\newcommand*{\rn}[1]{%
  \expandafter\@rn\csname c@#1\endcsname%
}
\newcommand*{\@rn}[1]{%
  $\ifcase#1\or(i)\or(ii)\or(iii)\or(iv)\or(v)\or(vi)\or(vii)\or(viii)\or(ix)\or(x)%
    \else\@ctrerr\fi$%
}
\AddEnumerateCounter{\rn}{\@rn}{53.13}
  \newbox\gnBoxA
\newdimen\gnCornerHgt
\newdimen\gnArgHgt
\def\Godelnum #1{%
\setbox\gnBoxA=\hbox{$#1$}%
\gnArgHgt=\ht\gnBoxA%
\ifnum     \gnArgHgt<\gnCornerHgt \gnArgHgt=0pt%
\else \advance \gnArgHgt by -\gnCornerHgt%
\fi \raise\gnArgHgt\hbox{$\ulcorner$} \box\gnBoxA %
\raise\gnArgHgt\hbox{$\urcorner$}}
\newcommand{\pushright}[1]{\ifmeasuring@#1\else\omit\hfill$\displaystyle#1$\fi\ignorespaces}
\newcommand{\pushleft}[1]{\ifmeasuring@#1\else\omit$\displaystyle#1$\hfill\fi\ignorespaces}
\newcommand{\PP}{\mathbb{P}}
\newcommand{\Q}{\dot{\mathbb{Q}}}
\newcommand{\1}{\mathbbm{1}}
\newcommand{\forces}{\Vdash}
\newcommand{\res}{\upharpoonright}
\newcommand{\dotieconcat}[2]{\text{\raisebox{.8ex}{$\smallfrown$}}}
\newcommand{\QQ}{\mathbb{Q}}
\newcommand{\BB}{\mathbb{B}}
\newcommand{\ran}{\mathrm{ran}}
\newcommand{\ZFC}{\mathrm{ZFC}}
\newcommand{\CH}{\mathrm{CH}}
\newcommand{\AD}{\mathrm{AD}}
\newtheoremstyle{nopoint}
  {}{}{\itshape}{}{\bfseries}{}{5pt}{}
\theoremstyle{plain}
\newtheorem{thm}{Theorem}[section]
\newtheorem{prop}[thm]{Proposition}
\newtheorem{lemm}[thm]{Lemma}
\newtheorem{cor}[thm]{Corollary}
\newtheorem{claim}[thm]{Claim}
\newtheorem{sub}[thm]{Subclaim}
\theoremstyle{definition}
\newtheorem{defn}[thm]{Definition}
\newtheorem{rem}[thm]{Remark}
\newtheorem*{que*}{Question}
\theoremstyle{nopoint}
\newtheorem*{lemm*}{Lemma}
\newtheorem*{thm*}{Theorem}
\newtheorem*{rem*}{Remark}
\newcommand{\plus}{+}
\newcommand{\diacol}{\diamondsuit(\omega_1^{{<}\omega})}
\newcommand{\diacolp}{\diamondsuit^{\plus}(\omega_1^{{<}\omega})}
\newcommand{\diacolpgen}[1]{\diamondsuit^{\plus}_{#1}(\omega_1^{{<}\omega})}
\newcommand{\diacolip}{\diamondsuit_{I}^{\plus}(\omega_1^{{<}\omega})}
\newcommand{\diab}{\diamondsuit(\mathbb B)}
\newcommand{\diabgena}[1]{\diamondsuit_{#1}^{\plus}(\BB)}
\newcommand{\good}{\text{-slim}}
\newcommand{\goodit}{\text{\textit{-slim}}}
\newcommand{\fgood}{f\good}
\newcommand{\fgoodit}{f\goodit}
\newcommand{\Col}{\mathrm{Col}}
\newcommand{\Add}{\mathrm{Add}}
\newcommand{\Ult}{\mathrm{Ult}}
\newcommand{\NS}{\mathrm{NS}_{\omega_1}}
\newcommand{\NSf}{\mathrm{NS}_{f}}
\newcommand{\crit}{\mathrm{crit}}
\newcommand{\Pmax}{\mathbb P_{\mathrm{max}}}
\newcommand{\MA}{\mathrm{MA}}
\newcommand{\PFA}{\mathrm{PFA}}
\newcommand{\PFAf}{\PFA(f)}
\newcommand{\FA}{\mathrm{FA}}
\newcommand{\fMM}{\mathrm{MM}(f)}
\newcommand{\fMMpp}{\mathrm{MM}^{++}(f)}
\newcommand{\BP}{\mathbb{B}}
\newcommand{\MM}{\mathrm{MM}}
\newcommand{\MMpp}{\MM^{++}}
\newcommand{\MMTpp}{\MMpp(T)}
\newcommand{\BFA}{\mathrm{BFA}}
\newcommand{\BMM}{\mathrm{BMM}}
\newcommand{\BMMpp}{\BMM^{++}}
\newcommand{\DBFAPsi}{D\text{-}\BFA^{\Psi}_{\vec A}}
\newcommand{\BFAPsi}{\BFA^{\Psi}_{\vec A}}
\newcommand{\BFAPsigen}[1]{#1\text{-}\BFA^{\Psi}}
\newcommand{\Mmax}{\mathbb{F}_{\mathrm{max}}}
\newcommand{\Fmax}{\Mmax}
\newcommand{\lh}{\mathrm{lh}}
\newcommand{\Wstar}{(\ast)}
\newcommand{\stargen}[1]{#1\text{-}\Wstar}
\newcommand{\LofR}{L(\mathbb{R})}
\newcommand{\aspsch}{Asper\'o-Schindler}
\newcommand{\Ord}{\mathrm{Ord}}
\newcommand{\lang}{\mathcal L}
\newcommand{\cert}{\mathfrak{C}}
\newcommand{\ul}{\underline}
\newcommand{\ulc}{\ulcorner}
\newcommand{\urc}{\urcorner}
\newcommand{\ulx}{\ul{x}}
\newcommand{\dmu}{\dot\mu}
\newcommand{\dsigma}{\dot\sigma}
\newcommand{\dn}{\dot n}
\newcommand{\Htwo}{H_{\omega_2}}
\newcommand{\Hkappa}{H_{\kappa}}
\newcommand{\normalformformula}{\ulc\dot N_i\models\varphi(\ul{\gamma_1},\dots, \ul{\gamma_k}, \dot n_1,\dots, \dot n_l, \dot I, \dot a, \dot M_{j_1},\dots, \dot M_{j_m}, \dot \mu_{q_1, r_1}, \dots, \dot \mu_{q_s, r_s}, \dot{\vec M})\urc}
\newcommand{\normalformcert}{\cert=\langle\langle M_i, \mu_{i, j}, N_i, \sigma_{i, j}\mid i\leq j\leq\omega_1\rangle, \langle (k_n, \alpha_n)\mid n<\omega\rangle, \langle \lambda_\xi, X_\xi\mid\xi\in K\rangle\rangle}
\newcommand{\normalformcertprime}{\cert'=\langle\langle M_i', \mu_{i, j}', N_i', \sigma_{i, j}'\mid i\leq j\leq\omega_1\rangle, \langle (k_n', \alpha_n')\mid n<\omega\rangle, \langle \lambda_\rho', X_{\rho}'\mid\rho\in K'\rangle\rangle}
\newcommand{\normalfone}{\ulc\dot\mu_{i, \omega_1}(\dot n)=\ul{x}\urc}
\newcommand{\normalftwo}{\ulc\dmu_{\omega_1,\omega_1}(\ulx)=\ulx\urc}
\newcommand{\normalfthree}{\ulc\dsigma_{i, j}(\dn)=\dot m\urc}
\newcommand{\normalffive}{\ulc\ul{\xi}\mapsto\ul{\nu}\urc}
\newcommand{\normalfsix}{\ulc\ulx\in\dot X_\xi\urc}
\newcommand{\Lim}{\mathrm{Lim}}
\newcommand{\standarditeration}{\langle \PP_\alpha,\Q_\beta\mid\alpha\leq\gamma, \beta<\gamma\rangle}
\newcommand{\Smax}{\mathbb{S}_{\mathrm{max}}}
\newcommand{\SmaxT}{\Smax^{T}}
\newcommand{\Pomo}{\mathcal P(\omega_1)}
\newcommand{\Vmax}{\mathbb{V}_{\mathrm{max}}}
\newcommand{\Qmax}{\mathbb Q_{\mathrm{max}}}
\newcommand{\SRP}{\mathrm{SRP}}
\newcommand{\ADR}{\mathrm{AD}_{\mathbb R}}
\newcommand{\domnum}{\mathfrak{d}}
\newcommand{\Pdom}{P_{\domnum=\aleph_1}}
\newcommand{\MMppdom}{\MMpp(\domnum=\aleph_1)}
\newcommand{\bnum}{\mathfrak{b}}
\newcommand{\QMM}{\mathrm{QM}}
\newcommand{\QM}{\QMM}
\newcommand{\BQMM}{\mathrm{BQM}}
\newcommand{\BQM}{\BQMM}
\newcommand{\Qmaxm}{\Qmax^{-}}
\newcommand{\Qmaxast}{\Qmax^{\ast}}
\newcommand{\Ulamn}{\mathrm{Ulam}}
\newcommand{\boldDelta}{\boldsymbol\Delta}
\newcommand{\RCS}{\mathrm{RCS}}
\newcommand{\PPdia}{\PP^{\diamondsuit}}
\newcommand{\tc}{\mathrm{tc}}
\newcommand{\MRP}{\mathrm{MRP}}
\newcommand{\variant}{variation}
\newcommand{\Pmaxvariant}{\Pmax\text{-variation}}
\newcommand{\Pmaxvariants}{\Pmax\text{-variations}}
\numberwithin{thm}{section}
\newcounter{historycounter}
\theoremstyle{plain}
\newtheorem{blueprintthm}[thm]{First Blueprint Theorem}
\newtheorem{blueprinttwothm}[thm]{Second Blueprint Theorem}
\newtheorem{thmh}[historycounter]{Theorem}
\theoremstyle{definition}
\newtheorem*{defn*}{Definition}
\newtheorem{defnh}[historycounter]{Definition}
\newtheorem{conv}[thm]{Convention}
\newtheorem{queh}[historycounter]{Question}
\title{Forcing $``\mathrm{NS}_{\omega_1}$ is $\omega_1$-dense” From Large Cardinals}
\author{Andreas Lietz\footnote{Institut f\"ur Mathematische Logik und Grundlagenforschung, Universit\"at M\"unster, Einsteinstrasse 62, 48149 M\"unster, FRG. }\hspace{5pt}\footnote{Current address: Institut für Diskrete Mathematik und Geometrie, TU Wien, Wiedner Hauptstrasse 8-10/104, 1040 Wien, AT\\
This paper is part of the authors PhD thesis.}}
\date{October 2023}
\begin{document}

\maketitle

\begin{abstract}
    We answer a question of Woodin by showing that assuming an inaccessible cardinal $\kappa$ which is a limit of ${<}\kappa$-supercompact cardinals exists, there is a stationary set preserving forcing $\PP$ so that 
    $V^\PP\models``\NS\text{ is }\omega_1\text{-dense}"$. We also introduce a new forcing axiom $\QM$, show it is consistent assuming a supercompact limit of supercompact cardinals and prove that it implies $\stargen{\Qmax}$. Consequently, $\QM$ implies ``$\NS$ is $\omega_1$-dense".
\end{abstract}

\section{Introduction}\label{introductionsection}

\subsection{History of ``$\NS$ is $\omega_1$-dense"}
In 1930, Stanislaw Ulam published an influential paper \cite{ulamzurmass} dealing with a question of Stefan Banach generalizing the measure problem of Lebesgue. He proved the following theorem:
\begin{thmh}[Ulam] Suppose $\kappa$ is an uncountable cardinal and there is a $\sigma$-additive real-valued measure on $\kappa$ which
\begin{enumerate}[label=$(\roman*)$]
 \item measures all subsets of $\kappa$ and 
 \item vanishes on points.
 \end{enumerate}
Then there is a weakly inaccessible cardinal $\leq\kappa$.
\end{thmh}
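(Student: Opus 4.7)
The plan is to call a cardinal $\lambda$ an \emph{Ulam cardinal} if every $\sigma$-additive measure on $\lambda$ measuring all subsets and vanishing on points is identically zero, and then argue that the least non-Ulam cardinal $\lambda_0\leq\kappa$ (which exists by hypothesis) is weakly inaccessible. Two easy closure properties are immediate: $\aleph_0$ is Ulam by $\sigma$-additivity on a partition into singletons, and if $\lambda=\sup_n\lambda_n$ with each $\lambda_n$ Ulam, then any witness $m$ on $\lambda$ transfers via a bijection to a trivial measure on each $\lambda_n$, so $m(\lambda)\leq\sum_n m(\lambda_n)=0$. Consequently $\lambda_0$ must be uncountable and of uncountable cofinality.

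The main step, and the main obstacle, is to show that the successor of an Ulam cardinal is Ulam, which will force $\lambda_0$ to be a limit cardinal. This is handled by an \emph{Ulam matrix} argument. Assume $\lambda$ is Ulam and $m$ is a putative witness on $\lambda^+$; for each $\eta<\lambda^+$ pick an injection $f_\eta:\eta\to\lambda$ and set
\[
A_{\alpha,\beta}=\{\eta<\lambda^+\mid \beta<\eta\text{ and }f_\eta(\beta)=\alpha\}\qquad(\alpha<\lambda,\ \beta<\lambda^+).
\]
For each fixed $\alpha$, the family $\{A_{\alpha,\beta}\mid\beta<\lambda^+\}$ is pairwise disjoint by injectivity of the $f_\eta$, so at most countably many of its members have positive $m$-measure; hence the set of $\beta$ for which some $A_{\alpha,\beta}$ has positive measure has cardinality at most $\lambda\cdot\aleph_0=\lambda$. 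Choose $\beta^*$ outside this set; then $\nu(S)=m(\bigcup_{\alpha\in S}A_{\alpha,\beta^*})$ is a $\sigma$-additive measure on $\lambda$ vanishing on points (for fixed $\beta^*$ the rows $A_{\alpha,\beta^*}$ are pairwise disjoint in $\alpha$), so $\nu(\lambda)=0$ by Ulam-ness of $\lambda$. But $\bigcup_{\alpha<\lambda}A_{\alpha,\beta^*}=(\beta^*,\lambda^+)$, and its complement $\beta^*+1$ has size $\leq\lambda$ and is therefore $m$-null by transferring to a measure on that smaller Ulam cardinal. Hence $m(\lambda^+)=0$, contradiction.

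It remains to show $\lambda_0$ is regular. If $\cof(\lambda_0)=\mu<\lambda_0$, partition $\lambda_0=\bigsqcup_{\xi<\mu}X_\xi$ with $|X_\xi|<\lambda_0$; minimality of $\lambda_0$ makes each $|X_\xi|$ Ulam, so any witness $m$ on $\lambda_0$ satisfies $m(X_\xi)=0$ for every $\xi$. Then $S\mapsto m(\bigsqcup_{\xi\in S}X_\xi)$ is a $\sigma$-additive measure on $\mu<\lambda_0$ vanishing on points with total mass $m(\lambda_0)>0$, contradicting that $\mu$ is Ulam. Hence $\lambda_0$ is an uncountable regular limit cardinal, i.e., weakly inaccessible.
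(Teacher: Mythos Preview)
The paper does not supply a proof of this theorem; it is quoted as a historical result of Ulam with a citation to \cite{ulamzurmass}, so there is nothing to compare against directly.

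Your argument is the standard one and is correct. A couple of minor remarks: your step about countable suprema of Ulam cardinals is actually subsumed by your regularity argument at the end (the case $\mu=\cof(\lambda_0)=\omega$), so you could streamline by dropping it; and in the Ulam matrix step you should perhaps make explicit that a $\sigma$-additive finite measure can give positive mass to at most countably many members of a disjoint family (since for each $n$ only finitely many can have mass ${>}1/n$), which you use implicitly. Otherwise the decomposition into ``$\aleph_0$ is Ulam'', ``successor of Ulam is Ulam via the Ulam matrix'', and ``$\lambda_0$ is regular by pushing the measure down to $\cof(\lambda_0)$'' is exactly the classical proof.
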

Ulam noticed that he could strengthen his conclusion if he replaces \textit{real-valued} by \textit{0-1-valued}. In more modern terminology, his second result reads:
\begin{thmh}[Ulam] Suppose $\kappa$ is an uncountable cardinal and there is a nonprincipal $\sigma$-complete ultrafilter on $\kappa$. Then there is a (strongly) inaccessible cardinal $\leq\kappa$.
\end{thmh}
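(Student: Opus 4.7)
The plan is to prove the stronger statement that the least uncountable cardinal $\kappa$ carrying a nonprincipal $\sigma$-complete ultrafilter is strongly inaccessible; any such $\kappa$ then witnesses the conclusion. Fix such a minimal $\kappa$ together with a witnessing ultrafilter $U$ on $\kappa$. The argument splits into four steps: lifting $\sigma$-completeness to $\kappa$-completeness, regularity, the limit property, and the strong limit property.

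First I would show $U$ is $\kappa$-complete. Let $\mu$ be the additivity of $U$, that is, the least cardinal $\lambda$ for which there is a partition $\{A_\alpha : \alpha<\lambda\}$ of $\kappa$ with each $A_\alpha\notin U$. By $\sigma$-completeness $\mu\geq\aleph_1$. Then $V\subseteq\mathcal P(\mu)$ defined by $X\in V\iff \bigcup_{\alpha\in X}A_\alpha\in U$ is readily seen to be a nonprincipal $\sigma$-complete ultrafilter on $\mu$; by minimality of $\kappa$ this forces $\mu\geq\kappa$. Regularity of $\kappa$ is then immediate: if $\cof(\kappa)<\kappa$, write $\kappa$ as a union of fewer than $\kappa$ sets each of size ${<}\kappa$; $\kappa$-completeness forces one such piece $B\in U$, and then decomposing $B$ into singletons contradicts non-principality together with $\kappa$-completeness.

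The main obstacle, and the conceptual heart of the proof, is showing $\kappa$ is a limit cardinal. Toward a contradiction suppose $\kappa=\lambda^+$. I would construct an Ulam matrix: for each $\xi<\lambda^+$ fix an injection $f_\xi\colon \xi\to\lambda$ and set $A_{\alpha,\xi}=\{\eta>\xi : f_\eta(\xi)=\alpha\}$ for $\alpha<\lambda$. Two properties hold by construction: for each fixed $\xi$ the family $\{A_{\alpha,\xi}:\alpha<\lambda\}$ partitions all but $\leq\lambda$-many ordinals, and for each fixed $\alpha$ the family $\{A_{\alpha,\xi}:\xi<\lambda^+\}$ is pairwise disjoint. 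Since $U$ is $\lambda^+$-complete and nonprincipal, for each $\xi$ some $A_{\alpha(\xi),\xi}\in U$. Pigeonhole on $\alpha(\xi)\in\lambda$ produces a fixed $\alpha^*$ with $\{\xi:\alpha(\xi)=\alpha^*\}$ of size $\lambda^+$, yielding $\lambda^+$ pairwise disjoint sets in $U$, a contradiction.

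Finally, for the strong limit property, suppose $2^\lambda\geq\kappa$ for some $\lambda<\kappa$ and fix an injection $F\colon\kappa\to{}^\lambda 2$. For each $\alpha<\lambda$ exactly one of $X_{\alpha,0}=\{\beta:F(\beta)(\alpha)=0\}$ and $X_{\alpha,1}=\kappa\setminus X_{\alpha,0}$ is in $U$; pick $i_\alpha\in\{0,1\}$ accordingly. By $\kappa$-completeness applied to ${<}\kappa$-many sets, $\bigcap_{\alpha<\lambda}X_{\alpha,i_\alpha}\in U$, yet injectivity of $F$ makes this intersection contain at most one point, contradicting non-principality. Combining the four steps yields that $\kappa$ is strongly inaccessible, proving the theorem. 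The Ulam matrix step is the technical crux; the remaining arguments are standard chains of partition-plus-completeness manipulations.
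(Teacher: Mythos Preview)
Your proof is correct. Note that the paper itself does not prove this theorem; it is stated in the historical introduction as a classical result of Ulam, with no proof given, so there is no approach in the paper to compare against. Your argument is the standard modern one: pass to the least $\kappa$ carrying such an ultrafilter, bootstrap $\sigma$-completeness to $\kappa$-completeness via the additivity, and then verify regularity and the strong limit property directly.

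One minor observation: your step 3 (the Ulam matrix argument showing $\kappa$ is a limit cardinal) is redundant once you have step 4. If $\kappa = \lambda^+$ then $2^\lambda \geq \lambda^+ = \kappa$, so the strong limit argument already rules out successor cardinals. The Ulam matrix is historically the key idea in Ulam's original work and your presentation of it is correct, but for the bare logical structure of this proof you could omit it and pass directly from regularity to the strong limit property. Calling it ``the technical crux'' is therefore a slight overstatement here; the real work is concentrated in the additivity argument of step 1 and the $2^\lambda$ argument of step 4.
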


These theorems gave birth to what are now known as real-valued measurable cardinals and measurable cardinals respectively. In the interest of having all subsets of some cardinal $\kappa$ be measured in some sense, instead of increasing the size of $\kappa$, it is also possible to increase the number of allowed filters that measure. Henceforth Ulam considered the following question:

\begin{queh} Suppose $\kappa$ is an uncountable cardinal below the least inaccessible. What is the smallest possible size of a family $\mathcal F$ of $\sigma$-closed nonprincipal filters on $\kappa$ so that every subset of $\kappa$ is measured by some filter in $\mathcal F$?
\end{queh}
Let us call the cardinal in question the Ulam number of $\kappa$, $\Ulamn(\kappa)$. Ulam's second theorem above can be rephrased as ``$\Ulamn(\kappa)>1$". Indeed, Ulam proved in unpublished work that $\Ulamn(\kappa)\geq\omega$. At some point, Ulam proposed this question to Paul Erd\H{o}s, who, together with Leonidas Alaoglu, improved Ulam's result to ``$\Ulamn(\kappa)\geq\omega_1$" \cite{erdosremarks}. The problem, this time in the special case $\kappa=\omega_1$, was apparently revitalized by appearing  in the 1971 collection of unsolved problems in set theory popularized by Erd\H{o}s and Hajnal \cite{erdoshajnal}: Shortly after, Karel Prikry \cite{prikry72} produced a model in which $\Ulamn(\omega_1)=2^{\omega_1}=\omega_2$, and did the same again with a different method in \cite{prikry76}. \\
A critical step towards a model in which $\Ulamn(\omega_1)=\omega_1$ was taken by Alan D. Taylor: Building on earlier work of Baumgartner-Hajnal-Mat\'e \cite{bhm}, Taylor provided \cite{taylor79} an impressive amount of statements equivalent to a natural strengthening of $``\Ulamn(\omega_1)=\omega_1"$, here is a shortened list.

\begin{thmh}[Taylor] 
The following are equivalent:
\begin{enumerate}[label=\rn*]
\item There is a family of normal filters witnessing $\Ulamn(\omega_1)=\omega_1$.
\item There is a $\sigma$-closed uniform $\omega_1$-dense ideal on $\omega_1$.
\item\label{taylor3cond}There is a normal uniform $\omega_1$-dense ideal on $\omega_1$.
\end{enumerate}
\end{thmh}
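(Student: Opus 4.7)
My plan is to prove the cycle $(\text{iii})\Rightarrow(\text{ii})$, $(\text{iii})\Rightarrow(\text{i})$, and $(\text{i})\Rightarrow(\text{iii})$, together with $(\text{ii})\Rightarrow(\text{iii})$ via a normalization argument.

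The implications $(\text{iii})\Rightarrow(\text{ii})$ and $(\text{iii})\Rightarrow(\text{i})$ are straightforward. For $(\text{iii})\Rightarrow(\text{ii})$, a normal uniform ideal on $\omega_1$ is automatically $\sigma$-complete, since the diagonal union of a countable family $\{X_n\mid n<\omega\}\subseteq I$ lies in $I$ by normality and differs from $\bigcup_n X_n$ only on a bounded (hence $I$-small) set. For $(\text{iii})\Rightarrow(\text{i})$, given a normal $\omega_1$-dense ideal $I$ with dense family $\langle A_\alpha\mid\alpha<\omega_1\rangle$ of $I$-positive sets, set $F_\alpha = I^\ast + A_\alpha$; this filter is normal, since adding a single positive set to a normal filter preserves closure under diagonal intersection (intersections with a fixed set commute with the diagonal operation), and the measuring property follows from density: $X\in I$ gives $\omega_1\setminus X\in I^\ast\subseteq F_\alpha$ for every $\alpha$, while $X\in I^+$ yields some $\alpha$ with $A_\alpha\setminus X\in I$, whence $X\in F_\alpha$.

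The direction $(\text{ii})\Rightarrow(\text{iii})$ is the classical normalization. Given a $\sigma$-closed uniform $\omega_1$-dense ideal $I$, I would form the smallest normal ideal $J\supseteq I$ by closing $I$ under diagonal unions, and then verify that $J$ retains $\omega_1$-density. Starting from the dense family for $I$, the plan is to apply a Fodor-style regressive-function projection inside each $I$-positive set to produce a family dense modulo $J$, using that the $\omega_1$-density of $I$ prevents the pathological splittings that would otherwise arise under normalization.

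For $(\text{i})\Rightarrow(\text{iii})$, given normal filters $\langle F_\alpha\mid\alpha<\omega_1\rangle$ witnessing $\Ulamn(\omega_1)=\omega_1$, set $I=\bigcap_{\alpha<\omega_1}F_\alpha^\ast$. This $I$ is uniform, since each $F_\alpha$ being $\sigma$-closed and nonprincipal extends the cocountable filter, and normal, since for any $\langle X_\beta\mid\beta<\omega_1\rangle\subseteq I$ the diagonal union $\nabla_\beta X_\beta$ lies in each $F_\alpha^\ast$ by normality of $F_\alpha$, hence in $I$. The hard part is $\omega_1$-density: I would first refine the measuring family so that each $F_\alpha$ becomes a normal filter generated over the club filter by a single designated $A_\alpha$, and then argue that $\langle A_\alpha\rangle$ is dense in $I^+$. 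This refinement step, requiring a recursive thinning of each filter while preserving the coverage of $\mathcal P(\omega_1)$, is the main obstacle I anticipate and constitutes the technical heart of Taylor's original argument.
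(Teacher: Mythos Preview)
The paper does not prove this theorem; it is stated in the historical introduction as a result of Taylor, with a citation to \cite{taylor79} and no argument given. There is therefore no proof in the paper to compare your proposal against.

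Evaluating your proposal on its own merits: the easy implications $(\mathrm{iii})\Rightarrow(\mathrm{ii})$ and $(\mathrm{iii})\Rightarrow(\mathrm{i})$ are handled correctly. For the two substantive directions, however, you have essentially written an outline rather than a proof. In $(\mathrm{ii})\Rightarrow(\mathrm{iii})$ you invoke ``classical normalization'' and a Fodor-style projection without saying why $\omega_1$-density survives closure under diagonal unions; that preservation is exactly the content of the implication and cannot be asserted. In $(\mathrm{i})\Rightarrow(\mathrm{iii})$ you set up the ideal $I=\bigcap_\alpha F_\alpha^{\ast}$ correctly and verify normality and uniformity, but then explicitly identify the refinement of the measuring family to single-generator filters as ``the technical heart of Taylor's original argument'' that you do not supply. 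So your proposal is a sound sketch of the proof architecture, with both nontrivial steps left as pointers back to Taylor --- which, as it happens, is exactly the level of detail the present paper provides.
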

The formulation \ref{taylor3cond} is much better suited for set-theoretical arguments. We also mention that Taylor proved that all the above statements fail under $\MA_{\omega_1}$.\\
Thus what remains of Ulam's original question was reduced to: Is the existence of a normal uniform  $\omega_1$-dense ideal on $\omega_1$ consistent with $\ZFC$? This was answered positively by W. Hugh Woodin in three different ways. The first was by forcing over a model of $\AD_{\mathbb R}$+$``\Theta\text{ is regular}"$, already in the fall of 1978. (unpublished). At that time, this theory was not yet known to be consistent relative to large cardinals. Naturally, somewhat later he did so from large cardinals:
\begin{thmh}[Woodin, unpublished\footnote{A proof can be found in Foreman's handbook article \cite{foremanhandbook}.}] Assume there is an almost-huge cardinal $\kappa$. Then there is a forcing extension in which there is a normal uniform $\omega_1$-dense ideal on $\omega_1=\kappa$.
\end{thmh}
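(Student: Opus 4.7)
The plan is to exploit an almost-huge embedding $j : V \to M$ with critical point $\kappa$ and target $\lambda := j(\kappa)$, where ${}^{<\lambda}M \subseteq M$, and to lift $j$ through a carefully chosen reverse Easton preparation collapsing $\kappa$ onto $\omega_1$. The lifted embedding will then induce, via its generic ultrapower, a normal uniform ideal $I$ on the new $\omega_1$, and the remaining work is to show that $\mathcal{P}(\omega_1)/I$ has a dense subset of size $\omega_1$.

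First, I would design a reverse Easton iteration $\mathbb{P}_\kappa$ of length $\kappa$ that turns $\kappa$ into $\omega_1$, for instance by inserting a Silver- or L\'evy-style collapse at each inaccessible stage $\alpha < \kappa$, arranged so that $j(\mathbb{P}_\kappa)$ factors as $\mathbb{P}_\kappa * \dot{\mathbb{Q}} * \dot{\mathbb{R}}$, with $\dot{\mathbb{Q}}$ the stage-$\kappa$ forcing and $\dot{\mathbb{R}}$ the tail up to $\lambda$. Then I would use the almost-hugeness as follows: given a $V$-generic $G * H$ for $\mathbb{P}_\kappa * \dot{\mathbb{Q}}$, I construct an $M[G * H]$-generic $G'$ for $\dot{\mathbb{R}}$ by a master-condition/diagonal argument, exploiting that $\dot{\mathbb{R}}$ is sufficiently closed in $M[G * H]$ and that $M$ contains only $\lambda$-many relevant dense sets; the ${<}\lambda$-closure of $M$ lets these be enumerated and threaded through a single generic. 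Checking $j[G] \subseteq G * H * G'$ then yields the lift $\tilde{j} : V[G] \to M[G * H * G']$, living in a further generic extension of $V[G]$.

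Next, I would set $I := \{A \in \mathcal{P}(\omega_1)^{V[G]} : \kappa \notin \tilde{j}(A)\}$. A standard generic-ultrapower computation shows $I$ is a normal uniform ideal on $\omega_1 = \kappa$ in $V[G]$, and that the Boolean algebra $\mathcal{P}(\omega_1)^{V[G]}/I$ embeds densely into (the Boolean completion of) the quotient $(\dot{\mathbb{Q}} * \dot{\mathbb{R}})/G$ used to manufacture $\tilde{j}$.

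The main obstacle will be engineering $\dot{\mathbb{Q}}$ to meet three competing demands simultaneously: (a) $\mathbb{P}_\kappa * \dot{\mathbb{Q}}$ must turn $\kappa$ into $\omega_1$; (b) the lift $\tilde{j}$ must go through, which requires $\dot{\mathbb{Q}}$ to admit master conditions pushing $j[\dot{\mathbb{Q}}]$ into the new generic filter; and (c) the quotient $(\dot{\mathbb{Q}} * \dot{\mathbb{R}})/G$, viewed in $V[G]$, must have a dense subset of size $\omega_1$, which is precisely the $\omega_1$-density of $I$. A natural candidate for $\dot{\mathbb{Q}}$ is a term-forcing variant of $\mathrm{Coll}(\omega_1, {<}\lambda)$; verifying (c) then reduces, roughly, to showing that modulo the generic for the top piece only $\omega_1$ many conditions in the tail are relevant from the viewpoint of $V[G]$, which in turn uses the $\kappa$-c.c.\ of $\mathbb{P}_\kappa$ together with the ${<}\lambda$-closure of $M$. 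Orchestrating these three demands, rather than any single one of them, is the technical heart of the argument.
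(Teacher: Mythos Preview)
The paper does not prove this theorem. It appears in the historical survey of the introduction and is merely stated as Woodin's unpublished result, with a footnote pointing to Foreman's handbook article for a proof. There is therefore no proof in the paper to compare your proposal against.

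For what it is worth, your outline is broadly in the spirit of the standard argument: one lifts an almost-huge embedding through a Silver/Kunen-style collapse preparation and reads off the dense ideal from the quotient. The genuine technical content lies in your point (c), and your sketch there is too vague to count as a proof: the specific combinatorics ensuring that the quotient forcing has an $\omega_1$-sized dense set is the whole difficulty, and ``only $\omega_1$ many conditions in the tail are relevant'' does not yet explain why. In the actual argument this is handled by a carefully designed universal collapse (Silver collapse or a variant) whose term space admits the required density computation; simply taking $\mathrm{Coll}(\omega_1,{<}\lambda)$ and invoking $\kappa$-c.c.\ plus ${<}\lambda$-closure of $M$ is not enough. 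If you want to flesh this out, Foreman's handbook chapter is the right reference.
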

 This finally resolved the question relative to large cardinals. But can the canonical normal uniform ideal, namely $\NS$, have this property? It is known that $\NS$ behaves a little different in this context.

\begin{thmh}[Shelah, \cite{shelahdense}] If $\NS$ is $\omega_1$-dense then $2^\omega=2^{\omega_1}$. In particular $\CH$ fails.
\end{thmh}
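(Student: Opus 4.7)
The goal is to establish $2^{\omega_1}\le 2^\omega$; the other inequality is immediate. Fix a dense sequence $\langle S_\alpha \mid \alpha<\omega_1\rangle$ in $(\mathcal P(\omega_1)/\NS)^+$ witnessing the $\omega_1$-density of $\NS$.

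For each $X\subseteq\omega_1$ define the type function $f_X\colon\omega_1\to 2$ by setting $f_X(\alpha)=1$ iff $S_\alpha\cap X$ is stationary. Density makes $X\mapsto f_X$ injective modulo $\NS$: if $X\bigtriangleup Y$ is stationary, any $S_\alpha$ almost contained in $X\bigtriangleup Y$ witnesses $f_X(\alpha)\ne f_Y(\alpha)$. This already yields $|\mathcal P(\omega_1)/\NS|\le 2^{\omega_1}$, but is not yet the desired bound.

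Next I would exploit $\omega_1$-density more strongly. It implies $\NS$ is $\omega_2$-saturated --- any antichain in the quotient lifts to distinct dense generators $S_\alpha$, of which there are only $\omega_1$ --- so $\NS$ is precipitous and forcing with $\mathbb B := \mathcal P(\omega_1)/\NS$ yields a well-founded generic ultrapower $j\colon V \to M \subseteq V[G]$ with $\crit(j) = \omega_1^V$. Moreover, the dense subset of size $\omega_1$ makes $\mathbb B$ forcing-equivalent to a poset of size $\omega_1$. I would try to show that each $X\in\mathcal P(\omega_1)^V$ admits a real code in $V[G]$, e.g.\ by finding a real description of $j(X)\cap j(\omega_1^V)$ extracted from the generic filter together with the sequence $\langle S_\alpha\rangle$, and then transferring back via cardinal preservation of the forcing to obtain an injection $\mathcal P(\omega_1)^V \hookrightarrow \mathbb R$ in $V$.

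\textbf{Main obstacle.} The decisive step is compressing $f_X$ from $\omega_1$-many bits to $\omega$-many. Bare $\omega_2$-saturation of $\NS$ is consistent with $2^\omega=\aleph_1<2^{\omega_1}$ (as witnessed by the Foreman--Magidor--Shelah model forcing $\NS$ saturated from a Woodin cardinal while retaining $\CH$), so the argument must genuinely use that the dense family has cardinality \emph{exactly} $\omega_1$, not merely that antichains are bounded by $\omega_1$. I expect the compression to go through a reflection argument producing a club of countable ordinals $\delta$ at which $f_X\upharpoonright\delta$ already determines $[X]_\NS$, paired with a separate bounding of the ambient $\NS$-classes, or alternatively through a careful analysis of the generic ultrapower that leverages the tight cardinality of $\mathbb B$. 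This combinatorial heart is the content of Shelah's proof in \cite{shelahdense}.
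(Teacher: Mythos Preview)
The paper does not give a proof of this theorem at all: it is stated in the historical introduction as a result of Shelah with a citation to \cite{shelahdense}, and no argument is supplied anywhere in the text. So there is nothing to compare your proposal against on the paper's side.

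As for your proposal itself, it is not a proof but an outline that explicitly stops short of the key step. You correctly set up the type function $f_X$ and observe that $\omega_1$-density implies saturation, but then you identify the ``compression from $\omega_1$ bits to $\omega$ bits'' as the main obstacle and defer it to Shelah's original paper. That is the entire content of the theorem; everything before it is preliminary. Your two suggested routes---a club-reflection argument on initial segments of $f_X$, or an analysis of the generic ultrapower exploiting $\vert\mathbb B\vert=\omega_1$---are plausible directions, but neither is carried out, and the second is closer to how the result is usually proved. Since the paper itself only cites the result, your write-up is in the same position as the paper: it points to \cite{shelahdense} for the actual argument.
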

This is not true for other normal uniform ideals on $\omega_1$, for example $\CH$ holds in the model Woodin constructs from an almost huge cardinal. One can also ask about the exact consistency strength of the existence of such a normal uniform $\omega_1$-dense ideal on $\omega_1$. Both these questions were answered in subsequent work by Woodin, building on his $\Pmax$-technique.
\begin{thmh}[Woodin, {\cite[Corollary 6.150]{woodinbook}}] The following theories are equiconsistent:
\begin{enumerate}[label=\rn*]
\item $\ZFC+``\text{There are infinitely many Woodin cardinals.}"$
\item $\ZFC+``\NS\text{ is }\omega_1\text{-dense.}"$
\item $\ZFC+``\text{There is a normal uniform }\omega_1\text{-dense ideal on }\omega_1."$
\end{enumerate}
\end{thmh}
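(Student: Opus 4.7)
The plan is to establish the three-way equiconsistency by proving the three implications separately. The implication $(ii) \Rightarrow (iii)$ is immediate because $\NS$ is always a normal uniform ideal on $\omega_1$, so its $\omega_1$-density directly witnesses $(iii)$. For $(i) \Rightarrow (ii)$, I would use $\Pmax$-forcing over $\LofR$: starting from $\ZFC$ plus infinitely many Woodin cardinals, one first arranges that $\LofR \models \ADp$ via the derived-model construction at a limit of Woodin cardinals. A theorem of Woodin then guarantees that $\Pmax$ is $\sigma$-closed and homogeneous in $\LofR$ and that in the generic extension the axiom $\Wstar$ holds; a further theorem establishes that $\Wstar$ implies $\NS$ is $\omega_1$-dense, which finishes this implication.

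For the lower bound $(iii) \Rightarrow (i)$, suppose $I$ is a normal uniform $\omega_1$-dense ideal on $\omega_1$. Forcing with $\mathcal P(\omega_1)/I$, which has a dense subset of size $\omega_1$ by density, produces a $V$-generic ultrafilter and hence a generic elementary embedding $j : V \to M \subseteq V[G]$ with critical point $\omega_1$; normality provides sufficient closure of $M$ inside $V[G]$ to make the embedding useful. The strategy is then a core-model induction: if infinitely many Woodin cardinals did not exist in any inner model, Steel's core model $K$ up to the relevant mouse operator would exist in $V$, and comparing $j \res K$ with the covering and absoluteness properties of $K$ would yield a contradiction. Iterating this argument through the successive mouse operators past each finite number of Woodins closes out the induction and yields $(i)$.

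The main obstacle is clearly this core-model induction: at the level of infinitely many Woodin cardinals the inner model theory is delicate, and one must verify carefully that the restricted embedding coming from an $\omega_1$-dense ideal -- as opposed to a merely saturated or precipitous one -- remains informative enough to drive the induction past each mouse operator without getting stuck at a finite stage. The $\Pmax$ side, by contrast, is by now well-understood machinery from Woodin's book, so the bulk of the genuinely technical work sits firmly on the inner-model-theoretic side.
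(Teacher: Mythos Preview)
Your treatment of $(ii)\Rightarrow(iii)$ is fine, and your sketch of $(iii)\Rightarrow(i)$ via core model induction is in line with how the paper describes Woodin's (unpublished) argument.

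The genuine gap is in $(i)\Rightarrow(ii)$. You propose to force with $\Pmax$ over $\LofR$ and then invoke a ``further theorem'' asserting that $\Wstar$ implies $\NS$ is $\omega_1$-dense. No such theorem exists, and in fact the implication is false: $\MMpp$ implies $\Wstar$ by the \aspsch\ theorem, but $\MMpp$ also implies $\MA_{\omega_1}$, and Taylor's result (cited earlier in the paper) shows that $\MA_{\omega_1}$ rules out \emph{any} normal uniform $\omega_1$-dense ideal on $\omega_1$. So $\Wstar$ is outright consistent with the failure of ``$\NS$ is $\omega_1$-dense''. The $\Pmax$-extension of $\LofR$ gives $\NS$ saturated, not $\omega_1$-dense.

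The paper's route for $(i)\Rightarrow(ii)$ is instead to force over $\LofR$ (with $\AD$ there, obtained from $\omega$ Woodins) with the $\Pmaxvariant$ $\Qmax$, which is specifically engineered so that the generic extension satisfies ``$\NS$ is $\omega_1$-dense''. You need to replace $\Pmax$ by $\Qmax$ and drop the appeal to $\Wstar$.
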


The direction $(iii)\Rightarrow (i)$ makes use of Woodin's core model induction technique, the argument is unpublished. We refer the interested reader to \cite{schindlersteel} where part of this is proven. Woodin's method for $(i)\Rightarrow(ii)$ is by forcing over $\LofR$, assuming $\AD$ there, with the $\Pmaxvariant$ $\Qmax$. This approach has one downside: It is a forcing construction over a canonical determinacy model. $\LofR$ can be replaced by larger determinacy models, but $\Qmax$ relies on a good understanding of the model in question. In practice, this is akin to an anti large cardinal assumption and leaves open questions along the lines of: Is ``$\NS$ is $\omega_1$-dense" consistent together with all natural large cardinals, e.g.~supercompact cardinals? Is it consistent with powerful combinatorial principles, for example $\SRP$?\\
Woodin's original motivation for these results was in fact the question of generic large cardinal properties of $\omega_1$: For example $\omega_1$ is not measurable by Ulam's theorem, but there can be a generic extension of $V$ with an elementary embedding $j:V\rightarrow M$ with transitive $M$ and critical point $\omega_1^V$. This leads to precipitous ideals on $\omega_1$.

\begin{defnh}
A uniform ideal $I$ on $\omega_1$ is precipitous if, whenever $G$ is generic for $(\Pomo/I)^+$ then $\Ult(V, U_G)$ is wellfounded\footnote{$U_G$ denotes the $V$-ultrafilter induced by $G$.}.
\end{defnh}

The existence of an $\omega_1$-dense ideal is a much stronger assumption than the existence of a precipitous ideal. There is a natural well-studied intermediate principle.

\begin{defnh}
A uniform ideal $I$ on $\omega_1$ is saturated if $(\Pomo/I)^+$ is $\omega_2$-c.c..
\end{defnh}

Here is a short history of similar result for these principles:
\begin{enumerate}[label=\rn*]
\item Mitchell forces a precipitous ideal on $\omega_1$ from a measurable in the mid 70s, see \cite{jmmp}.
\item Magidor forces ``$\NS$ is precipitous" from a measurable, published in \cite{jmmp}.
\item Kunen \cite{kunsat} forces a saturated ideal on $\omega_1$ from a huge cardinal, which he invented for this purpose.
\item\label{svwcond} Steel-Van Wesep \cite{svw} force ``$\NS$ is saturated" over a model of\footnote{Woodin \cite{woodinjustad} subsequently reduced the assumption to just $\AD$ .} $\AD+\mathrm{AC}_{\mathbb R}$.
\item\label{fmscond} Foreman-Magidor-Shelah \cite{fmsmmpp} force ``$\NS$ is saturated" from a supercompact with semiproper forcing. Later reduced to one Woodin cardinal by Shelah\footnote{The main ideas for the argument are in \cite[XVI]{shelahbook}, a write-up by Schindler can be found in \cite{nssat}.}.
\end{enumerate}

Woodin's results continue this line of research for $\omega_1$-dense ideals. But the analog of the step from \ref{svwcond} to \ref{fmscond} for $\omega_1$-dense ideals was missing. Accordingly, Woodin posed the following question:
\begin{queh}[Woodin, {\cite[Chapter 11 Question 18 b)]{woodinbookold}}]\label{woodinsquestion}
Assuming the existence of some large cardinal: Must there exist some semiproper partial order $\PP$ such that 
$$V^\PP\models``\NS\text{ is }\omega_1\text{-dense}"\ \text{?}$$
\end{queh}

We will answer this positively in this thesis.

\begin{thmh}
Assume there is an inaccessible cardinal $\kappa$ which is the limit of cardinals which are ${<}\kappa$-supercompact. Then there is a stationary set preserving forcing $\PP$ so that 
$$V^\PP\models``\NS\text{ is }\omega_1\text{-dense}".$$
If there is an additional supercompact cardinal below $\kappa$, we can find such $\PP$ that is semiproper.
\end{thmh}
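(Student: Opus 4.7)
The plan is to reduce the theorem to the consistency of the new forcing axiom $\QM$ from the abstract, via the chain ``$\QM$ implies $\stargen{\Qmax}$, which in turn implies $\NS$ is $\omega_1$-dense''. This follows the blueprint set by Foreman--Magidor--Shelah for ``$\NS$ is saturated'' via $\MM$, and by \aspsch{} for $\Wstar$: design a $\Pmax$-flavored forcing axiom whose instances, when collectively met, produce the desired combinatorial object---here, an $\omega_1$-dense subfamily of $(\Pomo/\NS)^+$---and then force it by a long iteration.

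For $\QM \Rightarrow$ ``$\NS$ is $\omega_1$-dense'', I would formulate $\QM$ so that its instances are stationary set preserving forcings whose generic filters supply $\Qmax$-like iterable pairs together with coherent embeddings witnessing $\stargen{\Qmax}$-conditions. The implication $\QM \Rightarrow \stargen{\Qmax}$ then reduces to: given any $\Qmax$-condition $q$ and any $\aleph_1$-many dense sets in $\Qmax$, exhibit a single forcing instance in the class governed by $\QM$ whose generic realizes a $\Qmax$-generic below $q$ meeting them. Once $\stargen{\Qmax}$ holds, Woodin's analysis of $\Qmax$-generics (cf.\ \cite{woodinbook}) delivers an $\omega_1$-dense sequence of stationary sets directly.

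For forcing $\QM$, one builds a length-$\kappa$ iteration $\PP = \langle \PP_\alpha, \Q_\alpha \mid \alpha < \kappa \rangle$ with countable support (revised countable support in the semiproper case). Bookkeeping enumerates at each stage $\alpha$ the instances of forcings governed by $\QM$ appearing in $V[\Ga]$. Inaccessibility of $\kappa$ makes $\kappa = \omega_2^{V^\PP}$, and the cofinal sequence of ${<}\kappa$-supercompact cardinals supplies the reflection argument verifying $\QM$ in $V^\PP$: any instance in the final model has size ${<}\delta$ for some ${<}\delta$-supercompact $\delta < \kappa$ and is thereby captured at some stage $\alpha < \kappa$. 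Preservation of $\omega_1$ along the iteration is by Shelah's preservation theorem for stationary set preserving iterations; with an additional supercompact below $\kappa$, Shelah's semiproper iteration theorem yields semiproperness of $\PP$.

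The main obstacle is designing $\QM$ to reconcile two competing demands. The axiom must be narrow enough that each iterand is stationary set preserving (respectively, semiproper), so that the iteration preserves $\omega_1$; yet wide enough that $\QM$ implies $\stargen{\Qmax}$ in full strength, so that arbitrary $\Qmax$-conditions can be realized. In Woodin's $\AD$-based construction these properties come essentially for free; here they must be verified combinatorially over $V$, presumably via the ``blueprint'' apparatus indicated by the \emph{First Blueprint Theorem} and \emph{Second Blueprint Theorem} environments declared in the preamble. The specific technical crux is establishing iterability of the countable models used in $\QM$-instances inside $V$ rather than inside $\LofR$ under $\AD$, which is precisely where the large cardinal hypothesis on ${<}\kappa$-supercompacts cofinal in $\kappa$ enters.
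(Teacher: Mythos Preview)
Your proposal has a genuine gap at the iteration-theoretic core. You write that ``preservation of $\omega_1$ along the iteration is by Shelah's preservation theorem for stationary set preserving iterations''; but there is no such theorem---RCS iterations of stationary set preserving forcings can collapse $\omega_1$. This is not a detail one can patch later: it is precisely the obstacle that makes the problem hard, and your identification of the ``main obstacle'' as iterability of countable models misses the actual crux.

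The paper's approach is quite different from what you outline. Rather than iterating stationary set preserving forcings, one fixes a witness $f$ of the principle $\diacol$ (a function guessing $\Col(\omega,\omega_1)$-filters) and iterates \emph{$f$-preserving} forcings, which need not preserve all stationary sets. The forcing axiom $\QM$ asserts $\FA(\{\PP\mid\PP\text{ preserves }f\})$ for such an $f$, and one shows directly (without going through $\stargen{\Qmax}$) that $\QM$ makes $\eta_f\colon\Col(\omega,\omega_1)\to(\Pomo/\NS)^+$ dense. The new iteration theorem is for \emph{$Q$-iterations}: nice iterations where successor stages alternate between forcing $\SRP$ and \emph{freezing} $\NS$ along $f$ (forcing every old stationary set to contain some $S^f_p$ mod $\NS$). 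The freezing forcing is built by a $\diamondsuit$-modified Asper\'o--Schindler $\Wstar$-forcing for a variant $\Qmaxm$ of $\Qmax$; this is where the supercompacts below $\kappa$ enter.

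Crucially, a $Q$-iteration is \emph{not} stationary set preserving unless $\NS$ was already $\omega_1$-dense. Stationary set preservation is obtained by a separate trick: first force with $\Col(\omega_1,2^{\omega_1})$, then choose $f$ so that every ground-model stationary set contains some $S^f_p$. Since $f$-preserving forcings preserve each $S^f_p$, the two-step composite preserves all $V$-stationary sets. The additional supercompact is used only to first arrange that stationary set preserving equals semiproper.
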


On a different note, there has been significant interest recently into the possible $\boldDelta_1$-definability of $\NS$ (with parameters), in particular in the presence of forcing axioms. Note that $\NS$ is trivially $\Sigma_1(\omega_1)$-definable, but it is independent of $\ZFC$ whether $\NS$ is $\boldsymbol \Pi_1$-definable. Hoffelner-Larson-Schindler-Wu \cite{hlsw} show:

\begin{enumerate}[label=\rn*]
 \item If $\BMM$ holds and there is a Woodin cardinal then $\NS$ is not $\boldDelta_1$-definable.
 \item If $\Wstar$ holds then $\NS$ is not $\boldDelta_1$-definable.
 \item\label{MMppnotboldDeltaitem} Thus by \aspsch\ \cite{aspsch}, if $\MMpp$ holds, $\NS$ is not $\boldDelta_1$-definable.
 \item It is consistent relative to large cardinals that $\mathrm{BPFA}$ holds and $\NS$ is $\boldDelta_1$-definable.
 \end{enumerate} 
 
 There is also a forthcoming paper by Ralf Schindler and Xiuyuan Sun \cite{schindlersun} showing  that in \ref{MMppnotboldDeltaitem}, $\MMpp$ can be relaxed to $\MM$. \\
If $\NS$ is $\omega_1$-dense then $\NS$ is automatically $\boldDelta_1$-definable: If $\mathcal S$ is a set of $\omega_1$-many stationary sets witnessing the density, then $T\subseteq\omega_1$ is stationary iff 
$$\exists C\subseteq\omega_1\text{ a club, }\exists S\in\mathcal S\  C\cap S\subseteq T.$$
This was first observed by Friedman-Wu-Zdomskyy \cite{fwz}. In this context, two interesting points arise from our results here: First, we isolate for the first time a forcing axiom which \textit{implies} ``$\NS$ is $\boldDelta_1$-definable". Second, it is well known that many of the structural consequences of $\MM$ follow already from $\SRP$, for example ``$\NS$ is saturated", $2^\omega=\omega_2$, $\mathrm{SCH}$, etc. In contrast, in the result of Schindler-Sun, $\MM$ cannot be replaced by $\SRP$: If appropriate large cardinals are consistent, then so is $\SRP$ together with ``$\NS$ is $\boldDelta_1$-definable". 

\subsection*{Acknowledgements}
I want to thank my supervisor Ralf Schindler for his continued support that led to the results presented here. Most importantly, for relentlessly asking me to force ``$\NS$ is $\omega_1$-dense" from large cardinals from the beginning of my PhD on. It should be obvious to the reader that this work would not have been possible without Ralf's great supervision. \\
Moreover, I am grateful to David Asper\'o for a number of very helpful remarks and suggestions.\\
Funded by the Deutsche Forschungsgemeinschaft (DFG, German Research
Foundation) under Germany’s Excellence Strategy EXC 2044 390685587, Mathematics M\"unster:
Dynamics - Geometry - Structure.

\section{Notation}

First, we fix some notation. We will extensively deal with countable elementary substructures $X\prec H_\theta$ for large regular $\theta$. We will make frequent use of the following notation:

\begin{defn}
Suppose $X$ is any extensional set.
\begin{enumerate}[label=\rn*]
\item $M_X$ denotes the transitive isomorph of $X$\index{MX@$M_X$}.
\item  $\pi_X\colon M_X\rightarrow X$ denotes the inverse collapse\index{piX@$\pi_X$}.
\item\label{deltadefn} $\delta^X\coloneqq\omega_1\cap X$\index{deltaX@$\delta^X$}.
\end{enumerate}
\end{defn} 

In almost all cases, we will apply this definition to a countable elementary substructure $X\prec H_\theta$ for some uncountable cardinal $\theta$. In some cases, the $X$ we care about lives in a generic extension of $V$, even though it is a substructure of $H_\theta^V$. In that case, $\delta^X$ will always mean $X\cap \omega_1^V$.\medskip

We will also sometimes make use of the following convention in order to ``unclutter" arguments.

\begin{conv}
If $X\prec H_\theta$ is an elementary substructure and some object $a$ has been defined before and $a\in X$ then we denote $\pi_X^{-1}(a)$ by $\bar a$\index{abar@$\bar a$ (short for $\pi_X^{-1}(a)$)}. 
\end{conv}

We will make use of this notation only if it is unambiguous.

\begin{defn}
If $X, Y$ are sets then $X\sqsubseteq Y$\index{<squaresubset@$\sqsubseteq$} holds just in case 
\begin{enumerate}[label=\rn*]
\item $X\subseteq Y$ and
\item $\delta^X=\delta^Y$.
\end{enumerate}
\end{defn}

We use the following notions of clubs and stationarity on $[H_\theta]^\omega$:

\begin{defn}
Suppose $A$ is an uncountable set. 
\begin{enumerate}[label=\rn*]
\item $[A]^\omega$\index{[A]omega@$[A]^\omega$} is the set of countable subsets of $A$.
\item $\mathcal C\subseteq [A]^\omega$ is a club in $[A]^\omega$\index{[A]omega@$[A]^\omega$!club in} if
\begin{enumerate}[label=$\alph*)$]
\item for any $X\in[A]^\omega$ there is a $Y\in \mathcal C$ with $X\subseteq Y$ and
\item if $\langle Y_n\mid n<\omega\rangle$ is a $\subseteq$-increasing sequence of sets in $\mathcal C$ then $\bigcup_{n<\omega} Y_n\in \mathcal C$.
\end{enumerate}
\item $\mathcal S\subseteq[A]^\omega$ is stationary in $[A]^\omega$\index{[A]omega@$[A]^\omega$!stationary in} if $\mathcal S\cap\mathcal C\neq\emptyset$ for any club $\mathcal C$ in $[A]^\omega$.
\end{enumerate}
\end{defn}

Next, we explain our notation for forcing iterations.
\begin{defn} Suppose $\PP=\standarditeration$ is an iteration and $\beta\leq\gamma$. We consider elements of $\PP$ as functions of domain (or length) $\gamma$.
\begin{enumerate}[label=$(\roman*)$]
\item If $p\in\PP_\beta$ then $\lh(p)=\beta$.
\item If $G$ is $\PP$-generic then $G_\beta$ denotes the restriction of $G$ to $\PP_\beta$, i.e.
$$G_\beta=\{p\res\beta\mid p\in G\}.$$
Moreover, $\dot G_\beta$ is the canonical $\PP$-name for $G_\beta$.
\item If $G_\beta$ is $\PP_\beta$-generic then $\PP_{\beta,\gamma}$ denotes (by slight abuse of notation) the remainder of the iteration, that is 
$$\PP_{\beta,\gamma}=\{p\in\PP_\gamma\mid p\res\beta\in G_\beta\}.$$
$\dot\PP_{\beta,\gamma}$ denotes a name for $\PP_{\beta,\gamma}$ in $V$.
\item If $G$ is $\PP$-generic and $\alpha<\beta$ then $G_{\alpha,\beta}$ denotes the projection of $G$ onto $\PP_{\alpha,\beta}$.
\end{enumerate}
\end{defn}

There will be a number of instances were we need a structure to satisfy a sufficiently large fragment of $\ZFC$. For completeness, we make this precise.

\begin{defn}
\textit{Sufficiently much of }$\ZFC$\index{sufficiently much of ZFC@Sufficiently much of $\ZFC$} is the fragment $\ZFC^{-}+``\omega_1\text{ exists}"$. Here, $\ZFC^{-}$ is $\ZFC$ without the powerset axiom and with the collection scheme instead of the replacement scheme.
\end{defn}

\section{$\diacol$ and $\diacolp$}\label{diacolpnddiacolpsection}

We introduce the central combinatorial principle which is due to Woodin. The relevancy is motivated by the following observation: If $\NS$ is $\omega_1$-dense, then there is a dense embedding
$$\eta\colon \Col(\omega,\omega_1)\rightarrow \mathcal (P(\omega_1)/\NS)^+.$$
We aim to force a forcing axiom that implies this. As usual, the forcing achieving this is an iteration $\PP$ of some large cardinal length $\kappa$ which preserves $\omega_1$ and iterates forcings of size ${<}\kappa$ with countable support-style supports. $\PP$ will thus be $\kappa$-c.c.~and this means that some ``representation"
$$\eta_0\colon \Col(\omega,\omega_1)\rightarrow \NS^+$$
of $\eta$ exists already in an intermediate extension. By ``representation" we mean that in $V^\PP$, 
$$[\eta_0(p)]_{\NS}=\eta(p)$$
for all $p\in\Col(\omega,\omega_1)$\footnote{For $S\subseteq\omega_1$ and $I$ an ideal on $\omega_1$, $[S]_I$ denotes the equivalence class of $S$ induced by the equivalence relation $T\sim T'\Leftrightarrow T\triangle T'\in I$.}. With this in mind, one should isolate the relevant $\Pi_1$-properties which $\eta_0$ possesses in $V^\PP$. Consequently, $\eta_0$ satisfies these properties in the intermediate extension. It is hopefully easier to first force an object with this $\Pi_1$-fragment and we should subsequently only force with partial orders that preserve this property. This is exactly what we will do. The relevant combinatorial properties are $\diacol$ and $\diacolp$ and were already isolated by Woodin in his study of $\Qmax$ \cite[Section 6.2]{woodinbook}. We remark that the definition we use here is slightly stronger than Woodin's original principle in a technical way that turns out to be convenient for our purposes. Most results in this Section are essentially due to Woodin and proven in \cite[Section 6.2]{woodinbook}.

\begin{defn}
\begin{enumerate}[label=\rn*]
\item We say that $f$ \textit{guesses $\Col(\omega,\omega_1)$-filters}\index{f guesses B filters@$f$ guesses $\Col(\omega,\omega_1)$-filters} if $f$ is a function 
$$f\colon\omega_1\rightarrow H_{\omega_1}$$
and for all $\alpha<\omega_1$, $f(\alpha)$ is a $\Col(\omega,\omega_1)\cap\alpha$-filter\footnote{We consider the empty set to be a filter.}.
\item Suppose $\theta\geq\omega_2$ is regular and $X\prec H_\theta$ is an elementary substructure. We say $X$ is $\fgoodit$\index{Set!f slim@$\fgood$}\footnote{We use the adjective ``slim" for the following reason: An $\fgood$ $X\prec H_\theta$ cannot be too fat compared to its height below $\omega_1$, i.e.~$\delta^X$.  If $X\sqsubseteq Y\prec H_\theta$ and $Y$ is $\fgood$ then $X$ is $\fgood$ as well, but the converse can fail.} if
\begin{enumerate}[label=$(X.\roman*)$]
 \item $X$ is countable,
 \item $f, \Col(\omega,\omega_1)\in X$ and
 \item $f(\delta^X)$ is $\Col(\omega,\omega_1)\cap\delta^X$-generic over $M_X$.
 \end{enumerate} 
\end{enumerate}
\end{defn}

\begin{defn}\label{diadefn}
$\diacol$\index{diamondb@$\diacol$} states that there is a function $f$ so that
\begin{enumerate}[label=\rn*]
 \item $f$ guesses $\Col(\omega,\omega_1)$-filters and
 \item for any $b\in\Col(\omega,\omega_1)$ and regular $\theta\geq\omega_2$
 \begin{align*}
 \{X\prec H_\theta\mid X \text{ is }\fgood\wedge &b\in f(\delta^X)\}
 \end{align*}
 is stationary in $[H_{\theta}]^\omega$.
 \end{enumerate} 
$\diacolp$\index{diamondb0plus@$\diacolp$} is the strengthening of $\diacol$ where $(ii)$ is replaced by:
\begin{enumerate}
\item[$(ii)^\plus$] For any regular $\theta\geq\omega_2$
$$\{X\prec H_\theta\mid X \text{ is }\fgood\}$$
contains a club of $[H_\theta]^{\omega}$. Moreover, for any $b\in\Col(\omega,\omega_1)$ 
$$\{\alpha<\omega_1\mid b\in f(\alpha)\}$$
is stationary.
\end{enumerate}
We say that $f$ \textit{witnesses} $\diacol$, $\diacolp$ respectively. 
\end{defn}

We introduce some convenient shorthand notation.
\begin{defn}
If $f$ witnesses $\diacol$ and $b\in\Col(\omega,\omega_1)$ then
$$S^f_b\coloneqq \{\alpha<\omega_1\mid b\in f(\alpha)\}.$$
If $f$ is clear from context we will sometimes omit the superscript $f$.
\end{defn}

Note that if $f$ witnesses $\diacol$, then $S^f_b$ is stationary for all $b\in\Col(\omega,\omega_1)$. This is made explicit for $\diacolp$. This is exactly the technical strengthening over Woodin's original definition of $\diacol, \diacolp$.
\begin{defn}
If $f$ witnesses $\diacol$ and $\PP$ is a forcing, we say that $\PP$ \textit{preserves} $f$\index{Forcing!preserves f@preserves $f$} if whenever $G$ is $\PP$-generic then $f$ witnesses $\diacol$ in $V[G]$.
\end{defn}
We remark that if $f$ witnesses $\diacolp$ then ``$\PP$ preserves $f$" still only means that $f$ witnesses $\diacol$ in $V^\PP$.\medskip

Next, we define a variant of stationary sets related to a witness of $\diacol$. Suppose $\theta\geq\omega_2$ is regular. Then $S\subseteq\omega_1$ is stationary iff for any club $\mathcal C\subseteq[H_\theta]^\omega$, there is some $X\in\mathcal C$ with $\delta^X\in S$. $f$-stationarity results from restricting to $f$-slim $X\prec H_\theta$ only.

\begin{defn}
Suppose $f$ witnesses $\diacol$.
\begin{enumerate}[label=\rn*]
\item A subset $S\subseteq\omega_1$ is $f$\textit{-stationary}\index{f stationary@$f$-stationary!in omega one@in $\omega_1$} iff whenever $\theta\geq\omega_2$ is regular and $\mathcal C\subseteq[H_\theta]^\omega$ is club then there is some $\fgood$ $X\in\mathcal C$ with $\delta^X\in S$.
\item A forcing $\PP$ \textit{preserves} $f$\textit{-stationary sets}\index{Forcing!preserves fstationary sets@preserves $f$-stationary sets} iff any $f$-stationary set is still $f$-stationary in $V^\PP$.
\end{enumerate}
\end{defn}

Note that all $f$-stationary sets are stationary, but the converse might fail. $f$-stationary sets are the correct replacement of stationary set in our context. \\

We mention a few basic facts about $\diacol$ and $\diacolp$ which are all essentially due to Woodin \cite{woodinbook}, although he did not use the notion of $f$-stationary sets explicitly.

\begin{prop}\label{fstationaryequivformulationprop}
Suppose $f$ guesses $\Col(\omega,\omega_1)$-filters. The following are equivalent for any set $S\subseteq\omega_1$:
\begin{enumerate}[label=\rn*]
\item\label{fstationaryequivformulationcond1} $S$ is $f$-stationary.
\item\label{fstationaryequivformulationcond2} Whenever $\langle D_\alpha\mid\alpha<\omega_1\rangle$ is a sequence of dense subsets of $\Col(\omega,\omega_1)$, the set 
$$\{\alpha\in S\mid \forall\beta<\alpha\ f(\alpha)\cap D_\beta\neq\emptyset\}$$
is stationary.
\end{enumerate}
\end{prop}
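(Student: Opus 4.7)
The plan is to prove the two implications separately, with the hard work concentrated in $(ii) \Rightarrow (i)$.

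For $(i) \Rightarrow (ii)$, assume $S$ is $f$-stationary and $\langle D_\alpha \mid \alpha<\omega_1\rangle$ is a sequence of dense subsets of $\Col(\omega,\omega_1)$. Given an arbitrary club $C \subseteq \omega_1$, pick a regular $\theta \geq \omega_2$ with $\langle D_\alpha\rangle, C, f, \Col(\omega,\omega_1), S \in H_\theta$, and apply $f$-stationarity to the club of countable $X \prec H_\theta$ containing these parameters. This yields an $f$-slim $X$ with $\delta^X \in S$. Since $C \in X$ is unbounded in $\omega_1$, $C \cap X$ is cofinal in $\delta^X$, so $\delta^X \in C$. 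For each $\beta<\delta^X$, $D_\beta \in X$, so $\pi_X^{-1}(D_\beta)$ is dense in $\pi_X^{-1}(\Col(\omega,\omega_1)) = \Col(\omega,\delta^X)^{M_X}$ in $M_X$; by genericity of $f(\delta^X)$ over $M_X$ there is some $p \in f(\delta^X) \cap \pi_X^{-1}(D_\beta)$. Because $p$ is a finite partial function $\omega \to \delta^X$ its collapse equals itself, so $p \in D_\beta$. Hence $\delta^X$ lies in the target set intersected with $C$.

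For $(ii) \Rightarrow (i)$, fix $\theta \geq \omega_2$ regular and a club $\mathcal C \subseteq [H_\theta]^\omega$; the idea is to encode the genericity needed for $f$-slimness into a well-chosen sequence of dense sets. Pick $\theta^* \gg \theta$ regular and build a continuous $\subseteq$-chain $\langle N_\xi \mid \xi<\omega_1\rangle$ of countable $N_\xi \prec H_{\theta^*}$ with $\mathcal C, f, S, \Col(\omega,\omega_1), \theta \in N_0$ and $\langle N_\eta \mid \eta\leq\xi\rangle \in N_{\xi+1}$. The set $C_0 = \{\xi \mid \delta^{N_\xi} = \xi\}$ is a club, and for $\xi \in C_0$ the substructure $X_\xi := N_\xi \cap H_\theta$ is countable, elementary in $H_\theta$, has $\delta^{X_\xi} = \xi$, and lies in $\mathcal C$ (the closure function defining $\mathcal C$ can be arranged to lie in $N_0$). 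For each $\xi$, enumerate as $\langle e^\xi_n \mid n<\omega\rangle$ the dense subsets of $\Col(\omega,\omega_1)$ in $N_\xi$, and define $D_{\omega\cdot\xi + n} := e^\xi_n$ (taking a trivial dense set otherwise). For the club $C_1 = C_0 \cap \{\alpha \mid \omega\cdot\alpha = \alpha\}$ and any limit $\alpha \in C_1$, continuity of the chain gives $N_\alpha = \bigcup_{\xi<\alpha} N_\xi$, so every dense $D \in N_\alpha$ appears as $D_\beta$ with $\beta = \omega\cdot\xi + n < \alpha$.

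Now apply hypothesis $(ii)$ to obtain a stationary $T = \{\alpha \in S \mid \forall\beta<\alpha\ f(\alpha) \cap D_\beta \neq \emptyset\}$, and pick $\alpha \in T \cap C_1$. Then $\alpha \in S$ and $X_\alpha \in \mathcal C$; for any dense $D \in M_{X_\alpha}$ in $\pi_{X_\alpha}^{-1}(\Col(\omega,\omega_1))$, the image $\pi_{X_\alpha}(D) \in X_\alpha \subseteq N_\alpha$ is dense in $\Col(\omega,\omega_1)$, hence equals some $D_\beta$ with $\beta<\alpha$. By $\alpha \in T$ there is $p \in f(\alpha) \cap D_\beta$, and $p$ being a finite partial function $\omega \to \alpha$ is fixed by $\pi_{X_\alpha}$, giving $p \in D$. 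Thus $f(\alpha)$ is $\Col(\omega,\omega_1)\cap\alpha$-generic over $M_{X_\alpha}$, so $X_\alpha$ is $f$-slim with $\delta^{X_\alpha} \in S$, witnessing $f$-stationarity of $S$. The main obstacle is the bookkeeping in the enumeration $D_{\omega\cdot\xi+n} = e^\xi_n$: the indexing together with continuity of $\langle N_\xi\rangle$ must ensure that every dense subset visible at stage $\alpha$ is already enumerated with index below $\alpha$, which is exactly what the closure under $\omega\cdot(-)$ provides.
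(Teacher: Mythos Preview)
The paper states this proposition without proof, attributing it (together with the neighboring facts) essentially to Woodin; so there is no paper proof to compare against. Your argument is correct and is the standard way to verify this equivalence. Both directions are handled cleanly: in $(i)\Rightarrow(ii)$ you correctly use that $D_\beta\in X$ for $\beta<\delta^X$ together with genericity of $f(\delta^X)$ over $M_X$ and the absoluteness of finite partial functions under the collapse; in $(ii)\Rightarrow(i)$ the continuous elementary chain plus the $\omega\cdot\xi+n$ bookkeeping is exactly what is needed to ensure every dense set visible in $N_\alpha$ already appears with index below $\alpha$. One small point worth making explicit: when you conclude $p\in D$ from $p\in\pi_{X_\alpha}(D)$, you are implicitly using that $p\in X_\alpha$ (so that elementarity applies) and $\pi_{X_\alpha}^{-1}(p)=p$; you mention the second fact but the first deserves a word, though it follows immediately since $p$ is a finite subset of $\omega\times\delta^{X_\alpha}\subseteq X_\alpha$.
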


\begin{prop}\label{diacolequivprop}
Suppose $f$ guesses $\Col(\omega,\omega_1)$-filters. The following are equivalent:
\begin{enumerate}[label=\rn*]
\item\label{diacolequivcond1} $f$ witnesses $\diacol$.
\item\label{diacolequivcond2} $S^f_b$ is $f$-stationary for all $b\in\Col(\omega,\omega_1)$.
\item\label{diacolequivcond3} For any $b\in\Col(\omega,\omega_1)$ and sequence $\langle D_\alpha\mid\alpha<\omega_1\rangle$ of dense subsets of $\Col(\omega,\omega_1)$, 
$$\{\alpha\in S^f_b\mid\forall\beta<\alpha\ f(\alpha)\cap D_\beta\neq\emptyset\}$$
is stationary.
\end{enumerate}
\end{prop}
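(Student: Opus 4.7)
The heart of the argument is just to unwind definitions together with Proposition \ref{fstationaryequivformulationprop}. The implication \ref{diacolequivcond2} $\Leftrightarrow$ \ref{diacolequivcond3} is immediate: apply Proposition \ref{fstationaryequivformulationprop} to the set $S = S^f_b$ for each $b\in\Col(\omega,\omega_1)$. So the real content is \ref{diacolequivcond1} $\Leftrightarrow$ \ref{diacolequivcond2}, which I plan to prove by two short, dual arguments using the observation that $b\in f(\delta^X)$ is literally the same as $\delta^X\in S^f_b$.

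For \ref{diacolequivcond1} $\Rightarrow$ \ref{diacolequivcond2}, suppose $f$ witnesses $\diacol$ and fix $b\in\Col(\omega,\omega_1)$. To show $S^f_b$ is $f$-stationary, fix a regular $\theta\geq\omega_2$ and an arbitrary club $\mathcal C\subseteq[H_\theta]^\omega$; by \ref{diacolequivcond1}, the set of $\fgood$ $X\prec H_\theta$ with $b\in f(\delta^X)$ is stationary in $[H_\theta]^\omega$, so it meets $\mathcal C$, yielding an $\fgood$ $X\in\mathcal C$ with $\delta^X\in S^f_b$, as required.

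For \ref{diacolequivcond2} $\Rightarrow$ \ref{diacolequivcond1}, suppose each $S^f_b$ is $f$-stationary. Fix $b\in\Col(\omega,\omega_1)$ and a regular $\theta\geq\omega_2$; I must show that
\[
T_b \coloneqq \{X\prec H_\theta\mid X\text{ is }\fgood\wedge b\in f(\delta^X)\}
\]
is stationary in $[H_\theta]^\omega$. Given any club $\mathcal C\subseteq[H_\theta]^\omega$, apply $f$-stationarity of $S^f_b$ to $\mathcal C$ to produce an $\fgood$ $X\in\mathcal C$ with $\delta^X\in S^f_b$, i.e.\ $b\in f(\delta^X)$. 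Then $X\in T_b\cap\mathcal C$, so $T_b$ meets every club and is therefore stationary.

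There is no genuine obstacle here; the one thing to be careful about is that the condition ``$\fgood$'' in the definition of $\diacol$ quantifies over \emph{all} regular $\theta\geq\omega_2$, and $f$-stationarity of a subset of $\omega_1$ is defined with a universal quantifier over $\theta$ as well, so the implications transfer uniformly in $\theta$ without extra work. The citation of Proposition \ref{fstationaryequivformulationprop} for \ref{diacolequivcond2} $\Leftrightarrow$ \ref{diacolequivcond3} is the only nontrivial input, and it has already been established.
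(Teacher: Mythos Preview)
Your proof is correct and follows exactly the same approach as the paper: the equivalence \ref{diacolequivcond2} $\Leftrightarrow$ \ref{diacolequivcond3} via Proposition \ref{fstationaryequivformulationprop}, and \ref{diacolequivcond1} $\Leftrightarrow$ \ref{diacolequivcond2} by unwinding definitions. The paper simply asserts that the latter equivalence ``follows from the definitions'' without spelling out the two directions as you have done.
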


\begin{proof}
The equivalence of \ref{diacolequivcond1} and \ref{diacolequivcond2} follows from the definitions. \ref{diacolequivcond2} and \ref{diacolequivcond3} are equivalent by the equivalent formulation of $f$-stationarity provided by Proposition \ref{fstationaryequivformulationprop}.
\end{proof}

We mention a handy corollary.

\begin{cor}\label{preservefcor}
Suppose $f$ witnesses $\diacol$. Any forcing preserving $f$-stationary sets preserves $f$.
\end{cor}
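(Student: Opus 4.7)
The plan is to reduce the corollary to a direct invocation of Proposition \ref{diacolequivprop}. Assume $f$ witnesses $\diacol$ in $V$ and let $\PP$ be a forcing that preserves $f$-stationary sets; the goal is to show that $f$ still witnesses $\diacol$ in $V^\PP$.

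First I would verify the mild absoluteness points needed to even formulate the conclusion. Every $f$-stationary set $S\subseteq\omega_1$ is in particular stationary in $\omega_1$ (given a club $E\subseteq\omega_1$, applying the definition with $\theta=\omega_2$ and the club of those $X\prec H_{\omega_2}$ with $\delta^X\in E$ yields some $\fgood$ $X$ with $\delta^X\in S\cap E$), so preservation of $f$-stationary sets forces $\omega_1$ to be preserved. Hence $\Col(\omega,\omega_1)^V=\Col(\omega,\omega_1)^{V^\PP}$, each $f(\alpha)$ remains a $\Col(\omega,\omega_1)\cap\alpha$-filter in $V^\PP$, so $f$ still guesses $\Col(\omega,\omega_1)$-filters there, and the sets $S^f_b$ have the same meaning in both models.

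With that in place, the main step is a two-line application of Proposition \ref{diacolequivprop}. In $V$, the implication \ref{diacolequivcond1}$\Rightarrow$\ref{diacolequivcond2} of that proposition gives that $S^f_b$ is $f$-stationary for every $b\in\Col(\omega,\omega_1)$. Since $\PP$ preserves $f$-stationary sets, each $S^f_b$ is still $f$-stationary in $V^\PP$. The converse implication \ref{diacolequivcond2}$\Rightarrow$\ref{diacolequivcond1} of the same proposition, now applied in $V^\PP$, then yields that $f$ witnesses $\diacol$ in $V^\PP$, which is precisely what it means for $\PP$ to preserve $f$.

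There is no substantive obstacle here — all the work has been front-loaded into Proposition \ref{diacolequivprop}, which reduces $\diacol$ to a statement about the $f$-stationarity of the specific sets $S^f_b$. The only thing one has to be careful about is the implicit preservation of $\omega_1$ discussed above, without which the hypothesis ``preserves $f$-stationary sets'' would not immediately transport the data defining $\diacol$ into $V^\PP$.
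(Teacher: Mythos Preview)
Your proof is correct and follows exactly the approach the paper intends: the corollary is stated immediately after Proposition~\ref{diacolequivprop} without an explicit proof, as a direct consequence of the equivalence \ref{diacolequivcond1}$\Leftrightarrow$\ref{diacolequivcond2} applied once in $V$ and once in $V^\PP$. Your care in checking that $\omega_1$ is preserved (so that ``$f$ guesses $\Col(\omega,\omega_1)$-filters'' and the sets $S^f_b$ retain their meaning in $V^\PP$) is appropriate and is the only point one might worry about.
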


\begin{prop}\label{diacolpequivprop}
Suppose $f$ guesses $\Col(\omega,\omega_1)$-filters. The following are equivalent:
\begin{enumerate}[label=\rn*]
\item\label{diacolpequivcond1} $f$ witnesses $\diacolp$.
\item\label{diacolpequivcond2} For any $b\in\Col(\omega,\omega_1)$, $S^f_b$ is stationary and all stationary sets are $f$-stationary.
\item\label{diacolpequivcond3} If $D$ is dense in $\Col(\omega,\omega_1)$ then 
$$\{\alpha<\omega_1\mid\ f(\alpha)\cap D\neq\emptyset\}$$
contains a club and for all $b\in\Col(\omega,\omega_1)$, $S^f_b$ is stationary.
\item\label{diacolpequivcond4} All countable $X\prec H_\theta$ with $f\in X$ and $\theta\geq \omega_2$ regular are $\fgood$ and moreover for all $b\in \Col(\omega,\omega_1)$, $S^f_b$ is stationary.
\end{enumerate}
\end{prop}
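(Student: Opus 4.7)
The plan is to prove the four conditions equivalent via the cycle $(i)\Rightarrow(iii)\Rightarrow(iv)\Rightarrow(i)$ together with $(i)\Leftrightarrow(ii)$, using Proposition \ref{fstationaryequivformulationprop} as the main technical tool. Since the clause ``$S^f_b$ is stationary for all $b$'' is common to all four conditions, only the first clauses require real work.

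For $(i)\Rightarrow(iii)$, I would fix a dense $D\subseteq\Col(\omega,\omega_1)$ and a large regular $\theta$ with $D\in H_\theta$. By $\diacolp$, a club of countable $X\prec H_\theta$ with $D,f\in X$ are $\fgood$. For any such $X$, the preimage $\pi_X^{-1}(D)$ is a dense subset of $\Col(\omega,\omega_1)\cap\delta^X$ in $M_X$, so it meets $f(\delta^X)$ by genericity; since conditions in $\Col(\omega,\omega_1)\cap X$ are fixed by $\pi_X^{-1}$, this actually yields $f(\delta^X)\cap D\neq\emptyset$. Projecting the club of such $X$ via $X\mapsto\delta^X$ then produces a club of $\omega_1$ contained in $\{\alpha<\omega_1 : f(\alpha)\cap D\neq\emptyset\}$.

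For $(iii)\Rightarrow(iv)$ I would verify the defining property of $\fgood$ directly: given a countable $X\prec H_\theta$ with $f\in X$ and a dense $E\in X$ of $\Col(\omega,\omega_1)$, the set $S_E=\{\alpha<\omega_1 : f(\alpha)\cap E\neq\emptyset\}$ is definable from $f$ and $E$ and so lies in $X$, and contains a club by $(iii)$; by elementarity $X$ contains some club $C\subseteq S_E$, and the standard fact that every club of $\omega_1$ lying in $X$ passes through $\delta^X$ yields $\delta^X\in S_E$, i.e., $f(\delta^X)\cap E\neq\emptyset$. The implication $(iv)\Rightarrow(i)$ is then immediate, since the countable $X\prec H_\theta$ with $f\in X$ already form a club in $[H_\theta]^\omega$.

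For the equivalence with $(ii)$: for $(i)\Rightarrow(ii)$, given a stationary $S\subseteq\omega_1$ and a club $\mathcal C\subseteq[H_\theta]^\omega$, intersecting $\mathcal C$ with the club of $\fgood$ structures and projecting via $X\mapsto\delta^X$ yields a club of $\omega_1$ meeting $S$, witnessing $f$-stationarity. Conversely, for $(ii)\Rightarrow(i)$ I would derive $(iii)$: if $\omega_1\setminus S_D$ were stationary for some dense $D\subseteq\Col(\omega,\omega_1)$, then by $(ii)$ it would be $f$-stationary, and applying Proposition \ref{fstationaryequivformulationprop} to the constant sequence $D_\alpha=D$ would force $\{\alpha\in\omega_1\setminus S_D : f(\alpha)\cap D\neq\emptyset\}$ to be stationary, which is absurd. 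No single step should be a real obstacle; the only subtlety worth flagging is the identification $\pi_X^{-1}(D)\simeq D\cap\delta^X$ in the first implication, which relies on the combinatorial fact that conditions in $\Col(\omega,\omega_1)$ lying in $X$ are hereditarily below $\delta^X$ and hence fixed by the collapse.
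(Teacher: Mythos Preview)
Your argument is correct. The paper does not actually supply a proof of this proposition: it is listed among the ``basic facts about $\diacol$ and $\diacolp$ which are all essentially due to Woodin'' and is stated without proof. Your cycle $(i)\Rightarrow(iii)\Rightarrow(iv)\Rightarrow(i)$ together with $(i)\Leftrightarrow(ii)$ is the natural route, and each step goes through as you describe. The one point you flag as subtle---that conditions of $\Col(\omega,\omega_1)$ lying in a countable $X\prec H_\theta$ are fixed by the transitive collapse---is indeed the only thing worth checking, and your justification is correct: any $p\in\Col(\omega,\omega_1)\cap X$ has $\ran(p)\subseteq\delta^X$, so $p\in\Col(\omega,\delta^X)\subseteq X$ is hereditarily in $X$, hence $\pi_X^{-1}(p)=p$. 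This also ensures $f(\delta^X)\subseteq X$, so that $f(\delta^X)\cap E=f(\delta^X)\cap E\cap X$ in your $(iii)\Rightarrow(iv)$ step.
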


We will now give a natural equivalent formulation of $\diacolp$.
Witnesses of $\diacolp$ are simply codes for regular embeddings\footnote{Regular embeddings, also known as complete embeddings, are embeddings between partial orders which preserve maximal antichains.} of $\Col(\omega,\omega_1)$ \mbox{into $\NS^+$}.

\begin{lemm}\label{diacomplemm}
The following are equivalent:
\begin{enumerate}[label=\rn*]
\item\label{diacompcond1} $\diacolp$.
\item\label{diacompcond2} There is a regular embedding $\eta\colon\Col(\omega,\omega_1)\rightarrow(\Pomo/\NS)^+$.
\end{enumerate}
\end{lemm}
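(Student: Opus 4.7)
The plan is to prove both directions directly using the equivalent formulations of $\diacolp$ in Proposition~\ref{diacolpequivprop}.

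For \ref{diacompcond1} $\Rightarrow$ \ref{diacompcond2}, given a witness $f$ of $\diacolp$, I would set $\eta(b) = [S^f_b]_{\NS}$. Each $S^f_b$ is stationary by $\diacolp$, so $\eta(b) \in (\Pomo/\NS)^+$. Monotonicity follows from upward closedness of the filters $f(\alpha)$, and preservation of incompatibility is automatic since $f(\alpha)$ cannot contain two incompatible elements. For regularity, given a maximal antichain $A \subseteq \Col(\omega,\omega_1)$, its downward closure $D_A = \{p \mid \exists b \in A\ p \leq b\}$ is dense, so by Proposition~\ref{diacolpequivprop}\ref{diacolpequivcond3} the set $\{\alpha < \omega_1 \mid f(\alpha) \cap D_A \neq \emptyset\}$ contains a club; upward closedness of $f(\alpha)$ then yields some $b \in A$ with $b \in f(\alpha)$, so $\bigcup_{b \in A} S^f_b$ contains a club, making $\{[S^f_b]_{\NS} \mid b \in A\}$ maximal in $(\Pomo/\NS)^+$.

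For the converse \ref{diacompcond2} $\Rightarrow$ \ref{diacompcond1}, I would fix a regular embedding $\eta$ and pick representatives $S_b \in \eta(b)$ for each $b$. After a standard normalization ensuring $\alpha \in S_b \Rightarrow b \in \Col(\omega,\omega_1) \cap \alpha$ under a fixed coding of $\Col(\omega,\omega_1)$ into $\omega_1$, I define
$$f(\alpha) = \{b \in \Col(\omega,\omega_1) \cap \alpha \mid \alpha \in S_b\}$$
whenever this is a filter, and $f(\alpha) = \emptyset$ otherwise. To verify $\diacolp$ via Proposition~\ref{diacolpequivprop}\ref{diacolpequivcond4}, the task reduces to showing that every countable $X \prec H_\theta$ in the club of ``nice'' substructures (where $\delta^X = X \cap \omega_1$ is an ordinal and hence $\Col(\omega,\omega_1) \cap \delta^X \subseteq X$) containing $\eta$ and the representative system is $\fgood$, together with stationarity of each $S^f_b$.

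Setting $\alpha = \delta^X$, the heart of the argument is to show that the right-hand side defining $f(\alpha)$ is both a filter and $\Col(\omega,\omega_1) \cap \alpha$-generic over $M_X$. Given $b, b'$ in this set---both automatically in $X$ by the niceness convention---if they were incompatible, then $S_b \cap S_{b'}$ would be nonstationary, so by elementarity $X$ contains a club disjoint from $S_b \cap S_{b'}$; but $\alpha$ lies in this club, contradicting $\alpha \in S_b \cap S_{b'}$. If $b, b'$ are compatible, I pick inside $X$ a maximal antichain $A$ of common refinements; regularity of $\eta$ gives $\bigcup_{c \in A} S_c = S_b \cap S_{b'}$ modulo $\NS$, and elementarity produces some $c \in A$ with $\alpha \in S_c$, yielding a common extension of $b, b'$ in $f(\alpha)$. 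The genericity argument runs analogously: for dense $\bar D \in M_X$, let $D = \pi_X(\bar D) \in X$, refine to a maximal antichain $A \in X$ with $A \subseteq D$, and obtain $c \in A \cap f(\alpha) \subseteq D$. Stationarity of $S^f_b$ follows from $S^f_b \supseteq S_b \cap E$ for the club $E$ of $f$-slim $\alpha$'s. The principal subtlety is the coding setup and identifying the correct club of $X$'s to which elementarity applies, both of which are standard.
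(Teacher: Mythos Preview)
The paper omits the proof of this lemma (the result is essentially due to Woodin). Your approach is the standard one and is correct in outline.

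Two minor points on the reverse direction. First, you do not explicitly check upward closure of $f(\alpha)$; this follows by the same elementarity device, since $b\leq b'$ gives $S_b\subseteq S_{b'}$ modulo a club that lies in $X$, and $\alpha=\delta^X$ belongs to that club. Second, the assertion that ``regularity of $\eta$ gives $\bigcup_{c\in A} S_c = S_b\cap S_{b'}$ modulo $\NS$'' is slightly off: the inclusion $\subseteq$ can fail when $\vert A\vert=\omega_1$, since the union of $\omega_1$-many nonstationary error sets need not be nonstationary. Fortunately you only use $\supseteq$, and that direction does follow from regularity once you check that $\{\eta(c):c\in A\}$ is predense below $[S_b\cap S_{b'}]_{\NS}$. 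To see this, extend $A$ to a maximal antichain $A'$ of $\Col(\omega,\omega_1)$; for $d\in A'\setminus A$ one verifies, using that compatible conditions in $\Col(\omega,\omega_1)$ have a greatest lower bound $b\cup b'$, that $d\perp b$ or $d\perp b'$, whence $\eta(d)\perp\eta(b)\wedge\eta(b')$. Regularity applied to $A'$ then yields predensity, and normality of $\NS$ turns this into the desired inclusion modulo a club in $X$. With these routine adjustments your argument goes through.
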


The argument above suggests the following definition.
\begin{defn}
Suppose $f$ witnesses $\diacol$. We define 
$$\eta_f\colon \Col(\omega,\omega_1)\rightarrow (\Pomo/\NS)^+$$
by $b\mapsto[S^f_b]_{\NS}$ and call $\eta_f$ the \textit{embedding associated to} $f$\index{etaf@$\eta_f$ (the embedding associated to $f$)}.
\end{defn}

\begin{defn}
Suppose $f$ witnesses $\diacol$. $\NSf$\index{NSf@$\NSf$ (the ideal of $f$-nonstationary sets)} is the ideal of $f$-nonstationary sets, that is 
$$\NSf=\{N\subseteq\omega_1\mid N\text{ is not }f\text{-stationary}\}.$$
\end{defn}

\begin{lemm}\label{nsfnormalideallemm}
Suppose $f$ witnesses $\diacol$. $\NSf$ is a normal uniform ideal.
\end{lemm}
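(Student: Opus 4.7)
The plan is to verify the defining properties of a normal uniform ideal: (a) $\omega_1 \notin \NSf$; (b) closure under subsets; (c) uniformity (all bounded sets lie in $\NSf$); and (d) normality (closure under diagonal unions).

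First I would observe the easy inclusion $\NS \subseteq \NSf$. Indeed, if $S \subseteq \omega_1$ is nonstationary, fix a club $C \subseteq \omega_1$ disjoint from $S$; then for any regular $\theta \geq \omega_2$ the set $\{X \prec H_\theta \mid \delta^X \in C\}$ is a club in $[H_\theta]^\omega$ which contains no $X$ with $\delta^X \in S$, so $S$ is not $f$-stationary. This inclusion immediately yields (b) (trivial from the definition), and (c), since every bounded subset of $\omega_1$ is nonstationary. For (a), by Proposition \ref{diacolequivprop} the set $S^f_{\1}$ is $f$-stationary, hence so is its superset $\omega_1$.

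The main step is normality. I would use the dense-set reformulation of $f$-stationarity from Proposition \ref{fstationaryequivformulationprop}. Let $\langle N_\alpha \mid \alpha < \omega_1\rangle$ be a sequence of $f$-nonstationary sets. For each $\alpha$, Proposition \ref{fstationaryequivformulationprop} yields a sequence $\langle D^\alpha_\beta \mid \beta < \omega_1\rangle$ of dense subsets of $\Col(\omega, \omega_1)$ such that
$$A_\alpha = \{\delta \in N_\alpha \mid \forall \beta < \delta\ \ f(\delta) \cap D^\alpha_\beta \neq \emptyset\}$$
is nonstationary. Fix a bijection $\pi \colon \omega_1 \to \omega_1 \times \omega_1$ with the property that $\{\delta < \omega_1 \mid \pi \res \delta \text{ is a bijection } \delta \to \delta \times \delta\}$ contains a club $C^*$. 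Writing $\pi(\xi) = (\alpha_\xi, \beta_\xi)$, set $E_\xi = D^{\alpha_\xi}_{\beta_\xi}$.

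To conclude $\nabla_\alpha N_\alpha \in \NSf$, by Proposition \ref{fstationaryequivformulationprop} it suffices to show that
$$T = \{\delta \in \nabla_\alpha N_\alpha \mid \forall \xi < \delta\ \ f(\delta) \cap E_\xi \neq \emptyset\}$$
is nonstationary. For $\delta \in T \cap C^*$, the closure of $\pi$ under $\delta$ means $f(\delta) \cap D^\alpha_\beta \neq \emptyset$ for all $\alpha, \beta < \delta$; picking the witness $\alpha < \delta$ with $\delta \in N_\alpha$ (which exists as $\delta \in \nabla_\alpha N_\alpha$) shows $\delta \in A_\alpha$, and hence $\delta \in \nabla_\alpha A_\alpha$. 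Thus $T \subseteq \nabla_\alpha A_\alpha \cup (\omega_1 \setminus C^*)$, which is nonstationary by normality of $\NS$ applied to $\langle A_\alpha \mid \alpha < \omega_1\rangle$. The main obstacle is purely organizational, namely coding a double-indexed family of dense sets into a single $\omega_1$-sequence so that the diagonalization collapses to the standard normality of $\NS$; no further ingredient beyond Proposition \ref{fstationaryequivformulationprop} is needed.
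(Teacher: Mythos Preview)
Your proof is correct. The paper states this lemma without proof, treating it as a basic fact essentially due to Woodin, so there is no argument to compare against. Your approach via Proposition~\ref{fstationaryequivformulationprop}---reducing normality of $\NSf$ to normality of $\NS$ by coding the double-indexed family of dense sets into a single $\omega_1$-sequence---is the natural and standard route.
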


To each witness $f$ of $\diacol$, one can associate a version of semiproperness.

\begin{defn}
\begin{enumerate}[label=\rn*]
\item Let $\theta$ be a sufficiently large regular cardinal and $X\prec H_\theta$ $\fgood$ with $\PP\in X$. A condition $q\in\PP$ is $(X, \Col(\omega,\omega_1), f)$-semigeneric if $q$ is $(X, \Col(\omega,\omega_1)$-semigeneric and
$$q\forces``\check X[\dot G]\text{ is }\fgood"$$
\item $\PP$ is $f$-semiproper if for any sufficiently large regular $\theta$ and any $\fgood$ $X\prec H_\theta$ with $\PP\in X$ as well as all $p\in\PP\cap X$ there is $q\leq p$ that is $(X, \PP, f)$-semigeneric.
\end{enumerate}
\end{defn}

An $f$-semiproper forcing $\PP$ need not preserve stationary sets, however it will preserve $f$-stationary sets as $f$-stationary sets and hence $f$ will still witness $\diacol$ in $V^\PP$.

However, just as for semiproperness, $f$-semiproper forcings can be iterated.

\begin{thm}[Lietz, \cite{lietzIteration}]\label{fsemiproperiterationtheoremotherpaper}
Suppose $f$ witnesses $\diacol$. Any nice iteration of $f$-semiproper forcings is $f$-semiproper.
\end{thm}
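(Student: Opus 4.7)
The approach is to induct on the length $\gamma$ of the nice iteration $\PP = \langle \PP_\alpha, \Q_\beta \mid \alpha \leq \gamma, \beta < \gamma \rangle$, following the template of Miyamoto's iteration theorem for semiproper forcing with nice support, while tracking the extra $\fgood$ requirement throughout. The inductive statement is: for any sufficiently large regular $\theta$, any $\fgood$ $X \prec H_\theta$ with $\PP \in X$, and any $p \in \PP \cap X$, there exists $q \leq p$ in $\PP$ that is $(X, \PP, f)$-semigeneric. The successor case $\gamma = \alpha + 1$ is essentially routine: by induction find $q_0 \leq p \res \alpha$ that is $(X, \PP_\alpha, f)$-semigeneric, so $q_0 \forces X[\dot G_\alpha]$ is $\fgood$; then in $V^{\PP_\alpha}$ apply $f$-semiproperness of the iterand to extend by an $(X[G_\alpha], \Q_\alpha, f)$-semigeneric $\dot r \leq p(\alpha)$, and concatenate to $q = q_0 \cc \dot r$. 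Limits of uncountable cofinality reduce directly to the inductive hypothesis via the nice-support definition, since every condition is essentially supported in some $\PP_\beta$ with $\beta < \gamma$.

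The heart of the argument is limits of countable cofinality. Choose an increasing sequence $\gamma_0 < \gamma_1 < \dots$ cofinal in $\sup(X \cap \gamma)$, and a $\sq$-increasing sequence $X_0 \sq X_1 \sq \dots$ of $\fgood$ elementary substructures with $\bigcup_n X_n = X$, each containing all relevant parameters together with the previously constructed data. Build a descending sequence $p = q_{-1} \geq q_0 \geq q_1 \geq \dots$ with $q_n \in \PP_{\gamma_n}$ and each $q_n$ being $(X_n, \PP_{\gamma_n}, f)$-semigeneric by the inductive hypothesis, arranging the supports so they amalgamate into a nice support. Miyamoto's framework then yields a common extension $q \in \PP$ of the sequence.

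The main obstacle is verifying that this $q$ is itself $(X, \PP, f)$-semigeneric. That $q \forces \omega_1^V \cap X[\dot G] = \delta^X$ is the standard fusion content of Miyamoto's argument. The new ingredient is showing $q \forces f(\delta^X)$ is $\Col(\omega, \omega_1) \cap \delta^X$-generic over $M_{X[\dot G]}$. Given a name $\dot D \in X$ for a dense subset of $\Col(\omega, \omega_1) \cap \delta^X$, the countability of $X$ together with the cofinality of the $\gamma_n$ in $\sup(X \cap \gamma)$ allow one to arrange $\dot D \in X_n$ as (essentially) a $\PP_{\gamma_n}$-name for some $n$; then the $(X_n, \PP_{\gamma_n}, f)$-semigenericity of $q_n$ forces $f(\delta^X)$ to meet the realization of $\dot D$ in $M_{X_n[\dot G_{\gamma_n}]} \subseteq M_{X[\dot G]}$. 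The real difficulty is the combinatorial bookkeeping: coordinating $\langle X_n \mid n<\omega \rangle$, $\langle q_n \mid n<\omega \rangle$, and the supports so that every relevant name appearing in $X$ is captured at a finite stage and so that the supports legally combine into a nice support. This is exactly what Miyamoto's nice-support iteration is designed to handle, now carried out while maintaining the $\fgood$ witness at every stage via the choice of each $X_n$ being $\fgood$ and each $q_n$ being $(X_n, \PP_{\gamma_n}, f)$-semigeneric rather than merely semigeneric.
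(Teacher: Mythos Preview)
The paper does not contain a proof of this theorem. It is stated with attribution to the author's separate work \cite{lietzIteration} and used as a black box throughout; there is no argument here to compare your proposal against.

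Your outline follows the expected template---adapt Miyamoto's nice-support iteration theorem for semiproper forcing by carrying the additional $\fgood$ condition through the induction---and this is almost certainly the approach taken in the cited paper. One small remark: you do not need to separately arrange that each $X_n$ is $\fgood$; as the paper notes in the footnote to the definition of $\fgood$, if $X_n \sqsubseteq X$ and $X$ is $\fgood$ then $X_n$ is automatically $\fgood$. The genuine content, as you correctly identify, is the fusion bookkeeping at countable-cofinality limits, and that is exactly what Miyamoto's nice-support machinery is built to handle; the $f$-version runs in parallel because the extra constraint ``$f(\delta^X)$ is generic over $M_{X[\dot G]}$'' is preserved under $\sqsubseteq$-extensions of $X$ and can be verified one dense set at a time.
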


We refer to \cite{miyamotosemiproper} for the definition of nice iterations. For all intents and purposes, nice iterations can be replaced by RCS iterations here.
\section{A Forcing Axiom That Implies ``$\NS$ Is $\omega_1$-Dense"}\label{qmmsection}

We formulate a forcing axiom that implies $\stargen{\Qmax}$. We go on and show that it can be forced from a supercompact limit of supercompact cardinals.

\subsection{$\mathrm{Q}$-Maximum}

\begin{defn}
$\mathrm{Q}$-Maximum\index{Q Maximum@$\mathrm{Q}$-Maximum}, denoted $\QMM$, holds if there is a witness $f$ of $\diacol$ and $\FA(\Gamma)$ holds where 
$$\Gamma=\{\PP\mid\PP\text{ preserves }f\}=\{\PP\mid \forall p\in\Col(\omega,\omega_1)\ S^f_p\in (\NSf^+)^{V^\PP}\}.$$
\end{defn}

 We remark that the consistency of $\QMM$ is a subtle matter, for example any ``$++$"-version of $\QM$ would be inconsistent. It is however relevant to our purposes.

\begin{lemm}\label{NSdensefromQMlemm}
If $f$ witnesses $\QM$ then $\eta_f$ is a dense embedding. In particular, $\NS$ is $\omega_1$-dense.
\end{lemm}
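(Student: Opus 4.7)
The plan is to apply $\FA(\Gamma)$ to a forcing $\PP_T$ tailored to each stationary $T\subseteq\omega_1$, in order to produce $b\in\Col(\omega,\omega_1)$ with $S^f_b\setminus T\in\NS$. Producing such a $b$ for every stationary $T$ establishes density of $\eta_f$, and then the ``in particular'' clause is automatic since $|\Col(\omega,\omega_1)|=\omega_1$, so $\{[S^f_b]_\NS : b\in\Col(\omega,\omega_1)\}$ is a dense subset of $(\Pomo/\NS)^+$ of size $\omega_1$.

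I would define $\PP_T$ to consist of pairs $(b,c)$ where $b\in\Col(\omega,\omega_1)$ is nontrivial (say $|b|\geq 1$) and $c\subseteq\omega_1$ is closed bounded with $c\cap S^f_b\subseteq T$, ordered by $(b',c')\leq(b,c)$ iff $b'\leq b$ in $\Col(\omega,\omega_1)$, $c$ is an initial segment of $c'$, and the promise $c'\cap S^f_b\subseteq T$ is maintained. The natural dense sets to consider are $D_\alpha=\{(b,c)\in\PP_T : \max c\geq\alpha\}$ for $\alpha<\omega_1$; each is dense because stationarity of $T$ makes $(\omega_1\setminus S^f_b)\cup T$ cofinal in $\omega_1$, so given $(b,c)$ one may always append a suitable $\beta\geq\max(\alpha,\max c+1)$ lying outside $S^f_b\setminus T$.

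The hard part will be showing $\PP_T\in\Gamma$, i.e.\ that $\PP_T$ preserves $f$. The difficulty is that a naive club-shooting through $\omega_1\setminus(S^f_b\setminus T)$ would collapse $\omega_1$ whenever $S^f_b\setminus T$ is stationary; $\PP_T$ circumvents this by allowing stronger conditions to dynamically strengthen the $b$-component (thereby shrinking $S^f_b$) while building the club. I would establish $f$-semiproperness and invoke Corollary \ref{preservefcor}: for any $\fgood$ $X\prec H_\theta$ with $\PP_T,T,f\in X$ and any $r\in\PP_T\cap X$, construct an $(X,\PP_T,f)$-semigeneric extension $s\leq r$ by using the $(\Col(\omega,\omega_1)\cap\delta^X)$-genericity of $f(\delta^X)$ over $M_X$ to thread the $b$-coordinate through a decreasing sequence meeting the dense subsets of $\PP_T$ in $X$, while appending $\delta^X$ to the $c$-coordinate; the $f$-stationarity of each $S^f_p$ for $p\in\Col(\omega,\omega_1)$ provides enough $X$ with $\delta^X\in S^f_p$ to witness preservation.

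Granting $\PP_T\in\Gamma$, applying $\FA(\Gamma)$ to $\PP_T$ and $\{D_\alpha : \alpha<\omega_1\}$ yields a filter $G\subseteq\PP_T$ in $V$ meeting each $D_\alpha$. Set $C=\bigcup\{c : \exists b\,(b,c)\in G\}$; then $C$ is unbounded by the $D_\alpha$'s and closed by the end-extension order together with compatibility in $G$ (a limit $\alpha$ of points $\alpha_n\in C$ lies in the $c$-coordinate of a common extension in $G$ of the relevant conditions, and such a $c$ is closed bounded). Picking any $b$ with $(b,c_b)\in G$, for $\alpha\in C\cap S^f_b$ take $(b',c')\in G$ with $\alpha\in c'$ and a common extension $(b'',c'')\in G$ of $(b,c_b)$ and $(b',c')$; the ordering forces $c''\cap S^f_b\subseteq T$, and $\alpha\in c''\cap S^f_b$, hence $\alpha\in T$. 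Therefore $C\cap S^f_b\subseteq T$, so $S^f_b\setminus T\subseteq\omega_1\setminus C\in\NS$, establishing $[S^f_b]_\NS\leq[T]_\NS$ and completing the argument.
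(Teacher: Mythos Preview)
Your forcing $\PP_T$ has a genuine structural problem: the order you define is not transitive. From $(b'',c'')\leq(b',c')$ you get $c''\cap S^f_{b'}\subseteq T$, and from $(b',c')\leq(b,c)$ you get $c'\cap S^f_b\subseteq T$; but since $b'\leq b$ gives $S^f_{b'}\subseteq S^f_b$, there is no reason for $c''\cap S^f_b\subseteq T$ to hold. Your endgame argument (pick any $b$ in $G$, use common extensions) relies precisely on this missing transitivity. One could try to repair this by recording a finite set of promises, but then the $f$-semiproperness sketch runs into a second obstacle: to meet the dense sets of a countable $X$ you will need to strengthen the $b$-coordinate infinitely often (e.g.\ the sets $\{(b,c):\lvert b\rvert>n\}$ are dense), yet conditions in $\Col(\omega,\omega_1)$ are finite, so there is no condition below the whole sequence. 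Your sketch does not address how to close off the construction.

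The paper sidesteps all of this by arguing by contradiction rather than directly. Suppose $S$ is stationary and $S^f_p\nsubseteq S\bmod\NS$ for every $p$; then $S^f_b\cap(\omega_1\setminus S)$ is stationary for every $b$. Now one forces with the \emph{simple} club-shooting $\PP$ through $\omega_1\setminus S$ (no moving $b$-coordinate). The extra hypothesis, together with the observation that $\QM$ implies $f$ witnesses $\diacolp$ (so stationary $=$ $f$-stationary), ensures that for each $b$ there are $f$-stationarily many $\fgood$ $X$ with $\delta^X\in S^f_b\cap(\omega_1\setminus S)$; one then builds a descending sequence in $\PP\cap X$ generic for $M_X[f(\delta^X)]$ and appends $\delta^X$, which is legal since $\delta^X\in\omega_1\setminus S$. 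This verifies directly that $S^f_b$ stays $f$-stationary in $V^\PP$, so $\PP\in\Gamma$, and $\FA(\{\PP\})$ produces a club disjoint from $S$ --- contradiction. The moral: the contrapositive hypothesis is exactly what makes the naive club-shooting $f$-preserving, so there is no need for a dynamic $b$.
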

\begin{proof}
Suppose $S\subseteq\omega_1$ is so that 
$$S^f_p\nsubseteq S\mod\NS$$
for all $p\in\Col(\omega,\omega_1)$. Let $\PP$ be the canonical forcing that shoots a club through $T\coloneqq \omega_1- S$. That is $p\in\PP$ iff $p\subseteq T$ is closed and bounded and $p\leq q$ iff $q$ is an initial segment of $p$. 
\begin{claim}
$\PP$ preserves $f$.
\end{claim}
\begin{proof}
Let $b\in\Col(\omega,\omega_1)$, we have to show that $S^f_b$ is $f$-stationary in $V^\PP$. Let $p\in\PP$, $\dot C$ a $\PP$-name for a club and $\langle \dot D_i\mid i<\omega_1\rangle$ a sequence of $\PP$-names for dense subsets of $\Col(\omega,\omega_1)$. We will find $q\leq p$ with 
\begin{equation*}\label{NSdensefromQMlemmtag}\tag{$q$}
q\forces\exists\xi\in\dot C\cap S^{\check f}_{\check b}\ \forall i<\xi\ \check f(\xi)\cap \dot D_i\neq\emptyset.
\end{equation*}
Let $\theta$ be large and regular. Note that $\fMM$ holds and hence $f$ witnesses $\diacolp$. As $T\cap S^f_b$ is stationary, $T\cap S^f_b$ is $f$-stationary and we can find some $X\prec H_\theta$ with
\begin{enumerate}[label=$(X.\roman*)$]
\item $X$ is $\fgood$,
\item $\PP, p, \dot C,\langle \dot D_i\mid i<\omega_1\rangle\in X$ and
\item $\delta^X\in T\cap S^f_b$.
\end{enumerate}

Now find a decreasing sequence $\langle p_n\mid n<\omega\rangle$ with 
\begin{enumerate}[label=$(\vec p.\roman*)$]
\item $p_0=p$,
\item $\forall n<\omega\ p_n\in\PP\cap X$ and
\item for all $D\in M_X[f(\delta^X)]$ dense in $\pi_X^{_1}(\PP)$, there is $n<\omega$ with $p_n\in \pi_X[D]$.
\end{enumerate}
Set $q=\bigcup_{n<\omega}p_n\cup\{\delta^X\}$ and note that $q\in\PP$ as $\delta^X\in T$. It is clear that $q$ is $(X, \PP, f)$-semigeneric so that if $G$ is $\PP$-generic with $q\in G$ then 
$$\forall i<\delta^X=\delta^{X[G]}\ f(\delta^X)\cap \dot D_i^G\neq\emptyset$$
as well as $\delta^{X}\in \dot C^G\cap S^f_b$. Thus $q$ indeed satisfies (\ref{NSdensefromQMlemmtag}).
\end{proof}
Thus $\FA(\{\PP\})$ holds. This implies that if $G$ is $\PP$-generic then 
$$(\Htwo;\in)^V\prec_{\Sigma_1}(\Htwo;\in)^{V[G]}$$
and as $T$ contains a club in $V[G]$, this must already be true in $V$. This means $S$ is nonstationary which is what we had to show.
\end{proof}

We will prove eventually that $\QM$ can be forced from large cardinals.

\begin{thm}
Suppose there is a supercompact limit of supercompact cardinals. Then $\QM$ holds in a  forcing extension by stationary set preserving forcing.
\end{thm}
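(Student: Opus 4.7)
The plan is to perform a ``Q-iteration'' $\langle \PP_\alpha, \Q_\beta \mid \alpha \leq \kappa, \beta < \kappa \rangle$ of length $\kappa$, where $\kappa$ is supercompact and a limit of supercompact cardinals, guided by a Laver function $\ell \colon \kappa \to V_\kappa$. At an early stage we force $\diacolp$ together with a distinguished witness $f$, and thereafter every iterand is required to preserve both $f$ and the principle $\diacolp$ for $f$. By Proposition \ref{diacolpequivprop}, preserving $\diacolp$ will ensure stationary sets coincide with $f$-stationary sets, so that $f$-semiproper iterands are in particular semiproper and the whole iteration is stationary set preserving.

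At stage $\alpha$, one interprets $\ell(\alpha)$ as a $\PP_\alpha$-name for a pair $(\dot\PP, \langle \dot D_i \mid i < \omega_1\rangle)$ consisting of a forcing and $\omega_1$-many dense subsets. If $\dot\PP$ is forced to preserve $f$, then $\Q_\alpha$ is chosen to be an $f$-semiproper forcing whose generic contains a filter for $\dot\PP$ meeting each $\dot D_i$; otherwise $\Q_\alpha$ is trivial. Because each iterand is $f$-semiproper, Theorem \ref{fsemiproperiterationtheoremotherpaper}, or an analog tailored for Q-iterations, applies, so that the full iteration $\PP_\kappa$ is $f$-semiproper, stationary set preserving, $\omega_1$-preserving, and $f$-preserving. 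In particular $f$ remains a witness of $\diacol$ in $V[G_\kappa]$, and by Lemma \ref{diacomplemm} this yields the embedding $\eta_f$ in the final model.

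The chief obstacle is two-fold. First, one must show that an arbitrary $f$-preserving forcing $\dot\PP$ can be caught by an $f$-semiproper iterand whose generic still encodes a filter meeting the prescribed dense sets, much as the semiproperisation trick handles arbitrary stationary set preserving forcings in the classical construction of $\MM$; this is presumably the content of the notion of a Q-iteration. Second, supercompactness must be deployed carefully enough to ensure every $f$-preserving pair $(\dot\PP, \langle \dot D_i \mid i < \omega_1\rangle) \in V^{\PP_\kappa}$ is caught. This is accomplished via a standard reflection argument using an embedding $j \colon V \to M$ with $\crit(j) = \kappa$ and $M^\lambda \subseteq M$ for $\lambda$ sufficiently large, arranged so that $j(\ell)(\kappa)$ names the given pair. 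One must also respect the inconsistency of any ``$++$''-strengthening noted after the definition of $\QM$, so that the bookkeeping delivers precisely $\FA(\Gamma)$ and no more.
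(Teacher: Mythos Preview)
Your plan has a genuine gap: the ``semiproperisation trick'' you invoke does not exist here, and it is not what a $Q$-iteration does. The classical argument for $\MM$ replaces an arbitrary stationary-set-preserving forcing by a semiproper one yielding the same filter, but there is no analogous replacement of an arbitrary $f$-preserving forcing by an $f$-semiproper one. The paper's $Q$-iteration works by a completely different mechanism: at every successor step one forces with something that \emph{freezes $\NS$ along $f$} (Lemma~\ref{freezinglemm}, using the $\diamondsuit$-$\Wstar$-forcing $\PPdia$ for $\Qmaxm$), so that every old $f$-stationary set contains some $S^f_p$ modulo $\NS$. This guarantees that at limit stages condition \ref{itm:harshrestriction} of Theorem~\ref{iterationtheoremotherpaper} is automatically satisfied by \emph{any} $f$-preserving forcing, and one may then use the Laver function to feed in arbitrary $f$-preserving forcings directly at limit steps. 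Those forcings are not $f$-semiproper in general, and the iteration is not $f$-semiproper; Theorem~\ref{fsemiproperiterationtheoremotherpaper} is not the engine here --- Theorem~\ref{Qiterationthm} is.

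Your approach to stationary-set preservation via maintaining $\diacolp$ throughout also does not match the paper and would require further justification you do not supply. The paper's route is instead: first use the bottom supercompact to arrange that all stationary-set-preserving forcings are semiproper; then force with $\Col(\omega_1,2^{\omega_1})$ and use Lemma~\ref{nicediacolwitnesslemm} to choose a witness $f$ of $\diacol$ with the property that every stationary set from the original ground model $V$ contains some $S^f_p$. Any subsequent $f$-preserving forcing then preserves all stationary sets from $V$, so the composite $\Col(\omega_1,2^{\omega_1})\ast\dot\PP_1$ is stationary-set-preserving (hence semiproper) over $V$, even though the $Q$-iteration $\PP_1$ is only $f$-preserving over the intermediate model. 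You are missing both the freezing step and this careful choice of $f$.
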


\subsection{$\mathrm{Q}$-iterations}
Our strategy to force $\QM$, or ``$\NS$ is $\omega_1$-dense" for that matter has to make use of an iteration theorem that allows us to iterate essentially arbitrary $f$-preserving forcings for a witness $f$ of $\diacol$ so that $f$ is preserved. We have proven in \cite{lietzIteration} a more general version of the following theorem.

\begin{thm}[Lietz, \cite{lietzIteration}]\label{iterationtheoremotherpaper}
    Suppose $f$ witnesses $\diacol$ and $\PP=\standarditeration$ is a nice iteration of $f$-preserving forcings. Suppose that 
    \begin{enumerate}[label=$(\PP.\roman*)$]
        \item\label{itm:srprestriction} $\forces_{\PP_{\alpha+2}}\SRP$ for all $\alpha+2\leq\gamma$ and
        \item\label{itm:harshrestriction} $\forces_{\PP_\alpha}``\Q_{\alpha}\text{ preserves }f\text{-stationary sets from } \bigcup_{\beta<\alpha}V[\dot G_\beta]$.
    \end{enumerate}
    Then $\PP$ preserves $f$.
\end{thm}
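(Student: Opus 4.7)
The plan is to proceed by induction on $\gamma$. By Corollary \ref{preservefcor} together with the equivalence in Proposition \ref{diacolequivprop}, it suffices to show that for every $b\in\Col(\omega,\omega_1)$ the set $S^f_b$ remains $f$-stationary in $V^\PP$, which via Proposition \ref{fstationaryequivformulationprop} amounts to: given $p_0\in\PP$, a $\PP$-name $\dot{\mathcal C}$ for a club in $[H_\theta^{V^\PP}]^\omega$, and $\PP$-names $\langle\dot D_\alpha\mid\alpha<\omega_1\rangle$ for dense subsets of $\Col(\omega,\omega_1)$, one can find $p\leq p_0$ and an $\fgood$ $X\prec H_\lambda^V$ (for large $\lambda$) with $\delta:=\delta^X\in S^f_b$, containing the relevant parameters, such that $p\forces X[\dot G]\in\dot{\mathcal C}$ and $f(\delta)\cap\dot D_\alpha\neq\emptyset$ for all $\alpha<\delta$. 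In effect we aim at an $(X,\PP,f)$-semigeneric condition for these specific data.

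The successor case $\gamma=\alpha+1$ is immediate: by induction $\PP_\alpha$ preserves $f$, so $S^f_b$ is $f$-stationary in $V[G_\alpha]$, and hypothesis \ref{itm:harshrestriction} then guarantees that $\Q_\alpha$ preserves $f$-stationarity of sets from $V[G_\alpha]$, so $S^f_b$ remains $f$-stationary in $V[G_{\alpha+1}]$.

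For a limit $\gamma$, first use $f$-stationarity of $S^f_b$ in $V$ to pick the $\fgood$ $X\prec H_\lambda^V$ with $\delta^X\in S^f_b$ containing $\PP, f, p_0, \dot{\mathcal C}$, and $\langle\dot D_\alpha\mid\alpha<\omega_1\rangle$. Since $X$ is countable, $X\cap\gamma$ has order type at most $\omega$; enumerate a cofinal sequence $\langle\gamma_n\mid n<\omega\rangle$ in $\sup(X\cap\gamma)$. Working inside $X$, recursively construct a descending chain $p_0\geq p_1\geq\ldots$ with $p_n\in\PP_{\gamma_n}\cap X$ such that each $p_{n+1}\res\gamma_n$ extends $p_n$ and meets the $n$th listed dense subset of $\pi_X^{-1}(\PP_{\gamma_n})$ from $M_X[f(\delta)]$ as well as the $n$th name in a diagonal enumeration of the $\dot D_\alpha$ for $\alpha<\delta$; the inductive hypothesis applied at $\gamma_n$ makes each step work, and membership in $\dot{\mathcal C}$ is arranged by an additional density argument inside $X$. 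Finally, take the lower bound $p\in\PP_\gamma$ via Miyamoto's nicelim operation provided by the nice support structure, padding trivially on $[\sup(X\cap\gamma),\gamma)$ when $\cof(\gamma)\geq\omega_1$.

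The main obstacle is verifying that the limit condition $p$ actually lies in $\PP_\gamma$ and forces both $X[\dot G]\in\dot{\mathcal C}$ and the $\fgood$ness of $X[\dot G]$, including the delicate fact that $f(\delta)$ remains generic over $M_{X[\dot G]}$ for $\Col(\omega,\omega_1)\cap\delta$. This is where hypothesis \ref{itm:srprestriction} is essential: at intermediate stages $\alpha<\gamma$ of uncountable cofinality in the iteration, $\SRP$ forced at stage $\alpha+2$, together with the induction hypothesis at $\alpha$, is used to reflect the relevant combinatorial data to a stationary set in $V[G_\alpha]$, allowing the gluing step to preserve $f$-stationarity through the limit. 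The overall argument parallels the Shelah--Miyamoto RCS/nice iteration theorem for semiproper forcing, systematically replacing stationary by $f$-stationary and semigeneric by $(X,\PP,f)$-semigeneric, with hypothesis \ref{itm:harshrestriction} substituting for $f$-semiproperness of the factors and $\SRP$ handling the otherwise problematic limits of uncountable cofinality.
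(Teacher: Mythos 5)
The paper does not actually prove Theorem~\ref{iterationtheoremotherpaper} here; it is cited as a result established in \cite{lietzIteration}, so there is no internal argument to compare against. Assessing your sketch on its own merits, though, there is a genuine gap in the limit case.

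Your argument is modelled on the Shelah--Miyamoto iteration template for (nice/RCS) iterations of semiproper forcings, and you say explicitly that hypothesis~\ref{itm:harshrestriction} is ``substituting for $f$-semiproperness of the factors.'' But it cannot play that role. In the Shelah-style chain construction, the step from $p_n$ to $p_{n+1}$ crucially uses semiproperness of the tail $\PP_{\gamma_n,\gamma_{n+1}}$ to keep $\delta^{X[\dot G_{\gamma_{n+1}}]}=\delta^X$; this is what lets the limit condition be semigeneric. Hypothesis~\ref{itm:harshrestriction} is a preservation statement about $f$-stationary sets from earlier stages and gives you no handle at all on the critical ordinal $\delta^{X[\dot G]}$ of a particular countable structure $X$. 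Likewise, ``the inductive hypothesis applied at $\gamma_n$ makes each step work'' does not follow: the inductive hypothesis is that $\PP_{\gamma_n}$ preserves $f$, which is strictly weaker than $f$-semiproperness of $\PP_{\gamma_n}$, and preservation by itself does not supply $(X,\PP_{\gamma_n},f)$-semigeneric extensions. In effect you are trying to prove the \emph{stronger} conclusion that $\PP$ is $f$-semiproper (by manufacturing an $f$-semigeneric condition below a given $p_0$), whereas the theorem only asserts preservation of $f$, and the hypotheses are calibrated for that weaker conclusion.

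The invocation of $\SRP$ is also only gestured at. Saying that $\SRP$ at stage $\alpha+2$ is ``used to reflect the relevant combinatorial data to a stationary set in $V[G_\alpha]$'' names a hope, not a mechanism; you would need to specify which set is being reflected, in which model, and how the reflected object feeds back into controlling the chain through a limit of uncountable cofinality. Without that, the uncountable-cofinality limits --- exactly where semiproper iteration theorems are hard --- remain unaddressed. To repair the argument you would either need to show that the hypotheses secretly imply $f$-semiproperness of the relevant tails (they do not in general), or you would need a genuinely different strategy that works directly with $f$-stationarity of $S^f_b$ via $\SRP$-style reflection and hypothesis~\ref{itm:harshrestriction}, rather than via semigeneric conditions.
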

The immediate problem is that \ref{itm:harshrestriction} puts an undesired additional requirement on the forcings we want to iterate. Luckily, there is a small trick to still get away with this: Note that an $f$-preserving forcing must preserve the $f$-stationarity of the set $S^f_b$ for $b\in\Col(\omega,\omega_1)$. Suppose that at all successor steps, we arrange that any $f$-stationary set from the previous extension contains some $S^f_b$ modulo a non-stationary set. Now at a limit step, suddenly every $f$-preserving forcing will satisfy requirement \ref{itm:harshrestriction}.  As \ref{itm:srprestriction} does not ask anything of us at limit steps either, we are free to use any $f$-preserving forcing we desire at limit steps.
\begin{defn}
Suppose $f$ witnesses $\diacol$. We say that a forcing $\PP$ \textit{freezes} \index{Forcing!freezes NS along f@freezes $\NS$ along $f$}$\NS$ along $f$ if for any $\PP$-generic $G$ we have
\begin{enumerate}[label=\rn*]
 \item $f$ witnesses $\diacol$ in $V[G]$ and
 \item for any $S\in\mathcal P(\omega_1)\cap V$, we either have $S\in \NS^{V[G]}$ or there is $p\in\Col(\omega,\omega_1)$ with $S^f_p\subseteq S\mod\NS^{V[G]}$.
 \end{enumerate} 
\end{defn}

 We hope to have motivated the following definition.

\begin{defn}
Suppose $f$ witnesses $\diacol$. A $Q$\textit{-iteration}\index{Q iteration@$\mathrm{Q}$-iteration} (w.r.t. $f$) is a nice iteration $\PP=\langle \PP_\alpha,\Q_\beta\mid\alpha\leq\gamma, \beta<\gamma\rangle$ which satisfies
\begin{enumerate}[label=\rn*]
\item $\forces_{\PP_{\alpha}}``\Q_\alpha\text{ preserves }f"$,
\item\label{Qiterationddaggercond} $\forces_{\PP_{\alpha+1}}(\ddagger)$ and
\item if $\alpha+1<\gamma$ then $\forces_{\PP_{\alpha+1}}``\Q_{\alpha+1}\text{ freezes }\NS\text{ along }f"$
\end{enumerate}
for all $\alpha<\gamma$.
\end{defn}

As an immediate consequence of Theorem \ref{iterationtheoremotherpaper}, we get the following ``iteration theorem".

\begin{thm}\label{Qiterationthm}
Suppose $f$ witnesses $\diacol$. All $Q$-iterations (w.r.t. $f$) preserve $f$.
\end{thm}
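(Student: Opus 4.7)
The plan is to derive this as an immediate corollary of Theorem \ref{iterationtheoremotherpaper}: given a $Q$-iteration $\PP=\standarditeration$ with respect to $f$, I would verify the two hypotheses $(\PP.i)$ and $(\PP.ii)$ of that theorem from the three defining clauses of a $Q$-iteration.

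Hypothesis $(\PP.i)$, namely $\forces_{\PP_{\alpha+2}}\SRP$ for all $\alpha+2\leq\gamma$, is immediate from clause \ref{Qiterationddaggercond}: applying $\forces_{\PP_{\beta+1}}(\ddagger)$ at index $\beta=\alpha+1<\gamma$ yields $\forces_{\PP_{\alpha+2}}(\ddagger)$, and the standing implication $(\ddagger)\Rightarrow\SRP$ closes this half.

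The heart of the argument is hypothesis $(\PP.ii)$, which the freezing clause of a $Q$-iteration is tailored to deliver, exactly as the motivating paragraph preceding the definition of $Q$-iteration explains. Fix $\alpha<\gamma$ and let $S\in V[G_\beta]$ for some $\beta<\alpha$ be $f$-stationary in $V[G_\alpha]$; the task is to show $S$ remains $f$-stationary in $V[G_{\alpha+1}]$. I would select a successor ordinal $\delta$ with $\beta\leq\delta$ and $\delta+1\leq\alpha$, so that clause (iii) of $Q$-iteration gives that $\Q_\delta$ freezes $\NS$ along $f$: hence in $V[G_{\delta+1}]$ the freezing dichotomy applies to $S$, namely either $S$ is non-stationary, or $S^f_p\subseteq S\mod\NS$ for some $p\in\Col(\omega,\omega_1)$. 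The first alternative is excluded because $S$ is stationary in $V[G_\alpha]$ and non-stationarity is preserved upwards; hence $S\supseteq S^f_p$ modulo $\NS^{V[G_\alpha]}$. Now $\Q_\alpha$ preserves $f$ by clause (i), so $S^f_p$ is $f$-stationary in $V[G_{\alpha+1}]$, while the inclusion modulo $\NS$ again persists upward. Since $f$-stationarity is closed under supersets modulo $\NS$ (any club in $[H_\theta^{V[G_{\alpha+1}]}]^\omega$ can be thinned to avoid the height of the nonstationary exceptional set), $S$ is $f$-stationary in $V[G_{\alpha+1}]$, as required.

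The main obstacle I anticipate is the small-$\alpha$ edge cases in which no intermediate successor stage is available to carry a freezing step: concretely, $\beta=0,\alpha=1$, and more generally $\alpha$ equal to the immediate successor of a limit or of $0$. For these I would substitute a direct application of $(\ddagger)$ inside $V[G_{\alpha+1}]$ itself, which holds by clause \ref{Qiterationddaggercond}, for the missing freezing conclusion; $(\ddagger)$ should entail the appropriate ``every $f$-stationary set contains some $S^f_p$ modulo $\NS$'' principle needed to replay the remainder of the above argument verbatim. A careful inductive setup on $\alpha$ — so that partial versions of $f$-preservation are available at intermediate stages — should make the bookkeeping between the three clauses of a $Q$-iteration go through cleanly.
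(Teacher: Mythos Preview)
Your approach matches the paper's: it states this theorem is an immediate consequence of Theorem~\ref{iterationtheoremotherpaper}, and you are supplying the verification of hypotheses $(\PP.i)$ and $(\PP.ii)$. Your derivation of $(\PP.i)$ and your main argument for $(\PP.ii)$ via the freezing clause are correct and align with the motivating paragraph preceding the definition of $Q$-iteration.

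There is a slip in your edge-case handling. You propose to invoke $(\ddagger)$ in $V[G_{\alpha+1}]$, but that is precisely the model in which you are trying to \emph{establish} that $S$ is $f$-stationary, so appealing to a principle of the form ``every $f$-stationary set contains some $S^f_p$ modulo $\NS$'' there would be circular. The correct stage is $V[G_\alpha]$: in each edge case $\alpha$ is a successor, so clause~\ref{Qiterationddaggercond} applied at index $\alpha-1$ gives $(\ddagger)$ in $V[G_\alpha]$; if $(\ddagger)$ there yields $S^f_p\subseteq S\bmod\NS$ for the $f$-stationary $S$, then since $\Q_\alpha$ preserves $f$, your remaining argument goes through verbatim. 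Your description of the edge cases should also be sharpened: the obstruction arises only when $\alpha=\alpha'+1$ with $\alpha'$ a limit or $0$ \emph{and} $\beta=\alpha'$; for $\beta<\alpha'$ the choice $\delta=\beta+1$ still works. Finally, since $(\ddagger)$ is not defined in the present paper (it lives in \cite{lietzIteration}), your ``should entail'' is unavoidable here, but be aware that whether the edge cases close depends on exactly what $(\ddagger)$ asserts.
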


Provided we find enough forcings which allow us to continue a $Q$-iteration up to a supercompact cardinal, we are able to force $\QM$. To be precise, we will prove the following two lemmas.

\begin{lemm}\label{freezinglemm}
Suppose $f$ witnesses $\diacol$, there is a Woodin cardinal and $V$ is closed under $X\mapsto M_1^\sharp(X)$. Then there is a $f$-preserving forcing which freezes $\NS$ along $f$.
\end{lemm}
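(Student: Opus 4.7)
The plan is to define a forcing $\PP$ whose conditions are pairs $(c,\sigma)$, where $c\subseteq\omega_1$ is a countable closed set with a maximum element and $\sigma$ is a countable partial function from $(\mathcal P(\omega_1))^V$ to $\Col(\omega,\omega_1)\cup\{\star\}$, subject to the coherence requirements that for every $S\in\dom(\sigma)$: if $\sigma(S)=\star$ then $\sup(c\cap S)<\max(c)$, and if $\sigma(S)=p$ then $\sup(c\cap(S^f_p\setminus S))<\max(c)$. The order extends $c$ as an end-extension and $\sigma$ as a function. A generic $G$ yields a club $C=\bigcup\{c\mid (c,\sigma)\in G\}\subseteq\omega_1$ and, by density, a total map $\sigma_G$ defined on every ground-model subset of $\omega_1$. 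If $\sigma_G(S)=\star$ then $C\cap S$ is bounded below every limit of $C$, so $S\in\NS^{V[G]}$; if $\sigma_G(S)=p$ then $C\cap(S^f_p\setminus S)$ is likewise bounded, giving $S^f_p\subseteq S\bmod\NS^{V[G]}$. Either outcome supplies the freezing clause.

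This naive version has an obvious obstruction: nothing prevents a condition from setting $\sigma(S)=\star$ for an $f$-stationary ground-model $S$, which would kill $f$. I would remedy this by adding an iterability side-condition permitting $\sigma(S)=\star$ only when a countable iterable $M_1^\sharp$-like premouse $\mathcal N$ over $(f,S)$ certifies $S$ to be non-$f$-stationary (via a canonical generic $\omega_1$-iteration of $\mathcal N$ inside $V$), and permitting $\sigma(S)=p$ only when a similar mouse certifies $S^f_p\setminus S$ to be non-$f$-stationary. The assumed closure of $V$ under $X\mapsto M_1^\sharp(X)$ ensures such certificates always exist: for each $S$, exactly one of ``$S$ is $f$-stationary" or ``some $p$ satisfies $S^f_p\subseteq S\bmod\NSf$'' holds, and $M_1^\sharp(f,S)$ produces the relevant witness canonically via a $\Sigma^1_3$-absoluteness argument. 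This yields density of conditions deciding each ground-model subset.

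The main step is to show that $\PP$ preserves $f$. By Corollary \ref{preservefcor} it suffices to show that $S^f_b$ remains $f$-stationary in $V[G]$ for every $b\in\Col(\omega,\omega_1)$. Given $(c_0,\sigma_0)\in\PP$, a $\PP$-name $\dot{\mathcal C}$ for a club in $[H_\theta]^\omega$, and a sequence $\langle \dot D_i\mid i<\omega_1\rangle$ of $\PP$-names for dense subsets of $\Col(\omega,\omega_1)$, pick an $\fgood$ $X\prec H_\theta$ containing all relevant parameters with $\delta^X\in S^f_b$, which is possible because $f$ witnesses $\diacol$. Working in $M_X[f(\delta^X)]$, build a descending chain $(c_n,\sigma_n)\in X\cap\PP$ meeting every dense subset of $\bar{\PP}=\pi_X^{-1}(\PP)$ lying in $M_X[f(\delta^X)]$, and set $c_\omega=\bigcup_n c_n\cup\{\delta^X\}$ and $\sigma_\omega=\bigcup_n\sigma_n$. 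The crucial verification is that $(c_\omega,\sigma_\omega)\in\PP$: for each $S\in\dom(\sigma_\omega)$ with $\sigma_\omega(S)=\star$ one needs $\delta^X\notin S$, and for each $S$ with $\sigma_\omega(S)=p\in f(\delta^X)$ one needs $\delta^X\in S$. Both follow by pushing the certifying mouse $\mathcal N$ for $S$ through $\pi_X$, absorbing $f(\delta^X)$, and reading off the answer at the image of $\omega_1^{\mathcal N}$, which equals $\delta^X$.

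The principal obstacle is precisely this last verification: producing mice whose certifications are absolute enough to survive the transition to $M_X$ and then to absorb the $\Col(\omega,\omega_1)\cap\delta^X$-generic filter $f(\delta^X)$ over $M_X$. This is where the closure under $X\mapsto M_1^\sharp(X)$ becomes essential, supplying the one-Woodin-cardinal generic absoluteness needed to carry out a $\Qmax$-style comparison of $\pi_X^{-1}(\mathcal N)$ with a sufficiently iterated version of $\mathcal N$ inside $V$. Once the iteration is in place, diagonalization against the $\dot D_i$'s inside $M_X[f(\delta^X)]$ is routine and yields an extension $(c_\omega,\sigma_\omega)$ forcing $\delta^X\in\dot{\mathcal C}\cap S^f_b$ together with $f(\delta^X)\cap\dot D_i\neq\emptyset$ for each $i<\delta^X$, which suffices to conclude that $S^f_b$ is $f$-stationary in $V[G]$.
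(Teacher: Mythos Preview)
Your approach has a fundamental gap at the density step. For the forcing to decide every ground-model $S$, you need that for each such $S$ either a certificate of non-$f$-stationarity exists, or a certificate that $S^f_p\setminus S$ is non-$f$-stationary for some $p$ exists. Under your reading these certificates track truth in $V$, so you are asserting that every $f$-stationary $S$ already contains some $S^f_p$ modulo $\NSf$. But this is precisely the conclusion the lemma is meant to force, not a hypothesis: under $\diacol$ alone there typically are $f$-stationary sets $S$ with $S^f_p\setminus S$ $f$-stationary for every $p$ (equivalently, $\eta_f$ is not dense into $(\Pomo/\NSf)^+$). For such $S$ your side-conditions forbid both $\sigma(S)=\star$ and every $\sigma(S)=p$, so no condition ever decides $S$ and the freezing clause is never established. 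Closure under $M_1^\sharp$ and $\Sigma^1_3$-absoluteness give you correctness about which sets are $f$-stationary in $V$; they cannot manufacture the missing dichotomy.

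The paper proceeds completely differently. First the Woodin cardinal is spent on an $f$-semiproper iteration making $\NS$ saturated and upgrading $f$ to a witness of $\diacolp$, so that $(\Htwo,\NS,f)$ becomes almost a $\Qmaxm$-condition. Then one forces with $\PPdia(\Qmaxm,f,\Qmaxm)$, the $\diamondsuit$-$\Wstar$-forcing of Theorem~\ref{forcingexistencelemm} for the $\Pmaxvariant$ $\Qmaxm$. This produces a $\diamondsuit$-iteration $q_0\to q_{\omega_1}$ together with an inner iteration landing on $(\Htwo,\NS,f)^V$. The freezing clause is wired into the order $<_{\Qmaxm}$ itself (condition \ref{Qmaxmorderkillcond}), and its realizability is Lemma~\ref{Qmaxmanalysislemm}; preservation of $f$ comes from Lemma~\ref{diamondsuititerationdiablemm} applied to the $\diamondsuit$-iteration. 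The closure under $M_1^\sharp$ is used only to check that $\Qmaxm$ has no minimal conditions in the relevant extensions, so that the $\PPdia$ machinery applies.
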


\begin{lemm}
    Suppose $f$ witnesses $\diacol$ and there is a supercompact cardinal. Then there is an $f$-preserving forcing $\PP$ with $V^\PP\models\SRP$.
\end{lemm}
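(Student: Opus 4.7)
The plan is to reduce this to the (already developed) machinery for forcing $\fMM$ from a supercompact cardinal. Since the introduction promises that $\SRP$ is a consequence of $\fMM$, it suffices to force $\fMM$ in an $f$-preserving way; the resulting model will then satisfy $\SRP$.

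Concretely, let $\kappa$ denote the supercompact cardinal and fix a Laver function $\ell \colon \kappa \to V_\kappa$. I would build a nice iteration
$$\PP = \langle \PP_\alpha, \dot\Q_\beta \mid \alpha \leq \kappa,\ \beta < \kappa \rangle$$
of $f$-semiproper forcings of size ${<}\kappa$ in the standard Baumgartner-style fashion: at stage $\alpha$, if $\ell(\alpha)$ is a $\PP_\alpha$-name which is forced to denote an $f$-semiproper forcing of size ${<}\kappa$, set $\dot\Q_\alpha = \ell(\alpha)$; otherwise take $\dot\Q_\alpha$ trivial. By the iteration theorem for $f$-semiproper forcings (Theorem \ref{fsemiproperiterationtheoremotherpaper}), $\PP$ itself is $f$-semiproper. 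In particular $\PP$ preserves $f$-stationary sets, and hence $\PP$ preserves $f$ by Corollary \ref{preservefcor}.

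To see that $V^\PP \models \fMM$, I would run the usual Laver-function argument: given any $f$-semiproper $\dot\Q \in V^\PP$ and a family of $\aleph_1$ dense sets, apply a supercompactness embedding $j \colon V \to M$ with $\crit(j) = \kappa$ chosen so that $j(\ell)(\kappa)$ codes (a name for) $\dot\Q$. Then $\PP * \dot\Q$ sits as an initial segment of $j(\PP)$, and genericity of the tail produces the required partial master filter. From the preceding section of the paper, $\fMM$ implies $\SRP$, so $V^\PP \models \SRP$ as required.

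The main obstacle is the last implication $\fMM \Rightarrow \SRP$ — this is where the $f$-context subtly differs from the classical case. What $\fMM$ directly yields are reflection statements involving $f$-stationary sets, not arbitrary stationary ones. The bridge is that $\fMM$ forces $f$ to witness $\diacolp$ (as invoked in the proof of Lemma \ref{NSdensefromQMlemm}), and by Proposition \ref{diacolpequivprop} this collapses the distinction between stationary and $f$-stationary. Together with the fact that the natural semiproper forcings that reflect a stationary set $S \subseteq [\lambda]^\omega$ can be verified to be $f$-semiproper in this enriched setting, one obtains the full $\SRP$ in $V^\PP$. Both of these ingredients are carried out in the earlier sections developing $\fMM$ and its structural consequences.
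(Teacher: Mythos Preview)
Your argument is correct, and it rests on the same key observation as the paper's: the natural forcings that realize single instances of $\SRP$ are not merely semiproper but $f$-semiproper, so the $f$-semiproper iteration theorem (Theorem~\ref{fsemiproperiterationtheoremotherpaper}) applies. The difference is one of packaging. The paper proceeds directly: it runs the standard $\SRP$-iteration (bookkeeping through all instances of $\SRP$), observes each step is $f$-semiproper, and appeals to the iteration theorem --- no detour through $\fMM$, and no need to verify $\fMM\Rightarrow\SRP$ as a separate implication. You instead force the full axiom $\fMM$ via a Laver-guided iteration and then deduce $\SRP$; this works, but the deduction $\fMM\Rightarrow\SRP$ unpacks to exactly the fact the paper uses up front (the $\SRP$-forcings are $f$-semiproper, hence $\fMM$ produces the required filters). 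So your route is a harmless over-shoot: you obtain a stronger conclusion ($\fMM$) than needed, at the cost of a slightly longer chain of implications, while the paper trims the argument to the minimum required for $\SRP$ alone.
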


We can show this right away.

\begin{proof}
    The same construction which forces $\SRP$ via semiproper forcing from a supercompact cardinal can be used. A small change in the proof gives that any forcing for an instance of $\SRP$ is not only semiproper, but also $f$-semiproper. Now use Theorem \ref{fsemiproperiterationtheoremotherpaper} instead of Shelah's iteration theorem for semiproper forcings and do a nice iteration instead of a RCS iteration.
\end{proof}

We will eventually prove Lemma \ref{freezinglemm} in the next section. The basic idea is to use a version of the \aspsch\ $\Wstar$-forcing with $\Pmax$ replaced by $\Qmax$. However, we will run into a number of problems we need to solve first.

\section{Blueprints for Instances of ``$\MMpp\Rightarrow\Wstar$"}\label{mmimpliesstarsection}

We modify the $\Wstar$-forcing method of \aspsch\ in a way that allows us to prove a variety of instances of $\MMpp\Rightarrow\Wstar$, though our main interest lies in Lemma \ref{freezinglemm}.

\begin{defn}
Let $\PP\in \LofR$ be a forcing. $\stargen{\PP}$\index{(*)@$\Wstar$!P@$\PP$-} asserts that $\AD$ holds in $\LofR$ and there is a filter $g\subseteq\PP$ with 
\begin{enumerate}[label=\rn*]
\item $g$ is $\PP$-generic over $\LofR$ and
\item $\mathcal P(\omega_1)\subseteq \LofR[g]$.
\end{enumerate}
\end{defn}
$\Wstar$\index{(*)@$\Wstar$} is $\stargen{\Pmax}$. $\Pmax$ is the most prominent of a number of similar forcing notions defined and analyzed by Woodin in \cite{woodinbook}. A central notion to all of them is that of a generically iterable structure.

\begin{defn}
Suppose the following holds:
\begin{enumerate}[label=$(M.\roman*)$]
\item $(M;\in, I)$ is a countable transitive model of (sufficiently much of) $\ZFC$ where $I$ is allowed as a class parameter in the schemes.
\item $(M;\in, I)\models\text{``}I$ is a normal uniform ideal on $\omega_1$".
\item $a_0,\dots, a_n\in M$.
\end{enumerate}
In this case, we call $(M, I, a_0,\dots, a_n)$ a \textit{potentially iterable structure}\index{Potentially iterable structure}.
A \textit{generic iteration} \index{Potentially iterable structure!generic iteration of} of $(M, I, a_0,\dots, a_n)$ is a sequence 
$$\langle (M_\alpha, I_\alpha, a_{0,\alpha}, \dots, a_{n, \alpha}), \mu_{\alpha,\beta}\mid\alpha\leq\beta\leq\gamma\rangle$$
with 
\begin{itemize}
\item $(M_0, I_0)=(M, I)$,
\item $a_{i,\alpha}=\mu_{0,\alpha}(a_i)$ for $i\leq n$,
\item $\mu_{\alpha, \alpha+1}\colon (M_\alpha;\in, I_\alpha)\rightarrow (M_{\alpha+1};\in, I_{\alpha+1})$ is a generic ultrapower of $M_\alpha$ w.r.t $I_\alpha$ and
\item if $\alpha\in\Lim$ then 
$$\langle (M_\alpha;\in, I_\alpha), \mu_{\beta,\alpha}\mid\beta<\alpha\rangle=\varinjlim\langle (M_\beta;\in, I_\beta), M_{\beta,\xi}\mid\beta\leq\xi<\alpha\rangle$$
\end{itemize}
for all $\alpha\leq\gamma$. $(M, I, a_0,\dots, a_n)$ is a \index{Potentially iterable structure!generically iterable} generically iterable structure if all (countable) generic iterations of $(M, I, a_0,\dots, a_n)$ produce wellfounded models. Note that this only depends on $(M, I)$ and that we do not require $I\in M$. 
\end{defn}

\begin{rem}
A generic iteration $\langle (M_\alpha, I_\alpha, a_{0,\alpha},\dots, a_{n,\alpha}), \mu_{\alpha,\beta}\mid\alpha\leq\beta\leq\gamma\rangle$ can be read off from the final map $\mu_{0,\gamma}\colon M_0\rightarrow M_\gamma$, so we will frequently identify one with the other. We also reserve the right to call generic iterations simply iterations.
\end{rem}

\begin{defn}
$\Pmax$-conditions\index{Pmax@$\Pmax$} are generically iterable structures $(M, I, a)$ with $a\in\Pomo^M$ and $M\models \omega_1^{L[a]}=\omega_1$. $\Pmax$ is ordered by $q=(N,J, b)<_{\Pmax} p$ iff there is a generic iteration 
$$\mu\colon p\rightarrow p^\ast=(M^\ast, I^\ast, a^\ast)$$
of length $\omega_1^q+1$ in $q$ so that 
\begin{enumerate}[label=$(<_{\Pmax}.\roman*)$]
\item $I^\ast= J\cap M^\ast$ and
\item $a^\ast=b$.
\end{enumerate}
\end{defn}

There are a number of ways this definition can be varied, leading to different partial orders. We will work with such variants in a general context.

\subsection{$\Pmaxvariants$ and the $\Vmax$-multiverse view}

\begin{defn}
A $\Pmaxvariant$\index{Pmax@$\Pmax$!Variation@-\variant} is a nonempty projective preorder \hbox{$(\Vmax,\leq_{\Vmax})$} with the following properties:

\begin{enumerate}[label=$(\Vmax.\roman*)$]
\item Conditions in $\Vmax$ are generically iterable structures $(M, I, a_0,\dots, a_n)$ for some fixed $n=n^{\Vmax}$\footnote{Of course, not all structures of this form are necessarily conditions.}.
\item\label{Vmaxvarphi} There is a first order formula $\varphi^{\Vmax}$ in the language\footnote{When dealing with $\Pmaxvariant$s, we stick to the convention that capitalized symbols are unary predicates symbols which are lower case are constants.} $\{\in, \dot I,\dot a_0,\dots ,\dot a_n\}$ so that $q=(N, J, b_0,\dots b_n)<_{\Vmax}(M, I, a_0,\dots a_n)$ iff there is a generic iteration 
$$j\colon p\rightarrow p^\ast=(M^\ast, I^\ast, a_0^\ast, \dots, a_n^\ast)$$
in $N$ of length $\omega_1^N+1$ with
$$(N;\in, J, b_0,\dots, b_n)\models \varphi^{\Vmax}(p^\ast).$$
\item\label{Vmaxordertransitem} If $\mu\colon p\rightarrow p^\ast$ witnesses $q<_{\Vmax} p$ and $\sigma\colon q\rightarrow q^\ast$ witnesses $r<_{\Vmax} q$ then $\sigma(\mu)\colon p\rightarrow\sigma(p^\ast)$ witnesses $r<_{\Vmax} p$. 
\item\label{Vmaxiteriesareconds} Suppose $(M, I)$ is generically iterable, $j\colon (M, I)\rightarrow (M^\ast, I^\ast)$ is a generic iteration of $(M, I)$ of countable length and $a_0,\dots a_n\in M$. Then 
$$(M, I, a_0,\dots, a_n)\in\Vmax\Leftrightarrow(M^\ast, I^\ast, j(a_0),\dots, j(a_n))\in\Vmax.$$
\item\label{Vmaxnomin} $\Vmax$ has no minimal conditions.
\end{enumerate}
\end{defn}
We always consider $\Pmaxvariants$ as a class defined by a projective formula, rather then the set itself. So if we mention $\Vmax$ in, e.g.~a forcing extension of $V$, then we mean the evaluation of the projective formula in that model\footnote{In practice this extension will be projectively absolute so it does not matter which projective formula we choose. Also all the variations we consider will have a $\Pi^1_2$-definition.}.
\begin{rem}
Typically, $\varphi^{\Vmax}$ dictates e.g.~one or more of the following:
\begin{itemize}
\item $a_0^\ast=b_0,\dots, a_n^\ast=b_n$.
\item $I^\ast=J\cap M^\ast$.
\item Some first order property is absolute between $M^\ast$ and $N$.
\end{itemize}
\end{rem}

We want to relate forcing axioms to star axioms of the form $\stargen{\Vmax}$ for $\Pmaxvariant$s $\Vmax$. To explain this relationship heuristically we present the $\Vmax$-Multiverse View: \\
Suppose $\Vmax$ is a $\Pmaxvariant$ (with $n^{\Vmax}=0$ for convenience) and
\begin{itemize}
\item $V=(V_\kappa)^{\mathcal V}$ for some large cardinal $\kappa$ in some larger model $\mathcal V$ and
\item there are a proper class of Woodin cardinals both in $V$ and $\mathcal V$.
\end{itemize}
We will take the point of view of $\mathcal V^{\Col(\omega,\kappa)}$. Note that our assumptions imply generic projective absoluteness (and more) in $\mathcal V$, in particular $\Vmax$ is a $\Pmaxvariant$ also in $\mathcal V^{\Col(\omega,\kappa)}$ and $\Vmax^{W}=\Vmax\cap W$ for any generic extension of $V$. Pick some $\vec A=(A_0,\dots, A_{n^{\Vmax}})\in\Htwo^{V}$. Let $\mathcal M(V)$ denote the closure of $V$ under generic extensions and grounds containing $\vec A$. Points $W\in\mathcal M(V)$ may be considered as $\Vmax$-conditions if 
$$(W, \NS^W, A_0,\dots, A_{n^{\Vmax}})\in\Vmax.$$
In this case we identify $W$ with this condition. In practice, this can only reasonably hold if $\omega_1^W=\omega_1^V$ so we make this an explicit condition. The $\Vmax$-multiverse of $V$ (w.r.t. $\vec A$) is
$$\mathcal M_{\Vmax}(V)=\{W\in\mathcal M(V)\mid W\in\Vmax\wedge \omega_1^W=\omega_1^V\}.$$
If we $\vec A$ picked with sufficient care then $\mathcal M_{\Vmax}(V)$ should be nonempty.
If $W[G]$ is a generic extension of $W$, both in $\mathcal M_{\Vmax}(V)$, then it is a good extension if
$$W[G]\leq_{\Vmax} W.$$
Here, $p\leq_{\Vmax} q$ means $p\forces_{\Vmax}\check q\in\dot G$. The existence of a proper class of Woodin cardinals in $V$ should guarantee that $\mathcal M_{\Vmax}(V)$ reversely ordered by good extensions is ``as rich as" $\Vmax$.\\
In this sense, iterated forcing along good extensions corresponds to building descending sequences in $\Vmax$.  In practice, $\Pmaxvariant$s are $\sigma$-closed. From this point of view, $\sigma$-closure of $\Vmax$ becomes roughly equivalent to a forcing iteration theorem: If 
$$\langle W[G_\alpha]\mid\alpha<\gamma\rangle$$
is a chain of good extensions $W[G_\alpha]\subseteq W[G_\beta]$ of points 
$$W[G_\alpha],W[G_\beta]\in\mathcal M_{\Vmax}(V),\ \alpha\leq\beta<\gamma\in V$$ then this constitutes a countable decreasing chain\footnote{Note that the size of $\gamma$ in $V$ does not matter here.} in $\Vmax$ in $\mathcal V^{\Col(\omega,\kappa)}$. $\sigma$-closure of $\Vmax$ suggests that there should be a further point 
$$W[G_\gamma]\in \mathcal M_{\Vmax}(V)$$
below all $W[G_\alpha]$, $\alpha<\gamma$. Thus the ``forcing iteration along \hbox{$\langle W[G_\alpha]\mid\alpha<\gamma\rangle$"} preserves $\omega_1$ and enough structure to be able to be extended to a $\Vmax$-condition below all $W[G_\alpha]$ without collapsing $\omega_1$.\\
We should be able to find points satisfying $\stargen{\Vmax}$ by constructing ``closure points" $W\in\mathcal M_{\Vmax}(V)$ of sufficiently generic $\leq_{\Vmax}$-decreasing sequences 
$$\langle W_\alpha\mid\alpha<\gamma\rangle$$
in $\mathcal M_{\Vmax}(V)$. To make that precise, we want:
\begin{equation*}\label{multiverseeq}\tag{$\bigstar$}
\text{If }D\in\LofR^W\text{ is dense open in }\Vmax^W\text{ then }W_\alpha\in D^\ast\text{ for some }\alpha<\gamma.
\end{equation*}
Here, $D^\ast$ is the reinterpretation of the universally Baire $D$ in $\mathcal V^{\Col(\omega,\kappa)}$.
The degree of closure of $W\in\mathcal M_{\Vmax}(V)$ under this procedure is measured by
$$g^{W}=\{p\in \Vmax\mid W<_{\Vmax} p\}$$
which should be a filter if $W$ is ``sufficiently closed". $g^{W}$ can be defined in $W$ via 
$$g^{W}=\{p\in\Vmax\mid\exists \mu\colon p\rightarrow p^\ast\text{ of length }\omega_1+1\text{ with } \varphi^{\Vmax}(p^\ast)\}^{W}$$
if $\Vmax$ has unique iterations.

\begin{defn}
$\Vmax$ has unique iterations\index{Pmax@$\Pmax$!Variation@-\variant!has unique Iterations} if whenever $q<_{\Vmax} p$ then there is a unique generic iteration of $p$ witnessing this.
\end{defn}

Under reasonable assumptions, (\ref{multiverseeq}) implies that $g^W$ is generic over $\LofR^W$. Finally, an additional property\footnote{Often, simply $(\neg\CH)^W$ is enough. Woodin \cite{woodinnote} (see also \cite{ralfnotchnote}) has shown that if $\AD^{\LofR}$ holds, there is a filter $g\subseteq\Pmax$ generic over $\LofR$ and $\CH$ fails then $g$ witnesses $\Wstar$.} like $W\models``\NS\text{ is saturated}"$ should imply $\Pomo^W\subseteq\LofR^W[g^W]$.\\
Taking a step back, forcing a forcing axiom related to good extensions via iterated forcing looks like it should produce such sequences $\langle W_\alpha\mid\alpha<\gamma\rangle$ with (\ref{multiverseeq}) and $\NS$ saturated in $W$, so $\stargen{\Vmax}$ should follow from such a forcing axiom.\\
On the other hand, $W$ looks like an endpoint of an iteration liberally incorporating forcings leading to good extensions: For $\alpha<\gamma$, if $D\in\LofR^{W_\alpha}$ is dense open in $\Vmax^{W_\alpha}$ then $D^\ast$ is dense open in the full $\Vmax$. $D^\ast$ can also be considered as a dense subset of $\mathcal M_{\Vmax}(V)$. As $D^\ast\cap \Vmax^W\in\LofR^W$, by (\ref{multiverseeq}), there will be some later $\alpha\leq\beta<\gamma$ with $W_\beta\in D^\ast$. Thus one might expect a forcing axiom to hold at $W$. This suggest that $\Vmax$ should in fact be \textit{equivalent} to a forcing axiom related to good extensions. The consistency of this forcing axiom should follow from the iteration theorem suggested by the $\sigma$-closure of $\Vmax$. \\
If we look at the case $\Vmax=\Pmax$ and let $A$ be some subset of $\omega_1$ so that $\omega_1^{L[A]}=\omega_1^V$ then stationary set preserving extensions are exactly the generic extensions intermediate to a good extension. The $\Pmax$-Multiverse View is roughly correct in the sense that:
\begin{itemize}
\item (Woodin) $\Pmax$ is $\sigma$-closed assuming $\AD^{\LofR}$.
\item (Shelah) Semiproper forcings can be iterated and the class of stationary set preserving forcings and semiproper forcings coincide under $\MM$.
\item(\aspsch) If there is a proper class of Woodin cardinals then
$$\Wstar\Leftrightarrow (\mathcal{P}(\mathbb R)\cap \LofR)\text{-}\BMMpp.$$
\end{itemize}

The rest of this section distills this heuristic into rigorous mathematics that relates more $\Pmaxvariant$s to forcing axioms. We will assume (two-step) generic absoluteness in this section, though this is not fully necessary. Note that in this case, if $\Vmax$ is a $\Pmaxvariant$ then we have
$$V^\PP\models`` \Vmax\text{ is a }\Pmaxvariant"$$
in any generic extension $V^\PP$, where $\Vmax$ is  to be understood as defined by a projective formula. Usually, $\Pmaxvariant$s are $\Pi^1_2$. \\
We will from now on work with some fixed $\Pmaxvariant$ $\Vmax$ and assume $n^\Vmax=0$ to ease notation.

\begin{defn}
We say that a structure $\mathcal H$ is \textit{almost a }$\Vmax$\textit{-condition}\index{almost a Vmax condition@Almost a $\Vmax$-condition} if
$$V^{\Col(\omega,\mathcal H)}\models\widecheck{\mathcal H}\in\Vmax.$$
For $A\in\Htwo$, $\mathcal H_A$ denotes the structure:
$$\mathcal H_A\coloneqq (\Htwo, \NS, A)$$
\end{defn}

 Suppose that for some fixed $A\in\Htwo$ we have that $\mathcal H\coloneqq \mathcal H_A$ is almost a $\Vmax$-condition. We may define 
$$g_{A}=\{p\in\Vmax\mid V^{\Col(\omega,2^{\omega_1})}\models \mathcal H <_{\Vmax} p\}.$$
Our goal is to show that $g_A$ witnesses $\stargen{\Vmax}$ under favorable circumstances. At the very least, it should be a filter.
\begin{prop}\label{generalfilterprop}
Suppose $g_A$ meets all projective dense $D\subseteq\Vmax$. Then $g_A$ is a filter.
\end{prop}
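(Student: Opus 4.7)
The plan is to verify the two filter properties separately: upward closure, then downward directedness, using transitivity of $<_{\Vmax}$ for the first and the projective density hypothesis together with the standing generic absoluteness assumption for the second.

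For upward closure, I would argue directly from property \ref{Vmaxordertransitem}. Given $p \in g_A$ and $p \leq_{\Vmax} q$, the nontrivial case is $p <_{\Vmax} q$, witnessed in $V$ by some generic iteration $\mu\colon q \to q^\ast$ living inside $p$. In the collapse extension $V^{\Col(\omega, 2^{\omega_1})}$, the assumption $p \in g_A$ furnishes a generic iteration $\sigma\colon p \to p^\ast$ inside $\mathcal H$ witnessing $\mathcal H <_{\Vmax} p$. Applying \ref{Vmaxordertransitem} with $\mathcal H$, $p$, $q$ playing the roles of $r$, $q$, $p$, the pushed-forward iteration $\sigma(\mu)\colon q \to \sigma(q^\ast)$ witnesses $\mathcal H <_{\Vmax} q$ in the collapse extension, so $q \in g_A$.

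For downward directedness, fix $p, q \in g_A$ and consider the set
$$D = \{r \in \Vmax \mid r \leq_{\Vmax} p \wedge r \leq_{\Vmax} q\} \cup \{r \in \Vmax \mid r \perp p\} \cup \{r \in \Vmax \mid r \perp q\}.$$
Density is a one-line case split: given $s \in \Vmax$, either $s \perp p$ (so $s \in D$), or there is $t \leq_{\Vmax} s, p$, and then either $t \perp q$ (so $t \in D$), or there is $r \leq_{\Vmax} t, q$, giving $r \leq_{\Vmax} s, p, q$ and $r \in D$. Since $\leq_{\Vmax}$ is projective by \ref{Vmaxvarphi} and the parameters $p, q$ are countable and codable as reals, $D$ is projective, so the hypothesis supplies some $r \in g_A \cap D$.

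The remaining task is to rule out the last two disjuncts, and this is where the main obstacle sits. Suppose for contradiction $r \perp p$ in $V$. In $V^{\Col(\omega, 2^{\omega_1})}$, however, the membership $r, p \in g_A$ produces generic iterations of both $r$ and $p$ living inside $\mathcal H$, and since $\mathcal H$ is a $\Vmax$-condition there, $\mathcal H$ is a common extension of $r$ and $p$ in the collapse extension. Compatibility of $r$ and $p$ is a projective statement in the reals coding them, and the two-step generic absoluteness assumed at the start of the section transfers this compatibility back to $V$, contradicting $r \perp p$. The delicate step I anticipate is precisely this invocation of absoluteness: one must check that the projective complexity of the compatibility statement is absorbed by the level of generic absoluteness in force. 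Once that is settled, the same argument applied to $q$ finishes the proof.
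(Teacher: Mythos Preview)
Your proof is correct and follows essentially the same route as the paper's: the same projective dense set $D$, the same use of $\mathcal H$ as a common lower bound in the collapse extension, and the same appeal to generic projective absoluteness to reflect compatibility back to $V$. Your treatment of upward closure via \ref{Vmaxordertransitem} is more explicit than the paper's, which simply asserts it is easy to see.
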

\begin{proof}
It is easy to see that if $q<_{\Vmax}p$ and $q\in g_A$ then $p\in g_A$. So assume $p, q\in g_A$ and we have to find some $r\in g_A$ with $r\leq_{\Vmax}p, q$. Consider 
$$D=\{r\in \Vmax\mid r\leq_{\Vmax} p,q \vee r\perp p\vee r\perp q\}$$
and note that $D$ is a projective dense subset of $\Vmax$, so by assumption we can find some  $r\in D\cap g_A$. Now in $V^{\Col(\omega,2^{\omega_1})}$ we have $r, p, q\leq_{\Vmax}\mathcal H$ and thus $r$ is compatible with both $p$ and $q$. By generic absoluteness, this is true in $V$ as well so that $r\leq_{\Vmax}p, q$ as $r\in D$.
\end{proof}

Even assuming that $g_A$ is a fully generic over $\LofR$, we still have to arrange $\Pomo\subseteq\LofR[g_A]$. 

\begin{defn}
Suppose that
\begin{enumerate}[label=$(\roman*)$]
\item $g\subseteq\Vmax$ is a filter,
\item $p\in g$ and
\item $\langle p_\alpha,\mu_{\alpha,\beta}\mid\alpha\leq\beta\leq\gamma\rangle$ is a generic iteration of $p_0=p$.
\end{enumerate}
Then we say that $\langle p_\alpha,\mu_{\alpha,\beta}\mid\alpha\leq\beta\leq\gamma\rangle$ is guided\index{Potentially iterable structure!generic iteration of!guided by g@guided by $g$} by $g$ if $p_\alpha\in g$ for all countable $\alpha\leq\gamma$.
\end{defn}

\begin{lemm}\label{guidediterationslemm}
Suppose $\Vmax$ has unique iterations and $g\subseteq\Vmax$ is a filter meeting all projective dense $D\subseteq\Vmax$. For any $p\in g$ and any $\gamma\leq\omega_1$, there is a unique iteration 
$$\langle p_\alpha,\mu_{\alpha,\beta}\mid\alpha\leq\beta\leq\gamma\rangle$$
of $p_0=p$ of length $\gamma+1$ guided by $g$.
\end{lemm}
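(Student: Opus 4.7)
The plan is to construct the iteration by transfinite recursion on $\gamma$, taking direct limits at countable limit stages and at $\gamma = \omega_1$, and extracting the next generic ultrapower from $g$ at successor stages. The two things to verify are (a) that the choice forced by $g$ at each successor step is well-defined and yields a structure in $g$, and (b) that the whole iteration is unique; both will follow from unique iterations combined with the order-transitivity clause $(\Vmax.iii)$.

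At a successor stage $\gamma = \delta+1$, with $p_\delta \in g$ by the inductive hypothesis, the set
$$D = \{q \in \Vmax : q <_{\Vmax} p_\delta\} \cup \{q \in \Vmax : q \perp p_\delta\}$$
is dense and projective in $p_\delta$, hence met by $g$; since $p_\delta \in g$ and $g$ is a filter, any $q \in g \cap D$ must satisfy $q <_\Vmax p_\delta$. Let $\mu_q\colon p_\delta \to p^*$ be the unique iteration in $q$ of length $\omega_1^q + 1$ witnessing this. I declare $p_{\delta+1}$ and $\mu_{\delta,\delta+1}$ to be the first iterate and first ultrapower step of $\mu_q$, noting $p_{\delta+1} \in \Vmax$ by $(\Vmax.iv)$.

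To see the choice is independent of $q$, given $q_1, q_2 \in g$ both $<_\Vmax p_\delta$, pick $r \in g$ with $r <_\Vmax q_1, q_2$. By $(\Vmax.iii)$, each $\sigma_i(\mu_{q_i})\colon p_\delta \to \sigma_i(p^*_i)$ witnesses $r <_\Vmax p_\delta$, where $\sigma_i\colon q_i \to q_i^*$ is the unique iteration of $q_i$ in $r$; by unique iterations in $r$ they coincide. Since the first iterate $(\mu_{q_i})_1$ is countable in $q_i$ of rank below $\crit(\sigma_i) = \omega_1^{q_i}$, it is fixed by $\sigma_i$, so the first iterates of $\mu_{q_1}$ and $\mu_{q_2}$ are equal. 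To see $p_{\delta+1} \in g$, I use the reindexing trick: since $\omega_1^q$ is a limit ordinal, $1 + \omega_1^q = \omega_1^q$, so $\langle (\mu_q)_{1+\alpha} \mid \alpha \leq \omega_1^q \rangle$ is a generic iteration of $p_{\delta+1}$ in $q$ of length $\omega_1^q + 1$ ending at the same $p^*$, which still satisfies $\varphi^{\Vmax}$ from $q$'s perspective. Thus $q <_\Vmax p_{\delta+1}$, and upward closure of $g$ under $<_\Vmax$ gives $p_{\delta+1} \in g$.

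At a countable limit stage $\gamma$, define $p_\gamma$ as the direct limit of the previously constructed $p_\alpha$, $\alpha < \gamma$. Using density (and the richness of $\Vmax$) there is $q \in g$ with $q <_\Vmax p$ and $\omega_1^q > \gamma$; by recursive application of the coherence above, the first $\gamma$ iterates of $\mu_q$ coincide with $p_0, \ldots, p_\alpha$ for $\alpha < \gamma$, so $p_\gamma = (\mu_q)_\gamma$, and the analogous reindexing (using $\gamma + \omega_1^q = \omega_1^q$) shows $q <_\Vmax p_\gamma$, hence $p_\gamma \in g$. At $\gamma = \omega_1$, ``guided by $g$'' demands membership only at countable indices, so the direct limit construction alone suffices. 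Uniqueness of the whole guided iteration now follows by induction: the independence argument above pins down $p_{\delta+1}$ and $\mu_{\delta,\delta+1}$ at each successor, and direct limits are canonical. The main obstacle is precisely this coherence step---using order-transitivity together with unique iterations to show that any choice of $q \in g$ extracts the same next ultrapower---and the accompanying reindexing trick, which is what allows upward closure of $g$ to transport $q \in g$ into $p_{\delta+1} \in g$.
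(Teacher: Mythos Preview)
Your existence argument is fine and close in spirit to the paper's: the paper simply picks a single $q\in g$ with $\omega_1^q>\gamma$ (the density of $\{q:\omega_1^q>\gamma\}$ is an easy consequence of $(\Vmax.iv)$ and $(\Vmax.v)$) and reads off the first $\gamma$ iterates of the witnessing iteration, but your recursive construction with the reindexing trick $1+\omega_1^q=\omega_1^q$ achieves the same thing.

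The gap is in uniqueness. Your ``independence argument'' shows that for any two $q_1,q_2\in g$ with $q_i<_{\Vmax}p_\delta$, the first iterates of $\mu_{q_1}$ and $\mu_{q_2}$ agree. That proves your \emph{construction} is well-defined, but it does \emph{not} prove that every guided ultrapower step coincides with it. Concretely: suppose $\nu\colon p_\delta\to p'$ is an arbitrary generic ultrapower with $p'\in g$. You need $p'$ to equal your $p_{\delta+1}$. Pick $q\in g$ with $q<_{\Vmax}p_\delta$ and $q<_{\Vmax}p'$, witnessed by $\mu_q$ and $\tau\colon p'\to(p')^*$. You would like to say that $\tau\circ\nu$ is another iteration of $p_\delta$ of length $\omega_1^q+1$ in $q$ witnessing $q<_{\Vmax}p_\delta$, and then invoke unique iterations. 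But $\nu$ lives in $V$, not in $q$; nothing you have written puts it there, so you cannot apply unique iterations \emph{inside $q$}. The order-transitivity clause $(\Vmax.iii)$ does not help here, since $\nu$ is a single external ultrapower step, not a witnessing iteration coming from some condition in $g$.

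The paper closes exactly this gap via $\Sigma^1_1$-absoluteness: since $p_\delta$ and $p'$ are countable in $q$, the statement ``there is a generic ultrapower map $p_\delta\to p'$'' is $\Sigma^1_1$ in real codes for $p_\delta,p'$ and hence reflects into $q$, producing some $\nu'\colon p_\delta\to p'$ \emph{in} $q$. Now $\tau\circ\nu'$ and $\mu_q$ are both in $q$ and both witness $q<_{\Vmax}p_\delta$, so unique iterations gives $p'=(\mu_q)_1=p_{\delta+1}$. A second $\Sigma^1_1$-reflection then pins down the map $\nu$ itself. You need to add this internalization step; without it the uniqueness claim is unsupported.
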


\begin{proof}
First, we prove existence for all $\gamma<\omega_1$.

\begin{claim}
There is $q\in g$ with $\omega_1^q>\gamma$.
\end{claim}
\begin{proof}
Let $D=\{q\in\Vmax\mid\omega_1^q>\gamma\}$. Clearly, $D$ is projective and we will show that $D$ is dense. Let $q\in\Vmax$ and using \ref{Vmaxnomin}, find $r<_{\Vmax} q$ as witnessed by
$$\sigma\colon q\rightarrow q^\ast.$$
Now let 
$$\nu\colon r\rightarrow r^\ast$$
be any generic iteration of $r$ of length $\gamma+2$, consequently $\omega_1^{r^\ast}>\gamma$. We have $r^\ast\in\Vmax$ by \ref{Vmaxiteriesareconds}. Note that the iteration $\nu\circ\sigma$ witnesses $r^{\ast}<_{\Vmax}q$. Again applying \ref{Vmaxnomin}, there is $s<_{\Vmax}r^{\ast}$ and thus $s<_{\Vmax} q$ and $s\in D$.\\
Thus $g\cap D\neq\emptyset$.
\end{proof}
As $g$ is a filter, we can find $q<_{\Vmax} p$ with $\omega_1^q>\gamma$. Thus if $\mu\colon p\rightarrow p^\ast$ witnesses this then $\mu$ is an iteration
$$\langle p_{\alpha, \beta},\mu_{\alpha, \beta}\mid \alpha\leq \beta\leq \omega_1^{q}\rangle$$
of length $\omega_1^q+1>\gamma+1$ by \ref{Vmaxvarphi}. 
\begin{claim}
$\langle p_{\alpha, \beta},\mu_{\alpha, \beta}\mid \alpha\leq \beta\leq\gamma\rangle$ is guided by $g$. 
\end{claim}
\begin{proof}
Let $\alpha\leq\gamma$. Then $\mu_{\alpha,\omega_1^q}$ is an iteration of length $\omega_1^q+1$ in $q$ and $q\models\varphi^{\Vmax}(p_{\omega_1^q})$, thus $q<_{\Vmax} p_\alpha$ and $p_\alpha\in g$.
\end{proof}
Next we prove uniqueness. By proceeding by induction on $\gamma\leq\omega_1$, it is in fact enough to verify the case $\gamma=1$. Suppose that $\mu_i\colon p\rightarrow p^\ast_i$ is a generic ultrapower of $p$ with $p^\ast_i\in g$ for $i<2$. As $g$ is a filter and by \ref{Vmaxnomin}, there is $q\in g$ with $q<_{\Vmax} p^\ast_i$ as witnessed by some
$$\mu^\ast_i\colon p^\ast_i\rightarrow p^{\ast\ast}_i$$
for $i<2$ as well as $q<_{\Vmax}p$ as witnessed by
$$\mu\colon p\rightarrow p^{\ast\ast}.$$
Let $i<2$. We have that $p, p^{\ast}_i$ are countable in $q$. As 
$$``p^{\ast}_i\text{ is a generic ultrapower of }p"$$
is a true $\Sigma^1_1(p, p^{\ast}_i)$-statement, it is true in $q$ as well. Thus there is a generic ultrapower
$$\mu'_i\colon p\rightarrow p^{\ast}_i$$
in $q$. Both $\mu, \mu^{\ast}_i\circ\mu'_i$ witness $q<_{\Vmax} p$ and as $\Vmax$ has unique iterations, $\mu=\mu^{\ast}_i\circ\mu'_i$. It follows that $p^\ast_0=p^\ast_1$.
\begin{claim}
$\mu_0^\ast=\mu_1^\ast$.
\end{claim}
\begin{proof}
 Assume this fails, then 
 $$``\text{There are distinct generic ultrapower maps }p\rightarrow p_0^\ast"$$
 is another true $\Sigma^1_1(p, p_0^\ast)$-statement which accordingly must hold in $q$. Thus there is a generic ultrapower map $\mu_0^{\prime\prime}\colon p\rightarrow p^\ast_0$ in $q$ different from $\mu_0'$. But then both $\mu_0^\ast\circ\mu_0'$ and $\mu_0^\ast\circ\mu_0^{\prime\prime}$ witness $q<_{\Vmax} p$, which contradicts that $\Vmax$ has unique iterations.
\end{proof}
Finally, existence of a generic iteration of $p$ of length $\omega_1+1$ guided by $g$ follows from existence and uniqueness of generic iterations of $p$ guided by $g$ of any countable length.
\end{proof}

This suggests the following definition:
\begin{defn}
Suppose $\Vmax$ is a $\Pmaxvariant$ with unique iterations and $g\subseteq\Vmax$ is a filter. For $p\in g$, the $g$\textit{-iteration}\index{g iteration@$g$-iteration} of $p$ is the unique generic iteration of $p$ of length $\omega_1+1$ that is guided by $g$ (if it exists).
\end{defn}

\begin{cor}\label{getVmaxstarcor}
Suppose that
\begin{enumerate}[label=$(\roman*)$]
\item $\AD$ holds in $\LofR$,
\item $\Vmax$ has unique iterations,
\item $\mathcal H_A$ is almost a $\Vmax$-condition,
\item $g_A\cap D\neq\emptyset$ for all dense $D\subseteq\Vmax$, $D\in\LofR$ and
\item\label{getVmaxstarpomocond} $\Pomo=\bigcup\{\Pomo\cap p^\ast\mid p\in g_A\wedge \mu\colon p\rightarrow p^\ast\text{ is guided by }g_A\}$.
\end{enumerate}
Then $\stargen{\Vmax}$ holds and $g_A$ witnesses this.
\end{cor}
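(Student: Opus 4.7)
The plan is to verify the three defining clauses of $\stargen{\Vmax}$ for the filter $g_A$: first, that $g_A$ is a filter on $\Vmax$; second, that $g_A$ meets every dense $D\subseteq\Vmax$ with $D\in\LofR$; and third, that $\Pomo\subseteq\LofR[g_A]$. Hypothesis (i) already gives $\AD^{\LofR}$, so only the three structural statements remain, and the argument should slot directly together from the lemmas already proved in this subsection.

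The first two clauses should come essentially for free. Because $\AD$ holds in $\LofR$, every projective subset of $\Vmax$ lies in $\LofR$, so hypothesis (iv) in particular gives that $g_A$ meets every projective dense subset of $\Vmax$. Applying Proposition \ref{generalfilterprop} then shows that $g_A$ is a filter. Genericity over $\LofR$ is literally hypothesis (iv).

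The substantive content is showing $\Pomo\subseteq\LofR[g_A]$. Fix $B\in\Pomo$. Hypothesis (v) yields some $p\in g_A$ together with a generic iteration $\mu\colon p\rightarrow p^{\ast}$ of length $\omega_1+1$, guided by $g_A$, with $B\in p^{\ast}$. The crucial point is that, because $\Vmax$ has unique iterations and $g_A$ is a filter meeting all $\LofR$-dense subsets of $\Vmax$, Lemma \ref{guidediterationslemm} applies inside $\LofR[g_A]$ and produces a unique generic iteration of $p$ of length $\omega_1+1$ guided by $g_A$. Thus $\mu$ and $p^{\ast}$ are uniquely determined by $p$ and $g_A$, and $B$ can be read off from $p^{\ast}$; hence $B\in\LofR[g_A]$.

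The one step that needs care is checking that this uniqueness really allows the $g_A$-iteration of $p$ to be computed inside $\LofR[g_A]$, and this is the main (though minor) obstacle. Running the construction from Lemma \ref{guidediterationslemm} in $\LofR[g_A]$, at each countable stage $\alpha$ one first uses a density argument inside $\LofR$ to pick some $q\in g_A$ with $q<_{\Vmax}p_\alpha$ and $\omega_1^q>\alpha$; then the clause \ref{Vmaxvarphi} in the definition of a $\Pmaxvariant$ together with uniqueness of iterations singles out a unique generic ultrapower map $p_\alpha\rightarrow p_{\alpha+1}$ inside $q$, and at limits one takes direct limits. Uniqueness at each step guarantees the construction is absolute to $\LofR[g_A]$, so $p^{\ast}\in\LofR[g_A]$ and $B\in\LofR[g_A]$, completing the verification of $\stargen{\Vmax}$.
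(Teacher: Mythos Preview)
Your proof is correct and follows essentially the same approach as the paper. The only cosmetic difference is in how you argue that the $g_A$-iteration of $p$ can be computed inside $\LofR[g_A]$: the paper simply observes that $\LofR$ already contains all countable generic iterations of $p$ (they are coded by reals), so $\LofR[g_A]$ can select those guided by $g_A$ and take their direct limit, whereas you re-run the construction of Lemma~\ref{guidediterationslemm} step by step inside $\LofR[g_A]$; both arrive at the same conclusion via uniqueness.
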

\begin{proof}
$g_A$ is a filter by Proposition \ref{generalfilterprop} and thus $\LofR$-generic by assumption. To see that $\Pomo\subseteq\LofR[g_A]$, notice that for any $p\in g_A$, $\LofR$ knows of all countable generic iterations of $p$. Hence, $\LofR[g_A]$ can piece together the $g_A$-iteration of $p$ from the countable iterations of $p$ that are guided by $g_A$. $\Pomo\subseteq\LofR[g_A]$ now follows immediately from \ref{getVmaxstarpomocond}.
\end{proof}

The biggest obstacle by far is to get into a situation where $g_A\cap D\neq\emptyset$ for all dense $D\subseteq\Vmax$, $D\in\LofR$. The main idea is:

\begin{lemm}\label{meetDlemm}
Suppose that all of the following hold:
\begin{enumerate}[label=$(\roman*)$]
\item $D\subseteq\Vmax$ is dense.
\item $\mathcal H_A$ is almost a $\Vmax$-condition.
\item $\PP$ is a forcing and $D$ is $\vert \PP\vert$-universally Baire.
\item\label{meetDgenitcon} In $V^\PP$ there is $q\in D^\ast$ and an iteration $\sigma\colon q\rightarrow q^\ast$ with  
$$(\Htwo;\in,\NS, A)^{V^\PP}\models \varphi^{\Vmax}(q^\ast).$$
\item $\Gamma$ is a set of formulas in the language $\{\in, \dot I, \dot a, \dot D\}$ so that
\begin{enumerate}[label=$(\Gamma.\roman*)$]
\item $\varphi^{\Vmax}\in\Gamma$,
\item $\Sigma_0\subseteq\Gamma$, where $\Sigma_0$ is computed in the language $\{\in,\dot D\}$ and
\item $\Gamma$ is closed under $\exists$ and  $\wedge$.
\end{enumerate}
\item\label{meetDFAcon} $(H_{\omega_2};\in, \NS, A, D)^V\prec_\Gamma(\Htwo;\in,\NS, A, D^\ast)^{V^\PP}$.
\end{enumerate}
Then $g_A\cap D\neq\emptyset$.\\
If additionally
\begin{enumerate}[resume,label=$(\roman*)$]
\item\label{meetDHtwocon} $\Htwo^V\subseteq q^\ast$
\end{enumerate}
then $\Pomo=\bigcup\{\Pomo\cap p^\ast\mid p\in g_A\wedge \mu\colon p\rightarrow p^\ast\text{ is guided by }g_{A}\}$.
\end{lemm}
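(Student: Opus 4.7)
My plan is to reflect the iteration witness produced in $V^\PP$ by \ref{meetDgenitcon} down to $V$ using the $\Gamma$-elementarity hypothesis \ref{meetDFAcon}. The essential step is to encode ``there is a $D$-witnessing iteration into $\varphi^{\Vmax}$'' as a formula in $\Gamma$. To this end I would introduce
$$\psi \equiv \exists q \in \dot D \, \exists \sigma \, \bigl[ \chi(q, \sigma) \wedge \varphi^{\Vmax}(\sigma(\omega_1)) \bigr],$$
where $\chi(q, \sigma)$ expresses ``$\sigma$ is a generic iteration of $q$ of length $\omega_1 + 1$''. I expect $\chi$ to be $\Sigma_0$: the successor clause unfolds to ``$G_\alpha$ is $q_\alpha$-generic for $I_\alpha$'', i.e.~$\forall A \in q_\alpha\,(A \text{ a maximal antichain in } I_\alpha^+ \to A \cap G_\alpha \neq \emptyset)$, which is bounded by $q_\alpha \in \sigma$; the limit clause is the usual bounded direct-limit condition. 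By the closure properties of $\Gamma$, then, $\psi \in \Gamma$.

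By assumption \ref{meetDgenitcon}, $\psi$ holds in $(\Htwo; \in, \NS, A, D^\ast)^{V^\PP}$, hence by \ref{meetDFAcon} it holds in $(H_{\omega_2}; \in, \NS, A, D)^V$. So in $V$ one finds $q \in D$ and a generic iteration $\sigma \colon q \to q^\ast$ of length $\omega_1 + 1$ with $\mathcal H_A \models \varphi^{\Vmax}(q^\ast)$. Since $\sigma$ lies in $H_{\omega_2}^V = \mathcal H_A$ and $\mathcal H_A$ is countable in $V^{\Col(\omega, 2^{\omega_1})}$, the iteration $\sigma$ witnesses $\mathcal H_A <_{\Vmax} q$ in that extension, hence $q \in g_A$. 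This gives $g_A \cap D \neq \emptyset$.

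For the strengthening under \ref{meetDHtwocon}, I would argue that the very same $\sigma$ is already guided by $g_A$. Writing $\sigma = \langle q_\alpha, \mu_{\alpha, \beta} \mid \alpha \leq \beta \leq \omega_1 \rangle$, fix any countable $\alpha < \omega_1$; the tail $\langle q_\beta, \mu_{\gamma, \beta} \mid \alpha \leq \gamma \leq \beta \leq \omega_1 \rangle$ is a generic iteration of $q_\alpha$ of length $\omega_1 + 1$ that again lies in $\mathcal H_A$ and terminates at $q^\ast$ with $\mathcal H_A \models \varphi^{\Vmax}(q^\ast)$, hence witnesses $\mathcal H_A <_{\Vmax} q_\alpha$ and places $q_\alpha$ in $g_A$. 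Thus $\sigma$ is guided by $g_A$. The assumption $\Htwo^V \subseteq q^\ast$ then gives $\Pomo \subseteq q^\ast$, so $\Pomo \cap q^\ast = \Pomo$ lies in $\bigcup\{\Pomo \cap p^\ast \mid p \in g_A \wedge \mu \colon p \to p^\ast \text{ is guided by } g_A\}$, while the reverse inclusion is trivial.

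The main obstacle is the first step: precisely verifying that ``$\sigma$ is a generic iteration'' can be formalized with only bounded quantifiers over the structure, so that $\psi$ genuinely lies in $\Gamma$. One must phrase the $M$-genericity of the ultrafilter used at each successor stage as a bounded quantification over maximal antichains inside the iterate $q_\alpha$ (rather than over dense sets in the ambient universe), and handle the direct-limit clause at limit stages in the same bounded fashion; the exact list of symbols allowed in $\Gamma$ must be checked to include everything these bounded quantifiers touch (in particular $\in$ and the constant $\dot D$, which is why the language of $\Gamma$ is defined to contain them).
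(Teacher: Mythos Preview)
Your argument for the first conclusion is correct and essentially identical to the paper's: you reflect the $\Gamma$-sentence asserting the existence of a $D$-element with a suitable iteration from $V^\PP$ down to $V$, and the resulting iteration witnesses $\mathcal H_A<_{\Vmax} q$ in $V^{\Col(\omega,2^{\omega_1})}$.

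Your argument for the second conclusion contains a genuine error. You reflect once to obtain $q\in D$ and $\sigma\colon q\to q^\ast$ in $V$, and then you correctly show that this $\sigma$ is guided by $g_A$ via the tail-iteration observation. But you then invoke hypothesis \ref{meetDHtwocon} as if it said $\Htwo^V\subseteq q^\ast$ for \emph{your reflected} $q^\ast$. It does not: \ref{meetDHtwocon} concerns the $q^\ast$ produced in $V^\PP$ by hypothesis \ref{meetDgenitcon}, not the one your reflection produced in $V$. Indeed your reflected $q^\ast$ is an \emph{element} of $\Htwo^V$, so $\Htwo^V\subseteq q^\ast$ is impossible by foundation. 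A single reflected iteration in $V$ cannot absorb all of $\Pomo$.

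The paper repairs this by reflecting once \emph{for each} $X\subseteq\omega_1$. Since $X\in\Htwo^V\subseteq q^\ast$ in $V^\PP$ by \ref{meetDHtwocon}, the $\Gamma$-sentence
\[
\exists q\in\dot D\ \exists\sigma\colon q\rightarrow q^\ast\text{ an iteration of length }\omega_1+1\ \wedge\ \varphi^{\Vmax}(q^\ast)\ \wedge\ X\in q^\ast
\]
holds in $(\Htwo;\in,\NS,A,D^\ast)^{V^\PP}$ with $X$ as a parameter, hence reflects to $V$. The witnessing iteration in $V$ (which may depend on $X$) is guided by $g_A$ by exactly the tail argument you gave, and its terminus contains $X$.
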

\begin{proof}
Observe that $(\Htwo;\in)\prec_{\Sigma_1}(\Htwo;\in)^{V^\PP}$ implies that $\PP$ preserves $\omega_1$. The statement
$$\exists q\in \dot D\ \exists \sigma\colon q\rightarrow q^\ast\text{ an iteration of length }\omega_1+1\text{ and }\varphi^{\Vmax}(q^\ast)$$
is in $\Gamma$ and thus is true in 
$$(\Htwo;\in,\NS, A, D)^V$$
as witnessed by some $p\in D$ and iteration $\mu\colon p\rightarrow p^\ast$. It follows that $\mu$ witnesses $\mathcal H_A<_{\Vmax} q$ in $V^{\Col(\omega,2^{\omega_1})}$ so that $p\in D\cap g_A$.\\
Now assume \ref{meetDHtwocon}, it is our duty to show
$$\Pomo=\bigcup\{\Pomo\cap p^\ast\mid p\in g_A\wedge \mu\colon p\rightarrow p^\ast\text{ is guided by }g_{A}\}.$$
Let $X\subseteq\omega_1$. As above, 
$$\exists q\in\Vmax\ \exists\sigma\colon q\rightarrow q^\ast\text{ an iteration of length }\omega_1+1\text{ and }\varphi^{\Vmax}(q^\ast)\wedge X\in q^\ast$$
reflects down to $V$. The iteration witnessing this in $V$ is guided by $g_{A}$ by the same argument that showed $p\in g_A$ above.
\end{proof}

Condition \ref{meetDFAcon} is a typical consequence of a (bounded) forcing axiom. It is left to construct forcings $\PP$ with property \ref{meetDgenitcon} to which hopefully a broad range of forcing axioms may apply. 

\subsection{\aspsch\ $\Wstar$-forcing}

We describe the results of \aspsch \cite{aspsch}. Their results carry over to any $\Pmaxvariant$ $\Vmax$ though they were originally proven in the case of $\Vmax=\Pmax$.  Suppose that 
\begin{enumerate}[label=$(\roman*)$]
\item $\NS$ is saturated,
\item $A\in \Htwo$ is so that $\mathcal H=(\Htwo,\NS, A)$ is almost a $\Vmax$-condition and
\item\label{univbaireassumption} $D\subseteq\Vmax$ is a $2^{\omega_1}$-universally Baire dense subset of $\Vmax$ whose reinterpretation is still dense in extensions by forcings of size $\leq 2^{\omega_1}$, as witnessed by trees $T, S$ with $D=p[T]$.
\end{enumerate}

\aspsch\ construct a partial order $\PP=\PP(\Vmax, A, D)$ so that in $V^\PP$ the following picture

\begin{tikzpicture}
\def\x{2.1};
\def\y{0.5};

\node (pT) at (0*\x,  0*\y){$p[T]$};
\node (N) at (0*\x, -2*\y) {$q_0=(N, I, b)$};
\node (Nast) at (2*\x, -2*\y) {$q_{\omega_1}=(N^*, I^*, b)$};
\node (M0) at (-2*\x, -4*\y) {$p_0$};
\node (MN) at (0*\x, -4*\y) {$p_{\omega_1^N}$};
\node (M3) at (2*\x, -4*\y) {$p_{\omega_1}$};
\node (H) at (2*\x, -6*\y) {$((H_{\omega_2})^V, \mathrm{NS}_{\omega_1}^V, A)=\mathcal H$};
\node (Vmax) at (-2*\x, -6*\y) {$\Vmax$};

\path (N)--(pT) node[midway, rotate=90]{$\in$};

\draw[->] (N)--(Nast) node[midway, above]{$\sigma_{0,\omega_1}$};

\path (MN)--(N) node[midway, rotate=90]{$\in$};
\path (M3)--(Nast) node[midway, rotate=90]{$\in$};

\draw[->] (M0)--(MN) node[midway, above]{$\mu_{0, \omega_1^N}$};
\draw[->] (MN)--(M3) node[midway, above]{$\mu_{\omega_1^N, \omega_1}$};

\path (H)--(M3) node[midway, rotate=90]{$=$};

\path (Vmax)--(M0) node[midway, rotate=270]{$\in$};

\end{tikzpicture}

exists so that
\begin{enumerate}[label=$(\PP.\roman*)$]
\item $\mu_{0, \omega_1}, \sigma_{0,\omega_1}$ are generic iterations of $p_0$, $q_0$ respectively,
\item\label{smalleraspschcond} $\mu_{0,\omega_1^N}$ witnesses $q_0<_{\Vmax} p_0$,
\item $\mu_{0,\omega_1}=\sigma_{0,\omega_1}(\mu_{0,\omega_1^N})$ and
\item\label{idealsmatchaspschcond} the generic iteration $\sigma_{0, \omega_1}\colon q_0\rightarrow q_{\omega_1}$ is \textit{correct}, i.e.~$I^\ast=\NS^{V^\PP}\cap N^\ast$.
\end{enumerate} 
If $\varphi^{\Vmax}((M, J, a))$ implies $J=\dot I\cap M$ then $\NS^{p_{\omega_1^N}}=I\cap p_{\omega_1^N}$. This gets transported upwards along $\sigma_{0,\omega_1}$ and shows $\NS^V=I^\ast\cap \Htwo^V$. Together with \ref{idealsmatchaspschcond}, this yields $\NS^V=\NS^{V^\PP}\cap V$, i.e.~$\PP$ preserves stationary sets. If $\MMpp$ holds in $V$ then 
$$(\Htwo;\in,\NS, A, D)^V\prec_{\Sigma_1}(\Htwo;\in,\NS, A, D^\ast)^{V^\PP}$$
and it follows from Lemma \ref{meetDlemm} that $g_A\cap D\neq\emptyset$ (note that $\varphi^{\Pmax}((M, I, a))``=I=\dot I\cap M\wedge a=\dot a"$). This is how \aspsch\ prove $\MMpp\Rightarrow\Wstar$.\\
An important observation is the following: To invoke a forcing axiom in the case of $\PP$ or variants thereof, typically $\PP$ needs to preserve certain structure, like stationary sets in the example above. This preservation is proven in two steps:
\begin{enumerate}[label=$(\roman*)$]
 \item Preservation between $q_{\omega_1}$ and $V^\PP$. This is governed by the iteration $\sigma_{0,\omega_1}$ having certain properties in $V^\PP$, e.g.~correctness. 
 \item Preservation between $p_{\omega_1}$ and $q_{\omega_1}$. This is governed by the nature of $\Vmax$, specifically the formula $\varphi^{\Vmax}$. 
 \end{enumerate}  
 
We will modify the construction of $\PP$ and get a forcing $\PPdia$ which strengthens \ref{idealsmatchaspschcond} so that $\PPdia$ can have a variety of preservation properties depending on the $\Pmaxvariant$ $\Vmax$ in question, for example
\begin{itemize}
\item preserving stationary sets as well as all Suslin trees  or
\item preserving a witness $f$ of $\diacol$ ($\rightsquigarrow\QM\Rightarrow\stargen{\Qmax}$).
\end{itemize}

\subsection{$\diamondsuit$-iterations}
We introduce the concept that is roughly the equivalent of $\diamondsuit$-forcing in the world of generic iterations.
\begin{defn}
Suppose $(N, I)$ is generically iterable. A generic iteration 
$$\langle (N_i, I_i), \sigma_{i, j}\mid i\leq j\leq\omega_1\rangle$$
of $(N, I)=(N_0, I_0)$ is a $\diamondsuit$\textit{-iteration}\index{diamond iteration@$\diamondsuit$-iteration} if for any 
\begin{enumerate}[label=$(\roman*)$]
\item sequence $\langle D_i\mid i<\omega_1\rangle$ of dense subsets of $((\Pomo/I_{\omega_1})^+)^{N_{\omega_1}}$ and
\item $S\in \Pomo^{N_{\omega_1}}- I_{\omega_1}$
\end{enumerate}
the set
$$\{\xi\in S\mid \forall i<\xi\ g_\xi\cap \sigma_{\xi,\omega_1}^{-1}[D_i]\neq\emptyset\}$$
is stationary. Here, $g_\xi$ is the generic ultrafilter applied to $N_\xi$ for $\xi<\omega_1$.
\end{defn}

If $(N, I)$ is generically iterable and $\diamondsuit$ holds then there is a $\diamondsuit$-iteration of $(N, I)$. But this is not generally the case. Paul Larson noted that if $(M, I)$ is generically iterable and 
$$\langle M_\alpha, \mu_{\alpha,\beta}\mid\alpha\leq\beta\leq\omega_1\rangle$$
is a \textit{generic} generic iteration of $(M, I)=(M_0, I_0)$ of length $\omega_1$ then this is a $\diamondsuit$-iteration. By this we mean that this iteration has been constructed generically by forcing with countable approximations ordered by endextension.
\begin{lemm}\label{diamondsuititerationdiablemm}
Suppose 
$$\langle (N_i, I_i), \sigma_{i, j}, g_i\mid i\leq j\leq\omega_1\rangle$$
is a $\diamondsuit$-iteration. If 
$$N_{\omega_1}\models ``f\text{ witnesses }\diabgena{I_{\omega_1}}"$$
then $I_{\omega_1}=\NSf\cap N_{\omega_1}$. In particular, $f$ witnesses $\diab$.
\end{lemm}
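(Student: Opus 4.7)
The plan is to prove both inclusions separately; only the $\supseteq$ direction uses the $\diamondsuit$-iteration hypothesis substantively, and the main technical point will be to convert $V$-dense subsets of $\BB$ into ($V$-)dense subsets of the $N_{\omega_1}$-poset $((\Pomo/I_{\omega_1})^{+})^{N_{\omega_1}}$, relying on $\eta_f^{N_{\omega_1}}$ being a regular embedding.

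For $\subseteq$, it suffices to show $I_{\omega_1}\subseteq \NS\cap N_{\omega_1}$, since $\NS\subseteq\NSf$ (every $f$-stationary set is stationary by definition). Fix $S\in I_{\omega_1}$ and pick $\alpha<\omega_1$ together with $S_\alpha\in I_\alpha$ such that $S=\sigma_{\alpha,\omega_1}(S_\alpha)$. By elementarity $\sigma_{\alpha,\xi}(S_\alpha)\in I_\xi$ for all $\xi\in[\alpha,\omega_1)$, so this set is disjoint from $g_\xi$; the standard characterization $T\in g_\xi\Leftrightarrow\omega_1^{N_\xi}\in\sigma_{\xi,\omega_1}(T)$ then yields $\omega_1^{N_\xi}\notin S$. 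Since $\{\xi<\omega_1\mid\omega_1^{N_\xi}=\xi\}$ is a club, $S$ is nonstationary in $V$.

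For $\supseteq$, we argue contrapositively: fix $S\in N_{\omega_1}\cap I_{\omega_1}^{+}$ and show $S$ is $f$-stationary. By Proposition \ref{fstationaryequivformulationprop}, it is enough to prove that for any $V$-sequence $\langle D_\beta\mid\beta<\omega_1\rangle$ of dense subsets of $\BB$, the set $\{\alpha\in S\mid\forall\beta<\alpha\ f(\alpha)\cap D_\beta\neq\emptyset\}$ is stationary. By $N_{\omega_1}\models\diabgena{I_{\omega_1}}$ and (the relativization to $I_{\omega_1}$ of) Lemma \ref{diacomplemm}, the map $\eta_f^{N_{\omega_1}}\colon b\mapsto [S^f_b]_{I_{\omega_1}}$ is a regular embedding $\BB\to((\Pomo/I_{\omega_1})^{+})^{N_{\omega_1}}$ inside $N_{\omega_1}$. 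Since regularity is witnessed by reductions that lie in $N_{\omega_1}\subseteq V$, $\eta_f^{N_{\omega_1}}[D_\beta]$ remains predense in $V$. Setting
$$E_\beta=\{T\in((\Pomo/I_{\omega_1})^{+})^{N_{\omega_1}}\mid\exists b\in D_\beta\ T\subseteq S^f_b\},$$
$E_\beta$ is dense in $V$: given any such $T$, predensity supplies $b\in D_\beta$ with $T\cap S^f_b\in I_{\omega_1}^{+}$, and $T\cap S^f_b\in E_\beta$ refines $T$.

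Applying the $\diamondsuit$-iteration property to $\langle E_\beta\mid\beta<\omega_1\rangle$ and $S$, the set $W=\{\xi\in S\mid\forall\beta<\xi\ g_\xi\cap\sigma_{\xi,\omega_1}^{-1}[E_\beta]\neq\emptyset\}$ is stationary in $V$. On the club where $\omega_1^{N_\xi}=\xi$, each $\xi\in W$ and $\beta<\xi$ supplies some $T_\beta\in g_\xi$ with $\sigma_{\xi,\omega_1}(T_\beta)\subseteq S^f_{b_\beta}$ for a witness $b_\beta\in D_\beta$; since $T_\beta\in g_\xi$, $\xi=\omega_1^{N_\xi}\in\sigma_{\xi,\omega_1}(T_\beta)\subseteq S^f_{b_\beta}$, giving $b_\beta\in f(\xi)$ and hence $f(\xi)\cap D_\beta\neq\emptyset$. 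This completes $\supseteq$. The ``in particular'' clause is then immediate: each $S^f_b$ is $I_{\omega_1}$-positive by $\diabgena{I_{\omega_1}}$ and hence $f$-stationary in $V$ by what we just proved, so Proposition \ref{diacolequivprop} yields that $f$ witnesses $\diab$.
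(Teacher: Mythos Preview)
Your proof is correct and follows essentially the same approach as the paper's. You are slightly more careful in two places---making the easy inclusion $I_{\omega_1}\subseteq\NS\cap N_{\omega_1}$ explicit, and passing to the downward closure $E_\beta$ to obtain genuine density rather than mere predensity---but the core argument (pushing $V$-dense subsets of $\BB$ through $\eta_f$ and invoking the $\diamondsuit$-iteration property) is the same.
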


\begin{proof}
Let $S\in \Pomo^{N_{\omega_1}}- I_{\omega_1}$, we have to show that $S$ is $f$-stationary. Let $\langle D_i'\mid i<\omega_1\rangle$ be a sequence of dense subsets of $\BP$. As $f$ witnesses $\diabgena{I_{\omega_1}}$ in $N_{\omega_1}$, we have 
$$N_{\omega_1}\models``\eta_f\colon \BP\rightarrow(\Pomo/I_{\omega_1})^+\text{ is a complete embedding}"$$
and notice that $\eta_f$ is a complete embedding in $V$ as well. Thus $D_i=\eta_f[D_i']$ is dense for $i<\omega_1$. As $\sigma_{0,\omega_1}\colon N_0\rightarrow N_{\omega_1}$ is a $\diamondsuit$-iteration, 
$$T\coloneqq \{\xi\in S\mid \forall i<\xi\ g_\xi\cap \sigma_{\xi,\omega_1}^{-1}[D_i]\neq\emptyset\}$$
is stationary. Thus if $C\subseteq\omega_1$ is club, we can find $\xi\in C\cap T$ with 
$\omega_1^{N_\xi}=\xi$ and $f\in\ran(\sigma_{\xi,\omega_1})$. It follows that 
$$f(\xi)=\eta_{\sigma_{\xi,\omega_1}^{-1}(f)}^{-1}[g_\xi]$$
so that $f(\xi)\cap D_i'\neq\emptyset$ for all $i<\xi$.
\end{proof}

\subsection{$\diamondsuit\text{-}\Wstar$-forcing}
\begin{thm}\label{forcingexistencelemm}
Suppose that 
\begin{enumerate}[label=$(\roman*)$]
\item\label{diastarforcinggenabscond} generic projective absoluteness holds for generic extensions by forcings of size $2^{\omega_1}$,
\item $\Vmax$ is a $\Pmaxvariant$,
\item $\NS$ is saturated and $\Pomo^\sharp$ exists,
\item $(\Htwo,\NS, A_0,\dots, A_{n^{\Vmax}})$ is almost a $\Vmax$-condition and
\item\label{diastarforcingdensecond} $D\subseteq\Vmax$ is $2^{\omega_1}$-universally Baire and dense in $\Vmax$ in any generic extension by a forcing of size $2^{\omega_1}$, as witnessed by trees $T, S$ with $p[T]=D$.
\end{enumerate}
Then there is a forcing $\PPdia$ so that in $V^{\PPdia}$ the following picture

\begin{tikzpicture}
\def\x{2.1};
\def\y{0.5};

\node (pT) at (0*\x,  0*\y){$p[T]$};
\node (N) at (0*\x, -2*\y) {$q_0$};
\node (Nast) at (2*\x, -2*\y) {$q_{\omega_1}$};
\node (M0) at (-2*\x, -4*\y) {$p_0$};
\node (MN) at (0*\x, -4*\y) {$p_{\omega_1^N}$};
\node (M3) at (2*\x, -4*\y) {$p_{\omega_1}$};
\node (H) at (2*\x, -6*\y) {$((H_{\omega_2})^V, \mathrm{NS}_{\omega_1}^V, A_0,\dots, A_{n^{\Vmax}})=\mathcal H$};
\node (Vmax) at (-2*\x, -6*\y) {$\Vmax$};

\path (N)--(pT) node[midway, rotate=90]{$\in$};

\draw[->] (N)--(Nast) node[midway, above]{$\sigma_{0,\omega_1}$};

\path (MN)--(N) node[midway, rotate=90]{$\in$};
\path (M3)--(Nast) node[midway, rotate=90]{$\in$};

\draw[->] (M0)--(MN) node[midway, above]{$\mu_{0, \omega_1^N}$};
\draw[->] (MN)--(M3) node[midway, above]{$\mu_{\omega_1^N, \omega_1}$};

\path (H)--(M3) node[midway, rotate=90]{$=$};

\path (Vmax)--(M0) node[midway, rotate=270]{$\in$};

\end{tikzpicture}

exists so that
\begin{enumerate}[label=$(\PPdia.\roman*)$]
\item $\mu_{0, \omega_1}, \sigma_{0,\omega_1}$ are generic iterations of $p_0$, $q_0$ respectively,
\item\label{smallerppdiacond} $\mu_{0,\omega_1^N}$ witnesses $q_0<_{\Vmax} p_0$,
\item $\mu_{0,\omega_1}=\sigma_{0,\omega_1}(\mu_{0,\omega_1^N})$ and
\item\label{idealsmatchppdiacond} the generic iteration $\sigma_{0, \omega_1}\colon q_0\rightarrow q_{\omega_1}$ is a $\diamondsuit$-iteration.
\end{enumerate} 
\end{thm}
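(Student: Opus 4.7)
The plan is to modify the Asperó--Schindler $\Wstar$-forcing $\PP(\Vmax, A, D)$ by layering additional genericity into the construction of $\sigma_{0,\omega_1}$, upgrading the correctness clause of the A-S picture to the $\diamondsuit$-iteration property. This is the natural analog of the observation preceding Lemma \ref{diamondsuititerationdiablemm} that a \emph{generic} generic iteration is automatically a $\diamondsuit$-iteration.

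A condition of $\PPdia$ consists of a standard A-S condition -- a finite fragment of iterations $\mu$ of $p_0$ and $\sigma$ of some $q_0 \in p[T]$ of common length $k+1$, a branch through $T$ certifying $q_0 \in p[T]$, and a thread inside $q_k$ that reflects an intended $\omega_1$-length witness of $q_0 <_{\Vmax} p_0$ -- augmented by a finite \emph{scheduling list} $\langle(\xi_i,\dot D_i)\mid i<l\rangle$ where $\xi_i\le k$ and $\dot D_i$ is a name at stage $\xi_i$ for a dense subset of $((\Pomo/I_{\xi_i})^+)^{q_{\xi_i}}$. Extensions end-extend the A-S data, may grow the scheduling list, and impose the side condition that each generic ultrafilter $g_\xi$ recorded by $\sigma$ with $\xi_i\le\xi$ meets $\sigma^{-1}_{\xi_i,\xi}[\dot D_i^{g_{\xi_i}}]$.

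The verification proceeds in three density claims. First, any $(\xi,\dot D)$ with $\dot D\in q_\xi$ can be scheduled, immediate from the ordering. Second, given a condition carrying finitely many commitments, one can extend $\sigma$ by a single generic ultrapower of $q_k$ whose ultrafilter meets all of them; this uses that $((\Pomo/I_k)^+)^{q_k}$ is countable in $V$ and finitely many dense sets there have a common refinement, and that the resulting ultrapower stays wellfounded by saturation of $\NS$ together with the persistence of $q_0\in p[T]$ in the side extension. Third, the A-S density lemmas for the thread and for the $T$-branch are orthogonal to the scheduling side condition and carry over unchanged. Combining these, a generic $G$ builds the full A-S picture; given any sequence $\langle D_i\mid i<\omega_1\rangle$ of dense subsets of $((\Pomo/I_{\omega_1})^+)^{q_{\omega_1}}$ and any positive $S$ in $V[G]$, one reflects name-theoretic information for the tail of $\langle D_i\rangle$ into $q_\alpha$ for a club of $\alpha<\omega_1$ (using $\Pomo^{\sharp}$ exactly as in A-S), schedules the reflected names, and extracts a stationary set of $\xi\in S$ with $g_\xi\cap\sigma_{\xi,\omega_1}^{-1}[D_i]\neq\emptyset$ for all $i<\xi$.

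The principal obstacle is proving the witnessing set is \emph{stationary} rather than merely unbounded, since commitments are finite while a $V[G]$-club need not be anticipated by any single condition. This is handled by a Fodor-style argument: working inside a hull of size $\omega_1$ containing a name for an arbitrary club, one uses saturation of $\NS$ together with the universal Baireness hypothesis of the theorem -- which ensures that $D=p[T]$ remains dense in the auxiliary extensions used to satisfy commitments and that the relevant generic ultrapowers remain wellfounded -- to reflect the combined A-S/scheduling machinery to club-many countable stages. From generic projective absoluteness one then obtains a stationary set of witnesses inside the given club, verifying that $\sigma_{0,\omega_1}$ is a $\diamondsuit$-iteration in $V^{\PPdia}$.
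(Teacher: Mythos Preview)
Your intuition --- that a generically chosen generic iteration is a $\diamondsuit$-iteration, and that this should guide the modification of the Asper\'o--Schindler construction --- is right. But the mechanism you propose cannot work. The dense sets $D_i$ in the $\diamondsuit$-iteration property are arbitrary elements of $V[G]$, not elements of $N_{\omega_1}$; there is no ``reflection of $D_i$ into $q_\alpha$'' of the kind you invoke, and the $\Pomo^\sharp$ argument in Asper\'o--Schindler does something entirely different (it secures wellfoundedness of iterates, not reflection of dense sets). What must be anticipated are $\PPdia$-\emph{names} for such $D_i$, and these are large objects in $V$, not objects inside the $q_\xi$. Scheduling names that live inside $q_{\xi_i}$ therefore captures the wrong class of dense sets. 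Even if one scheduled $\PPdia$-names instead, a condition commits to only finitely many; your Fodor sketch does not explain why, below an arbitrary condition and for an arbitrary club name $\dot C$, some extension forces a $\xi\in\dot C$ at which \emph{all} $\omega_1$-many (mostly not yet scheduled) dense sets are simultaneously met by $g_\xi$.

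The paper's solution is a self-referential device absent from your proposal. It fixes a $\diamondsuit_\kappa$-sequence (with $\kappa=\omega_3$) and builds $\PPdia$ as the last stage of an increasing chain $\langle\PPdia_\lambda\mid\lambda\in C\cup\{\kappa\}\rangle$. Certificates carry markers $\ulc\ul\xi\mapsto\ul{\lambda_\xi}\urc$ together with countable $X_\xi\prec(Q_{\lambda_\xi};\in,\PPdia_{\lambda_\xi},A_{\lambda_\xi})$, and the strengthened genericity clause requires: for every $\lambda_\xi$-\emph{code} $Z\subseteq\PPdia_{\lambda_\xi}\times\omega_1\times\omega$ for a dense subset of $(I^+)^{\dot N_{\omega_1}}$ that is definable over that structure from parameters in $X_\xi$, the stage-$\xi$ ultrafilter already meets $(Z\cap X_\xi)^\Sigma$. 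Note the recursion: the clause at stage $\lambda$ refers to the already-constructed $\PPdia_{\lambda_\xi}$. Given names $\dot S,\dot C,\langle\dot D_\alpha\mid\alpha<\omega_1\rangle$, one uses $\diamondsuit_\kappa$ to find $\lambda$ at which the combined parameter is correctly guessed by $A_\lambda$; then for any $\xi$ with $\lambda_\xi=\lambda$, \emph{every} $\dot D_\alpha$ with $\alpha<\xi$ is coded by some $Z_\alpha$ definable from the single parameter $\alpha\in X_\xi$, so the one finite marker $\ulc\ul\xi\mapsto\ul\lambda\urc$ forces $g_\xi$ to meet all of them at once. This is precisely how a finite condition secures $\omega_1$-many commitments simultaneously, and it is the missing idea in your approach.
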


For the remainder of this section, $\omega_1$ will always denote $\omega_1^V$.\\
So suppose \ref{diastarforcinggenabscond}-\ref{diastarforcingdensecond} holds. We will assume $n^{\Vmax}=0$ for notational purposes. For the most part, we will follow the construction of $\PP$ in \cite{aspsch} but will put additional constraints on the certificates. The idea that guides us here is: 

\begin{quote}
In order for $\sigma_{0,\omega_1}\colon q\rightarrow q^\ast$ to be a $\diamondsuit$-iteration, the forcing $\PPdia$ will have to anticipate dense subsets of the forcing $(I^+)^{N_{\omega_1}}$ so that they have been ``hit before". This should be captured by the map $K\rightarrow C$. Formulating this correctly produces a strengthened version of the ``genericity condition" put onto semantic certificates.
\end{quote}

A reader who can compile the above paragraph without syntax error can probably safely skip most the definition of $\PP$ and go straight to \ref{newgenericitycondition}.\\

We try to keep our notation here consistent with the notation in the paper \cite{aspsch}. For this reason, we will identify a condition $p=(M, I, a)\in\Vmax$ with its first coordinate $M$. Additionally, by even more abuse of notation:
\begin{conv}
If $(N, J, b)$ is (almost) a condition in $\Vmax$, then
\begin{itemize}
\item $I^N$ denotes $J$,
\item $(I^+)^N$ denotes $\Pomo^M-J$ and
\item $a^N$ denotes $b$.
\end{itemize}
\end{conv}

We will additionally assume both $2^{\omega_1}=\omega_2$ and $\diamondsuit_{\omega_3}$ to hold. Otherwise, first force with $\Add(\omega_2,1)\ast\Add(((2^{\omega_1})^+)^V, 1)$ and note that \ref{diastarforcinggenabscond} and \ref{diastarforcingdensecond} still hold for forcing with $\Col(\omega,\omega_2)$, which is all we need. Moreover, observe that this preserves ``$\NS$ is saturated".\\
We will denote $\omega_3$ by $\kappa$ and pick a $\diamondsuit_\kappa$-sequence $\langle \bar A_\lambda\mid\lambda<\kappa\rangle$.

We may find $T_0\subseteq T$ of size $\omega_2$ so that 
$$V^{\Col(\omega,\omega_2)}\models\exists q\in p[T_0]\ q<_{\Vmax} \mathcal H.$$
Here we use that $\mathcal H$ is almost a $\Vmax$-condition as well as \ref{Vmaxnomin}. Note that $p[T_0]\subseteq p[T]$ in any outer model. Without loss of generality, we may assume that $T_0$ is a tree on $\omega\times\omega_2$. \\
 Fix a bijection 
$$c\colon \kappa\rightarrow H_\kappa.$$
For $\lambda<\kappa$ let 
$$Q_\lambda\coloneqq c[\lambda] \text{ and }A_\lambda\coloneqq c\left[\bar A_\lambda\right].$$
There is then a club $C\subseteq\kappa$ with 
\begin{enumerate}[label=\rn*]
\item $T_0, p \in Q_\lambda$ and $\omega_2+1\subseteq Q_\lambda$,
\item $Q_\lambda\cap\Ord=\lambda$ and
\item $(Q_\lambda;\in)\prec (H_\kappa;\in)$
\end{enumerate}
for all $\lambda\in C$. We now have 
\begin{align*}
 &\text{For all }P, B\subseteq H_\kappa\text{ the set }\\
(\diamondsuit)\hspace{10pt}&\hspace{10pt}\{\lambda\in C\mid (Q_\lambda;\in, P\cap Q_\lambda, A_\lambda)\prec (H_\kappa;\in, P, B)\}\\
&\text{is stationary.}
\end{align*}
We will also define $Q_\kappa$ as $H_\kappa$. The forcing $\PP$ will add some 
$$(N_0, I_0, a_0)\in D^\ast$$
together with a generic iteration 
$$\langle N_i, \sigma_{i, j}\mid i\leq j \leq\omega_1\rangle$$
by Henkin-style finite approximations. By abuse of notation, we let $N_i=(N_i; I_i, a_i)$. For readability we will also write 
$$N_{\omega_1}=(N_{\omega_1}, I^\ast, a^\ast).$$
$\PPdia$ will be the last element of an increasing sequence $\langle \PPdia_\lambda\mid \lambda\in C\cup\{\kappa\}\rangle$ of forcings which we define inductively. We will have:
\begin{enumerate}[label=\rn*]
\item $\PPdia_\lambda\subseteq Q_\lambda$,
\item conditions in $\PPdia_\lambda$ will be finite sets of formulae in a first order language $\mathcal L_\lambda$ and
\item the order on $\PPdia_\lambda$ is reverse inclusion.
\end{enumerate}

Suppose now that $\lambda\in C\cup\{\kappa\}$ and $\PPdia_\nu$ is defined for all $\nu\in C\cap\lambda$. 

We will make use of the same convention as \aspsch.

\begin{conv}
$x\subseteq\omega$ is a real code for $N_0=(N, I_0, a_0)$ if there is a surjection $f\colon \omega\rightarrow N$ so that $x$ is the monotone enumeration of G\"odel numbers of all expressions of the form 
$$\ulcorner \dot N\models \varphi(\dot n_1,\dots , \dot n_l, \dot  I, \dot a)\urcorner$$
where $\varphi$ is a first order formula of the language associated to $(N_0, I_0, a_0)$(see below) and 
$$N\models\varphi(f(n_1),\dots, f(n_l), I_0, a_0)$$
holds.
\end{conv}

We will have conditions in $\PPdia_\lambda$ be certified in a concrete sense by objects $\mathfrak{C}$ which exist in generic extensions of $V$ that satisfies projective absoluteness w.r.t. $V$. They are of the form 
$$\mathfrak{C}=\langle\langle M_i, \mu_{i, j}, N_i, \sigma_{i, j}\mid i\leq j\leq\omega_1\rangle, \langle (k_n, \alpha_n)\mid n<\omega\rangle, \langle \lambda_\xi, X_\xi\mid\xi\in K\rangle\rangle$$

where

\begin{enumerate}[label=$(\cert.\arabic*)$]
\item\label{semcertcondsfirst} $M_0$, $N_0\in\Vmax$,
\item\label{semcertcondssecond} $x=\langle k_n\mid n<\omega\rangle$ is a real code for $N_0=(N_0;\in, I, a_0)$ and \linebreak $\langle (k_n,\alpha_n)\mid n<\omega\rangle$ is a branch through $T_0$,
\item\label{semcertcondsthird} $\langle M_i, \mu_{i, j}\mid i\leq j\leq\omega_1^{N_0}\rangle\in N_0$ is a generic iteration of $M_0$ witnessing $N_0<_{\Vmax}M_0$,
\item\label{semcertcondsfourth} $\langle N_i, \sigma_{i, j}\mid i\leq j \leq\omega_1\rangle$ is a generic iteration of $N_0$,
\item $\langle M_i, \mu_{i, j}\mid i\leq j\leq\omega_1\rangle=\sigma_{0,\omega_1}(\langle M_i, \mu_{i, j}\mid i\leq j\leq\omega_1^{N_0}\rangle)$ and 
$$M_{\omega_1}=((H_{\omega_2})^V;\in, (\NS)^V, A),$$
\item\label{semcertcondslast} $K\subseteq\omega_1$ and for all $\xi\in K$
\begin{enumerate}[label=$(\cert.6.\alph*)$]
\item $\lambda_\xi\in\lambda\cap C$, and if $\gamma<\xi$ is in $K$ then $\lambda_\gamma<\lambda_\xi$ and $X_{\gamma}\cup\{\lambda_\gamma\}\subseteq X_\xi$,
\item\label{deltacondition} $X_\xi\prec(Q_{\lambda_\xi};\in, \PPdia_{\lambda_\xi}, A_{\lambda_\xi})$ and $\delta^{X_\xi}=\xi$.
\end{enumerate}
\end{enumerate}

If $\mathfrak{C}$ has these properties, we call $\mathfrak{C}$ a \textit{potential certificate}\index{Potential certificate}.\medskip

Next up, we will define a certain first order language $\mathcal L$. $\lang$ will have the following distinguished constants
\begin{itemize}
\item \ul{$x$} for any $x\in H_\kappa$,
\item $\dot n$ for any $n<\omega$,
\item $\dot M_i$ for $i<\omega_1$,
\item $\dot\mu_{i, j}$ for $i\leq j\leq\omega_1$,
\item $\dot{\vec M}$,
\item $\dot N_i$ for $i<\omega_1$,
\item $\dot \sigma_{i, j}$ for $i\leq j<\omega_1$,
\item $\dot I$, $\dot a$ and
\item $\dot X_\xi$ for $\xi<\omega_1$.
\end{itemize}

The constants $\dot n$ will eventually produce ``Henkin-style" term models for the $N_i$. Formulas in the language $\mathcal L$ are of the form 
$$\ulc\dot N_i\models\varphi(\ul{\gamma_1},\dots, \ul{\gamma_k}, \dot n_1,\dots, \dot n_l, \dot I, \dot a, \dot M_{j_1},\dots, \dot M_{j_m}, \dot \mu_{q_1, r_1}, \dots, \dot \mu_{q_s, r_s}, \dot{\vec M})\urc$$
where 
\begin{itemize}
\item $i<\omega_1$,
\item $\gamma_1,\dots \gamma_k<\omega_1$,
\item $n_1, \dots, n_l<\omega$,
\item $j_1,\dots, j_m<\omega_1$,
\item $q_t\leq r_t<\omega_1$ for $t\in\{1,\dots, s\}$
\end{itemize}
and $\varphi$ is a first order $\in$-formula. Moreover we allow as formulas
\begin{itemize}
\item $\ulc\dot\mu_{i, \omega_1}(\dot n)=\ul{x}\urc$ for $i<\omega_1, n<\omega$ and $x\in H_{\omega_2}$,
\item $\ulc\dmu_{\omega_1,\omega_1}(\ulx)=\ulx\urc$ for $x\in \Htwo$,
\item $\ulc\dsigma_{i, j}(\dn)=\dot m\urc$ for $i\leq j<\omega_1$ and $n, m<\omega$,
\item $\ulc(\vec{\ul{k}}, \vec{\ul{\alpha}})\in\ul{T}\urc$ for $\vec k\in\omega^{{<}\omega}$ and $\vec\alpha\in\omega_2^{{<}\omega}$,
\item $\ulc\ul{\xi}\mapsto\ul{\nu}\urc$ for $\xi<\omega_1$ and $\nu<\kappa$ and
\item $\ulc\ulx\in\dot X_\xi\urc$ for $\xi<\omega_1$ and $x\in\Hkappa$.
\end{itemize}

$\lang^\lambda$ is the set of $\lang$-formulae $\varphi$ so that if $\ulx$ appears in $\varphi$ for some $x\in\Hkappa$ then $x\in Q_\lambda$. We assume formulae in $\lang^\lambda$ to be coded in a reasonably way (ultimately uniform in $\lambda$) so that $\lang^\lambda=\lang\cap Q_\lambda$. We will not make this precise.\medskip

A potential certificate 
$$\normalformcert$$
is ($\lambda$-)\textit{precertified}\index{Potential certificate!lambda precertified@$\lambda$-precertified} by $\Sigma\subseteq\lang^\lambda$ if there are surjections $e_i\colon \omega\rightarrow N_i$ for $i<\omega_1$ so that
\begin{enumerate}[label=($\Sigma$.\arabic*)]
\item\label{syncertcondsfirst} $\normalformformula\in\Sigma$ iff
\begin{enumerate}
\item $i<\omega_1$,
\item $\gamma_1,\dots, \gamma_k\leq\omega_1^{N_i}$,
\item $n_1,\dots, n_l <\omega$,
\item $j_1,\dots, j_m\leq\omega_1^{N_i}$,
\item $q_t\leq r_t\leq\omega_1^{N_i}$ for $t\in\{1,\dots, s\}$
\end{enumerate}
and 

\begin{align*}
N_i\models\varphi(&\gamma_1,\dots, \gamma_k, e_i(n_1),\dots, e_i(n_l), I^{N_i}, a^{N_i},\\ &M_{j_1}, \dots, M_{j_m}, \mu_{q_1, r_1},\dots, \mu_{q_s, r_s}, \vec{M})
\end{align*}

where $\vec M=\langle M_j, \mu_{j, j'}\mid j\leq j'\leq\omega_1^{N_i}\rangle$,
\item $\normalfone\in\Sigma$ iff $i<\omega_1$, $n<\omega$ and $\mu_{i, \omega_1}(e_i(n))=x$,
\item $\normalftwo\in\Sigma$ for all $x\in\Htwo$,
\item $\normalfthree\in\Sigma$ iff $i\leq j<\omega_1$ and $\sigma_{i, j}(e_i(n))=e_j(m)$,
\item $\ulc(\vec{\ul{l}}, \vec{\ul{\beta}})\in\ul{T}\urc\in\Sigma$ iff for some $n<\omega$, $\lh(\vec l)=n=\lh(\vec\beta)$ and for all $m<n$ $l_m=k_m$, $\beta_m=\alpha_m$,
\item $\normalffive\in\Sigma$ iff $\xi\in K$ and $\nu=\lambda_\xi$ and
\item $\normalfsix\in\Sigma$ iff $\xi\in K$ and $x\in X_\xi$.
\end{enumerate}

Note that $\cert$ can be ``read off" from $\Sigma$ in a unique way via a Henkin-style construction. For $i<\omega_1$ and $n, m<\omega$, let 
$$n\sim_i m\Leftrightarrow \ulc N_i\models \dot n =\dot m\urc\in\Sigma$$
and denote the equivalence class of $n$ modulo $\sim_i$ by $[n]_i^{\Sigma}$. We will usually drop the superscript $\Sigma$ if it is clear from context. Also let
$$n\tilde{\in}_i m\Leftrightarrow\ulc N_i\models \dot n \in \dot m\urc\in\Sigma.$$
Then $(N_i, \in)\cong (\omega, \tilde{\in}_i)/\sim_i$. We call the latter model the term model producing $N_i$.  See Lemma 3.7 in \cite{aspsch} for more details. For $x\in N_i$ we say $x$ is represented by $n$ if $x$ gets mapped to $[n]_i$ by the unique isomorphism of $N_i$ to the term model. The term model for $N_{\omega_1}$ is then the direct limit along the term models producing the $N_i$, $i<\omega_1$ and elements can then be represented by pairs $(i, n)$, $i<\omega_1, n<\omega$ in the natural way. 

To define certificates, we make use of the following concept:

\begin{defn}
For $\bar\lambda\in C\cap\lambda$,
$$Z\subseteq\PPdia_{\bar\lambda}\times\omega_1\times\omega$$
is a $\bar\lambda$-code\index{lambda code@$\lambda$-code!for a dense subset@for a dense subset of $(I^+)^{\dot N_{\omega_1}}$} for a dense subset of $(I^+)^{\dot N_{\omega_1}}$ given that
\begin{enumerate}[label=\rn*]
\item if $(p, i, n)\in Z$ then
$$\ulc \dot N_i\models``\dot n\in \dot I_i^+"\urc\in p,$$
\item for any $(q, j, m)\in\PP_{\bar\lambda}\times\omega_1\times\omega$ with
$$\ulc \dot N_j\models``\dot m\in \dot I_j^+"\urc\in q$$
there is $(p, i, n)\in Z$ with 
\begin{enumerate}[label=$(\alph*)$]
\item $p\leq q$, $j\leq i$ and
\item $\ulc\dot N_i\models``\dot n\subseteq \dot k\mod \dot I_i"\urc, \ulc\dot\sigma_{j, i}(\dot m)=\dot k\urc\in p$ for some $k<\omega$,
\end{enumerate} 
\item and if $(p, i, n)\in Z$ as well as $q\leq p$ then $(q, i, n)\in Z$.
\end{enumerate}
Suppose that 
$$\normalformcert$$
is $(\lambda-)$precertified by $\Sigma\subseteq\lang^\lambda$ as witnessed by $(e_i)_{i<\omega_1}$. For $Z_0\subseteq Z$ we define the evaluation of $Z_0$ by $\Sigma$ as\index{Z0 Sigma@$Z_0^\Sigma$ (the evaluation of $Z_0$ by $\Sigma$)}
$$Z_0^{\Sigma}\coloneqq\{S\in N_{\omega_1}\mid\exists p\in [\Sigma]^{{<}\omega}\exists i<\omega_1\exists n<\omega\ ((p, i, n)\in Z_0\wedge S=\sigma_{i,\omega_1}(e_i(n)))\}.$$
\end{defn}

A potential certificate $\cert$ is ($\lambda$-)\textit{certified}\index{Potential certificate!lambda certified@$\lambda$-certified} by a collection $\Sigma\subseteq\lang^\lambda$ if $\cert$ is $(\lambda$-)precertified by $\Sigma$ and additionally
\begin{enumerate}[resume*]
\item\label{newgenericitycondition} whenever $\xi\in K$ and $Z$ is a $\lambda_\xi$-code for a dense subset of $(I^+)^{\dot N_{\omega_1}}$ definable over 
$$(Q_{\lambda_\xi};\in, \PPdia_{\lambda_\xi}, A_{\lambda_\xi})$$
from parameters in $X_\xi$, then there is $S\in (Z\cap X_\xi)^{\Sigma}$ with $\xi\in S$.
\end{enumerate}
\begin{defn}
In the case that \ref{newgenericitycondition} is satisfied, we call $\cert$ a semantic certificate\index{Semantic certificate}, and $\Sigma$ a syntactic certificate\index{Syntactic certificate}, relative to 
$$\Vmax, A, \Htwo, T_0, \langle A_\nu\mid \nu\in C\cap\lambda\rangle\text{ and }\langle\PPdia_{\nu}\mid\nu\in C\cap\lambda\rangle.$$
\end{defn}

\begin{rem}
The genericity condition in \cite{aspsch} that is replaced here with \ref{newgenericitycondition} (adapted to our context) is:
\begin{enumerate}[label=$(\Sigma.\arabic*)^{\mathrm{AS}}$, start=8]
\item\label{aspschgenericitycond} If $\xi\in K$ and $E\subseteq\PPdia_{\lambda_\xi}$ is dense and definable over 
$$(Q_{\lambda_\xi};\in, \PPdia_{\lambda_\xi}, A_{\lambda_\xi})$$
from parameters in $X_\xi$ then 
$$[\Sigma]^{<\omega}\cap E\cap X_\xi\neq\emptyset.$$
\end{enumerate}
Condition \ref{newgenericitycondition} is stronger than \ref{aspschgenericitycond}: From any such $E$, 
$$Z=\{(p, i, n)\in \PPdia_{\bar\lambda}\times\omega_1\times\omega\mid \exists q\in E\ p\leq q\wedge \ulc \dot N_i\models``\dot n\in \dot I_i^+"\urc\in p\}$$
is a $\lambda_\xi$-code for a dense subset of $(I^+)^{\dot N_{\omega_1}}$ definable over the same structure from the same parameters. If $(Z\cap X_\xi)^{\Sigma}\neq \emptyset$, it follows that 
$$[\Sigma]^{<\omega}\cap E\cap X_\xi\neq\emptyset.$$
\end{rem}
Suppose $\Sigma$ is a certificate that certifies 
$$\normalformcert,$$
$\xi\in K$ and $Z$ is a $\lambda_\xi$-code for a dense subset of $(I^+)^{\dot N_{\omega_1}}$ definable over 
$$(Q_{\lambda_\xi};\in, \PPdia_{\lambda_\xi}, A_{\lambda_\xi}).$$
$Z$ is supposed to represent a dense subset of $(I^+)^{N_{\omega_1}}$ (w.r.t. inclusion $\mod I^{N_{\omega_1}}$) in $V^{\PPdia_\lambda}$. $\Sigma$ may not be ``generic over $V$", so it may not be the case that $Z^\Sigma$ is dense in $(I^+)^{N_{\omega_1}}$. Nonetheless, already \ref{aspschgenericitycond} implies that 
$$D=\sigma_{\xi,\omega_1}^{-1}[(Z\cap X_\xi)^{\Sigma}]\subseteq (I^+)^{N_\xi}$$
is dense. $D$ may not be in $N_\xi$, so it is not guaranteed that $D$ is hit by the ultrapower $\sigma_{\xi,\xi+1}\colon N_\xi\rightarrow N_{\xi+1}$ just from genericity over $N_\xi$ alone, however \ref{newgenericitycondition} makes sure that this happens (observe that $\omega_1^{N_\xi}=\xi$). So in essence, the idea of \ref{newgenericitycondition} is that any dense subset of $(I^+)^{N_{\omega_1}}$ that exists in the final $V^{\PPdia_\kappa}$ has been ``hit" before at some point along the iteration of $N_0$ to $N_{\omega_1}$.

\begin{rem}
Note that for any syntactic certificate, there is a unique semantic certificate it corresponds to. Given a semantic certificate, its corresponding syntactic certificate is unique modulo the choice of the maps $(e_i)_{i<\omega}$.
\end{rem}

A finite set $p$ of $\lang^\lambda$-formulas is \textit{certified} by $\Sigma$ iff $\Sigma$ is a syntactic certificate and $p\subseteq\Sigma$. If $\cert$ is a semantic certificate then we also say $p$ is certified by $\cert$ in case there is a syntactic certificate $\lambda$ certifying both $\cert$ and $p$. 

\begin{defn}
Conditions $p\in\PPdia_\lambda$\index{Pdiamondalambda@$\PPdia_\lambda$} are finite sets of $\lang^\lambda$ formulae so that 
$$V^{\Col(\omega, \omega_2)}\models``\exists\Sigma\subseteq\lang^\lambda\ \Sigma\text{ certifies }p".$$
\end{defn}

This completes the construction of $\PPdia_\lambda$.
\begin{prop}\label{certinsamllcolprop}
Let $p\in[\lang^\lambda]^{<\omega}$. If $p$ is certified in some outer model, then $p$ is certified in $V^{\Col(\omega,\omega_2)}$.
\end{prop}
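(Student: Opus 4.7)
The plan is to show that ``$p$ is certified'' becomes a projective statement with real parameters once we pass to a model in which $\omega_2^V$ is countable, and then to transfer from the given outer model down to $V^{\Col(\omega,\omega_2)}$ via the generic projective absoluteness assumed in \ref{diastarforcinggenabscond}.

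First, being a semantic certificate is upward absolute between transitive models of enough $\ZFC$. The potential-certificate conditions \ref{semcertcondsfirst}--\ref{semcertcondslast} are manifestly absolute between transitive models, and the genericity clause \ref{newgenericitycondition} quantifies over dense subsets definable from parameters in $X_\xi$ over the fixed structure $(Q_{\lambda_\xi},\in,\PPdia_{\lambda_\xi},A_{\lambda_\xi}) \in V$, which is likewise absolute. Hence if $p$ is certified in some outer model $W$, then $p$ is still certified in any common generic extension $W^\ast$ of $W$ and $V^{\Col(\omega,\omega_2)}$.

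Second, I code the problem into a projective statement in $V^{\Col(\omega,\omega_2)}$. Fix a bijection $f\colon \omega \to \Htwo^V$ in $V^{\Col(\omega,\omega_2)}$. Since every element of $\Htwo^V$ has hereditary size at most $\omega_2^V$, each of $\mathcal H$, $T_0$, $p$, and each structure $(Q_{\lambda_\xi},\in,\PPdia_{\lambda_\xi},A_{\lambda_\xi})$ for $\lambda_\xi \in C \cap \lambda$ is coded by a real definable from $f$; similarly, any semantic certificate $\cert$ (a sequence of length $\omega_1^V+1$ all of whose components have size at most $\omega_2^V$) is coded by a single real $r$. All the clauses defining ``$\cert$ is a semantic certificate for $p$'' then translate via $f$ into a projective condition on $r$: the generic-iteration conditions and countable elementarity are arithmetic, branch-membership through $T_0$ is arithmetic, and \ref{newgenericitycondition} becomes at worst $\Pi^1_1$, since definability over the now countable $(Q_{\lambda_\xi},\in,\PPdia_{\lambda_\xi},A_{\lambda_\xi})$ from finitely many parameters in $X_\xi$ is captured by arithmetic quantification over formula codes and parameters. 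Thus ``there is a real $r$ coding a certificate for $p$'' is a projective statement in the real parameter $f$.

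Third, this projective statement holds in $W^\ast$ by the first step, so by generic projective absoluteness (applied in its two-step form, which follows from \ref{diastarforcinggenabscond} together with the ambient large-cardinal setup) the same statement holds in $V^{\Col(\omega,\omega_2)}$, producing a certifying real and hence a certificate for $p$ there. The main obstacle I anticipate is verifying that the definability quantifier in \ref{newgenericitycondition} genuinely reduces to arithmetic quantification after coding, and articulating the precise form of two-step generic projective absoluteness needed in the last step; both hinge on the fact that $\omega_2^V$ is countable in $V^{\Col(\omega,\omega_2)}$, so that all ambient structures become hereditarily countable and definability over them reduces to quantification over naturals.
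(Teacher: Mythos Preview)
Your argument has a genuine gap at the final transfer step. The hypothesis is that $p$ is certified in \emph{some outer model} $W$ of $V$, not in a generic extension. In the applications of this proposition (e.g.\ in Claim~\ref{isaconditionclaim} and in the proof that $\emptyset\in\PPdia_{\min(C)}$), the relevant outer model is $V[h]$ viewed from inside a generic iterate $M^+$ of $V$; this is \emph{not} a generic extension of $M^+$. Hence there is no reason a ``common generic extension $W^\ast$ of $W$ and $V^{\Col(\omega,\omega_2)}$'' should exist, and even if it did, hypothesis~\ref{diastarforcinggenabscond} only gives you projective absoluteness between $V$ and its generic extensions by forcings of size $2^{\omega_1}$, not between $V^{\Col(\omega,\omega_2)}$ and an arbitrary $W^\ast$. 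So the appeal to generic projective absoluteness in your third step is illegitimate.

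Relatedly, your claim that clauses \ref{semcertcondsfirst}--\ref{semcertcondslast} are ``manifestly absolute'' skips over the real difficulty: clause~\ref{semcertcondsfirst} demands $M_0,N_0\in\Vmax$, which includes generic iterability and is a priori only projective, not $\Sigma^1_2$. The paper's proof handles this by a two-stage argument: first, Shoenfield absoluteness (which \emph{does} apply between arbitrary outer models) transfers the existence of $\Sigma$ satisfying all the $\mathbf\Sigma^1_2$-expressible conditions, namely \ref{syncertcondsfirst}--\ref{newgenericitycondition} and \ref{semcertcondssecond}--\ref{semcertcondslast}, down to $V^{\Col(\omega,\omega_2)}$. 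Second, clause~\ref{semcertcondsfirst} is recovered separately: $N_0\in\Vmax$ because $N_0\in p[T_0]$ and $p[T_0]\subseteq\Vmax$ in $V[g]$ by the universally Baire hypothesis~\ref{diastarforcingdensecond}; and $M_0\in\Vmax$ because $\mathcal H$ is almost a $\Vmax$-condition, so by~\ref{Vmaxiteriesareconds} it suffices that $M_0$ is generically iterable, which follows from the existence of $\Pomo^\sharp$ via Woodin's result. This decomposition into a Shoenfield-transferable core plus a separate iterability argument is exactly what your approach is missing.
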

\begin{proof}
Let $g$ be $\Col(\omega,\omega_2)$-generic. If there is some outer model in which $p$ is certified, then by Shoenfield absoluteness we can find in $V[g]$ a set of $\lang^\lambda$-formulas $\Sigma$ with $p\in[\Sigma]^{<\omega}$ such that if 
$$\normalformcert$$
is the corresponding semantic interpretation then 
\begin{enumerate}[label=$(\roman*)$]
\item $\Sigma$ satisfies \ref{syncertcondsfirst}-\ref{newgenericitycondition},
\item $\cert$ satisfies \ref{semcertcondssecond} as well as \ref{semcertcondsfourth}-\ref{semcertcondslast} and
\item $\cert$ satisfies \ref{semcertcondsthird} in the sense that $\mu_{0, \omega_1^{N_0}}\in N_0$ and $N_0\models \varphi^{\Vmax}(M_{\omega_1^{N_0}})$,
\end{enumerate}
as this can be expressed by a $\mathbf{\Sigma}^1_2$-formula. It remains to show that \ref{semcertcondsfirst} holds true as well, i.e.~$M_0, N_0\in\Vmax$. For $N_0$ this follows as $N_0\in p[T_0]$ and by assumption \ref{diastarforcingdensecond}, $p[T_0]\subseteq\Vmax$ in $V[g]$. To see that $M_0\in\Vmax$, note that $\mathcal H\in\Vmax$ as $\mathcal H$ is almost a $\Vmax$-condition in $V$. By \ref{Vmaxiteriesareconds}, it is enough to see that $M_0$ is generically iterable. This follows from (the proof of) Theorem 3.16 in \cite{woodinbook}, here we use $\Pomo^\sharp$ exists in $V$.
\end{proof}
We let $\PPdia=\PPdia_\kappa$. As in \aspsch, we conclude that there is a club $D\subseteq C$ so that for all $\lambda\in D$
$$\PPdia_\lambda=\PPdia\cap Q_\lambda$$ 
and hence we get 

\begin{align*}
 &\text{for all } B\subseteq H_\kappa\text{ the set }\\
(\diamondsuit(\PPdia))\hspace{10pt}&\hspace{10pt}\{\lambda\in C\mid (Q_\lambda;\in, \PPdia_\lambda, A_\lambda)\prec (H_\kappa;\in, \PP, B)\}\\
&\text{is stationary.}
\end{align*}

\begin{lemm}
$\emptyset\in \PPdia_{\min(C)}$.
\end{lemm}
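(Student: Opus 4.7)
The plan is to verify, working in $V^{\Col(\omega,\omega_2)}$ (via Proposition \ref{certinsamllcolprop}), the existence of a syntactic certificate $\Sigma\subseteq\lang^{\min(C)}$; since $\emptyset$ is a finite subset of any such $\Sigma$, any such $\Sigma$ trivially certifies $\emptyset$. The key simplification is to take $K=\emptyset$ in the accompanying semantic certificate, whereby both the strengthened genericity condition \ref{newgenericitycondition} and the old condition \ref{aspschgenericitycond} hold vacuously. This reduces our task to producing the remaining data for a semantic certificate $\cert$ together with Henkin enumerations $(e_i)_{i<\omega_1^V}$.

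In $V^{\Col(\omega,\omega_2)}$, note first that $\omega_1^V$ is countable. By assumption \ref{diastarforcingdensecond} and the way $T_0$ was extracted (namely, $T_0$ was chosen so that $\Col(\omega,\omega_2)$ forces $p[T_0]$ to contain a condition below $\mathcal H$), there exists $N_0\in p[T_0]$ with $N_0<_{\Vmax}\mathcal H$. Fix such an $N_0$ and a branch $\langle (k_n,\alpha_n)\mid n<\omega\rangle$ through $T_0$ whose first coordinate is a real code of $N_0$. Inside $N_0$, let $\langle M_i,\mu_{i,j}\mid i\leq j\leq\omega_1^{N_0}\rangle$ be a generic iteration witnessing $N_0<_{\Vmax} M_0$ for some $M_0\in\Vmax^{N_0}$. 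The main step, and the main obstacle, is to build in $V^{\Col(\omega,\omega_2)}$ a generic iteration
$$\langle N_i,\sigma_{i,j}\mid i\leq j\leq\omega_1^V\rangle$$
of $N_0$ of countable length $\omega_1^V+1$ such that $\sigma_{0,\omega_1^V}(\langle M_i,\mu_{i,j}\rangle)$ terminates at the real $\mathcal H=(\Htwo,\NS,A_0,\dots,A_{n^{\Vmax}})$. This is carried out by the standard $\Pmax$-style construction from \cite{woodinbook}: using the saturation of $\NS$, one chooses at each successor step a generic filter on $(\Pomo/\NS)^+$ over $V$ and takes the corresponding generic ultrapower to determine $\sigma_{\alpha,\alpha+1}$; the match $\sigma_{0,\omega_1^V}(M_{\omega_1^{N_0}})=\mathcal H$ at the top is exactly what $N_0\in p[T_0]$ was chosen to witness, since $p[T_0]$ encodes precisely such iterations. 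Generic iterability of $N_0$, which follows from $N_0\in\Vmax$, ensures wellfoundedness throughout.

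Combined with $K=\emptyset$, this produces a potential certificate $\cert$ satisfying \ref{semcertcondsfirst}--\ref{semcertcondslast}. Fix surjections $e_i\colon\omega\to N_i$ for $i<\omega_1^V$ and let $\Sigma\subseteq\lang^{\min(C)}$ be the set of all $\lang^{\min(C)}$-formulas that are dictated to belong to $\Sigma$ by clauses $(\Sigma.1)$--$(\Sigma.7)$ when read off from $\cert$ under these enumerations. By construction, $\Sigma$ satisfies the precertification clauses, and \ref{newgenericitycondition} holds vacuously as $K=\emptyset$; hence $\Sigma$ is a syntactic certificate and $\emptyset\in\PPdia_{\min(C)}$ follows. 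The construction is a direct adaptation of the existence argument in \cite{aspsch}, with the additional genericity demand \ref{newgenericitycondition} circumvented precisely by taking $K$ empty.
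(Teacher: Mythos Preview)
There is a genuine gap in the central step. The semantic certificate requires, via condition $(\cert.5)$, that $M_{\omega_1}=\mathcal H=((\Htwo)^V,\NS^V,A)$, where $M_{\omega_1}=\sigma_{0,\omega_1}(M_{\omega_1^{N_0}})$ is the final model of the stretched iteration. You claim this can be arranged by iterating $N_0$ of length $\omega_1^V+1$ and that ``the match $\sigma_{0,\omega_1^V}(M_{\omega_1^{N_0}})=\mathcal H$ at the top is exactly what $N_0\in p[T_0]$ was chosen to witness, since $p[T_0]$ encodes precisely such iterations.'' This is incorrect on two counts. First, $p[T_0]$ does not encode any iterations: $T_0$ was chosen merely so that $p[T_0]\subseteq D$ and so that in $V^{\Col(\omega,\omega_2)}$ some $N_0\in p[T_0]$ satisfies $N_0<_{\Vmax}\mathcal H$. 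The relation $N_0<_{\Vmax}\mathcal H$ gives an iteration of $\mathcal H$ \emph{inside $N_0$} of length $\omega_1^{N_0}+1$; it says nothing about how to iterate $N_0$ so that the stretch lands at $\mathcal H$. Second, your description of the construction (``choose at each successor step a generic filter on $(\Pomo/\NS)^+$ over $V$ and take the corresponding generic ultrapower to determine $\sigma_{\alpha,\alpha+1}$'') conflates iterating $N_\alpha$ with iterating $V$; the map $\sigma_{\alpha,\alpha+1}$ is a generic ultrapower of the countable $N_\alpha$, not of $V$, and there is no targeting mechanism that makes the stretched $M$-iterate equal $\Htwo^V$.

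The paper's argument supplies exactly the missing idea. One takes $M_0=\mathcal H$ (so the witnessing iteration $\mu_{0,\omega_1^{N_0}}$ lies in $N_0$), iterates $N_0$ of length $\kappa+1=\omega_1^{V[g]}+1$ (not $\omega_1^V+1$), and then, using that $\NS^V$ is saturated, \emph{lifts} the induced iteration $\mu_{0,\kappa}$ of $M_0=\mathcal H$ to a generic iteration $\mu_{0,\kappa}^+\colon (V,\NS^V)\rightarrow M^+$. By construction of the lift one has $M_\kappa=(\Htwo)^{M^+}$ automatically. The semantic certificate (with $K=\emptyset$, as you correctly suggest) is then verified not in $V[g]$ relative to $V$'s parameters, but in $M^+$ relative to the $\mu^+$-images of the parameters $\Vmax,A,\Htwo,T_0,\dots$; Proposition~\ref{certinsamllcolprop} gives $M^+\models\emptyset\in\mu^+(\PPdia_{\min(C)})$, and elementarity of $\mu^+$ reflects this back to $V$. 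Your reduction to $K=\emptyset$ is right, but the lift-and-reflect manoeuvre is the heart of the proof and cannot be bypassed.
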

The argument is essentially the same as the proof of Lemma 3.6 in \cite{aspsch} modulo some details that arise from replacing $\Pmax$ by a general $\Pmaxvariant$.
\begin{proof}
Let $g$ be generic for $\Col(\omega, \omega_2)$. Note that $\mathcal H\in\Vmax$ as $\mathcal H$ is almost a $\Vmax$-condition in $V$. By choice of $T_0$, we can find $N_0=(N_0, I_0, a_0)\in D^\ast$ with $N_0<_{\Vmax}\mathcal H$. Let $\langle (k_n,\alpha_n)\mid n<\omega\rangle$ witness $N_0\in p[T]$. Let us denote $M_0=\mathcal H$ and let
$$\mu_{0, \omega_1^{N_0}}\colon M_0\rightarrow M_{\omega_1^{N_0}}$$
witness $N_0<_{\Vmax} M_0$. Now let 
$$\sigma_{0, \kappa}\colon N_0\rightarrow N_\kappa$$
be a generic iteration of $N_0$ of length $\kappa+1=\omega_1^{V[g]}+1$ as well as 
$$\mu_{0,\kappa}\coloneqq \sigma_{0,\kappa}(\mu_{0,\omega_1^{N_0}})\colon M_0\rightarrow M_\kappa$$
the stretch of $\mu_{0,\omega_1^{N_0}}$ by $\sigma_{0, \kappa}$. Note that this is a generic iteration of $M_0$ of length $\kappa+1$. 
\begin{claim}\label{extendclaim}
The generic iteration 
$$\langle M_\alpha, \mu_{\alpha,\beta}\mid\alpha\leq\beta\leq\kappa\rangle$$ 
can be extended to a generic iteration of $M_0^+\coloneqq (V, \NS^V)$ of length $\kappa+1$. That is, there is a generic iteration 
$$\langle M_\alpha^+,\mu^+_{\alpha,\beta}\mid\alpha\leq\beta\leq\kappa\rangle$$
of $M_0^+$ so that for all $\alpha\leq\beta\leq\kappa$
\begin{enumerate}[label=$(+.\roman*)$]
\item\label{pluscond1} $M_\alpha=\left(\Htwo\right)^{M_\alpha^+}$ and
\item\label{pluscond2} $\mu_{\alpha,\beta}=\mu_{\alpha,\beta}^+\res M_\alpha$.
\end{enumerate}
\end{claim}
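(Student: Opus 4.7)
The plan is to build the extension $\langle M_\alpha^+,\mu^+_{\alpha,\beta}\mid\alpha\leq\beta\leq\kappa\rangle$ by induction on $\beta\leq\kappa$, maintaining \ref{pluscond1} and \ref{pluscond2} as inductive invariants and starting from $M_0^+:=(V,\NS^V)$. The guiding principle is that, as long as $M_\alpha=\Htwo^{M_\alpha^+}$ holds, subsets of $\omega_1^{M_\alpha}$ and the non-stationary ideal on $\omega_1^{M_\alpha}$ are shared between $M_\alpha$ and $M_\alpha^+$. Hence the Boolean algebras $((\Pomo/I_\alpha)^+)^{M_\alpha}$ and $((\Pomo/I_\alpha^+)^+)^{M_\alpha^+}$ coincide, and the ultrafilter $\bar U_\alpha$ driving the step $\mu_{\alpha,\alpha+1}\colon M_\alpha\to M_{\alpha+1}$ can simultaneously serve as an ultrafilter over $M_\alpha^+$.

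At a successor step $\beta=\alpha+1$, I will set $M_{\alpha+1}^+:=\Ult(M_\alpha^+,\bar U_\alpha)$ with $\mu^+_{\alpha,\alpha+1}$ the canonical ultrapower map, and take $\mu^+_{\gamma,\alpha+1}:=\mu^+_{\alpha,\alpha+1}\circ\mu^+_{\gamma,\alpha}$ for $\gamma\leq\alpha$. The core verification for \ref{pluscond1} at $\alpha+1$ is that any function $f\colon\omega_1\to M_\alpha$ lying in $M_\alpha^+$ already lies in $M_\alpha$: since $M_\alpha=\Htwo^{M_\alpha^+}$, each $f(\gamma)$ satisfies $|\tc(f(\gamma))|\leq\omega_1$ in $M_\alpha^+$, so $|\tc(f)|\leq\omega_1$ in $M_\alpha^+$, placing $f\in\Htwo^{M_\alpha^+}=M_\alpha$. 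Consequently the equivalence classes representing elements of $\Htwo^{M_{\alpha+1}^+}$ coincide with those representing elements of $\Ult(M_\alpha,\bar U_\alpha)=M_{\alpha+1}$, yielding the desired equality; \ref{pluscond2} at $\alpha+1$ is immediate. At a limit $\beta$, I will take $M_\beta^+$ to be the direct limit of $\langle M_\gamma^+,\mu^+_{\gamma,\delta}\mid\gamma\leq\delta<\beta\rangle$ with the canonical limit maps; both invariants pass to the limit because taking direct limits commutes with the $\Htwo$-restriction.

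The principal obstacle is well-foundedness of the successive $M_\alpha^+$. At limit stages this reduces to the successor case by the standard pigeonhole argument: any purported descending $\in$-sequence in the direct limit is represented by elements of some fixed earlier $M_\gamma^+$, contradicting well-foundedness there. At a successor step, well-foundedness of $\Ult(M_\alpha^+,\bar U_\alpha)$ requires that $\bar U_\alpha$ be $M_\alpha^+$-generic for the forcing $((\Pomo/I_\alpha^+)^+)^{M_\alpha^+}$, together with precipitousness of $I_\alpha^+$ in $M_\alpha^+$. The latter is inherited from saturation (hence precipitousness) of $\NS^V$ in $V$ via the elementary $\mu^+_{0,\alpha}$. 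To secure the former, I will exploit that the construction takes place in $V[g]$ where $g$ is $\Col(\omega,\omega_2)$-generic, so that $\omega_2^V$ becomes countable; by saturation of $\NS^V$, all maximal antichains of $((\Pomo/\NS^V)^+)^V$ have size $\leq\omega_1$, and there are at most $2^{\omega_1}=\omega_2$ many of them in $V$, hence only countably many in $V[g]$. Therefore, when choosing $\sigma_{0,\kappa}$ at the outset, I should pick it sufficiently generically over $V$ (in the same spirit as Larson's bookkeeping for $\diamondsuit$-iterations used elsewhere in the paper) so that each shifted ultrafilter $\bar U_\alpha$ appearing in $\mu_{0,\kappa}=\sigma_{0,\kappa}(\mu_{0,\omega_1^{N_0}})$ is $M_\alpha^+$-generic; this mild strengthening suffices to run the induction.
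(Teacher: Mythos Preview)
Your inductive skeleton and the verification of \ref{pluscond1}, \ref{pluscond2} at successors and limits are essentially the same as in the paper. The real divergence is in how you secure genericity of the ultrafilter over $M_\alpha^+$, and here you miss the key point and introduce an unnecessary (and not quite justified) detour.

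You invoke saturation only to get precipitousness, and then propose to ensure $M_\alpha^+$-genericity of $\bar U_\alpha$ by going back and choosing $\sigma_{0,\kappa}$ ``sufficiently generically over $V$''. But $\sigma_{0,\kappa}$ has already been fixed before the claim is stated; the claim is supposed to hold for the given iteration, not for a carefully re-chosen one. Moreover, your bookkeeping suggestion is circular as written: the models $M_\alpha^+$ over which you need genericity are themselves built from the very ultrafilters you are trying to control, so ``choosing $\sigma_{0,\kappa}$ at the outset'' cannot anticipate them without an interleaved construction you do not describe.

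The paper's argument avoids all of this with one line: by elementarity of $\mu_{0,\alpha}^+$, $M_\alpha^+\models``\NS\text{ is saturated}"$, so every maximal antichain of $(\NS^+)^{M_\alpha^+}$ has size at most $\omega_1^{M_\alpha^+}$ and therefore lies in $\Htwo^{M_\alpha^+}=M_\alpha$. Since $g_\alpha$ is $M_\alpha$-generic by hypothesis, it meets all of these antichains and is automatically $M_\alpha^+$-generic. This is the intended second use of saturation (beyond precipitousness), and it makes the claim hold for \emph{any} extension $\sigma_{0,\kappa}$, with no extra choices required.
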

\begin{proof}
The iteration $\langle M_\alpha^+,\mu^+_{\alpha,\beta}\mid\alpha\leq\beta\leq\kappa\rangle$ arises by applying the same generic ultrafilter $g_\alpha$ which generates $\mu_{\alpha,\alpha+1}\colon M_\alpha\rightarrow M_{\alpha+1}$ to $M_\alpha^+$. By induction on $\alpha$, as $M_\alpha=\left(\Htwo\right)^{M_\alpha^+}$, $g_\alpha$ measures all subsets of $\omega_1^{M_\alpha^+}$ in $M_\alpha^+$. It is a generic ultrafilter as 
$$M_\alpha^+\models``\NS\text{ is saturated}"$$
by elementarity of $\mu_{0,\alpha}^+$, and hence all maximal antichains in $(\NS^+)^{M_\alpha^+}$ are already in $M_\alpha$, hence are met by $g_\alpha$. Now let 
$$\mu_{\alpha,\alpha+1}^+\colon M_\alpha^+\rightarrow M_{\alpha+1}^+\coloneqq \Ult(M_\alpha^+, g_\alpha)$$
be the ultrapower. Any $x\in (\Htwo)^{M_{\alpha+1}^+}$ is represented by some function 
$f\colon \omega_1^{M_\alpha^+}\rightarrow \left(\Htwo\right)^{M_\alpha^+}$ which is an element of $\left(\Htwo\right)^{M_\alpha^+}=M_\alpha$. It follows that $\mu_{\alpha,\alpha+1}=\mu_{\alpha,\alpha+1}^+\res M_\alpha$. It is easy to see that the properties \ref{pluscond1},\ref{pluscond2} are stable under taking direct limits.
\end{proof}
The point is that 
$$\langle\langle M_i,\mu_{i, j}, N_i,\sigma_{i, j}\mid i\leq j\leq\omega_1\rangle,\langle (k_n,\alpha_n)\mid n<\omega\rangle, \emptyset\rangle$$
is a semantic certificate for $\emptyset$ in $M^+\coloneqq M_\kappa^+$ with respect to 
$$\mu^+(\Vmax), \mu^+(A), \left(\Htwo\right)^{M^+}, \mu^+(T_0), \mu^+(\langle A_\nu\mid \nu\in C\cap\lambda\rangle), \mu^+(\langle \PPdia_\nu\mid\nu\in C\cap\lambda\rangle)$$
for $\lambda=\min(C)$ and $\mu^+=\mu_{0,\kappa}^+$. By Proposition \ref{certinsamllcolprop}, 
$$M^+\models\emptyset\in\mu^+(\PPdia_{\min (C)})$$
so that $\emptyset\in \PPdia_{\min(C)}$ in $V$ by elementarity of $\mu^+$.
\end{proof}

\begin{lemm}\label{certificatelemm}
Suppose $\lambda\in C\cup\{\kappa\}$ and $g\subseteq\PPdia_\lambda$ is a filter with 
\begin{enumerate}[label=$(\roman*)$]
\item $g\cap E\neq\emptyset$
whenever $E\subseteq\PPdia_\lambda$ is dense and definable over 
$$(Q_\lambda;\in, \PPdia_\lambda, A_\lambda),$$
\item $g$ is an element of a generic extension of $V$ by a forcing of size $\leq 2^{\omega_2}$.
\end{enumerate}
Then $\bigcup g$ is a semantic certificate.
\end{lemm}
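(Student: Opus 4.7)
The plan is to verify that $\Sigma\coloneqq\bigcup g$ is a syntactic certificate (with its corresponding $\cert$ definable from $\Sigma$) by checking each clause $(\Sigma.1)$--\ref{newgenericitycondition} separately, following the blueprint of the analogous lemma in \aspsch\ \cite{aspsch}. For each clause I will exhibit a family of dense subsets of $\PPdia_\lambda$ that are definable over $(Q_\lambda;\in,\PPdia_\lambda,A_\lambda)$ from parameters in $Q_\lambda$; assumption $(i)$ then forces $\Sigma$ to satisfy that clause. Assumption $(ii)$ puts $g$ in a common outer model with the auxiliary certifying theories $\Sigma_q$, so that their semantic interpretations are available for manipulation.

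For the \aspsch-style clauses --- Henkin completeness and existential witnesses for each $\dot N_i$, coherence of the iteration maps $\sigma_{i,j}$ and $\mu_{i,j}$ with their generic-ultrapower structure, the fact that $\langle(k_n,\alpha_n)\mid n<\omega\rangle$ traces a branch through $T_0$, and the coherent extension of $\langle\lambda_\xi,X_\xi\mid\xi\in K\rangle$ --- density will follow by the familiar template: given $q\in\PPdia_\lambda$, invoke the definition of $\PPdia_\lambda$ to obtain a syntactic certificate $\Sigma_q\supseteq q$ in $V^{\Col(\omega,\omega_2)}$ (using Proposition \ref{certinsamllcolprop}), read off from $\Sigma_q$ a finite set of formulas to append to $q$, and note that the resulting $q'\leq q$ remains certified by $\Sigma_q$. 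The existence of such a $q'$ is absolute between $V$ and $V^{\Col(\omega,\omega_2)}$ since both the relevant dense set and $\PPdia_\lambda$ are defined in $V$.

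The genuinely new step is verifying clause \ref{newgenericitycondition}. For each 4-tuple $(\xi,\lambda_0,X_0,Z)$ with $\lambda_0\in C\cap\lambda$, $X_0\prec(Q_{\lambda_0};\in,\PPdia_{\lambda_0},A_{\lambda_0})$ of height $\xi$, and $Z\in Q_\lambda$ a $\lambda_0$-code for a dense subset of $(I^+)^{\dot N_{\omega_1}}$ definable over $(Q_{\lambda_0};\in,\PPdia_{\lambda_0},A_{\lambda_0})$ from parameters in $X_0$, let $E_{(\xi,\lambda_0,X_0,Z)}$ be the set of $q\in\PPdia_\lambda$ which either contain a formula refuting $(\lambda_\xi,X_\xi)=(\lambda_0,X_0)$, or else contain some $p\subseteq q$ with $(p,i,n)\in Z\cap X_0$ together with a formula $\ulc\ul\xi\in\dsigma_{i,j}(\dot n)\urc\in q$ for some $j<\omega_1$. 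This set is definable over $(Q_\lambda;\in,\PPdia_\lambda,A_\lambda)$ with parameters $\xi,\lambda_0,X_0,Z\in Q_\lambda$. For density: given $q$ with certifying $\Sigma_q$ and corresponding semantic certificate $\cert_q$, either $\cert_q$ disagrees with $(\lambda_0,X_0)$ at stage $\xi$ (and we extend into the first disjunct), or else \ref{newgenericitycondition} applied to $\cert_q$ itself yields $(p,i,n)\in Z\cap X_0$ with $p\in[\Sigma_q]^{<\omega}$ and $\xi$ in the semantic interpretation of $\sigma_{i,\omega_1}(e_i(n))$; appending $p$ and the formula $\ulc\ul\xi\in\dsigma_{i,j}(\dot n)\urc$ for any sufficiently large $j$ to $q$ completes the density argument.

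The main obstacle is this last density step: one must verify that the inserted formula lies in $\lang^\lambda$ (automatic since $\xi,i,j,n$ are countable and the constants involved live in $Q_\lambda$), that the extended condition remains certified by $\Sigma_q$ (immediate), and that $E_{(\xi,\lambda_0,X_0,Z)}$ truly is definable from parameters in $Q_\lambda$ --- for the last, one uses $X_0\subseteq Q_{\lambda_0}\subseteq Q_\lambda$ and $Z\in Q_\lambda$. Meeting every such $E_{(\xi,\lambda_0,X_0,Z)}$ separately by genericity of $g$ then establishes clause \ref{newgenericitycondition} for $\Sigma$.
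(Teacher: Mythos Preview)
Your handling of the \aspsch-style clauses is fine and matches the paper's citation of \cite[Lemma 3.7]{aspsch}. The gap is in your argument for \ref{newgenericitycondition}.

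You parametrize the dense sets $E_{(\xi,\lambda_0,X_0,Z)}$ by a countable structure $X_0$, claiming $X_0\in Q_\lambda$. But the $X_\xi$ that actually arises in the semantic certificate $\cert$ attached to $\bigcup g$ is the set $\{x\mid\ulc\ul x\in\dot X_\xi\urc\in\bigcup g\}$: it is a function of the filter $g$, which by hypothesis lives only in a generic extension of $V$. There is no reason for this $X_\xi$ to lie in $Q_\lambda$, or even in $V$, so it is not one of the $X_0$'s you have enumerated. Meeting $E_{(\xi,\lambda_0,X_0,Z)}$ for all ground-model $X_0\in Q_\lambda$ therefore tells you nothing about the actual $X_\xi$. (Even your ``refuting'' disjunct cannot save this: the language contains no formula asserting $x\notin\dot X_\xi$, so a finite condition can only refute $X_\xi\subseteq X_0$, never $X_0\subseteq X_\xi$.)

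The paper avoids this by never using $X_\xi$ as a parameter. Its dense set is parametrized only by $\xi$, $\lambda_\xi$, and the defining parameter $x$ for $Z$ --- all elements of $Q_\lambda$. Given $p\in g$ containing $\ulc\ul\xi\mapsto\ul{\lambda_\xi}\urc$ and $\ulc\ul x\in\dot X_\xi\urc$, one picks any certificate $\Sigma'$ for $p$; since $x\in X_\xi'$ and \ref{newgenericitycondition} holds for $\Sigma'$, one obtains a witness $(s,j,l)\in Z\cap X_\xi'$ with the right properties. The crucial move is then to put the formula $\ulc\ul s\in\dot X_\xi\urc$ into the stronger condition $r\leq p$. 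This commits $s$ to the \emph{eventual} $X_\xi$ determined by $g$, regardless of whether $X_\xi'$ and $X_\xi$ agree. Your argument is missing exactly this commitment step, and without it the witness you extract from $\Sigma_q$ has no reason to land in $(Z\cap X_\xi)^{\bigcup g}$.
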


\begin{proof}
Read off the canonical candidate 
$$\normalformcert$$
from $g$. The proof of Lemma 3.7 in \cite{aspsch} shows that $\bigcup g$ $\lambda$-precertifies $\cert$. Note that the argument from Proposition \ref{certinsamllcolprop} gives that $M_0, N_0\in\Vmax$ and \ref{semcertcondsthird} follows from \ref{syncertcondsfirst} and  \ref{Vmaxvarphi}. It remains to check \ref{newgenericitycondition}. So suppose $\xi\in K$ and $Z$ is a $\lambda_\xi$-code for a dense subset of $(I^+)^{\dot N_{\omega_1}}$ definable over 
$$\mathcal Q_\lambda\coloneqq (Q_{\lambda_\xi};\in, \PPdia_{\lambda_\xi}, A_{\lambda_\xi})$$
from a parameter $x\in X_\xi$. Then there is $p\in g$ with 
$$\ulc \ul{\xi}\mapsto\ul{\lambda_\xi}\urc, \ulc \ul{x}\in\dot X_\xi\urc\in p.$$
Let $\Sigma^\prime$ be a syntactic certificate certifying $p$ (in some extension of $V$ by $\Col(\omega,\omega_2)$) and 
$$\normalformcertprime$$
the corresponding semantic certificate. We have $\xi\in K$ and $\lambda_\xi'=\lambda_\xi$ as well as $x\in X_\xi'$. Thus $Z$ is definable over $\mathcal Q_\lambda$ from parameters in $X_\xi'$. As $\Sigma'$  satisfies \ref{newgenericitycondition}, there is $S\in (Z\cap X^\prime_\xi)^{\Sigma'}$ with $\xi\in S$. We may now find $(q, i, n)\in Z\cap X_\xi^\prime$ so that 
$$S=\sigma_{i,\omega_1}([n]_i^{\Sigma'}).$$
Note that $i<\xi$ as $\delta^{X_\xi^\prime}=\xi$. Let $\sigma_{i,\xi+1}([n]_i^{\Sigma'}]=[m]_{\xi+1}^{\Sigma'}$. It follows that 
$$\ulc \dot N_{\xi+1}\models``\ul{\xi}\in \dot m"\urc, \ulc\dot\sigma_{i, \xi+1}(\dot n)=\dot m\urc\in \Sigma'.$$
This is a density argument that shows: There are $s\geq r\in g$, $j<\xi$, $l<\omega$ so that 
\begin{enumerate}[label=$(\roman*)$]
\item $(s, j, l)\in Z$,
\item $\ulc \ul{s}\in \dot X_\xi\urc\in r$ and
\item $\ulc\dot N_{\xi+1}\models``\ul{\xi}\in\dot k"\urc, \ulc\dot\sigma_{j, \xi+1}(\dot l)=\dot k\urc\in r$ for some $k<\omega$.
\end{enumerate}
It follows that for $S=\sigma_{j,\omega_1}([l]_j^{\bigcup g})$, we have $S\in (Z\cap X_\xi)^{\bigcup g}$ and $\xi\in S$.
\end{proof}

\begin{lemm}\label{strongpreservationlemm}
Suppose $g$ is generic for $\PPdia$ and 
$$\normalformcert$$
is the resulting semantic certificate. Then in $V[g]$, 
$$\langle N_i, \sigma_{i, j}\mid i\leq j\leq\omega_1\rangle$$
is a $\diamondsuit$-iteration.
\end{lemm}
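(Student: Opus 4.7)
The plan is to combine $(\diamondsuit(\PPdia))$ with the strengthened genericity condition~\ref{newgenericitycondition} built into certificates. Work in $V[g]$ and fix $\vec D = \langle D_i \mid i<\omega_1\rangle$, $S \in \Pomo^{N_{\omega_1}} - I_{\omega_1}$, and a $\PPdia$-name $\dot C$ for a club in $\omega_1$ that we need to meet. Fix also $\PPdia$-names $\dot{\vec D}, \dot S$ for $\vec D$ and $S$, and let $B \subseteq H_\kappa$ be a predicate coding $\PPdia, \dot C, \dot S, \dot{\vec D}$. Applying $(\diamondsuit(\PPdia))$ to $B$, the set
$$E = \{\lambda \in C \mid (Q_\lambda;\in,\PPdia_\lambda, A_\lambda) \prec (H_\kappa;\in,\PPdia, B)\}$$
is stationary, so that for $\lambda\in E$ the predicate $A_\lambda$ captures $B\cap Q_\lambda$ and in particular the reflected names. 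A density argument paralleling the one used in Lemma~\ref{certificatelemm} and in the proof that $\emptyset \in \PPdia_{\min(C)}$ then shows that in $V[g]$ the set $K_E \coloneqq \{\xi \in K \mid \lambda_\xi \in E\}$ is stationary in $\omega_1$.

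For $\xi\in K_E$ we have $X_\xi\prec (Q_{\lambda_\xi};\in,\PPdia_{\lambda_\xi}, A_{\lambda_\xi})$ with $\delta^{X_\xi}=\xi$, so every $i<\xi$ lies in $X_\xi$ and, through $A_{\lambda_\xi}$, the substructure $X_\xi$ sees the restrictions of $\dot C, \dot S, \dot{\vec D}$ to $Q_{\lambda_\xi}$. A standard reflection argument using~\ref{newgenericitycondition} applied to a code witnessing cofinality of $\dot C^g$ below $\xi$ yields $\xi\in \dot C^g$ on a stationary subset of $K_E$. For each $i<\xi$, I would let
$$D_i^* = \{T\in(I^+)^{N_{\omega_1}} \mid (T\leq S \wedge T\in D_i) \vee T\wedge S \in I_{\omega_1}\},$$
which is dense in $(I^+)^{N_{\omega_1}}$ because $S\notin I_{\omega_1}$ and the quotient is atomless, and let $Z_i \subseteq \PPdia_{\lambda_\xi}\times\omega_1\times\omega$ be the canonical $\lambda_\xi$-code for $D_i^*$; this $Z_i$ is definable over $(Q_{\lambda_\xi};\in,\PPdia_{\lambda_\xi}, A_{\lambda_\xi})$ from the parameter $i \in X_\xi$ using the reflected $\dot S$ and $\dot{\vec D}$.

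Invoking~\ref{newgenericitycondition} once per $i<\xi$ then yields $(p, j, n)\in Z_i\cap X_\xi$ with $p\subseteq\bigcup g$ and $\xi\in \sigma_{j,\omega_1}(e_j(n))$; setting $T_i\coloneqq \sigma_{j,\xi}(e_j(n))\in N_\xi$, we get $T_i\in g_\xi$ and $\sigma_{\xi,\omega_1}(T_i)\in D_i^*$. When the first disjunct of $D_i^*$ holds, one reads off $\sigma_{\xi,\omega_1}(T_i)\in D_i$, so $g_\xi\cap\sigma_{\xi,\omega_1}^{-1}[D_i]\neq\emptyset$, and simultaneously $\xi\in S$. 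The main obstacle is ensuring the first disjunct is the one that occurs: if instead $\sigma_{\xi,\omega_1}(T_i)\wedge S\in I_{\omega_1}$ for some $i<\xi$, one does not immediately obtain $\xi\in S$. I expect this to be handled by an additional application of~\ref{newgenericitycondition} to a code isolating $\dot S$ itself, or equivalently by a diagonal argument showing that the set of $\xi\in K_E$ where the second disjunct is witnessed for some $i<\xi$ is non-stationary; the careful bookkeeping required to secure simultaneously the membership $\xi\in S$ as an actual element and the density condition at every $i<\xi$ is the technical core of the proof.
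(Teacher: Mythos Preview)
Your proposal has a genuine gap in two places, and in both cases the missing idea is the same lifting argument that the paper makes central.

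First, the sentence ``a density argument paralleling \ldots\ shows that $K_E$ is stationary'' hides the entire technical core. To get a condition $q\leq p$ which forces some $\xi\in K$ with $\lambda_\xi$ a fixed $\lambda\in E$, one must produce a certificate satisfying \ref{newgenericitycondition} at that new $\xi$. One cannot do this by massaging an existing certificate for $p$: the ultrafilter $g_\xi$ and the sets it contains are already determined, so there is no freedom left to arrange $\xi\in T$ for the required $T$'s. The paper handles this by passing to $V^{\Col(\omega,\omega_2)}$, taking a suitably generic $g\subseteq\PPdia_\lambda$, then \emph{extending} the resulting iteration by one more ultrapower using a filter $G$ on $(I^+)^{N_{\omega_1}}$ chosen to meet every $\lambda$-code definable over $(Q_\lambda;\in,\PPdia_\lambda,A_\lambda)$. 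After lifting to a generic iteration $\mu^+\colon V\to M^+$, the role of $\xi$ is played by $\omega_1^V$, and the choice of $G$ is exactly what verifies \ref{newgenericitycondition} at this new point; one then reflects along $\mu^+$ to find $\xi<\omega_1$ and $q\in\PPdia$ in $V$.

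Second, your $D_i^\ast$ device cannot secure $\xi\in S$, and this is not a matter of bookkeeping: \ref{newgenericitycondition} only guarantees that $g_\xi$ meets dense subsets, never that it lands in a prescribed $I^+$-positive set. No diagonal argument over $K_E$ will change this, because for any fixed $\xi\in K_E$ the second disjunct of $D_i^\ast$ can be the one realized. The paper solves this simultaneously with the first problem: the ultrafilter $G$ above is also chosen with $\dot S^g\in G$, so that $\omega_1^V\in\sigma_{\omega_1,\omega_1+1}(S)$; the reflected condition $q$ then literally contains the formula $\ulc\dot N_{\xi+1}\models``\ul{\xi}\in\dot m"\urc$ (with $\dot m$ representing the image of $S$), which forces $\xi\in\dot S$ outright. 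The verification that $q\Vdash\xi\in\dot C$ and that $g_\xi$ meets each $\sigma_{\xi,\omega_1}^{-1}[\dot D_\alpha]$ for $\alpha<\xi$ then proceeds, as you anticipated, via the reflected names and \ref{newgenericitycondition} applied to the codes $Z_\alpha$ at $\lambda_\xi=\lambda$.
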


\begin{proof}
Let $\dot S, \dot C$ be $\PPdia$-names with 
$$p\forces``\dot C\subseteq\omega_1\text{ is club and }\dot S\in(\dot I^+)^{\dot N_{\omega_1}}"$$
for some $p\in\PPdia$. Further suppose $\langle \dot D_\alpha\mid\alpha<\omega_1\rangle$ is a sequence of $\PPdia$-names for dense subsets of $(I^+)^{\dot N_{\omega_1}}$. We may suppose that
\begin{align*}
p\forces\dot S=\dot\sigma_{i_0,\omega_1}([\check n]_{\check i_0}^{\bigcup \dot G})
\end{align*}
for some $i_0<\omega_1$ and $n<\omega$ where $\dot\sigma_{i_0,\omega_1}$ is a name for $\sigma_{i_0, \omega_1}$ which arises in the semantic certificate corresponding to the generic filter. It is our duty to find $\xi<\omega_1$ and $q\leq p$ with 
\begin{equation*}\label{fstationarypresqeq}\tag{$\spadesuit$}
q\forces\check\xi\in\dot S\cap\dot C\wedge\forall \alpha<\check\xi\ \dot g_\xi\cap\dot\sigma_{\xi,\omega_1}^{-1}[\dot D_\alpha]\neq\emptyset
\end{equation*}
where $\dot g_\xi$ is a name for the generic ultrafilter applied to $\dot N_\xi$ along the iteration to $\dot N_{\omega_1}$. We will replace the $\dot D_\alpha$ with codes for them: For $\alpha<\omega_1$, let $Z_\alpha$ be defined by $(q, j, m)\in Z_\alpha$ iff
\begin{enumerate}[label=$(Z.\roman*)$]
\item $(q, j, m)\in\PPdia\times\omega_1\times\omega$,
\item $\ulc\dot N_j\models``\dot m\in\dot I_j"\urc\in q$ and
\item $q\forces\dot\sigma_{j, \omega_1}\left([m]_j^{\bigcup \dot G}\right)\in\dot D_\alpha$.
\end{enumerate}
Further, for $\alpha<\omega_1$, we let 
$$E_\alpha=\{q\leq p \mid\exists\beta\ \alpha\leq\beta\wedge q\forces\check\beta\in\dot C\}$$
and 
$$E=\{(q, \alpha)\in\PPdia\times\omega_1\mid q\forces\check\alpha\in\dot C\}.$$
Finally we define 
$$\tau=\left(\bigoplus_{\alpha<\omega_1} Z_\alpha\right)\oplus \left(\bigoplus_{\alpha<\omega_1} E_\alpha\right)\oplus E.$$
We may now find $\lambda\in C$ so that $p\in \PPdia_\lambda$ and 
$$(Q_\lambda;\in, \PPdia_\lambda, A_\lambda)\prec (\Hkappa;\in, \PPdia, \tau).$$
 Here, $\oplus$ denotes some canonical way of coding at most $\omega_1$-many subsets of $\Hkappa$ into a subset of $H_\kappa$. Let $h$ be $\Col(\omega, \omega_2)$-generic over $V$.
 
\begin{claim}
In $V[h]$, there are filters $g, G$ that satisfy the following properties \ref{forcingexistencegGcond1}-\ref{forcingexistencegGcond3}:
\begin{enumerate}[label=\rn*]
\item\label{forcingexistencegGcond1} $g$ meets every dense subset of $\PPdia_\lambda$ that is definable (with parameters) in 
$$(Q_\lambda;\in, \PPdia_\lambda, A_\lambda).$$
\end{enumerate}
Let 
$$\normalformcert$$ 
denote the semantic certificate corresponding to $g$.
\begin{enumerate}[label=\rn*, resume]
\item $G$ is $(I^+)^{N_{\omega_1}}$-generic over $N_{\omega_1}$ with $\dot S^g=[n]_{i_0}^{\bigcup g}\in G$.
\item\label{forcingexistencegGcond3} $G$ meets $Z^{\bigcup g}$ whenever $Z$ is a $\lambda$-code for a dense subset of $(\dot I^+)^{\dot N_{\omega_1}}$ definable (with parameters) over 
$$(Q_\lambda;\in, \PPdia_\lambda, A_\lambda).$$
\end{enumerate}
\end{claim}

\begin{proof}
Let $g'\subseteq\PPdia_\lambda$ be generic over $V$ and let
$$\normalformcertprime$$
be the semantic certificate corresponding to $\bigcup g'$. Let further $G'$ be $(I^+)^{N_{\omega_1}'}$-generic over $V[g']$ (so in particular over $N_{\omega_1}'$) with $\dot S^{g'}=[n]_{i_0}^{\bigcup g'}\in G'$. It is clear that $g', G'$ satisfy \ref{forcingexistencegGcond1}-\ref{forcingexistencegGcond3} above. The existence of such filters is $\Sigma^1_1$ in a real code for $(Q_\lambda;\in, \PPdia_\lambda, A_\lambda)$ so that there are $g, G\in V[h]$ with \ref{forcingexistencegGcond1}-\ref{forcingexistencegGcond3} by Shoenfield-absoluteness.
\end{proof}

We now work in $V[h]$. Let $G, g$ be the filters given by the claim above and let 
 $$\normalformcert$$ be the semantic certificate that comes from $g$. Let
$$\sigma_{\omega_1,\omega_1+1}\colon N_{\omega_1}\rightarrow N_{\omega_1+1}=\Ult(N_{\omega_1}, G)$$
be the generic ultrapower.  We can further extend the generic iteration 
$$\langle N_i, \sigma_{i, j}\mid i\leq j\leq \omega_1+1\rangle$$
to one of length $\kappa+1$, say
$$\langle N_i, \sigma_{i, j}\mid i\leq j\leq \kappa\rangle.$$
Further, set 
$$\vec M=\langle M_i, \mu_{i, j}\mid i\leq j\leq\kappa\rangle\coloneqq \sigma_{\omega_1, \kappa}(\langle M_i, \mu_{i, j}\mid i\leq j\leq\omega_1\rangle).$$
As $\cert$ is certified, $M_{\omega_1}=\mathcal H$ and as in Claim \ref{extendclaim}, we can extend the tail of $\vec M$ that is an iteration of $M_{\omega_1}$ to a generic iteration of $M_{\omega_1}^+\coloneqq (V, \NS^V, A)$, say
$$\langle M_i^+, \mu_{i, j}^+\mid \omega_1\leq i \leq j \leq \kappa\rangle$$
and have all $M_i^+$, $i\in[\omega_1, \kappa]$, wellfounded. Let us write 
$$\mu^+\coloneqq \mu_{\omega_1, \kappa}^+\colon V\rightarrow M_{\omega_1}^+=:M^+.$$
Work in $M^+$. We will now use 
$$\langle M_i, \mu_{i, j}, N_i, \sigma_{i, j}\mid i\leq j \leq\kappa\rangle$$
as part of a certificate. Set 
$$q\coloneqq \mu^+(p)\cup\{\ulc\ul{\omega_1}\mapsto\mu^+(\lambda)\urc, \ulc \dot\sigma_{i_0, \omega_1+1}(\dot n)=\dot m\urc, \ulc \dot N_{\omega_1+1}\models``\ul{\omega_1}\in\dot m"\urc\}$$
where $\dot m$ represents $\sigma_{\omega_1,\omega_1+1}(S)$ in the term model for $N_{\omega_1+1}$.
\begin{claim}\label{isaconditionclaim}
$q\in\mu^+(\PPdia)$.
\end{claim}

\begin{proof}
Set
$$\cert^\ast=\langle\langle M_i, \mu_{i, j}, N_i, \sigma_{i, j}\mid i\leq j \leq\kappa\rangle, \langle (k_n, \mu^+(\alpha_n))\mid n<\omega\rangle, \langle \lambda_\xi^\ast, X_\xi^\ast\mid \xi\in K^\ast\rangle\rangle$$
where 
\begin{itemize}
\item $K^\ast=K\cup\{\omega_1\}$,
\item for $\xi\in K$, $\lambda_\xi^\ast=\mu^+(\lambda_\xi)$ and $X_\xi^\ast=\mu^+[X_\xi]$ and
\item $\lambda_{\omega_1}=\mu^+(\lambda)$, $X_{\omega_1}^\ast=\mu^+[Q_\lambda]$.
\end{itemize}
We show that $\cert^\ast$ is a semantic certificate for $q$ in $M^+$. Note that we have to show that $\cert^\ast$ is a certificate relative to 
$$\mu^+(\Vmax), \mu^+(A), \mu^+(\Htwo)=(\Htwo)^{M^+}, \mu^+(T_0), \mu^+(\langle A_\nu\mid \nu\in C\rangle), \mu^+(\langle \PP_\nu\mid \nu\in C\rangle).$$
Observe that we can find a corresponding set of formulae $\Sigma^+$ that corresponds to $\cert^\ast$ with $\mu^+[\bigcup g]\subseteq\Sigma^+$ which we aim to prove to be a syntactic certificate.\\
We have $M_\kappa=\left(\Htwo\right)^{M^+}$. Notice also that 
$$\langle(k_n, \mu^+(\alpha_n))\mid n<\omega\rangle\in [\mu^+(T_0)]$$
and that $(k_n)_{n<\omega}$ is still a real code for $N_0$. Next, we prove \ref{newgenericitycondition}. First assume $\xi\in K$. Then 
$$X_\xi^\ast=\mu^+[X_\xi]\prec (\mu^+(Q_{\lambda_\xi});\in, \mu^+(\PPdia_{\lambda_\xi}), \mu^+(A_{\lambda_\xi}))$$
and $\delta^{X_\xi^\ast}=\delta^{X_\xi}=\xi$ as $\crit(\mu)=\omega_1>\xi$. As $\mu^+[X_\xi
]=X_\xi^\ast$, \ref{newgenericitycondition} holds for $\xi$ in $\cert^\ast$, since it holds for $\xi$ in $\cert$.
\\
Finally, let us consider the case $\xi=\omega_1$. We have 
$$X_{\omega_1}^\ast=\mu^+[Q_\lambda]\prec (\mu^+(Q_\lambda);\in, \mu^+(\PPdia_\lambda), \mu^+(A_\lambda))$$
and $\delta^{X_{\omega_1}^\ast}=\omega_1$ as $\mu^+$ has critical point $\omega_1$. Clearly $X_{\omega_1}^\ast$ collapses to $Q_\lambda$. So if $x\in X_{\omega_1}^\ast$ and
\begin{align*}
M^+\models``&\hat Z\text{ is a }\mu^+(\lambda)\text{-code for a dense subset of } (\dot I^+)^{N_\kappa}\text{ definable over} \\
&\hspace{60pt}(\mu^+(Q_\lambda);\in, \mu^+(\PPdia_\lambda), \mu^+(A_\lambda))\\
&\text{with parameter }x"
\end{align*}
for some $x\in X_{\omega_1}^\ast$, then by elementarity, the same definition defines a $\lambda$-code $Z$ for a dense subset of $(\dot I^+)^{\dot N_{\omega_1}}$ over 
$$(Q_\lambda; \in, \PPdia_\lambda, A_\lambda)$$
with parameter $(\mu^+)^{-1}(x)$ and we have  $\mu^+(Z)= \hat Z$.
Our properties of $g, G$ imply that there is $R\in G\cap Z^{\bigcup g}$. It is not difficult to see 
$$(\hat Z\cap X_{\omega_1}^\ast)^{\Sigma^+}=\sigma_{\omega_1, \kappa}[Z^{\bigcup g}]$$
and hence $\omega_1\in\sigma_{\omega_1, \kappa}(R)\in (\hat Z\cap X_{\omega_1}^\ast)^{\Sigma^+}$. This shows \ref{newgenericitycondition} at $\omega_1$.\\
We conclude that indeed, $\cert^\ast$ is a semantic certificate for $q$ which exists in some outer model of $M^+$. This gives $q\in \mu^+(\PPdia)$ by Proposition \ref{certinsamllcolprop}.
\end{proof}

Thus we have 
\begin{align*}
M^+\models &``\exists\xi<\mu^+(\omega_1)\\ &\left(\mu^+(p)\cup\{\ulc\ul{\xi}\mapsto\ul{\mu^+(\lambda)}\urc, \ulc \dot\sigma_{i_0, \xi+1}(\dot n)=\dot m\urc, \ulc \dot N_{\xi+1}\models``\ul{\xi}\in\dot m"\urc\}\in\mu^+(\PPdia)\right)".
\end{align*}
By elementarity of $\mu^+$, we conclude 
$$V\models``\exists \xi<\omega_1\ \left(p\cup\{\ulc\ul{\xi}\mapsto\ul{\lambda}\urc, \ulc \dot\sigma_{i_0, \xi+1}(\dot n)=\dot m\urc, \ulc \dot N_{\xi+1}\models``\ul{\xi}\in\dot m"\urc\}\in\PPdia\right)".$$
Let $\xi$ witness this and set 
$$q=p\cup\{\ulc\ul{\xi}\mapsto\ul{\lambda}\urc, \ulc \dot\sigma_{i_0, \xi+1}(\dot n)=\dot m\urc, \ulc \dot N_{\xi+1}\models``\ul{\xi}\in\dot m"\urc\}.$$
We will show that $q,\xi$ witness (\ref{fstationarypresqeq}). From this point on, we work in $V$ again and forget about $h, g, \cert$, etc.

\begin{claim}\label{fstationarypresclaim1}
$q\forces\check\xi\in\dot C\cap\dot S$.
\end{claim}

\begin{proof}
As in Claim 3.17 in \cite{aspsch}, exploit the components of $\tau$ made up from $E$ as well as $E_\alpha$, $\alpha<\omega_1$.
\end{proof}

\begin{claim}\label{fstationarypresclaim2}
$q\forces\forall \alpha<\check\xi\ \dot g_\xi\cap \dot\sigma_{\xi,\omega_1}^{-1}[\dot D_\alpha]\neq\emptyset$.
\end{claim}
\begin{proof}

Let $g$ be $\PPdia$-generic with $q\in g$ and let 
$$\normalformcert$$
be the resulting semantic certificate. We have $\xi\in K$ and $\lambda_\xi=\lambda$ as $q\in g$. Fix some $\alpha<\xi$. Clearly, 
$$\bar{Z}_\alpha=Z_\alpha\cap Q_\lambda$$
is a $\lambda$-code for a dense subset of $(\dot I^+)^{\dot N_{\omega_1}}$ which is definable over 
$$(Q_\lambda;\in, \PPdia_\lambda, A_\lambda)$$
from a parameter in $X_\xi$, namely $\alpha$. Recall that $\delta^{X_\xi}=\xi$. Using \ref{newgenericitycondition}, we find that there is 
$$R\in (\bar Z_\alpha\cap X_\xi)^{\bigcup g}$$
with $\xi \in R$. Note that there are $r\in g$, $j<\xi=\delta^{X_\xi}$ as well as $k<\omega$ with
\begin{enumerate}[label=\rn*]
\item $(r, j, k)\in \bar{Z}_\alpha\subseteq Z_\alpha$ and
\item $R=\sigma_{j,\omega_1}([k]_j^{\bigcup g})$.
\end{enumerate}
By definition of $Z_\alpha$, and as $r\in g$, $R\in D_\alpha$ and since $\xi\in R$, $R\in g_\xi$, where $g_\xi$ is the generic ultrafilter generating $\sigma_{\xi,\xi+1}\colon N_\xi\rightarrow N_{\xi+1}$.
\end{proof}
(\ref{fstationarypresqeq}) follows from Claim \ref{fstationarypresclaim1} together with Claim \ref{fstationarypresclaim2}. 
\end{proof}

This completes the proof of Theorem \ref{forcingexistencelemm}. We denote the forcing $\PPdia$ constructed above in the instance of a $\Pmaxvariant$ $\Vmax$, the set $A\in\Htwo$ and appropriate dense $D\subseteq\Vmax$ by $\PPdia(\Vmax, A, D)$ (and forget that $\PPdia$ also depends on the choice of $T, T_0,$ etc.).

\subsection{The first blueprint}
We will formulate a general theorem that will allow us to prove a variety of instances of $\MMpp\Rightarrow\Wstar$. In order to formulate the relevant forcing axioms, we use that in practice $\varphi^{\Vmax}$ has a specific form.

\begin{defn}
A $\Pmaxvariant$ $\Vmax$ is \textit{typical}\index{Pmax@$\Pmax$!Variation@-\variant!typical} if $\varphi^{\Vmax}$ can be chosen to be the form 
\begin{align*}
\varphi^{\Vmax}(x)=``&\exists M, I, a_0,\dots, a_n\ x=(M, I, a_0,\dots, a_n)\\ &\wedge\forall y\in M \bigwedge_{\psi\in\Psi} \left[\psi(y)\leftrightarrow (M;\in, I, a_0,\dots, a_n)\models\psi(y)\right]"
\end{align*}
for $n=n^{\Vmax}$ and a finite set $\Psi$ of formulae $\psi(y)$ in the language \hbox{$\{\in,\dot I,\dot a_0,\dots, \dot a_n\}$}. Moreover, $\Psi$ contains the formulae $\psi(x)=``x\in\dot I"$ and $\psi_i(x)=``x=\dot a_i"$ for all $i\leq n^{\Vmax}$. We say that $\Psi$ witnesses the typicality of $\Vmax$. \\
This means that $q<_{\Vmax} p$ iff there is a generic iteration $\mu\colon p\rightarrow p^\ast$ of $p$ in $q$ of length $\omega_1^q+1$ so that the formulae in $\Psi$ are absolute between $q, p^\ast$.
\end{defn}

\begin{rem}
For example, $\Pmax$ is (or can be construed as) a typical $\Pmaxvariant$. We have that typicality of $\Pmax$ is witnessed by $\{\psi_0^{\Pmax},\psi_1^{\Pmax}\}$ where
\begin{itemize}
\item $\psi_0^{\Pmax}(y)=``y\in\dot I"$  and
\item $\psi_1^{\Pmax}(y)=``y=\dot a_0"$.
\end{itemize}
All $\Pmaxvariants$ we will encounter, except for $\Qmaxm$, are typical $\Pmaxvariants$.
\end{rem}

Next, we formulate the relevant bounded and unbounded forcing axioms as general as possible.

\begin{defn}
Suppose $\psi(x)$ is a formula in the language $\{\in,\dot I,\dot a_0,\dots,\dot a_n\}$ and $\vec A=(A_0,\dots, A_n)\in \Htwo$. 
\begin{enumerate}[label=$(\roman*)$]
\item We define $R^\psi_{\vec A}$\index{RpsiA@$R^{\psi}_{\vec A}$} via
$$R^{\psi}_{\vec A}\coloneqq\{x\in\Htwo\mid (\Htwo;\in,\NS, A_0,\dots, A_n)\models\psi(x)\}.$$
\item For $x\in\Htwo$, we say that $C\subseteq\omega_1$\index{Code!for x@for $x$} is a code for $x$ if: Let $l\colon \omega_1\rightarrow\omega_1\times\omega_1$ denote G\"odels pairing function and $E=l[C]$. Then $(\omega_1\times\omega_1, E)$ is wellfounded and $(\tc(\{x\}),\in)$ is the transitive isomorph\footnote{$\tc$ denotes transitive closure.}.
\item $C\subseteq\omega_1$ is a code for an element\index{Code!for an element of RpsiA@for an element of $R^{\psi}_{\vec A}$} of $R^\psi_{\vec A}$ if $C$ is a code for some $x\in R^\psi_{\vec A}$.
\end{enumerate}
\end{defn}

\begin{defn}
Suppose that
\begin{itemize}
\item $\Gamma$ is a class of forcings,
\item $\vec A=(A_0,\dots, A_n)\in\Htwo$ and
\item $\Psi$ is a set of formulae $\psi(x)$ in the language $\{\dot I,\dot a_0,\dots, \dot a_n\}$. 
\end{itemize}
\begin{enumerate}[label=$(\roman*)$]
\item $\DBFAPsi(\Gamma)$\index{BFAPsiGamma@$\BFAPsi(\Gamma)$} states that $D\subseteq\mathbb R$ is $\infty$-universally Baire and whenever $\PP\in\Gamma$ and $g$ is $\PP$-generic then
$$\left(\Htwo;\in, D, R_{\vec A}^{\psi}\mid\psi\in\Psi\right)^V\prec_{\Sigma_1}\left(\Htwo;\in, D^\ast, R_{\vec A}^{\psi}\mid\psi\in\Psi\right)^{V[g]}.$$
For $\Delta\subseteq\mathcal P(\mathbb R)$, $\BFAPsigen{\Delta}_{\vec A}(\Gamma)$ means $\DBFAPsi(\Gamma)$ for all $D\in\Delta$.
\item $\FA^{\Psi}_{\vec A}(\Gamma)$\index{FAPsiA@$\FA^{\Psi}_{\vec A}$} states that whenever $\PP\in\Gamma$ and 
\begin{enumerate}[label=$(\mathrm{FA}.\roman*)$]
\item $\mathcal D$ is a set of at most $\omega_1$-many dense subsets of $\PP$,
\item $\mathcal N_\psi$ is a set of at most $\omega_1$-many $\PP$-names for codes of elements of $(R^{\psi}_{\vec A})^{V^\PP}$ for $\psi\in\Psi$
\end{enumerate}
then there is a filter $g\subseteq\PP$ so that
\begin{enumerate}[label=$(g.\roman*)$]
\item $g\cap D\neq\emptyset$ for all $D\in\mathcal D$ and
\item $\dot S^g=\{\alpha<\omega_1\mid\exists p\in g\ p\forces\check\alpha\in\dot S\}$ is a code for an element of $R^\psi_{\vec A}$ for all $\dot S\in \mathcal N_\psi$, $\psi\in\Psi$.
\end{enumerate}

\end{enumerate}
\end{defn}

We note that the methods of Bagaria in \cite{bagariabfa} readily yield the following.
\begin{lemm}\label{generalbagaria}
Suppose that 
\begin{enumerate}[label=$(\roman*)$]
\item $\Gamma$ is a class of forcings,
\item $\vec A=(A_0,\dots, A_n)\in\Htwo$ and
\item $\Psi$ is a set of formulae $\psi(x)$ in the language $\{\dot I,\dot a_0,\dots, \dot a_n\}$. 
\end{enumerate}
If $\FA^{\Psi}_{\vec A}(\Gamma)$ holds then so does $\BFAPsigen{\mathrm{uB}}_{\vec A}(\Gamma)$.
\end{lemm}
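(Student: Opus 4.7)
The plan is to adapt Bagaria's classical argument deriving generic $\Sigma_1$-absoluteness for $\Htwo$ from $\FA(\Gamma)$ to the enriched structure that appears in $\DBFAPsi(\Gamma)$. Accordingly, fix $\PP\in\Gamma$, a $\PP$-generic $g$, a set $D\subseteq\mathbb R$ that is $\infty$-universally Baire, and a $\Sigma_1$-statement
$$\exists x\,\varphi(x, a, D^\ast, R^\psi_{\vec A}\mid\psi\in\Psi)$$
with $\varphi$ bounded and $a\in\Htwo^V$, which holds in $V[g]$; the task is to produce a witness already in $V$.

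First I would fix trees $T_0, T_1$ of size $\leq|\PP|$ with $p[T_0]=D$ and $p[T_1]=\mathbb R-D$, whose projections remain complementary in $V[g]$ by universal Baireness. In $V[g]$, pick a witness $x$ and a transitive set $Y\in\Htwo^{V[g]}$ containing $x, a$ and closed enough to evaluate every bounded quantifier of $\varphi$; then adjoin to $Y$, for each real $r$ encountered during this evaluation, a branch through $T_0$ certifying $r\in D$ or a branch through $T_1$ certifying $r\notin D$, and also flag those reals that are supposed to lie in some $R^\psi_{\vec A}$. Code the resulting enriched set by some $C_Y\subseteq\omega_1$. By design, the assertion ``the object decoded from $C_Y$ has a distinguished element satisfying $\varphi$, where each $D^\ast$-atom is decided via the attached $T_0$/$T_1$-branches and each $R^\psi_{\vec A}$-flagged real is genuinely such'' is first-order in $C_Y, a, T_0, T_1, R^\psi_{\vec A}$ and absolute between $V$ and $V[g]$ once $C_Y$ is present.

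Second, I would pull $C_Y$ back to $V$ via the forcing axiom. Pick $p_0\in\PP$ and a $\PP$-name $\dot C$ so that $p_0$ forces $\dot C$ to be such a code, and for each $\alpha<\omega_1$ set $\mathcal D_\alpha=\{q\leq p_0\mid q\text{ decides ``}\check\alpha\in\dot C\text{''}\}$. For each $\psi\in\Psi$ and each countable index $\alpha$, let $\dot S_{\psi,\alpha}$ be a $\PP$-name for a code of the real flagged as the $\alpha$-th $R^\psi_{\vec A}$-element by $\dot C$; collect these into $\mathcal N_\psi$. Applying $\FA^\Psi_{\vec A}(\Gamma)$ to $\{\mathcal D_\alpha\mid\alpha<\omega_1\}$ together with $\{\mathcal N_\psi\mid\psi\in\Psi\}$ produces a filter $g_0\in V$ meeting every $\mathcal D_\alpha$ and evaluating each $\dot S_{\psi,\alpha}$ to a genuine code of an element of $R^\psi_{\vec A}$ in $V$.

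Finally, setting $C=\dot C^{g_0}$ and decoding in $V$, the absoluteness set up in the first step guarantees that the resulting object carries $T_0$- and $T_1$-branches certifying the correct $D$-membership facts in $V$ and that its flagged reals really lie in $R^\psi_{\vec A}$. Hence its distinguished element witnesses $\varphi$ in the enriched $V$-structure, which gives $\DBFAPsi(\Gamma)$; as $D$ was an arbitrary universally Baire set, $\BFAPsigen{\mathrm{uB}}_{\vec A}(\Gamma)$ follows. The main obstacle is the bookkeeping in the first step: one must arrange $C_Y$ so that \emph{every} occurrence of $D^\ast$ inside the bounded quantifiers of $\varphi$, and every relevant occurrence of the $R^\psi_{\vec A}$, is pre-certified by internal data of $C_Y$, for that is exactly what converts the absoluteness of $\varphi$ across the extension into a purely combinatorial filter-existence problem of the shape that $\FA^\Psi_{\vec A}(\Gamma)$ is designed to solve.
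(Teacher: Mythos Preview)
The paper gives no detailed proof here, only citing Bagaria's methods, and your sketch is precisely the intended adaptation: code the witness in $V[g]$, attach tree-branches to certify $D$-membership, invoke the $\mathcal N_\psi$'s to certify $R^\psi_{\vec A}$-membership, and pull everything back with the forcing axiom. So the approaches coincide.

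Two remarks are in order. First, a small slip: the predicates $R^\psi_{\vec A}$ are subsets of $\Htwo$, not of $\mathbb R$, so what you flag and pass to $\mathcal N_\psi$ are arbitrary $\Htwo$-elements (encoded as subsets of $\omega_1$ via the coding machinery the paper sets up), not reals.

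Second, and more substantively, your bookkeeping only handles \emph{positive} occurrences of the $R^\psi_{\vec A}$-atoms. A $\Sigma_0$-matrix in the enriched language may equally well contain atoms $u\notin R^\psi_{\vec A}$, and nothing in your argument guarantees that an element flagged as lying outside $(R^\psi_{\vec A})^{V[g]}$ ends up outside $(R^\psi_{\vec A})^V$ after decoding from the pseudo-generic filter. In the classical $\MMpp\Rightarrow\BMMpp$ this asymmetry is invisible because one side of the single predicate (``nonstationary'') is already $\Sigma_1$ over $(\Htwo;\in)$ and hence absorbed into the internal coding, while the other side (``stationary'') is exactly what the ``$++$'' part covers. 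For the general lemma you should make the analogous move explicit: either take $\Psi$ effectively closed under negation, so that $R^{\neg\psi}_{\vec A}$ is available for its own family $\mathcal N_{\neg\psi}$, or observe that for the $\Psi$'s actually arising in the paper's applications one side of each $R^\psi_{\vec A}$ is $\Sigma_1$-definable over $(\Htwo;\in,\vec A)$ and can therefore be certified internally without appealing to the forcing axiom.
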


\begin{defn}
Let $\Psi$ be a set of formulae in the language $\{\dot I,\dot a_0,\dots, \dot a_n\}$ for some $n$. For $\vec A=(A_0,\dots, A_n)$, we say that a forcing $\PP$ is $(\Psi,\vec A)$\textit{-preserving}\index{Psi A preserving@$(\Psi, \vec A)$-preserving} iff 
$$R^\psi_{\vec A}=\left(R^\psi_{\vec A}\right)^{V^\PP}\cap V$$
for all $\psi\in\Psi$. $\Gamma^\Psi_{\vec A}$\index{GammaPsiA@$\Gamma^{\Psi}_{\vec A}$} denotes the class of $(\Psi, \vec A)$-preserving forcings.
\end{defn}

\begin{defn}
A $\Pmaxvariant$ $\Vmax$ \textit{accepts} $\diamondsuit$\textit{-iterations}\index{Pmax@$\Pmax$!Variation@-\variant!accepts diamond iterations@accepts $\diamondsuit$-Iterations} if
\begin{align*}
``&\text{If }p\in\Vmax \text{ and }p\rightarrow p^\ast=(M, I, a_0,\dots, a_{n^{\Vmax}})\\&\text{ is a }\diamondsuit\text{-iteration }\text{then }\mathcal H_{(a_0,\dots, a_{n^{\Vmax}})}\models\varphi^{\Vmax}(p^\ast)" 
\end{align*}
is provable in $\mathrm{ZFC}^{-}+``\omega_1\text{ exists}"$ (that is, from sufficiently much of $\ZFC$).
\end{defn}

\begin{blueprintthm}\label{blueprintthm}
Suppose that\index{Blueprint Theorem!First}
\begin{enumerate}[label=$(\roman*)$]
\item $\Vmax$ is a typical $\Pmaxvariant$ with typicality witnessed by $\Psi$,
\item $\Vmax$ has unique iterations and accepts $\diamondsuit$-iterations,
\item $\vec A\in\Htwo$ and $\mathcal H_{\vec A}$ is almost a $\Vmax$-condition,
\item $\SRP$ holds and
\item\label{blueprintfacond} $\FA^{\Psi}_{\vec A}(\Gamma^\Psi_{\vec A})$ holds.
\end{enumerate}
Then $\stargen{\Vmax}$ holds as witnessed by $g_{\vec A}$.
\end{blueprintthm}

\begin{proof}
Let us assume $n^{\Vmax}=0$, so $\vec A=A$. $\SRP$ entails ``$\NS$ is saturated" as well as $\forall\kappa\geq\omega_2\neg\Box_\kappa$. Results of Steel \cite{steelpfa} show that the latter implies that $V$ is closed under \hbox{$X\mapsto M_\omega^\sharp(X)$}. As a consequence
\begin{itemize}
\item $\AD^{\LofR}$,
\item all sets of reals in $\LofR$ are $\infty$-universally Baire and 
\item $(\LofR^V;\in, D)\equiv(\LofR^{V[G]};\in, D^\ast)$ for all sets $D\subseteq\mathbb R$ in $\LofR$ and any generic extension $V[G]$ of $V$.
\end{itemize}
Thus generic projective absoluteness holds in $V$ and if $D\in\LofR$ is a dense subset of $\Vmax$, then $D^\ast$ is a dense subset of $\Vmax$ in any generic extension. Thus $\PPdia(\Vmax, A,D)$ exists for any such $D$.
\begin{claim}
For any dense $D\subseteq\Vmax$, $D\in\LofR$, $\PPdia(\Vmax, A, D)$ is $(\Psi, A)$-preserving.
\end{claim}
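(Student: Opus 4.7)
The plan is to sandwich $\mathcal H_A^V$ and $\mathcal H_A^{V^{\PPdia}}$ around the iterate $q_{\omega_1}$ coming from Theorem \ref{forcingexistencelemm}, applying the typicality formula $\varphi^{\Vmax}$ twice. For notational simplicity I assume $n^{\Vmax} = 0$ and write $A = \vec A$. Unraveling definitions, $(\Psi, A)$-preservation of $\PPdia$ reduces to showing that for every $\psi \in \Psi$ and every $x \in \Htwo^V$, $\mathcal H_A^V \models \psi(x)$ iff $\mathcal H_A^{V^{\PPdia}} \models \psi(x)$.

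Work in $V^{\PPdia}$ and invoke the picture from Theorem \ref{forcingexistencelemm}; by Lemma \ref{strongpreservationlemm}, $\sigma_{0,\omega_1} \colon q_0 \to q_{\omega_1}$ is moreover a $\diamondsuit$-iteration. The first layer of absoluteness will come from $q_0 <_{\Vmax} p_0$, witnessed by the internal iteration $\mu_{0,\omega_1^{N_0}} \in N_0$: by typicality of $\Vmax$, $N_0 \models \varphi^{\Vmax}(p_{\omega_1^{N_0}})$ unfolds into the absoluteness of every $\psi \in \Psi$ on $M^{p_{\omega_1^{N_0}}}$ between the ambient $q_0$ and $p_{\omega_1^{N_0}}$. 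Since $p_0$ is hereditarily countable in $N_0$, it is fixed by $\sigma_{0,\omega_1}$, and the stretching identity $\mu_{0,\omega_1} = \sigma_{0,\omega_1}(\mu_{0,\omega_1^{N_0}})$ gives $\sigma_{0,\omega_1}(p_{\omega_1^{N_0}}) = p_{\omega_1} = \mathcal H_A^V$; applying elementarity of $\sigma_{0,\omega_1}$ promotes this to $\Psi$-absoluteness between $q_{\omega_1}$ and $\mathcal H_A^V$ on $\Htwo^V$.

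For the second layer, I use that $\Vmax$ accepts $\diamondsuit$-iterations to conclude $\mathcal H_{\vec a^\ast}^{V^{\PPdia}} \models \varphi^{\Vmax}(q_{\omega_1})$, where $\vec a^\ast$ denotes the top coordinates of $q_{\omega_1}$. The identity $\vec a^\ast = A$ is extracted from the typicality formula $\psi_0 \in \Psi$ asserting $y = \dot a_0$, applied inside the witness to $q_0 <_{\Vmax} p_0$ and then stretched by $\sigma_{0,\omega_1}$; so the acceptance conclusion reads $\mathcal H_A^{V^{\PPdia}} \models \varphi^{\Vmax}(q_{\omega_1})$, which typicality unfolds into $\Psi$-absoluteness between $\mathcal H_A^{V^{\PPdia}}$ and $q_{\omega_1}$ on $N^\ast$.

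To compose the two layers I need $\Htwo^V \subseteq N^\ast$, which follows because $p_{\omega_1} = \sigma_{0,\omega_1}(p_{\omega_1^{N_0}}) \in N^\ast$ and $N^\ast$ is transitive, so the underlying set $M^{p_{\omega_1}} = \Htwo^V$ lies inside $N^\ast$. The main bookkeeping point is the stretching identity $\sigma_{0,\omega_1}(p_{\omega_1^{N_0}}) = p_{\omega_1}$ together with pinning the top coordinate of $q_{\omega_1}$ down to $A$; once those are in place, composing the two $\Psi$-absoluteness statements yields the required equivalence for every $\psi \in \Psi$ and every $x \in \Htwo^V$, which is precisely $(\Psi, A)$-preservation.
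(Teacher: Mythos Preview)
Your proof is correct and follows essentially the same approach as the paper: both arguments sandwich $\mathcal H_A^V$ between $q_{\omega_1}$ (via $q_0<_{\Vmax}p_0$ pushed along $\sigma_{0,\omega_1}$) and $\mathcal H_A^{V^{\PPdia}}$ (via acceptance of $\diamondsuit$-iterations), then compose the two instances of $\varphi^{\Vmax}$ using typicality. You are simply more explicit about the bookkeeping (the stretching identity, the inclusion $\Htwo^V\subseteq N^\ast$, and how $b^\ast=A$ is read off from the formula $\psi_0$), which the paper compresses into the single line ``from typicality''.
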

\begin{proof}
Let $g$ be $\PPdia(\Vmax, A, D)$-generic. By Theorem \ref{forcingexistencelemm}, in $V[g]$ we have

\begin{tikzpicture}
\def\x{2.1};
\def\y{0.5};

\node (pT) at (0*\x,  0*\y){$D^\ast$};
\node (N) at (0*\x, -2*\y) {$q_0$};
\node (Nast) at (2*\x, -2*\y) {$q_{\omega_1}=(N^*, I^*, b^\ast)$};
\node (M0) at (-2*\x, -4*\y) {$p_0$};
\node (MN) at (0*\x, -4*\y) {$p_{\omega_1^N}$};
\node (M3) at (2*\x, -4*\y) {$p_{\omega_1}$};
\node (H) at (2*\x, -6*\y) {$((H_{\omega_2})^V, \mathrm{NS}_{\omega_1}^V, A)=\mathcal H_{A}$};
\node (Vmax) at (-2*\x, -6*\y) {$\Vmax$};

\path (N)--(pT) node[midway, rotate=90]{$\in$};

\draw[->] (N)--(Nast) node[midway, above]{$\sigma_{0,\omega_1}$};

\path (MN)--(N) node[midway, rotate=90]{$\in$};
\path (M3)--(Nast) node[midway, rotate=90]{$\in$};

\draw[->] (M0)--(MN) node[midway, above]{$\mu_{0, \omega_1^N}$};
\draw[->] (MN)--(M3) node[midway, above]{$\mu_{\omega_1^N, \omega_1}$};

\path (H)--(M3) node[midway, rotate=90]{$=$};

\path (Vmax)--(M0) node[midway, rotate=270]{$\in$};

\end{tikzpicture}

where
\begin{enumerate}[label=$(\PPdia.\roman*)$]
\item $\mu_{0, \omega_1}, \sigma_{0,\omega_1}$ are generic iterations of $p_0$, $q_0$ respectively,
\item $\mu_{0,\omega_1^N}$ witnesses $q_0<_{\Vmax} p_0$,
\item $\mu_{0,\omega_1}=\sigma_{0,\omega_1}(\mu_{0,\omega_1^N})$ and
\item the generic iteration $\sigma_{0, \omega_1}\colon q_0\rightarrow q_{\omega_1}$ is a $\diamondsuit$-iteration.
\end{enumerate}
Note that 
$$(N^\ast;\in, I^\ast, b^\ast)\models\varphi^{\Vmax}(\mathcal H_A).$$
As $\Vmax$ is typical, we must have $b^\ast=A$. As $\Vmax$ accepts $\diamondsuit$-iterations, 
$$(\Htwo;\in,\NS, A)^{V[g]}\models \varphi^{\Vmax}(q_{\omega_1})$$
and finally it follows from typicality that
$$(\Htwo;\in, \NS, A)^{V[g]}\models\varphi^{\Vmax}(\mathcal H_A).$$
As $\Psi$ witnesses the typicality of $\Vmax$, it follows that $\PPdia(\Vmax, A, D)$ is $(\Psi, A)$-preserving.
\end{proof}

It follows from Theorem \ref{forcingexistencelemm}, Lemma \ref{generalbagaria} and Lemma \ref{meetDlemm} that
\begin{itemize}
\item $g_{\vec A}\cap D\neq\emptyset$ for all dense $D\subseteq\Vmax$, $D\in\LofR$ and 
\item $\Pomo=\bigcup\{\Pomo\cap p^\ast\mid p\in g_{\vec A}
\wedge\mu\colon p\rightarrow p^\ast\text{ is guided by }g_{\vec A}\}$.
\end{itemize}
By Corollary \ref{getVmaxstarcor}, $g_{\vec A}$ witnesses $\stargen{\Vmax}$.
\end{proof}

\begin{rem}
If additionally there are a proper class of Woodin cardinals, then $g_{\vec A}$ meets all $\infty$-universally Baire dense subsets of $\Vmax$.
\end{rem}

\subsection{The second blueprint}

From the right perspective, $\stargen{\Vmax}$ is a forcing axiom. As noted before, \aspsch\ show that if there is a proper class of Woodin cardinals, then $\Wstar$ is equivalent to $(\mathcal P(\mathbb R)\cap \LofR)$-$\BMMpp$. Some additional assumption like large cardinals is necessary as $\BMM$ implies closure of $V$ under sharps while $\Wstar$ holds in the $\Pmax$-extension of $\LofR$. We try to generalize this result roughly to all natural $\Pmaxvariants$ for which the $\PPdia$-method can prove them from some forcing axiom. We will have to restrict to better behaved $\Pmaxvariants$.
\begin{defn}
Let $\Vmax$ be a $\Pmaxvariant$ with unique iterations and $g$ be $\Vmax$-generic over $\LofR$.
\begin{enumerate}[label=$(\roman*)$]
 \item We say that $g$ \textit{produces} $(A_0,\dots, A_{n^{\Vmax}})$\index{g produces A@$g$ produces $(A_0,\dots, A_{n})$} if there is $p\in g$ so that if 
 $$\mu\colon p\rightarrow p^\ast=(M, I, a_0,\dots, a_{n^{\Vmax}})$$
 is the $g$-iteration of $p$ then $a_i=A_i$ for all $i\leq n^{\Vmax}$.
 \item If $\Vmax$ is typical, we set
 $$\mathcal H_g\coloneqq (\Htwo, \NS, A_0,\dots, A_{n^{\Vmax}})^{\LofR[g]}$$
 \index{Hg@$\mathcal H_g$}where $(A_0,\dots, A_{n^{\Vmax}})$ is the unique sequence produced by $g$.
\end{enumerate}
\end{defn}

\begin{defn}
A $\Pmaxvariant$ $\Vmax$ with unique iterations is \textit{self-assembling}\index{Pmax@$\Pmax$!Variation@-\variant!self-assembling} if: Whenever $g$ is $\Vmax$-generic over $\LofR$ then
\begin{enumerate}[label=$(\roman*)$]
\item $\mathcal H_g$ is almost a $\Vmax$-condition and
\item $(\Htwo)^{\LofR[g]}=\bigcup\{p^\ast\mid p\in g, \mu\colon p\rightarrow p^\ast\text{ guided by }g\}$.
\end{enumerate}
\end{defn}
All $\Pmaxvariant$ we will work with are self-assembling (assuming $\AD$ in $\LofR$). For example, $\Pmax$ is self-assembling. The relevance of this property for us is partly explained by the following result.

\begin{lemm}\label{selfassemblingfilterlemm}
Suppose $\Vmax$ is a self-assembling $\Pmaxvariant$ with unique iterations and typicality of $\Vmax$ is witnessed by a set $\Psi$ of $(\Sigma_1\cup\Pi_1)$-formulae. If $\stargen{\Vmax}$ holds as witnessed by $g$ then
\begin{enumerate}[label=$(\roman*)$]
    \item\label{selfassemblecond1} $\mathcal H_{\vec A}$ is almost a $\Vmax$-condition and
    \item\label{selfassemblecond2} $g=g_{\vec A}$
\end{enumerate}
where $g$ produces $\vec A$.
\end{lemm}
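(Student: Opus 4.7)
My plan is to first identify the two structures $\mathcal H_{\vec A}$ and $\mathcal H_g$ as literally equal, then transfer ``almost a $\Vmax$-condition'' from $\LofR[g]$ to $V$, and finally compare $g$ with $g_{\vec A}$ element by element using the $g$-iterations provided by Lemma \ref{guidediterationslemm}. I expect the main difficulty to lie in the verification, in the forward direction of \ref{selfassemblecond2}, of the $\Psi$-absoluteness clause of $\varphi^{\Vmax}$ between $\mathcal H_{\vec A}$ and the $g$-iterate of a condition $p \in g$.

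First, since $g$ witnesses $\stargen{\Vmax}$ and $\LofR[g] \subseteq V$, I get $\mathcal P(\omega_1)^V = \mathcal P(\omega_1)^{\LofR[g]}$, so the two models agree on $\omega_1$, $\omega_2$, $H_{\omega_2}$, and $\NS$. Combined with $g$ producing $\vec A$, this immediately yields $\mathcal H_g = \mathcal H_{\vec A}$ as structures. Part \ref{selfassemblecond1} will then fall out of self-assembling in $\LofR[g]$, which provides that $\mathcal H_g$ is almost a $\Vmax$-condition there, together with Shoenfield absoluteness: after collapsing, ``$\mathcal H_g \in \Vmax$'' is a $\Pi^1_2$ statement about a real code for the structure, hence absolute between $(\LofR[g])^{\Col(\omega, \mathcal H_g)}$ and $V^{\Col(\omega, \mathcal H_{\vec A})}$.

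For the forward direction of \ref{selfassemblecond2}, fix $p \in g$ and let $\mu \colon p \to p^\ast = (M^\ast, I^\ast, a_0^\ast, \ldots, a_{n^{\Vmax}}^\ast)$ be the $g$-iteration, of length $\omega_1 + 1$ by Lemma \ref{guidediterationslemm}. It suffices, in $V^{\Col(\omega, 2^{\omega_1})}$, to check $\mathcal H_{\vec A} \models \varphi^{\Vmax}(p^\ast)$, which by typicality reduces to $\Psi$-absoluteness between $\mathcal H_{\vec A}$ and $p^\ast$ at elements of $M^\ast$. The distinguished formulae in $\Psi$ translate into $\vec a^\ast = \vec A$ and $I^\ast = \NS \cap M^\ast$; the former is immediate from $g$ producing $\vec A$ combined with typicality propagating the $\vec a$-coordinates along descending chains in $g$, and the latter follows analogously via the formula ``$x \in \dot I$''. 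For a general $\psi \in \Psi$, I would exploit the hypothesis $\psi \in \Sigma_1 \cup \Pi_1$ by a direct-limit argument: for each $r \in g$ with $r \leq p$, the initial segment of $\mu$ of length $\omega_1^r + 1$ witnesses $r <_{\Vmax} p$, and typicality supplies $\Psi$-absoluteness between $r$ and $p_{\omega_1^r}$. Since $H_{\omega_2}^{\LofR[g]} = \bigcup \{q^\ast : q \in g\}$ by self-assembling and $M^\ast = \bigcup_{r \in g} M_{\omega_1^r}$, taking the direct limit as $r$ descends in $g$ lets the $\Sigma_1$ or $\Pi_1$ quantifier commute with the limit and yields absoluteness between $\mathcal H_g = \mathcal H_{\vec A}$ and $p^\ast$.

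For the reverse direction $g_{\vec A} \subseteq g$, I would argue by contradiction: if $p \in g_{\vec A} \setminus g$, then $\LofR$-genericity of $g$ yields some $q \in g$ with $p \perp_{\Vmax} q$. By the forward direction $q \in g_{\vec A}$ as well, so after collapsing $2^{\omega_1}$ both $p$ and $q$ are $<_{\Vmax}$-above $\mathcal H_{\vec A}$ and are therefore compatible in $V^{\Col(\omega, 2^{\omega_1})}$. Compatibility in the projective preorder $\Vmax$ is projective in real codes, so generic absoluteness (available from $\AD^{\LofR}$, which is in force under $\stargen{\Vmax}$) returns compatibility to $V$, contradicting $p \perp_{\Vmax} q$. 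As flagged at the start, the genuinely technical step is the direct-limit $\Sigma_1 \cup \Pi_1$-absoluteness argument in the forward direction; everything else is bookkeeping combining typicality, self-assembling, and Shoenfield absoluteness.
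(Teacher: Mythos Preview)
Your proposal is correct and follows essentially the same route as the paper: identify $\mathcal H_g=\mathcal H_{\vec A}$ via $\Pomo^V=\Pomo^{\LofR[g]}$, use self-assembling for \ref{selfassemblecond1}, and for \ref{selfassemblecond2} verify $\mathcal H_{\vec A}\models\varphi^{\Vmax}(p^\ast)$ for the $g$-iteration $\mu\colon p\to p^\ast$ by reducing to $\Psi$-absoluteness.

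One point of exposition deserves care. You say $I^\ast=\NS\cap M^\ast$ follows ``analogously'' to $\vec a^\ast=\vec A$ via the formula $x\in\dot I$, and later that the general $\Sigma_1\cup\Pi_1$ direct-limit argument handles everything uniformly. But the general argument, to pass from the $g$-iterate $r^\ast$ to $\mathcal H_{\vec A}$, needs $\Sigma_0$-correctness of $r^\ast$ in $\mathcal H_{\vec A}$, i.e.\ precisely $I^{r^\ast}=\NS\cap r^\ast$ and $a^{r^\ast}=\vec A$. So this special case must be established \emph{first}, by the concrete club argument (for a club $C$ find $r\in g$ with $C\in r^\ast$, use typicality to get $\bar\mu(\bar S)\in(I^r)^+$, hence $\bar\mu(\bar S)\cap\bar C\neq\emptyset$ in $r$, then push forward). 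The paper isolates this as a separate claim (Claim~\ref{matchupclaim}) and only afterwards runs the $\Sigma_1$/$\Pi_1$ argument on top of it; your write-up has the right ingredients but blurs this dependency.

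On the other hand, the paper simply asserts ``it suffices to show $g\subseteq g_{\vec A}$'' without justification; your explicit compatibility-plus-projective-absoluteness argument for $g_{\vec A}\subseteq g$ fills in exactly what is being used there.
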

\begin{proof}
As $\Vmax$ is self-assembling, $\mathcal H_g$ is almost a $\Vmax$-condition. Moreover, $\Pomo\subseteq\LofR[g]$ as $g$ witnesses $\stargen{\Vmax}$. It follows that $\mathcal H_g=\mathcal H_{\vec A}$ and thus \ref{selfassemblecond1} holds.\\
Let us now prove \ref{selfassemblecond2}, note that it suffices to show $g\subseteq g_{\vec A}$.
\begin{claim}\label{matchupclaim}
If $q\in g$ and 
$$\sigma\colon q\rightarrow q^\ast=(M^\ast, I^\ast, a_0^\ast,\dots, a_{n^{\Vmax}}^\ast)$$
is the $g$-iteration of $q$ then $I^\ast=\NS\cap M^\ast$ and $a_i^\ast=A_i$ for $i\leq n^{\Vmax}$.
\end{claim}
\begin{proof}
$a_i^\ast=A_i$ for $i\leq n^{\Vmax}$ follows easily from typicality, we show $I^\ast=\NS\cap M^\ast$. It is clear that $I^\ast\subseteq\NS$ since if $S\in I^\ast$, then a tail of the iteration points of the iteration $\sigma\colon q\rightarrow q^\ast$ is missing from $S$. On the other hand, suppose $S\in \Pomo^{M^\ast}-I^\ast$. We may assume $S=\mu(\bar S)$ for some $\bar S\in q$. If $C\subseteq\omega_1$ is club then as $\Vmax$ is self-assembling, there is $r\in g$, such that if $\nu\colon r\rightarrow r^\ast$ is the $g$-iteration of $r$, then $C\in\ran(\nu)$, say $C=\nu(\bar C)$. Note that we may assume $r<_{\Vmax} q$, say this is witnessed by
$$\bar \sigma\colon q\rightarrow \bar q=(\bar M,\bar I,\bar a).$$
Write $r=(N, J, b)$. As $\Vmax$ is typical, $\bar I=J\cap \bar M$ and hence $\bar\sigma(\bar S)\cap \bar C\neq\emptyset$ which gives
$$\nu\circ\bar\sigma(\bar S)\cap C\neq\emptyset.$$
Clearly, $\nu(\bar\sigma)$ is an iteration of $q$ of length $\omega_1+1$ guided by $g$. Thus, by Lemma \ref{guidediterationslemm}, $\nu(\bar\sigma)=\sigma$. $S\cap C\neq\emptyset$ follows.
\end{proof}
Let $p\in g$ and let $\mu\colon p\rightarrow p^\ast$ be the $g$-iteration of $p$.
\begin{claim}
$\mathcal H_{\vec A}\models\varphi^{\Vmax}(p^\ast)$.
\end{claim}
\begin{proof}
Let $\psi\in\Psi$ and assume $\psi$ is $\Sigma_1$, so write $\psi(x)=\exists y\ \theta(x, y)$ where $\theta$ is $\Sigma_0$. So suppose for some $x\in p$ and $y\in \Htwo$ we have
$$\mathcal H_{\vec A}\models\exists y\ \theta(x, y).$$
As $\Vmax$ is self-assembling, we can find $q\in g$ with
\begin{enumerate}[label=$(q.\roman*)$]
    \item $q<_{\Vmax} p$ as witnessed by $\bar\mu\colon p\rightarrow \bar p$ and
    \item $\mathcal H_{\vec A}\models\theta(x, \sigma(y))$ for some $y\in q$ 
\end{enumerate}
where $\sigma\colon q\rightarrow q^\ast$ is the $g$-iteration of $q$. By Claim \ref{matchupclaim},
$$q^\ast\prec_{\Sigma_0}\mathcal H_{\vec A}$$
and as $\sigma(\bar\mu)=\mu$ by Lemma \ref{guidediterationslemm} as well as elementarity of $\sigma$ we find 
$$q\models\theta(\bar\mu(x), y).$$
Finally, $q\models(\varphi^{\Vmax}(\bar p))$ so that 
$$\bar p\models\exists z\ \theta(\bar\mu(x), z)$$
and hence $p\models\exists z\ \theta(x, z)$ by elemntarity of $\bar\mu$.\\
The ``dual argument" works if $\psi$ is $\Pi_1$ instead.
\end{proof}
Now if $G$ is $\Col(\omega,2^{\omega_1})$-generic then the above shows that $\mu\colon p\rightarrow p^\ast$ witnesses $\mathcal H_{\vec A}<_{\Vmax} p$ in $V[G]$. Thus $p\in g_{\vec A}$.
\end{proof}

Theorem \ref{blueprintthm} gives a hint how the forcing axiom equivalent to $\stargen{\Vmax}$ should look like. However, $\Gamma^\Psi_{\vec A}$ is not the right class of forcings, for example one can construe two $\Pmaxvariants$ which are the same as forcings, but for which the resulting classes $\Gamma^{\Psi}_{\vec A}$ are fundamentally different for reasonable $\vec A$. Instead, we should look at the class of forcings which roughly lie on the way to the good extensions highlighted in the $\Vmax$-Multiverse View.

\begin{defn}
Suppose that 
\begin{enumerate}[label=$(\roman*)$]
\item $\Vmax$ is a typical $\Pmaxvariant$,
\item typicality of $\Vmax$ is witnessed by $\Psi$ and
\item $\vec A=(A_0,\dots, A_{n^{\Vmax}})\in \Htwo$.
\end{enumerate}
The class $\Gamma^{\Vmax}_{\vec A}(\Psi)$\index{GammaVmaxAPsi@$\Gamma^{\Vmax}_{\vec A}(\Psi)$} consists of all $(\Psi, \vec A)$-preserving forcings $\PP$ so that if $g$ is $\PP$-generic, then there is a forcing $\QQ\in V[g]$ with 
$$V[g]\models``\QQ\text{ is }(\Psi,\vec A)\text{-preserving}"$$
and if further $h$ is $\QQ$-generic over $V[g]$, then in $V[g][h]$ both
\begin{enumerate}[label=$(h.\roman*)$]
\item $\mathcal H_{\vec A}$ is almost a $\Vmax$-condition and
\item $\NS$ is saturated.
\end{enumerate}
\end{defn}

It just so happens that, maybe by accident, for the $\Pmaxvariants$ we will look at explicitly, if there is a proper class of Woodin cardinals then one can choose $\Psi$ so that $\Gamma^{\Psi}_{\vec A}=\Gamma^{\Vmax}_{\vec A}(\Psi)$ in case that $\Gamma^{\Vmax}_{\vec A}\neq\emptyset$.

\begin{defn}
Suppose that $(M, I)$ is a potentially iterable structure and $Y\subseteq\mathbb R$. We say that $(M, I)$ is (generically) $Y$\textit{-iterable}\index{Potentially iterable structure!Y iterable@$Y$-iterable} if for $X\coloneqq Y\cap M$ we have
\begin{enumerate}[label=$(\roman*)$]
    \item $(M;\in, I, X)$ is a model of (sufficiently much of) $\ZFC$ where $Y$ is allowed as a class parameter in the schemes and
    \item whenever $\langle (M_\alpha, I_\alpha, X_\alpha),\mu_{\alpha,\beta}\mid\alpha\leq\beta\leq\gamma\rangle$ is a generic iteration of $(M_0, I_0, X_0)=(M, I, X)$, i.e.
    \begin{enumerate}[label=$(\mu.\roman*)$]
        \item $(M_{\alpha+1};\in , I_{\alpha+1}, X_{\alpha+1})$ is an ultrapower of $(M_\alpha;\in , I_\alpha, X_\alpha)$ by a $M_\alpha$-generic ultrafilter w.r.t. $I_\alpha$ for $\alpha<\gamma$,
        \item if $\alpha\leq\gamma$ is a limit then 
        $$\langle(M_\alpha, I_\alpha, X_\alpha),\mu_{\xi,\alpha}\mid\xi<\alpha\rangle=\varinjlim(\langle(M_\beta,I_\beta, X_\beta),\mu_{\beta,\xi}\mid\beta\leq\xi<\alpha\rangle)$$
    \end{enumerate}
    then $X_\gamma=Y\cap M_\gamma$.
\end{enumerate}
\end{defn}

\begin{prop}[Folklore]\label{Xiterableprop}
Suppose that $\NS$ is saturated and $X\subseteq\mathbb R$ is $\infty$-universally Baire.
Then in any forcing extension $V[G]$ in which $\Htwo^V$ is countable, $(\Htwo, \NS, X)^V$ is $X^\ast$-iterable. 
\end{prop}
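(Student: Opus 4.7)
The plan is to handle wellfoundedness and the matching condition $X_\gamma=X^\ast\cap M_\gamma$ separately. Fix trees $T,S\in V$ witnessing that $X$ is $\infty$-universally Baire: $X=p[T]^V$ and in every forcing extension $W$ of $V$, $p[T]^W$ and $p[S]^W$ partition $\mathbb R^W$, so in particular $X^\ast=p[T]^{V[G]}$. Since $X$ is $\infty$-universally Baire, $\Pomo^\sharp$ exists, and Theorem 3.16 of \cite{woodinbook} applied to the saturated ideal $\NS$ yields generic iterability of $(\Htwo^V,\NS^V)$. This handles wellfoundedness of every countable generic iteration $\langle (M_\alpha,I_\alpha,X_\alpha),\mu_{\alpha,\beta}\mid\alpha\leq\beta\leq\gamma\rangle$ of $(\Htwo^V,\NS^V,X)$ inside $V[G]$.

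For the matching condition, I would lift the iteration in parallel to a generic iteration $j_{0,\alpha}\colon V\to V_\alpha^+$ of the full universe. Each $M_\alpha$-generic ultrafilter $U_\alpha$ on $(I_\alpha^+)^{M_\alpha}$ lifts automatically to a $V_\alpha^+$-generic ultrafilter on $(\NS^+)^{V_\alpha^+}$: by elementarity, saturation passes to $V_\alpha^+$, so every maximal antichain of $(\NS^+)^{V_\alpha^+}$ has size at most $\omega_1^{V_\alpha^+}$ and transitive closure of size at most $\omega_1^{V_\alpha^+}$, placing it inside $\Htwo^{V_\alpha^+}=M_\alpha$. Saturation also delivers precipitousness, so the $V_\alpha^+$ are wellfounded; they embed into $V[G]$, satisfy $M_\alpha=\Htwo^{V_\alpha^+}$ and $\mu_{0,\alpha}=j_{0,\alpha}\restriction\Htwo^V$, and the predicate $X_\alpha$ coincides with $j_{0,\alpha}(X)$ intersected with $\mathbb R^{M_\alpha}$ by the ultrapower construction.

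The matching condition thus reduces to showing that, for each $r\in\mathbb R^{V_\alpha^+}$,
$$V_\alpha^+\models r\in j_{0,\alpha}(X)\iff r\in X^\ast,$$
equivalently, $r\in p[j_{0,\alpha}(T)]^{V_\alpha^+}\iff r\in p[T]^{V[G]}$. The main obstacle is that $T$ and $j_{0,\alpha}(T)$ are distinct trees inhabiting different models. I would absorb $V_\alpha^+$ and $V[G]$ into a common generic extension $V[H]$ of $V$, available since the iteration has countable length in $V[G]$. In $V[H]$, both pairs $(p[T],p[S])$ and $(p[j_{0,\alpha}(T)],p[j_{0,\alpha}(S)])$ partition $\mathbb R^{V[H]}$: the first by $\infty$-universal Baireness of $X$ in $V$, the second by elementarity transferring the $\infty$-universal Baireness of $j_{0,\alpha}(X)$ from $V_\alpha^+$.

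To close the gap I would proceed by induction on $\alpha$, using a {\L}o\'{s}-style analysis at successors: for the ultrapower $V_{\alpha+1}^+=\mathrm{Ult}(V_\alpha^+,U_\alpha)$, every $r\in\mathbb R^{V_{\alpha+1}^+}$ is represented by some $f\in V_\alpha^+$ with $f\colon\omega_1^{V_\alpha^+}\to\mathbb R^{V_\alpha^+}$, and the inductive hypothesis together with the tree representations reduces membership of $r$ in either projection to the same $U_\alpha$-measure statement $\{\beta : f(\beta)\in p[T]^{V[H]}\cap V_\alpha^+\}\in U_\alpha$. Agreement at limits follows from commutation of direct limits with the universally Baire reinterpretation, and agreement at stage zero is trivial since $X_0=X=X^\ast\cap\Htwo^V$. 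Intersecting the resulting equivalence with $V[G]$ yields $X_\alpha=X^\ast\cap M_\alpha$ at every stage.
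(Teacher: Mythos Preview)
Your overall strategy matches the paper's: lift the iteration of $(\Htwo,\NS,X)^V$ to a generic iteration of all of $V$ using saturation, and then compare the trees. The gap is in how you finish the tree comparison. In step~5 you want a common $V[H]$ in which the pair $(p[j_{0,\alpha}(T)],p[j_{0,\alpha}(S)])$ partitions $\mathbb R^{V[H]}$; your justification is ``elementarity transferring the $\infty$-universal Baireness of $j_{0,\alpha}(X)$ from $V_\alpha^+$''. But universal Baireness in $V_\alpha^+$ only controls \emph{generic extensions of $V_\alpha^+$}, and you have given no reason why $V[H]$ (or $V[G]$) is such an extension---a generic iterate $V_\alpha^+\subseteq V[G]$ is generally not a ground of $V[G]$. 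The {\L}o\'s induction in step~7 does not repair this: you correctly reduce $r\in p[j_{0,\alpha+1}(T)]^{V_{\alpha+1}^+}$ to the measure statement $\{\beta:f(\beta)\in p[T]^{V[H]}\}\in U_\alpha$ via {\L}o\'s and the inductive hypothesis, but there is no {\L}o\'s link from that statement back to $r\in p[T]^{V[H]}$, since $T$ is not of the form $j_{0,\alpha}(\cdot)$ and $p[T]^{V[H]}\cap V_\alpha^+$ need not lie in $V_\alpha^+$. The induction does not close.

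The paper finishes the argument in $V[G]$ with no $V[H]$ and no induction. Write $\mu^+=j_{0,\gamma}$. First, $p[T]\subseteq p[\mu^+(T)]$ in $V[G]$: a branch $(x,g)$ through $T$ yields the branch $(x,\mu^+\circ g)$ through $\mu^+(T)$, and similarly for $S$. Since $p[T],p[S]$ already partition $\mathbb R^{V[G]}$, the images cover $\mathbb R^{V[G]}$. For disjointness, $V_\gamma^+$ believes $p[\mu^+(T)]\cap p[\mu^+(S)]=\emptyset$ by elementarity; this is equivalent to wellfoundedness of a single tree built from $\mu^+(T),\mu^+(S)$, hence absolute up to $V[G]$. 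Thus $p[\mu^+(T)]=p[T]=X^\ast$ in $V[G]$, and $X_\gamma=\mu^+(X)\cap M_\gamma=p[\mu^+(T)]\cap M_\gamma=X^\ast\cap M_\gamma$ follows immediately.
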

\begin{proof}

Let $\PP$ be some forcing which collapses $2^{\omega_1}$ to $\omega$. Let $T, S\in V$ witness that $X$ is $\vert\PP\vert$-universally Baire with $p[T]=X, p[S]=\mathbb R-X$. Let $G$ be $\PP$-generic over $V$. Let 
$$\langle (M_\alpha, I_\alpha, X_\alpha),\mu_{\alpha,\beta}\mid\alpha\leq\beta\leq\gamma\rangle$$
be any generic iteration of $(M_0, I_0, X_0)=(H_\kappa, \NS, X)^V$. Then as in Claim \ref{extendclaim}, this iteration can be lifted to a generic iteration 
$$\langle(M_\alpha^+, I_\alpha, X_\alpha),\mu_{\alpha,\beta}^+\mid\alpha\leq\beta\leq\gamma\rangle$$
of $(M_0^+, I_0, X_0)=(V, \NS^V, X)$. In particular, $M_\gamma$ is wellfounded as $M^+_\gamma$ is wellfounded. Let $\mu^+=\mu^+_{0,\gamma}$, $M^+=M^+_\gamma$.
\begin{claim}
In $V[G]$, $p[\mu^+(T)]=X^\ast$.
\end{claim}
\begin{proof}
Work in $V[G]$. We have $X^\ast=p[T]$ and this implies $X^\ast\subseteq p[\mu^+(T)]$, likewise $\mathbb R- X^\ast\subseteq p[\mu^+(S)]$. In $M^+$, $\mu^+(T), \mu^+(S)$ project to complements and an absoluteness of wellfoundedness argument shows that this must be true in $V[G]$ as well, so that we indeed have $X^\ast=p[\mu^+(T)]$.
\end{proof}
We conclude
$$X_\gamma=\mu^+(X)=\mu^+(p[T])=p[\mu^+(T)]\cap M^+=X^\ast\cap M^+=X^\ast\cap M_\gamma$$
which is what we had to show.
\end{proof}

\begin{lemm}\label{fafromstarlemm}
Suppose that 
\begin{enumerate}[label=$(\roman*)$]
\item $\Vmax$ is a typical self-assembling $\Pmaxvariant$ with unique iterations,
\item typicality of $\Vmax$ is witnessed by a set of $(\Sigma_1\cup\Pi_1)$-formulae $\Psi$,
\item there is a proper class of Woodin cardinals,
\item $\stargen{\Vmax}$ holds as witnessed by $g$ and
\item $g$ produces $\vec A$.
\end{enumerate}
Then $\BFAPsigen{(\mathcal P(\mathbb R)\cap\LofR)}_{\vec A}(\Gamma^{\Vmax}_{\vec A}(\Psi))$ holds true.
\end{lemm}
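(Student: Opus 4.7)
My plan is to prove the $\Sigma_1$-elementarity clause of $\DBFAPsi(\Gamma^{\Vmax}_{\vec A}(\Psi))$ for each $D\in\mathcal P(\mathbb R)\cap\LofR$; since the proper class of Woodin cardinals makes every such $D$ automatically $\infty$-universally Baire and yields generic projective absoluteness, the universally Baire hypothesis of $\DBFAPsi$ will hold uniformly. So I fix $\PP\in\Gamma^{\Vmax}_{\vec A}(\Psi)$, a $\Sigma_1$-formula $\phi$ in the language $\{\in,\dot D\}\cup\{\dot R^\psi:\psi\in\Psi\}$, parameter $b\in\Htwo^V$, and a $\PP$-generic $g_0$ with $V[g_0]\models\phi(b,D^\ast,R^\psi_{\vec A}:\psi\in\Psi)$; the goal is to see that $\phi(b)$ already holds in $\Htwo^V$.

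Using the definition of $\Gamma^{\Vmax}_{\vec A}(\Psi)$, I would first pass to $V[g_0][h]$ for a $(\Psi,\vec A)$-preserving $\QQ\in V[g_0]$ and $\QQ$-generic $h$, so that in $V[g_0][h]$ both $\mathcal H_{\vec A}$ is almost a $\Vmax$-condition and $\NS$ is saturated. The two-step $(\Psi,\vec A)$-preservation keeps every $R^\psi_{\vec A}$-predicate coherent, $D^\ast$ is absolute across the extensions by being $\infty$-ubB, and upward persistence of $\Sigma_1$-statements then yields $V[g_0][h]\models\phi(b,D^\ast,R^\psi_{\vec A}:\psi\in\Psi)$.

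In $V[g_0][h]$, Proposition~\ref{Xiterableprop} applied to $D^\ast$ (and, separately, to each uB-reinterpretation of the projective predicates defining the $R^\psi_{\vec A}$'s), together with the collapse of $2^{\omega_1}$ to $\omega$, produces a countable $\Vmax$-condition $q_0$ capturing $(\Htwo,\NS,\vec A)^{V[g_0][h]}$, which is $D^\ast$-iterable and correctly interprets all $R^\psi$-predicates in each of its generic iterates. Inside $\LofR^{V[g_0][h]}$, the set
\[
E_\phi=\{p\in\Vmax:\exists\bar b\in p\ \exists\mu\colon p\to p^\ast\text{ of length }\omega_1^p+1\text{ with }\mu(\bar b)=b\text{ and }p^\ast\models\phi(b)\}
\]
is then dense below a suitable $\Vmax$-condition coding $b$: any $p$ below such a coding condition can be iterated generically in $V[g_0][h]$ all the way into $q_0$, and typicality of $\Vmax$ (witnessed by $\Sigma_1\cup\Pi_1$-formulae), combined with $\Vmax$'s acceptance of $\diamondsuit$-iterations and the fact $q_0\models\phi(b)$, delivers the required iterate $p^\ast$. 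This density is a $\Sigma^{\LofR}_1(D,b,\vec A)$-type statement, which reflects from $\LofR^{V[g_0][h]}$ to $\LofR^V$ by the generic absoluteness provided by the Woodin cardinals.

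Finally, in $V$, Lemma~\ref{selfassemblingfilterlemm} gives $g=g_{\vec A}$; the self-assembling property and $\Pomo^V\subseteq\LofR[g]$ furnish a condition $p_b\in g$ and $\bar b\in p_b$ whose $g$-iteration sends $\bar b$ to $b$. The density of $E_\phi$ below $p_b$, now secured in $\LofR^V$, means $g$ meets $E_\phi$ at some $p_1\leq_{\Vmax}p_b$, and by uniqueness of $g$-iterations (Lemma~\ref{guidediterationslemm}) the $g$-iteration of $p_1$ provides an iterate $p_1^\ast\models\phi(b)$ whose image lies in $\Htwo^V$; as $\phi$ is $\Sigma_1$, this lifts to $\Htwo^V\models\phi(b)$. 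The hard part will be the density of $E_\phi$ inside $\LofR^{V[g_0][h]}$, which will require careful use of $\Vmax$-typicality via $\Sigma_1\cup\Pi_1$-formulae, the $\diamondsuit$-iteration acceptance (to transfer $\varphi^{\Vmax}$-absoluteness from $q_0$ down to each $p^\ast$), and the $D^\ast$-iterability of $q_0$ to ensure that the $D^\ast$-predicate in $\phi$ is correctly read by the iterates of arbitrary conditions below $p_b$.
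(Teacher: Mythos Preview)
Your overall architecture matches the paper's (pass to the $(\Psi,\vec A)$-preserving extension, realise $\mathcal H_{\vec A}^{V[g_0][h]}$ as a $\Vmax$-condition, reflect a density statement through $\LofR$-absoluteness, and meet the dense set with $g=g_{\vec A}$), but two steps as written do not go through.

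First, your dense set $E_\phi$ is ill-formed. You require $\mu\colon p\to p^\ast$ of length $\omega_1^p+1$ with $\mu(\bar b)=b$ for the fixed $b\in\Htwo^V$; a countable iteration cannot hit an element of $\Htwo^V$, and if you meant length $\omega_1^V+1$ then $E_\phi$ is no longer a subset of $\Hone$ and cannot live in $\LofR$. Relatedly, your reflection step treats ``density of $E_\phi$'' as a $\Sigma^{\LofR}_1(D,b,\vec A)$-statement, but $b,\vec A\in\Htwo$ are not parameters available to $\LofR$. The paper fixes this by \emph{first} using self-assembly to choose $p\in g$ and $\bar v\in p$ with $b$ the image of $\bar v$ under the $g$-iteration, and then phrasing the relevant dense set entirely in terms of the real parameters $p,\bar v$ and $X\in\mathcal P(\mathbb R)\cap\LofR$; only then does $(\LofR^V;\in,X)\equiv(\LofR^{V[G][H][g]};\in,X^{\ast\ast\ast})$ apply.

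Second, you invoke ``$\Vmax$'s acceptance of $\diamondsuit$-iterations'' to obtain the witness $p^\ast$, but this is not among the hypotheses of the lemma (it appears only in the Blueprint Theorems), and in any case it does not say what you need: it concerns $\diamondsuit$-iterations of length $\omega_1+1$, not arbitrary generic iterations of $p$ into $q_0$. The paper's density argument runs differently. The key point you are missing is the paper's Claim~\ref{strongerclaim}: since the full extension $V\subseteq V[g_0][h]$ is $(\Psi,\vec A)$-preserving and $\mathcal H_{\vec A}^{V[g_0][h]}$ is almost a $\Vmax$-condition, one gets $\mathcal H_{\vec A}^{V[g_0][h]}<_{\Vmax} q$ for \emph{every} $q\in g$ directly from typicality and (the proof of) Lemma~\ref{selfassemblingfilterlemm}. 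This structure, together with its $X^{\ast\ast\ast}$-iterability from Proposition~\ref{Xiterableprop}, is the universal witness below any $q\in g$, and that is what makes the density statement true on the $V[G][H][g]$-side. The final transfer from $q^\ast$ to $\Htwo^V$ likewise requires both ingredients: $X$-iterability gives full $X$-elementarity of the $g$-iteration $\sigma$, and $\varphi^{\Vmax}(q^\ast)$ gives the $R^\psi_{\vec A}$-agreement; your ``as $\phi$ is $\Sigma_1$, this lifts'' skips exactly this.
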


\begin{proof}
We will assume $n^{\Vmax}=0$. Let $g$ witness $\stargen{\Vmax}$. Let $p\in g$ and $\mu\colon p\rightarrow p^\ast=(M, I, A)$ the generic iteration of $p$ guided by $g$. We will show that 
$$\BFAPsigen{(\mathcal P(\mathbb R)\cap\LofR)}_{A}(\Gamma^{\Vmax}_{A}(\Psi))$$
holds. By Lemma \ref{selfassemblingfilterlemm}, $\mathcal H_g=\mathcal H_{A}$ is almost a $\Vmax$-condition. Now let $\PP\in\Gamma^{\Vmax}_A(\Psi)$ and $X\in\mathcal P(\mathbb R)\cap\LofR$. Let $G$ be $\PP$-generic. We have to show that 
$$(\Htwo;\in, X, R^\psi_A\mid \psi\in\Psi)^V\prec_{\Sigma_1}(\Htwo;\in,X^\ast, R^\psi_A\mid\psi\in\Psi)^{V[G]}.$$
So let $v\in\Htwo^V$, and $\theta$ a $\Sigma_0$-formula such that
$$(\Htwo;\in,X^\ast, R^\psi_A\mid\psi\in\Psi)^{V[G]}\models \exists u\ \theta(u, v).$$
As $\Vmax$ is self-assembling, we may assume without loss of generality that $v=\mu(\bar v)$ for some $\bar v\in p$. Let $V[G][H]$ be a further generic extension by $(\Psi, A)$-preserving forcing so that in $V[G][H]$
\begin{enumerate}[label=$(H.\roman*)$]
\item $\mathcal H_A^{V[G][H]}$ is almost a $\Vmax$-condition and
\item $\NS$ is saturated.
\end{enumerate}
Note that 
$$(H_{\omega_2};\in, X^\ast, R^\psi_A\mid \psi\in\Psi)^{V[G]}\prec_{\Sigma_0}(\Htwo;\in, X^{\ast\ast}, R^\psi_A\mid \psi\in\Psi)^{V[G][H]}$$
as the extension is $(\Psi, A)$-preserving. Here, $X^{\ast\ast}$ denotes the reevaluation of $X^\ast$ in $V[G][H]$. Accordingly,
$$(\Htwo;\in, X^{\ast\ast}, R^\psi_A\mid \psi\in\Psi)^{V[G][H]}\models\exists u\ \theta(u, v).$$
Let $g$ be $\Col(\omega,2^{\omega_1})^{V[G][H]}$-generic over $V[G][H]$ and $X^{\ast\ast\ast}$ the reevaluation of $X^{\ast\ast}$ in $V[G][H][g]$. Then in $V[G][H][g]$, 
$$(\Htwo, \NS, X^{\ast\ast})^{V[G][H]}$$
is $X^{\ast\ast\ast}$-iterable by Proposition \ref{Xiterableprop}.
\begin{claim}\label{strongerclaim}
$\mathcal H_A^{V[G][H]}<_{\Vmax} q$ for all $q\in g$.
\end{claim}
\begin{proof}
Let $q\in g$ and $\sigma\colon q\rightarrow q^\ast$ the $g$-iteration of $q$. It follows from the proof of Lemma \ref{selfassemblingfilterlemm} that 
$$(\Htwo;\in,\NS, A)^V\models\varphi^{\Vmax}(q^\ast)$$
and since the extension $V\subseteq V[G][H]$ is $(\Psi, A)$-preserving, 
$$(\Htwo;\in,\NS, A)^{V[G][H]}\models\varphi^{\Vmax}(q^\ast)$$
follows.
\end{proof}

Let $q\in g$, $q<_{\Vmax} p$ as witnessed by $\bar \mu\colon p\rightarrow \bar p$. $\mathcal H^{V[G][H]}_A$ witnesses in $V[G][H][g]$ that there is $r=(M, I, a)<_{\Vmax} q$, as witnessed by $\sigma\colon q\rightarrow q^\ast$, so that
\begin{enumerate}[label=$(r.\roman*)$]
\item\label{fafromstarfirstrcond} $(M, I, Y)$ is $X^{\ast\ast\ast}$-iterable,
\item $(M;\in, I)\models``V=\Htwo\wedge I=\NS"$ and
\item\label{fafromstarlastrcond} $(M;\in, Y, R^{\psi}_A\mid\psi\in\Psi)^M\models\exists u\ \theta(u, \sigma(\bar\mu(\bar v))$
\end{enumerate}
where $Y=X^{\ast\ast\ast}\cap M$. As there is a proper class of Woodin cardinals,
$$(\LofR^V;\in, X)\equiv(\LofR^{V[G][H][g]};\in, X^{\ast\ast\ast})$$
and hence a density argument shows that there is $q=(N, J, b)\in g$, $q<_{\Vmax} p$, as witnessed by $\mu'\colon p\rightarrow p'$, such that 
\begin{enumerate}[label=$(q.\roman*)$]
\item\label{fafromstarxitcond} $(N, J, X\cap N)$ is $X$-iterable,
\item $(N;\in, J)\models``V=\Htwo\wedge J=\NS"$ and
\item\label{fafromstarexistitcond} for some $u\in N$, $(N;\in, X\cap N, R^{\psi}_A\mid\psi\in\Psi)^N\models \theta(u,\mu'(v))$.
\end{enumerate}
Let $\sigma\colon q\rightarrow q^\ast=(N^\ast, J^\ast, a^\ast)$ be the $g$-iteration of $q$.  By (the proof of) Lemma \ref{selfassemblingfilterlemm} \ref{selfassemblecond2}
$$(\Htwo,\NS, A)^V\models \varphi^{\Vmax}(q^\ast)$$
and hence
$$(N^\ast;\in, X\cap N^\ast, R^{\psi}_A \mid\psi\in\Psi)^{N^\ast}\prec_{\Sigma_0}(\Htwo;\in,X, R^{\psi}_A\mid\psi\in\Psi)^V.$$
Moreover, 
$$\sigma\colon (N, J, X\cap N)\rightarrow (N^\ast, J^\ast, X\cap N^\ast)$$
is fully elementary by \ref{fafromstarxitcond} so that
$$(N^\ast;\in, X\cap N^\ast, R^{\psi}_A\mid\psi\in\Psi)^{N^\ast}\models\theta(\sigma(u), \sigma(\mu'(v))).$$
By Lemma \ref{guidediterationslemm}, $\sigma\circ\mu'=\mu$, so we can conclude
$$(\Htwo;\in, X, R^{\psi}_A\mid\psi\in\Psi)^V\models\theta(\sigma(u), v)$$
which is what we had to show.
\end{proof}
 
In fact, we get an equivalence in case we can apply the $\PPdia$-method.

\begin{blueprinttwothm}\label{secondblueprintthm}
\index{Blueprint Theorem!Second}Suppose that
\begin{enumerate}[label=$(\roman*)$]
\item There are a proper class of Woodin cardinals,
\item $\Vmax$ is a self-assembling typical $\Pmaxvariant$,
\item $\Vmax$ has unique iterations and accepts $\diamondsuit$-iterations,
\item typicality of $\Vmax$ is witnessed by a set $\Psi$ of $(\Sigma_1\cup\Pi_1)$-formulae,
\item $\vec A=(A_0,\dots, A_{n^{\Vmax}})\in \Htwo$ and
\item $\Gamma^{\Psi}_{\vec A}=\Gamma^{\Vmax}_{\vec A}(\Psi)$.
\end{enumerate}
The following are equivalent:
\begin{enumerate}[label=$(\bigast.\roman*)$]
\item\label{starfaeqstarcond} There is a filter $g\subseteq\Vmax$ which witnesses $\stargen{\Vmax}$ and produces $\vec A$.
\item\label{starfaeqfacond}$\BFAPsigen{(\mathcal P(\mathbb R)\cap\LofR)}_{\vec A}(\Gamma^{\Vmax}_{\vec A}(\Psi))$.
\end{enumerate}
\end{blueprinttwothm}

\begin{proof}
``\ref{starfaeqstarcond}$\Rightarrow$\ref{starfaeqfacond}" follows from Theorem \ref{fafromstarlemm}. ``\ref{starfaeqfacond}$\Rightarrow$\ref{starfaeqstarcond}" can be proven similar to the First Blueprint Theorem \ref{blueprintthm}. We use the existence of a proper class of Woodin cardinals instead of $\SRP$ to justify $\AD^{\LofR}$, that all sets of reals in $\LofR$ are $\infty$-universally Baire and generic $\LofR$-absoluteness. It is not immediate that $\mathcal H_{\vec A}$ is almost a $\Vmax$-condition, nor did we assume that $\NS$ is saturated, however as $\Gamma^{\Vmax}_{\vec A}(\Psi)=\Gamma^{\Psi}_{\vec A}$, we can pass to a $(\Psi, \vec A)$-preserving forcing extension in which both of this is true. It follows that 
\begin{align*}
    g=\{p\in\Vmax\mid&\exists \mu\colon p\rightarrow p^\ast\text{ a generic iteration of } 
    \\&\text{length }\omega_1+1\text{ with }\mathcal H_{\vec A}\models\varphi^{\Vmax}(p^\ast)\}
\end{align*}
witnesses $\stargen{\Vmax}$ and produces $\vec A$.
\end{proof}

\subsection{The $\Qmax$-variation $\Qmaxm$}

We will have to do some work in order to find a forcing which freezes $\NS$ along a witness $f$ of $\diacol$. The main idea is to find the correct $\Pmaxvariant$ to throw into the $\diamondsuit$-$\Wstar$-forcing. Let us first introduce Woodin's $\Qmax$.

\begin{defn}
A condition $p\in\Qmax$\index{Qmax@$\Qmax$} is a generically iterable structure $p=(N, I, f)$ with
\begin{enumerate}[label=$(\Qmax.\roman*)$]
\item $N\models``f\text{ guesses }\Col(\omega,\omega_1)\text{-filters}"$ and
\item $N\models``\eta_f:\Col(\omega,\omega_1)\rightarrow(\Pomo/I)^+\text{ is a dense embedding}"$, where $\eta_f$ is the embedding associated to $f$.
\end{enumerate}
The order on $\Qmax$ is given by 
$$q=(M, J, h)<_{\Qmax}p$$
iff there is an iteration 
$$j\colon p\rightarrow p^\ast=(N^\ast, I^\ast, f^\ast)$$
in $q$ with $f^\ast=h$.
\end{defn}
We mention that it follows from Lemma \ref{diacomplemm} that if $(N, I, f)$ is a $\Qmax$-condition then $N\models``f\text{ witnesses }\diacolp$".\\
Forcing that $\mathcal H_f$ is almost a $\Qmax$-condition for some $f$ essentially amounts to forcing ``$\NS$ is $\omega_1$-dense". We replace $\Qmax$ by an equivalent forcing for which this is easier to achieve.

\begin{defn}
A condition $p\in\Qmaxm$\index{Qmaxminus@$\Qmaxm$} is a generically iterable structure of the form $p=(N, I, f)$ so that 
$$(N;\in, I)\models``f\text{ witnesses }\diacolip".$$
The order on $\Qmaxm$ is given by $q\coloneqq (M, J, h)<_{\Qmaxm}(N, I, f)=:p$ iff  there is an iteration 
$$j\colon p\rightarrow p^\ast=(N^\ast, I^\ast, f^\ast)$$
in $q$ so that
\begin{enumerate}[label=$(<_{\Qmaxm}.\roman*)$]
 \item $f^\ast = h$ and
 \item\label{Qmaxmorderkillcond} if $S\in J^+\cap p^\ast$ then there is $b\in\Col(\omega,\omega_1^q)$ with $S^h_b\subseteq S\mod J$.
 \end{enumerate} 
\end{defn}

We note that $\Qmaxm$ is essentially unchanged if condition \ref{Qmaxmorderkillcond} is dropped, but demanding it is convenient for us.

\begin{prop}[Woodin, {\cite[Definition 6.20]{woodinbook}}]\label{strongdiasharpsprop}
Suppose $\Pomo$ is closed under $A\mapsto A^\sharp$ and $I$ is a normal uniform ideal. Suppose $f$ guesses $\Col(\omega,\omega_1)$-filters. The following are equivalent:
\begin{enumerate}[label=\rn*]
\item\label{strongdiasharpcond1} $f$ witnesses $\diacolip$.
\item\label{strongdiasharpcond2} For any $A\subseteq\omega_1$, 
$$\{\alpha<\omega_1\mid f(\alpha)\text{ is not generic over }L[A\cap\alpha]\}\in I$$
and for all $b\in\BP$, $S^f_b\in I^+$.
\end{enumerate}
\end{prop}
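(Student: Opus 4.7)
My plan is to verify the equivalence by proving each implication separately; the $S^f_b \in I^+$ conjunct appears identically on both sides and so requires no work, and the content lies in converting between the density formulation of $\diacolip$ (which is the $I$-analog of Proposition \ref{diacolpequivprop}\ref{diacolpequivcond3}, and which I would state and verify as a preliminary step) and genericity of $f(\alpha)$ over $L[A\cap\alpha]$. The bridge in both directions is that every dense subset of $\Col(\omega,\alpha)$ in $L[A\cap\alpha]$ should arise as the $\alpha$-restriction of a dense subset of $\Col(\omega,\omega_1)$, and, conversely, every dense subset of $\Col(\omega,\omega_1)$ restricts, on a club of $\alpha$, to a dense subset of $\Col(\omega,\alpha)$ lying in $L[A\cap\alpha]$ for a suitable coding $A$.

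For \ref{strongdiasharpcond2}$\Rightarrow$\ref{strongdiasharpcond1}, I would fix a dense $D\subseteq\Col(\omega,\omega_1)$ and choose $A\subseteq\omega_1$ coding both $D$ and a bijection $\iota\colon\omega_1\to\Col(\omega,\omega_1)$. A routine closure argument, iteratively bounding the ranges of witnesses for the density of $D$, provides a club $C$ of $\alpha<\omega_1$ with $\iota[\alpha]=\Col(\omega,\alpha)$ and $D\cap\Col(\omega,\alpha)$ dense in $\Col(\omega,\alpha)$; for $\alpha\in C$ one also has $D\cap\Col(\omega,\alpha)\in L[A\cap\alpha]$. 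By \ref{strongdiasharpcond2} the set $G=\{\alpha\mid f(\alpha)\text{ is generic over }L[A\cap\alpha]\}$ has complement in $I$, and for $\alpha\in C\cap G$ the intersection $f(\alpha)\cap D$ is nonempty. Since $I\supseteq\NS$ by normality and uniformity, the complement of $\{\alpha\mid f(\alpha)\cap D\neq\emptyset\}$ lies in $I$, giving the density half of $\diacolip$.

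For \ref{strongdiasharpcond1}$\Rightarrow$\ref{strongdiasharpcond2}, I would fix $A\subseteq\omega_1$. Since each $A\cap\alpha$ codes to a real and $\Pomo$ is closed under $\sharp$, $(A\cap\alpha)^\sharp$ exists and $\Pomo\cap L[A\cap\alpha]$ is countable in $V$; in particular there are only countably many dense subsets of $\Col(\omega,\alpha)$ in $L[A\cap\alpha]$. The core step is to produce a single sequence $\langle D_\xi\mid\xi<\omega_1\rangle$ of dense subsets of $\Col(\omega,\omega_1)$ and a club $C^\ast\subseteq\omega_1$ such that for every $\alpha\in C^\ast$, $\{D_\xi\cap\Col(\omega,\alpha)\mid\xi<\alpha\}$ is exactly the collection of dense subsets of $\Col(\omega,\alpha)$ in $L[A\cap\alpha]$. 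Given this, the $I$-analog of Proposition \ref{fstationaryequivformulationprop}, which follows from \ref{strongdiasharpcond1} by normality of $I$ and diagonal intersection, yields that $T=\{\alpha\mid\forall\xi<\alpha\ f(\alpha)\cap D_\xi\neq\emptyset\}$ has complement in $I$; on $C^\ast\cap T$, $f(\alpha)$ meets every dense subset of $\Col(\omega,\alpha)$ in $L[A\cap\alpha]$ and so is $L[A\cap\alpha]$-generic.

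The main obstacle is clearly the uniform enumeration $\langle D_\xi\rangle$: coherently assembling the local $L[A\cap\alpha]$-dense subsets of $\Col(\omega,\alpha)$ into a single $\omega_1$-sequence whose $\alpha$-restrictions match the local collections on a club. The natural approach is to use $(A\cap\alpha)^\sharp$ to canonically parametrize each local dense subset by a term-with-parameters-below-$\alpha$; coherence across $\alpha$ then follows from a standard condensation-style argument, and $C^\ast$ collects those $\alpha$ closed under the relevant codings. The remainder of the argument is essentially bookkeeping around this sharp-based enumeration together with invocations of the normality of $I$.
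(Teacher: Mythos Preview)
The paper does not supply a proof of this proposition; it is stated with attribution to Woodin \cite[Definition 6.20]{woodinbook} and then simply invoked later in the proof of Lemma \ref{Qmaxmanalysislemm}. So there is no in-paper argument to compare against.

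Your proposal is correct and follows the standard line. The direction \ref{strongdiasharpcond2}$\Rightarrow$\ref{strongdiasharpcond1} is routine exactly as you describe: code $D$ into $A$, pass to the club of closure points, and read off $f(\alpha)\cap D\neq\emptyset$ from genericity over $L[A\cap\alpha]$; normality of $I$ converts ``club'' into ``complement in $I$''. For \ref{strongdiasharpcond1}$\Rightarrow$\ref{strongdiasharpcond2} you have correctly located the only real content: producing a single $\omega_1$-sequence $\langle D_\xi\mid\xi<\omega_1\rangle$ of dense subsets of $\Col(\omega,\omega_1)$ whose initial segments capture, on a club of $\alpha$, all dense subsets of $\Col(\omega,\alpha)$ lying in $L[A\cap\alpha]$. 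Your plan to parametrize these by Skolem terms and finite tuples of ordinals using $(A\cap\alpha)^\sharp$ is exactly right; the point is that $A^\sharp$ gives a club of $L[A]$-indiscernibles on which $L[A\cap\alpha]=L_\alpha[A]$ and every element is named by a term in $A\cap\alpha$ together with finitely many smaller indiscernibles, so the local collections cohere. The diagonal-intersection step via normality of $I$ is then exactly as you wrote. The sketch would benefit from spelling out the term enumeration more explicitly, but there is no gap.
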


The following is the key result about $\Qmaxm$.

\begin{lemm}\label{Qmaxmanalysislemm}
Suppose $J$ is a normal uniform ideal, $h$ witnesses $\diacolpgen{J}$, and $\Pomo$ is closed under $A\mapsto A^\sharp$. For any $p=(N, I, f)\in\Qmaxm$ there is an iteration 
$$j\colon p\rightarrow p^\ast=(N^\ast, I^\ast, f^\ast)$$
so that
\begin{enumerate}[label=\rn*]
\item $f^\ast=h\mod J$ (so in particular $f^\ast$ witnesses $\diacolpgen{J}$) and
\item if $S\in J^+\cap N^\ast$ then there is $b\in\Col(\omega,\omega_1)$ with $S^{f^\ast}_b\subseteq S\mod J$.
\end{enumerate}
\end{lemm}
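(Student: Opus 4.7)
The construction builds a generic iteration $j \colon p \to p^\ast$ of length $\omega_1 + 1$ by induction on $\beta \leq \omega_1$, choosing at each successor stage a $V$-generic ultrafilter $g_\beta$ on $(I_\beta^+)^{N_\beta}$. The first objective is to arrange $f^\ast = h \mod J$. Since $p \in \Qmaxm$, the embedding
$$\eta_{j_{0,\beta}(f)} \colon \Col(\omega,\beta)^{N_\beta} \to ((I_\beta^+)/I_\beta)^{N_\beta}$$
is regular in $N_\beta$, and a direct ultrapower computation shows that if $g_\beta$ extends the pullback filter $\eta_{j_{0,\beta}(f)}[h(\beta)]$, then $j_{0, \omega_1}(f)(\beta) = h(\beta)$; this extension exists provided $h(\beta)$ is $N_\beta$-generic on $\Col(\omega,\beta)^{N_\beta}$. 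Coding the iteration-so-far incrementally into a set $A \subseteq \omega_1$ with $N_\beta \in L[A \cap \beta]$ and applying Proposition \ref{strongdiasharpsprop} to the witness $h$ of $\diacolpgen{J}$ and this $A$, the set of $\beta$'s where $h(\beta)$ is not $L[A\cap\beta]$-generic (hence not $N_\beta$-generic) lies in $J$, so matching holds off a $J$-null set and $f^\ast = h \mod J$ follows.

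For the density condition, bookkeep pairs $(\alpha, \bar S)$ with $\bar S \in (I_\alpha^+)^{N_\alpha}$, split by the dichotomy: call $\bar S$ \emph{eventually good} if at some stage $\beta_0 \geq \alpha$ there exists $b \in \Col(\omega,\beta_0)^{N_{\beta_0}}$ with $\eta_{j_{0,\beta_0}(f)}(b) \leq [j_{\alpha, \beta_0}(\bar S)]$ in the Boolean algebra of $N_{\beta_0}$, and \emph{never good} otherwise. Elementarity of $j_{\beta, \gamma}$ preserves good-ness forward. For eventually-good $\bar S$, fix the witness $b$ at $\beta_0$: at any later $\beta$ with $b \in h(\beta)$ and $g_\beta$ realizing $h(\beta)$, we have $\eta_{j_{0,\beta}(f)}(b) \in g_\beta$, and by elementarity this forces $[j_{\alpha, \beta}(\bar S)] \in g_\beta$, so $S^h_b \subseteq j_{\alpha, \omega_1}(\bar S)$ modulo the bounded set below $\beta_0$ together with the $J$-null matching error. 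For never-good $\bar S$, the hypothesis $\eta_{j_{0,\beta}(f)}(b) \not\leq [j_{\alpha, \beta}(\bar S)]$ for all $b$ means $\eta_{j_{0,\beta}(f)}(b) \cdot [j_{\alpha, \beta}(\bar S)]^c > 0$ individually for each $b \in h(\beta)$, so the demand $g_\beta \ni [j_{\alpha, \beta}(\bar S)]^c$ is consistent with realizing $h(\beta)$; arranged uniformly this forces $j_{\alpha, \omega_1}(\bar S) \in \NS \subseteq J$. Every $S \in J^+ \cap N^\ast$ thus comes from an eventually-good $\bar S$ and its witness $b$ fulfills condition (ii).

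The main obstacle is the simultaneous consistency of the killing-demands for countably many never-good $\bar S_i$ at a single stage: the filter generated by $\eta_{j_{0,\beta}(f)}[h(\beta)]$ and $\{[j_{\alpha_i, \beta}(\bar S_i)]^c : i\}$ could collapse if $\eta_{j_{0,\beta}(f)}(b) \leq \bigvee_i [j_{\alpha_i, \beta}(\bar S_i)]$ for some $b \in h(\beta)$, even though each individual killing is consistent. The resolution uses $V$-genericity of $g_\beta$ on the countable-in-$V$ forcing $((I_\beta^+)/I_\beta)^{N_\beta}$ — meeting the relevant dense subsets produced by each never-good $\bar S_i$ — together with bookkeeping that spaces the activation of killings across stages, and finally absorbing the residual conflict-stages into the $J$-null error already provided by Proposition \ref{strongdiasharpsprop}.
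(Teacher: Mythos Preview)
Your argument for part (i) --- matching $f^\ast$ with $h$ modulo $J$ by coding the iteration into a single $A\subseteq\omega_1$ and invoking Proposition~\ref{strongdiasharpsprop} --- is sound in outline, though the paper organizes this differently by working over $L[x]$ for a real $x$ coding $p$ and building an auxiliary Levy-collapse generic $g\subseteq\Col(\omega,{<}\omega_1)$ along the club of $x$-indiscernibles.

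The genuine gap is in your treatment of (ii). The dichotomy into eventually-good versus never-good sets is natural, and the eventually-good case is handled correctly, but the killing argument for never-good sets does not go through. To force $j_{\alpha,\omega_1}(\bar S)\in J$ you must arrange $[j_{\alpha,\beta}(\bar S)]^c\in g_\beta$ for a co-$J$ set of $\beta$, not merely at isolated bookkept stages, so ``spacing the activation of killings'' is no help: every active never-good set must be avoided at essentially every stage. Your own obstruction --- that $\eta_\beta(b)\leq\bigvee_i[j_{\alpha_i,\beta}(\bar S_i)]$ can hold for some $b\in h(\beta)$ while each $\bar S_i$ individually remains never-good --- is real and fatal. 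In that situation no $g_\beta$ extending $\eta_\beta[h(\beta)]$ can avoid all the $j_{\alpha_i,\beta}(\bar S_i)$ simultaneously, and neither $V$-genericity of $g_\beta$ over a countable poset nor absorption into the $J$-null matching error changes this: one of the $\bar S_i$ will end up in $g_\beta$ on a $J$-positive set of such stages $\beta$, so it survives in $J^+$ without any witness $b$.

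The paper sidesteps the dichotomy entirely. It makes the choice of each generic ultrafilter $U_\alpha$ \emph{canonical}: the $L[x,g\upharpoonright\omega_1^{p_\alpha}+1]$-least generic extending $\eta_\alpha[g(\omega_1^{p_\alpha})]$. The whole iteration then lives definably over $L[x,g]$. Given $S\in J^+\cap N^\ast$, choose $\alpha$ in the indiscernible club with $\alpha\in S$ and $\bar S\in p_\alpha$; since $\bar S\in U_\alpha$, some $b\in g(\alpha)$ forces $\bar S\in\dot U_\alpha$ over $L[x,g\upharpoonright\alpha]$. For any later indiscernible $\beta\in S^{f^\ast}_b$, an indiscernibility embedding $L[x]\to L[x]$ with critical point $\alpha$ and $\alpha\mapsto\beta$ lifts along $g$ and carries this forcing statement to $\beta$, giving $\beta\in S$. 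Thus $S^{f^\ast}_b\subseteq S\mod J$ directly, with no sets ever killed.
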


\begin{proof}
Let $x$ be a real coding $p$ and let $D$ be the club of $x$-indiscernibles below $\omega_1$. By induction along $\omega_1$ we will define a filter $g\subseteq \Col(\omega, {<}\omega_1)$. Let 
$$\vec\alpha\coloneqq \langle \alpha_i\mid i<\omega_1\rangle$$
be the increasing enumeration of $D$. Assume that $g\res\alpha_i$ is already defined. First we define $g(\alpha_i)$:\\
\ul{Case 1:} $h(\alpha_i)$ is generic over $L[x, g\res\alpha_i]$. Then let $g(\alpha_i)=h(\alpha_i)$.\\
\ul{Case 2:} Case 1 fails. Then let $g(\alpha_i)$ be some generic for $\Col(\omega, \alpha_i)$ over $L[x, g\res\alpha_i]$.\\
Next, we choose $g\res(\alpha_i, \alpha_{i+1})$ to be any generic for $\Col(\omega, (\alpha_i, \alpha_{i+1}))$ over $L[x, g\res\alpha_i + 1]$. 
\begin{claim}
$g$ is generic over $L[x]$.
\end{claim}
\begin{proof}
$\vec\alpha$ enumerates a club of $L[x]$-regular ordinals. Thus for any $i<\omega_1$, $\Col(\omega,{<}\alpha_i)$ has the $\alpha_i$-c.c.~in $L[x]$. It follows by induction that $g\res\alpha_i$ is $\Col(\omega, {<}\alpha_i)$-generic over $L[x]$ and finally that $g$ is $\Col(\omega,{<}\omega_1)$-generic over $L[x]$. 
\end{proof}
By induction on $\alpha<\omega_1$, we now define a generic iteration 
$$\langle p_i, \sigma_{i, j}, U_i\mid i\leq j\leq\alpha\rangle$$
of $p_0=p$. Here, $U_i$ denotes the generic filter that produces the ultrapower $\sigma_{i, i+1}$.\\ 
Let $\eta_\alpha$ denote the map 
$$(\eta_{\sigma_{0,\alpha}(f)})^{p_\alpha}\colon\Col(\omega,\omega_1^{p_\alpha})\rightarrow ((\Pomo/\sigma_{0,\alpha}(I))^+)^{p_\alpha}.$$
Simply pick $U_\alpha$ least, according to the canonical global wellorder in $$L[x, g\res\omega_1^{p_\alpha}+1]$$ 
so that 
\begin{enumerate}[label=$(U.\roman*)$]
\item $U_\alpha$ is $((\Pomo/\sigma_{0,\alpha}(I))^+))^{p_\alpha}$-generic over $p_\alpha$ and
\item $\eta_\alpha[g(\omega_1^{p_\alpha})]\subseteq U_\alpha$.
\end{enumerate}
This is possible as $g(\omega_1^{p_\alpha})$ is $\Col(\omega, \omega_1^{p_\alpha})$-generic over $p_\alpha$, as
$$p_{\alpha}\models``\eta^{p_\alpha}\text{ is a regular embedding}"$$
and as $p_\alpha$ is countable in $L[x, g\res\omega_1^{p_\alpha}+1]$. $U_\alpha$ induces the generic ultrapower $\sigma_{\alpha,\alpha+1}\colon p_\alpha\rightarrow \Ult(p_\alpha, U_\alpha)=:p_{\alpha+1}$.\\

Finally we get a generic iteration map
$$\sigma\coloneqq \sigma_{0,\omega_1}\colon p\rightarrow p^\ast\coloneqq p_{\omega_1}=(N^\ast, I^\ast, f^\ast).$$

\begin{claim}\label{almostequalclaim}
$f^{\ast}=h\mod J$.
\end{claim}

\begin{proof}
$f^{\ast}$ and $g$ agree on the club of iteration points, i.e.~we have $f^{\ast}(\omega_1^{p_\alpha})=g(\omega_1^{p_\alpha})$ for any $\alpha<\omega_1$. Here we use that $U_\alpha$ extends $\pi^{p_\alpha}[g(\alpha)]$.\\
Moreover,
$$\{\alpha<\omega_1\mid h(\alpha)\text{ is not generic over }L[x, g\res\alpha]\}\in J$$
by Proposition \ref{strongdiasharpsprop} as $h$ witnesses $\diacolpgen{J}$. By construction of $g$, it follows that $\{\alpha<\omega_1\mid h(\alpha)\neq g(\alpha)\}\in J$. As $J$ is a normal uniform ideal, we can conclude
$$\{\alpha<\omega_1\mid f^\ast(\alpha)\neq h(\alpha)\}\in J.$$
\end{proof}

It follows that $f^\ast$ witnesses $\diacolpgen{J}$. Now let $S\in J^+\cap N^\ast$. We have to show the following.
\begin{claim}
$S^{f^\ast}_b\subseteq S\mod J$ for some $b\in\Col(\omega, \omega_1)$.
\end{claim}

\begin{proof}
We will prove that the intersection of $D$ with $S^{f^\ast}_b-S$ is bounded below $\omega_1$ for some $b$. Find $\alpha\in D$ so that
\begin{enumerate}[label=$(\alpha.\roman*)$]
 \item there is $\bar S\in p_\alpha$ with $\sigma_{\alpha,\omega_1}(\bar S)=S$ and
 \item\label{qmaxmalphacond2} $\alpha\in S$.
\end{enumerate}
By \ref{qmaxmalphacond2}, there must be some $b\in g(\alpha)$ with
$$b\forces^{L[x, g\res\alpha]}_{\Col(\omega, \alpha)}\bar S\in \dot U_\alpha$$
where $\dot U_\alpha$ is a name for the least filter $U$ that is generic over $p_\alpha$ and contains $\eta_\alpha[\dot g]$, where $\dot g$ is now the canonical name for the generic. Now suppose $\alpha<\beta\in S^{f^\ast}_b\cap D$. There is then an elementary embedding 
$$j\colon L[x]\rightarrow L[x]$$
with 
\begin{enumerate}[label=$(j.\roman*)$]
\item $j(\alpha)=\beta$ and
\item $\mathrm{crit}(j)=\alpha$.
\end{enumerate}
We have that $j$ lifts to an elementary embedding 
$$j^+\colon L[x, g\res\alpha]\rightarrow L[x, g\res\beta]$$
so that 
$$b=j(b)\forces^{L[x, g\res\beta]}_{\Col(\omega,\beta)}j^+\left(\bar S\right)\in j^+\left(\dot U_\alpha\right).$$
Clearly, $j^+\left(\dot U_\alpha\right)^{g(\beta)}=U_\beta$ and thus 
$$\beta\in \sigma_{\beta,\omega_1}\left(j^+\left(\bar S\right)\right)$$
as $b\in f^\ast(\beta)=g(\beta)$. Note that all points in $D$ are iteration points and recall that $f^\ast$ and $g$ agree on iteration points.
\begin{sub}
$j^+\left(\bar S\right)=\sigma_{\alpha,\beta}\left(\bar S\right)$.
\end{sub}
\begin{proof}
The reason is that, since $\alpha$ is a limit ordinal, $p_\alpha$ is the direct limit along $\langle p_i, \sigma_{i, k}\mid i\leq k<\alpha\rangle$ and thus there is some $\gamma<\alpha$ and $\bar{\bar S}\in p_\gamma$ with $\sigma_{\gamma\alpha}\left(\bar{\bar S}\right)=\bar S$. Hence
\begin{align*}
j^+\left(\bar S\right)&=j^+\left(\sigma_{\gamma, \alpha}\left(\bar{\bar S}\right)\right)=j^+(\sigma_{\gamma, \alpha})\left(j^+\left(\bar{\bar S}\right)\right)\\
&=\sigma_{\gamma, \beta}\left(\bar{\bar{S}}\right)=\sigma_{\alpha,\beta}\left(\sigma_{\gamma, \alpha}\left(\bar{\bar S}\right)\right)=\sigma_{\alpha, \beta}\left(\bar S\right).
\end{align*}
Here, we use $j^+(\sigma_{\gamma,\alpha})=\sigma_{\gamma,\beta}$ in the third equation. This holds as our lift $j^+$ satisfies $j^+(g\res\alpha)=g\res\beta$ and so it is easy to see that $j^+(\langle U_i\mid i<\alpha\rangle)=\langle U_i\mid i<\beta\rangle$ so that 
$$j^+(\langle p_i, \sigma_{i, k}\mid i\leq k <\alpha\rangle)=\langle p_i, \sigma_{i, k}\mid i\leq k <\beta\rangle.$$
\end{proof}
All in all, $\beta\in \sigma_{\beta,\omega_1}\left(\sigma_{\alpha,\beta}\left(\bar S\right)\right)=S$.  Thus 
$$\left(S_b^{f^\ast}-S\right)\cap D\subseteq\alpha$$
so that $S_b^{f^\ast}\subseteq S\mod J$.\\
\end{proof}
\end{proof}

\begin{prop}[Folklore?]\label{sharpsfromprecidealprop}
Suppose there is a precipitous ideal on $\omega_1$. Then $\mathcal P(\omega_1)$ is closed under $A\mapsto A^\sharp$.
\end{prop}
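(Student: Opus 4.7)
The plan is to exploit the generic embedding provided by a precipitous ideal on $\omega_1$. Fix an arbitrary $A \subseteq \omega_1$; it suffices to show $A^{\sharp}$ exists. Let $I$ be a precipitous ideal on $\omega_1$, let $G$ be $(\mathcal{P}(\omega_1)/I)^{+}$-generic over $V$, and let $U_G$ be the induced $V$-ultrafilter on $\omega_1^V$. In $V[G]$, form the generic ultrapower
\[ j \colon V \to M = \operatorname{Ult}(V, U_G). \]
Precipitousness is exactly the statement that $M$ is well-founded, and by normality and uniformity of $I$ one has $\operatorname{crit}(j) = \omega_1^{V}$.

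Next, restrict to the inner model $L[A]$: the map
\[ j \restriction L[A] \colon L[A] \to L[j(A)] \]
is a nontrivial elementary embedding between inner models of $V[G]$ (both constructions being absolute). Since $A \subseteq \omega_1^V = \operatorname{crit}(j)$, we have $j(A) \cap \omega_1^V = A$, so $L[A] \subseteq L[j(A)]$ and the two models agree strictly below the critical point. By a classical theorem of Kunen, the mere existence of such a nontrivial elementary embedding between $L[A]$ and an $L[B]$ with $B \cap \operatorname{crit}(j) = A$ suffices to produce a proper class of Silver indiscernibles for $L[A]$; concretely, the critical sequence $\langle j^n(\omega_1^V) \mid n < \omega \rangle$ together with iterates of $j$ furnishes the EM-blueprint. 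Hence $A^{\sharp}$ exists in $V[G]$.

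Finally, the existence of $A^{\sharp}$ is absolute between $V$ and its set-generic extensions: the sharp is coded by a set of G\"odel numbers of formulas definable uniformly from $A$ and any cofinal sequence of Silver indiscernibles for $L[A]$, and any such blueprint witnessed in $V[G]$ is already witnessed by the same EM-theory in $V$ (alternatively, appeal to Shoenfield absoluteness for the $\Sigma^{1}_{3}(A)$-statement asserting the existence of a well-founded $A$-premouse). Therefore $A^{\sharp} \in V$, and as $A$ was arbitrary, $\mathcal{P}(\omega_1)$ is closed under the sharp operation.

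The only subtle step is the passage from the embedding $j \restriction L[A]$ to an actual sharp for $A$; this is the standard Kunen argument, and the absoluteness of sharp existence between $V$ and $V[G]$ is well-known folklore. The precipitousness hypothesis is used in exactly one place: to guarantee well-foundedness of the target $M$ of the generic ultrapower.
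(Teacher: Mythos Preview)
Your argument has a gap at the key step. When $A$ is unbounded in $\omega_1^V$, the restricted embedding $j\restriction L[A]$ sends $L[A]$ into $L[j(A)]$ with $j(A)\supsetneq A$, so the target is not $L[A]$. Kunen's classical theorem derives $A^\sharp$ from a nontrivial $j\colon L[A]\to L[A]$; your proposed justification via ``the critical sequence $\langle j^n(\omega_1^V)\mid n<\omega\rangle$ together with iterates of $j$'' breaks down precisely because $j$ does not send $L[A]$ to itself, so $j^2, j^3,\ldots$ are not defined on $L[A]$. One can repair this---e.g.\ by showing the derived $L[A]$-measure on $\omega_1^V$ is iterable, or by generically iterating the full $j\colon V\to M$---but that is genuine additional work you have not supplied, and it is not ``the standard Kunen argument.''

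The paper's proof sidesteps this cleanly with a two-step approach. First, for a real $x$ one has $j(x)=x$, so $j\restriction L[x]\colon L[x]\to L[x]$ and Kunen applies directly: every real in $V$ has a sharp. Then by elementarity $M$ is closed under real sharps; since $\omega_1^V<j(\omega_1^V)=\omega_1^M$, the set $A=j(A)\cap\omega_1^V$ is coded in $M$ by a real, whence $A^\sharp\in M\subseteq V[g]$, and absoluteness finishes. Passing through reals first is exactly what avoids the $j(A)\neq A$ problem.
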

\begin{proof}
It is easy to see that $\mathbb R$ is closed under $x\mapsto x^\sharp$. Let $I$ be a precipitous ideal and let $j\colon V\rightarrow M=\Ult(V, g)$ be the generic ultrapower of $V$ in the extension $V[g]$, $g$ generic for $I^+$. Then $A=j(A)\cap\omega_1^V\in M$ and is coded by a real in $M$. By elementarity, $\mathbb R\cap M$ is closed under $x\mapsto x^\sharp$. Thus $A^\sharp$ exists in $M\subseteq V[g]$. As forcing cannot add a sharp, $A^\sharp\in V$.
\end{proof}

\begin{lemm}\label{QmaxdenseinQmaxmlemm}
Assume $\AD$ in $\LofR$. The inclusion $\Qmax\hookrightarrow\Qmaxm$ is a dense embedding.
\end{lemm}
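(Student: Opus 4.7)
I plan to verify the three conditions for $\iota \colon \Qmax \hookrightarrow \Qmaxm$ to be a dense embedding: it preserves the order, preserves incompatibility, and has dense image. For order preservation, if $q = (M, J, h) \in \Qmax$ witnesses $q <_{\Qmax} p$ via an iteration $j \colon p \to p^* = (N^*, I^*, f^*)$ in $q$ with $f^* = h$, then for each $S \in J^+ \cap p^*$ the density of $\eta_h$ in $q$ gives some $b \in \Col(\omega,\omega_1^q)$ with $\eta_h(b) \leq [S]_J$, i.e., $S^h_b \subseteq S \mod J$---precisely the extra $<_{\Qmaxm}$-clause. Since this clause is automatic whenever $q \in \Qmax$, the two orders in fact agree on $\Qmax$, and incompatibility preservation reduces to density: if $q_1, q_2 \in \Qmax$ were $\Qmaxm$-compatible via $r$, density would supply $r' \in \Qmax$ with $r' \leq_{\Qmaxm} r \leq_{\Qmaxm} q_1, q_2$, and then $r' \leq_{\Qmax} q_1, q_2$ by coincidence of the orders on $\Qmax$.

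The main task is density. Given $p = (N, I, f) \in \Qmaxm$, I would first locate a $\Qmax$-condition $q_0 = (M, J, h)$ with $p \in M$; under $\AD^{\LofR}$ such $q_0$ exist by standard Woodin-style absorption. I then apply Lemma \ref{Qmaxmanalysislemm} inside $q_0$ to the pair $(J, h)$ and the condition $p$. The hypotheses hold internally: $J$ is normal and uniform in $q_0$; $h$ witnesses $\diacolpgen{J}$ in $q_0$ because $\eta_h$ is a dense embedding there; and $\Pomo$ in $q_0$ is closed under sharps, since $J$ being $\omega_1$-dense in $q_0$ makes it precipitous and Proposition \ref{sharpsfromprecidealprop} then applies. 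The lemma produces, inside $q_0$, an iteration $j \colon p \to p^* = (N^*, I^*, f^*)$ of length $\omega_1^{q_0}+1$ with $f^* = h \mod J$ and with every $S \in J^+ \cap p^*$ containing some $S^h_b$ modulo $J$.

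Setting $q \coloneqq (M, J, f^*)$, the equality $f^* = h \mod J$ gives $\eta_{f^*} = \eta_h$ as maps into $(\Pomo/J)^+$, so $q \in \Qmax$. The iteration $j$ lives inside $q$ and witnesses $q <_{\Qmaxm} p$, because its $f^*$-coordinate equals that of $q$ by construction and the extra $J^+$-clause is exactly what the lemma supplies. The step I expect to be the main obstacle is the internal use of Lemma \ref{Qmaxmanalysislemm}: one must verify that $p$ is still a $\Qmaxm$-condition from the perspective of $q_0$ (absoluteness of generic iterability between $V$ and $q_0$, which holds under $\AD^{\LofR}$) and check that the generic construction of the lemma, originally carried out in $V$, goes through verbatim inside $q_0$, so that the produced iteration genuinely has length $\omega_1^{q_0}+1$ with all the claimed properties.
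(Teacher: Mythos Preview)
Your approach is essentially the paper's: show the two orders agree on $\Qmax$, then for density absorb $p$ into a $\Qmax$-condition $q_0=(M,J,h)$, apply Lemma~\ref{Qmaxmanalysislemm} inside $M$, and set $q=(M,J,f^\ast)$. One sharpening is worth making explicit: it is not quite enough to have merely $p\in M$; the proof of Lemma~\ref{Qmaxmanalysislemm} uses the club of $x$-indiscernibles for a real $x$ coding $p$, so you need $x^\sharp\in M$ (this is exactly what the paper demands, and under $\AD^{\LofR}$ such $q_0$ are available). With that in hand, the internal application goes through verbatim and the rest of your argument is correct.
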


\begin{proof}
It is easy to see that if $p, q\in\Qmax$ then 
$$q<_{\Qmax} p\Leftrightarrow q<_{\Qmaxm} p.$$
Now let $p\in\Qmaxm$ and find $x$ a real coding $p$. Our assumptions imply by Woodin's analysis of $\Qmax$ under $\AD^{\LofR}$ that there is $q=(M, J, h)\in\Qmax$ with $x^\sharp\in M$. By Proposition \ref{sharpsfromprecidealprop},
$$M\models``\Pomo\text{ is closed under }A\mapsto A^\sharp".$$
Thus we may apply Lemma \ref{strongdiasharpsprop} inside $M$ and find an iteration 
$$j\colon p\rightarrow p^\ast=(N^\ast, I^\ast, f^\ast)$$
so that 
$$q'\coloneqq (M, J, f^\ast)\in \Qmax$$
and $j$ witnesses $q'<_{\Qmaxm}p$.
\end{proof}

It is not obvious how to even prove construct a single $\Qmax$-condition assuming only $\AD^{\LofR}$. Woodin worked with a variant $\Qmaxast$ of $\Qmax$ instead to analyze the $\Qmax$-extension of $\LofR$. We remark that this can be done with $\Qmaxm$ as well. The arguments are, modulo Lemma \ref{Qmaxmanalysislemm}, quite similar to the arguments in the $\Qmaxast$ analysis.

\section{Consistency of $\QMM$ and forcing ``$\NS$ is $\omega_1$-dense"}

We are now in position to force $\QMM$ and force ``$\NS$ is $\omega_1$-dense".

We can now finally find a forcing which freezes $\NS$ along $f$ assuming large cardinals and that $f$ witnesses $\diacol$.

We will also reap what we have sown by replacing $\Qmax$ with $\Qmaxm$.

\begin{proof}[Proof of Lemma \ref{freezinglemm}]
Use the Woodin cardinal to make $\NS$ saturated while turning $f$ into a witness of $\diacolp$ by $f$-semiproper forcing in a generic extension $V[g]$ using the iteration theorem \ref{fsemiproperiterationtheoremotherpaper}. Shelah's construction to make $\NS$ saturated works just as well in this context. Observe that 
$$(\Htwo,\NS, f)^{V[g]}$$
is a almost a $\Qmaxm$-condition in $V[g]$. Work in $V[g]$. Next we want to apply Theorem \ref{forcingexistencelemm} with $\Vmax=\Qmaxm$ for the dense set $D=\Qmaxm$. Note that the universe is closed under $X\mapsto X^\sharp$ and as $D$ is $\Pi^1_2$, $D$ is $\infty$-universally Baire. We cannot guarantee full generic absoluteness for small forcings, however we actually only need that for any forcing $\PP$ of size $\leq 2^{\omega_2}$ we have that
\begin{enumerate}[label=$(\roman*)$]
    \item\label{freezingcond1} $(\Qmaxm)^{V[g]^\PP}\cap V[g]=(\Qmaxm)^{V[g]}$ and
    \item\label{freezingcond2} $(\Qmaxm)^{V[g]^{\PP}}$ is a $\Pmaxvariant$ in $V[g]^{\PP}$
\end{enumerate}
\ref{freezingcond1} is again guaranteed by the closure under $X\mapsto X^\sharp$.  The only nontrivial thing one has to verify for \ref{freezingcond2} is that $\Qmaxm$ has no minimal conditions in $V[g]^{\PP}$. This follows from the closure of $\mathbb R$ under $x\mapsto M_1^\sharp(x)$.\\
Thus $\PPdia=\PPdia(\Qmaxm, f, \Qmaxm)$ exists and in a further extension $V[g][h]$ by $\PPdia$ we have: 

\begin{tikzpicture}
\def\x{2.1};
\def\y{0.5};

\node (pT) at (0*\x,  0*\y){$\Qmaxm$};
\node (N) at (0*\x, -2*\y) {$q_0$};
\node (Nast) at (2*\x, -2*\y) {$q_{\omega_1}=(N_{\omega_1}, I_{\omega_1}, f)$};
\node (M0) at (-2*\x, -4*\y) {$p_0$};
\node (MN) at (0*\x, -4*\y) {$p_{\omega_1^{q_0}}$};
\node (M3) at (2*\x, -4*\y) {$p_{\omega_1}$};
\node (H) at (2*\x, -6*\y) {$((H_{\omega_2})^{V[g]}, \NS^{V[g]},f)$};
\node (Vmax) at (-2*\x, -6*\y) {$\Qmaxm$};

\path (N)--(pT) node[midway, rotate=90]{$\in$};

\draw[->] (N)--(Nast) node[midway, above]{$\sigma_{0,\omega_1}$};

\path (MN)--(N) node[midway, rotate=90]{$\in$};
\path (M3)--(Nast) node[midway, rotate=90]{$\in$};

\draw[->] (M0)--(MN) node[midway, above]{$\mu_{0, \omega_1^{q_0}}$};
\draw[->] (MN)--(M3) node[midway, above]{$\mu_{\omega_1^{q_0}, \omega_1}$};

\path (H)--(M3) node[midway, rotate=90]{$=$};

\path (Vmax)--(M0) node[midway, rotate=270]{$\in$};

\end{tikzpicture}

So that
\begin{enumerate}[label=$(\PPdia.\roman*)$]
\item $\mu_{0, \omega_1}, \sigma_{0,\omega_1}$ are generic iterations of $p_0$, $q_0$ respectively,
\item\label{qmaxmsmallerppdiacond} $\mu_{0,\omega_1^{q_0}}$ witnesses $q_0<_{\Qmaxm} p_0$,
\item\label{qmaxmstretchcond} $\mu_{0,\omega_1}=\sigma_{0,\omega_1}(\mu_{0,\omega_1^{q_0}})$ and
\item\label{qmaxmidealsmatchppdiacond} the generic iteration $\sigma_{0, \omega_1}\colon q_0\rightarrow q_{\omega_1}$ is a $\diamondsuit$-iteration.
\end{enumerate} 
\begin{claim}
$f$ witnesses $\diacol$ in $V[g][h]$.
\end{claim}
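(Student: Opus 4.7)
The plan is to deduce the claim directly from Lemma \ref{diamondsuititerationdiablemm} applied to the generic iteration $\sigma_{0,\omega_1}\colon q_0\to q_{\omega_1}$ produced by $\PPdia$, with $\BB=\Col(\omega,\omega_1)$.

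First, I would unpack what is encoded in $q_0\in\Qmaxm$. Writing $q_0=(N_0, I_0, f^{q_0})$, the definition of $\Qmaxm$ gives
$$(N_0;\in, I_0)\models \text{``}f^{q_0}\text{ witnesses }\diacolip\text{''}.$$
By the full elementarity of the iteration map $\sigma_{0,\omega_1}\colon q_0\to q_{\omega_1}$, this transfers to
$$(N_{\omega_1};\in, I_{\omega_1})\models \text{``}\sigma_{0,\omega_1}(f^{q_0})\text{ witnesses }\diabgena{I_{\omega_1}}\text{''}.$$
Now, by $\PPdia$ property \ref{qmaxmstretchcond} together with the labeling in the diagram (where the third coordinate of $q_{\omega_1}$ is $f$ itself, as dictated by the typicality of $\Qmaxm$ and the requirement $\mu_{0,\omega_1}=\sigma_{0,\omega_1}(\mu_{0,\omega_1^{q_0}})$ landing in $\mathcal H_f$), we have $\sigma_{0,\omega_1}(f^{q_0})=f$. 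Hence
$$(N_{\omega_1};\in, I_{\omega_1})\models \text{``}f\text{ witnesses }\diabgena{I_{\omega_1}}\text{''}.$$

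Second, I would invoke Lemma \ref{diamondsuititerationdiablemm} directly. Its hypotheses are precisely what has just been verified: $\sigma_{0,\omega_1}$ is a $\diamondsuit$-iteration by $\PPdia$ property \ref{qmaxmidealsmatchppdiacond}, and the inner $\diamondsuit^+$-principle holds in $N_{\omega_1}$. The lemma then yields the desired conclusion that $f$ witnesses $\diacol$ in $V[g][h]$, together with the identification $I_{\omega_1}=\NSf\cap N_{\omega_1}$. The latter will in fact be useful in the subsequent claims needed to argue that $\PPdia$ freezes $\NS$ along $f$.

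There is no real obstacle at this point; all the hard work is hidden in two earlier achievements, namely the construction of $\PPdia$ so that the induced iteration is a $\diamondsuit$-iteration (Theorem \ref{forcingexistencelemm}) and the design of $\Qmaxm$ so that every condition internally satisfies $\diacolip$ with respect to its own ideal. Once these are lined up, Lemma \ref{diamondsuititerationdiablemm} does all the remaining work in one stroke.
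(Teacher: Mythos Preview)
Your proposal is correct and follows the same route as the paper's one-line proof: verify the hypothesis of Lemma~\ref{diamondsuititerationdiablemm} (that $N_{\omega_1}\models``f$ witnesses $\diabgena{I_{\omega_1}}"$) via elementarity of $\sigma_{0,\omega_1}$ and the identification $\sigma_{0,\omega_1}(f^{q_0})=f$, then invoke the lemma together with \ref{qmaxmidealsmatchppdiacond}. One small correction: you justify $\sigma_{0,\omega_1}(f^{q_0})=f$ by appealing to ``the typicality of $\Qmaxm$'', but the paper explicitly singles out $\Qmaxm$ as the one $\Pmaxvariant$ considered that is \emph{not} typical; the identification instead follows directly from the definition of $<_{\Qmaxm}$ (the clause $f^\ast=h$) applied to \ref{qmaxmsmallerppdiacond} and then transported by \ref{qmaxmstretchcond}.
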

\begin{proof}
By Lemma \ref{diamondsuititerationdiablemm} and \ref{qmaxmidealsmatchppdiacond}, $I_{\omega_1}=\NSf^{V[g][h]}\cap N_{\omega_1}$, in particular $f$ witnesses $\diacol$ in $V[g][h]$.
\end{proof}
It remains to show that the extension $V\subseteq V[g][h]$ has ``frozen $\NS^V$ along $f$". Let $S\in\Pomo^V$. It follows from \ref{qmaxmsmallerppdiacond}, \ref{qmaxmstretchcond} and the definition of $<_{\Qmaxm}$ (especially \ref{Qmaxmorderkillcond}) that one of the following holds:
\begin{itemize}
    \item Either $S\in I_{\omega_1}$,
    \item or for some $p\in\Col(\omega,\omega_1)$ we have $S^f_p\subseteq S\mod I_{\omega_1}$.
\end{itemize}
As any $\diamondsuit$-iteration is correct, $I_{\omega_1}=\NS^{V[g][h]}\cap N_{\omega_1}$. It follows that
\begin{itemize}
    \item either $S\in\NS^{V[g][h]}$,
    \item or for some $p\in\Col(\omega,\omega_1)$ we have $S^f_p\subseteq S\mod \NS^{V[g][h]}$,
\end{itemize}
which is what we had to show.
\end{proof}
\begin{rem}
Instead of closure of $V$ under $X\mapsto M_1^{\sharp}$ we could just as well have assumed that there is a second Woodin cardinal with a measurable above. 
\end{rem}

\begin{thm}\label{getQMMthm}
Suppose $f$ witnesses $\diacol$ and there is a supercompact limit of supercompact cardinals. Then there is a $f$-preserving forcing extension in which $f$ witnesses $\QMM$. 
\end{thm}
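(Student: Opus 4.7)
The plan is to build a nice iteration $\PP=\langle \PP_\alpha,\dot\Q_\beta\mid \alpha\leq\kappa,\beta<\kappa\rangle$ of length $\kappa$, where $\kappa$ is the given supercompact limit of supercompacts, that satisfies the three clauses in the definition of a $Q$-iteration with respect to $f$. By Theorem \ref{Qiterationthm} the iteration will preserve $f$, which already gives one half of $\QMM$; the forcing-axiom half will be verified by a Laver-function reflection argument along the lines of the classical consistency proof of $\MMpp$ from a supercompact.

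Fix a Laver function $\ell\colon\kappa\to V_\kappa$ supplied by the supercompactness of $\kappa$. I would define $\dot\Q_\alpha$ by case analysis on $\alpha\bmod 3$:
\begin{enumerate}[label=$(\alph*)$]
\item If $\alpha\equiv 0$ and $\ell(\alpha)$ is (a name for) an $f$-preserving forcing in $V^{\PP_\alpha}$, let $\dot\Q_\alpha$ be this forcing; otherwise take $\dot\Q_\alpha$ trivial. This is the \emph{catching} step.
\item If $\alpha\equiv 1$, let $\dot\Q_\alpha$ be the $f$-semiproper forcing furnished by the lemma following Theorem \ref{Qiterationthm} that forces $\SRP$ (hence $(\ddagger)$). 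This is possible because $\kappa$ is a limit of supercompacts, so a supercompact above the current iteration is still available.
\item If $\alpha\equiv 2$, let $\dot\Q_\alpha$ be the $f$-preserving forcing given by Lemma \ref{freezinglemm} that freezes $\NS$ along $f$; its hypotheses (a Woodin cardinal plus closure under $X\mapsto M_1^\sharp(X)$) are again inherited from a suitable supercompact still lying below $\kappa$.
\end{enumerate}
By construction every $\dot\Q_\alpha$ preserves $f$, $(\ddagger)$ is forced at every successor stage (via clause (b)), and $\dot\Q_{\alpha+1}$ freezes $\NS$ along $f$ whenever $\alpha+1<\kappa$ (the case analysis is arranged so that each successor is immediately followed by a freezing step). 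Thus $\PP$ is a $Q$-iteration and Theorem \ref{Qiterationthm} yields that $f$ witnesses $\diacol$ in $V^\PP$.

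To verify $\FA(\Gamma)$ in $V^\PP$, where $\Gamma=\{\QQ\mid \QQ\text{ preserves }f\}$, let $\QQ\in V^\PP$ be $f$-preserving and $\langle D_\xi\mid\xi<\omega_1\rangle$ a family of dense subsets of $\QQ$. Pick $\lambda$ sufficiently large and an elementary embedding $j\colon V\to M$ with $\crit(j)=\kappa$, $j(\kappa)>\lambda$, $M^\lambda\subseteq M$, and $j(\ell)(\kappa)$ coding $\QQ$. In $M$ the iteration $j(\PP)$ then uses (a copy of) $\QQ$ at stage $\kappa$ via its catching clause, so generating over $\PP*\dot\QQ$ produces an object one can lift along $j$ to a master condition; standard bookkeeping shows the pulled-back filter $g\subseteq\QQ$ in $V^\PP$ meets every $D_\xi$. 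Combined with the preservation of $f$ already established, this yields $\QMM$.

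The hard part will be the lifting/master-condition step: one must show the supercompact embedding $j$ lifts through $\PP*\dot\QQ$ in a manner compatible with $f$-preservation, i.e.\ that at each stage the side-by-side iterations $\PP$ and $j(\PP)$ agree enough for the closure of $M$ to produce the required filter, and that the resulting $g$ really decides all $\omega_1$ many dense sets correctly in $V^\PP$. This is the $Q$-iteration analogue of the Foreman--Magidor--Shelah argument and will rely crucially on Theorem \ref{Qiterationthm} applied to the ``tail'' iteration above stage $\alpha$, together with the fact that every relevant tail is itself a $Q$-iteration so preserves $f$-stationary sets.
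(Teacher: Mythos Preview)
Your overall strategy is correct and close to the paper's, but the $\bmod\ 3$ interleaving does not produce a $Q$-iteration. Clause (iii) of the definition requires that $\Q_{\alpha+1}$ freeze $\NS$ along $f$ at \emph{every} successor ordinal $\alpha+1<\kappa$, and clause (ii) requires $(\ddagger)$ in $V^{\PP_{\alpha+1}}$ for \emph{every} $\alpha$. In your construction only the stages $\equiv 2\pmod 3$ freeze, and $(\ddagger)$ (via $\SRP$) is guaranteed only immediately after stages $\equiv 1\pmod 3$; your parenthetical that ``each successor is immediately followed by a freezing step'' does not meet (iii), which is a condition on $\Q_{\alpha+1}$ itself, not on some nearby stage. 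Hence you cannot invoke Theorem~\ref{Qiterationthm}, and the same objection blocks a direct appeal to Theorem~\ref{iterationtheoremotherpaper}: its hypothesis \ref{itm:srprestriction} fails after your freezing and catching stages.

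The paper repairs this by making each $\Q_\alpha$ a two-step iteration $\Q_\alpha^0\ast\ddot\Q_\alpha^1$. The second factor always forces $\SRP$, so clause (ii) holds after every step; at successor $\alpha$ the first factor freezes $\NS$ along $f$, so clause (iii) holds (the composite still freezes, since the subsequent $f$-semiproper $\SRP$-forcing preserves each relation $S^f_p\subseteq S\mod\NS$). The Laver-guessed forcing is inserted as $\Q_\alpha^0$ only at \emph{limit} $\alpha$, precisely where clause (iii) is vacuous --- this is exactly the point of the discussion preceding the definition of $Q$-iteration. Once the iteration is arranged this way the forcing axiom follows by the standard Laver-function reflection argument; no master-condition work beyond the usual is needed, so what you flag as the ``hard part'' is in fact routine.
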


\begin{proof}
Let $\kappa$ be a supercompact limit of supercompact cardinals and 
$$L\colon V_\kappa\rightarrow V_\kappa$$
an associated Laver function. We describe a $Q$-iteration w.r.t. $f$
$$\PP=\langle \PP_\alpha, \Q_\beta\mid\alpha\leq\kappa,\beta<\kappa\rangle$$
that forces $\QMM$. For any $\alpha<\kappa$, $\Q_\alpha$ is a two step-iteration of the form 
$$\Q_\alpha=\Q_\alpha^0\ast\ddot{\mathbb Q}^1_\alpha$$
with $\vert \Q_\alpha\vert<\kappa$. If $\alpha$ is a successor (or $0$) then
\begin{enumerate}[label=\rn*]
\item $\Q_\alpha^0$ is forced to be a $f$-preserving forcing that freezes $\NS$ along $f$ and
\item $\ddot{\mathbb Q}^1_\alpha$ is a name for a $f$-preserving partial order forcing $\SRP$.
\end{enumerate}

Note that $\Q_\alpha^0$ exists by Lemma \ref{freezinglemm} and $\ddot{\mathbb Q}^1_\alpha$ exists by Corollary \ref{getSRPcor}.\\

If $\alpha$ is a limit ordinal, then 
\begin{enumerate}[label=\rn*]
\item $\Q_\alpha^0$ is $L(\alpha)$ if that is a $\PP_\alpha$-name for a $f$-preserving forcing and the trivial forcing else,
\item $\ddot{\mathbb Q}^1_\alpha$ is as in the successor case.
\end{enumerate}

It is clear that this constitutes a $Q$-iteration and hence $\PP$ preserves $f$ and in particular $\omega_1$ is not collapsed. $\PP$ is $\kappa$-c.c..  As we use $f$-preserving forcings guessed by $L$ at limit steps, $\QMM$ holds in the extension as witnessed by $f$ by the usual argument.
\end{proof}

If one is only interested in forcing ``$\NS$ is $\omega_1$-dense", a slightly weaker large cardinal assumption is sufficient.

\begin{thm}\label{firstnsdenseforcingthm}
Suppose $f$ witnesses $\diacol$ and $\kappa$ is an inaccessible limit of ${<}\kappa$-supercompact cardinals. Then there is a $f$-preserving forcing extension in which $\NS$ is $\omega_1$-dense.
\end{thm}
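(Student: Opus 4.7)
The plan is to imitate the proof of Theorem \ref{getQMMthm} almost verbatim, dropping only the Laver-function bookkeeping at limit stages. That bookkeeping was needed solely to force the full $\QM$; since we aim only for the (strictly weaker) consequence ``$\NS$ is $\omega_1$-dense", the iteration pattern ``freeze, then force $(\ddagger)$" at successor stages is already enough, and limit stages can be handled trivially.

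Concretely, I would construct a nice $Q$-iteration (with respect to $f$) $\PP = \langle \PP_\alpha, \Q_\beta \mid \alpha \leq \kappa, \beta < \kappa \rangle$ of length $\kappa$. Let $\langle \lambda_\alpha \mid \alpha < \kappa \rangle$ enumerate cofinally many ${<}\kappa$-supercompacts below $\kappa$. At each successor stage $\alpha$ (or $\alpha = 0$), arrange $|\PP_\alpha| < \lambda_\alpha$ and define $\Q_\alpha = \Q_\alpha^0 \ast \ddot{\mathbb Q}^1_\alpha$, where $\Q_\alpha^0$ is the $f$-preserving forcing from Lemma \ref{freezinglemm} that freezes $\NS$ along $f$ (this uses a Woodin cardinal and closure under $X \mapsto M_1^\sharp(X)$ obtained from $\lambda_\alpha$), and $\ddot{\mathbb Q}^1_\alpha$ is an $f$-preserving forcing producing $(\ddagger)$ up to some level, built from $\lambda_\alpha$ exactly as in Theorem \ref{getQMMthm}. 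At limit $\alpha$ just take $\Q_\alpha^0$ trivial and $\ddot{\mathbb Q}^1_\alpha$ to be the corresponding $f$-preserving forcing for $(\ddagger)$. This is still a $Q$-iteration, so Theorem \ref{Qiterationthm} gives that $\PP$ preserves $f$; inaccessibility of $\kappa$ together with $|\Q_\alpha| < \kappa$ at each stage yields the $\kappa$-c.c.

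To finish, let $G$ be $\PP$-generic and let $S \in \Pomo^{V[G]}$. By the $\kappa$-c.c., $S$ lies in $V^{\PP_\alpha}$ for some $\alpha < \kappa$. The freezing step at stage $\alpha+1$ (or any later successor) applied to $S$ yields, in $V^{\PP_{\alpha+2}}$, that either $S \in \NS$ or $S^f_p \subseteq S \mod \NS$ for some $p \in \Col(\omega,\omega_1)$; both alternatives are upward absolute (nonstationarity is never destroyed by $\omega_1$-preserving forcing), so they persist to $V[G]$. In $V[G]$ it follows first that every stationary set $S$ contains some $f$-stationary $S^f_p$ modulo $\NS$, hence is itself $f$-stationary, so by Proposition \ref{diacolpequivprop} $f$ in fact witnesses $\diacolp$ there; by Lemma \ref{diacomplemm} this means $\eta_f\colon \Col(\omega,\omega_1)\to(\Pomo/\NS)^+$ is a regular embedding. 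Second, the range of $\eta_f$ is dense in $(\Pomo/\NS)^+$ by the freezing conclusion. Therefore $\eta_f$ is a dense embedding, and $\NS$ is $\omega_1$-dense.

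The main obstacle is purely bookkeeping: arranging the $\ddot{\mathbb Q}^1_\alpha$ to achieve the needed fragment $(\ddagger)$ at stage $\alpha+1$ using only the ${<}\kappa$-supercompact $\lambda_\alpha$ rather than a full supercompact $\kappa$. However, this is exactly what the iteration theorem of \cite{lietzIteration} (Theorem \ref{fsemiproperiterationtheoremotherpaper}) is designed to handle: $f$-semiproper iterations of length $\lambda_\alpha$ force the required local fragment of $\SRP$ while remaining $f$-preserving, and one sees as in the proof of Theorem \ref{getQMMthm} that everything fits into the $Q$-iteration template.
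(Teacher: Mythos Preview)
Your overall strategy is correct and matches the paper's: build a $Q$-iteration of length $\kappa$ that freezes $\NS$ along $f$ at successor stages, argue via Theorem~\ref{Qiterationthm} that $f$ is preserved, and use $\kappa$-c.c.\ plus the freezing conclusion to get density of $\eta_f$. The finishing argument you give is exactly the paper's.

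Where you diverge from the paper is in handling the $(\ddagger)$-clause of the $Q$-iteration definition. You note yourself that a ${<}\kappa$-supercompact $\lambda_\alpha$ only forces a ``fragment'' of $\SRP$/$(\ddagger)$ in $V$, and then assert this fragment suffices and that ``everything fits into the $Q$-iteration template''. But the definition of $Q$-iteration requires full $(\ddagger)$ at each $\PP_{\alpha+1}$, and Theorem~\ref{Qiterationthm} (via Theorem~\ref{iterationtheoremotherpaper}) needs full $\SRP$ at $\PP_{\alpha+2}$; a fragment does not literally meet the hypotheses, so your appeal to Theorem~\ref{Qiterationthm} is not justified as written. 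This is not ``purely bookkeeping'' --- it is the one place where the weaker large-cardinal hypothesis bites.

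The paper resolves this with a single clean observation you are missing: since $\kappa$ is inaccessible, $V_\kappa\models\ZFC$, and in $V_\kappa$ the ${<}\kappa$-supercompacts are \emph{genuine} supercompacts. Hence for each $\gamma<\kappa$ one has $V_\kappa\models``\PP_\gamma\text{ is a }Q\text{-iteration w.r.t.\ }f"$, and Theorem~\ref{Qiterationthm} \emph{applied inside $V_\kappa$} gives that each $\PP_\gamma$ preserves $f$; this immediately yields that $\PP$ preserves $f$ in $V$. No fragments, no extra analysis of what portion of $\SRP$ is needed. Your argument can be repaired simply by inserting this move.
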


\begin{proof}
Indeed any nice iteration 
$$\PP=\langle \PP_\alpha,\Q_\beta\mid\alpha\leq\kappa, \beta<\kappa\rangle$$
so that for all $\gamma<\kappa$
$$V_\kappa\models``\PP_\gamma\text{ is a }Q\text{-iteration w.r.t. }f"$$
preserves $f$ and forces ``$\NS$ is $\omega_1$-dense". To see this, first of all note that $\PP$ is $\kappa$-c.c.~by Fact \ref{niceccfact}. Now any $\PP_\gamma$ for $\gamma<\kappa$ preserves $f$ by Theorem \ref{Qiterationthm} applied in $V_\kappa$ and it follows immediately that $\PP$ preserves $f$. Suppose now that $G$ is $\PP$-generic and 
$$V[G]\models S\in\NS^+.$$
There must be some nonlimit $\gamma<\kappa$ with $S\in V[G_\gamma]$. As $\Q_\gamma^{G_\gamma}$ freezes $\NS$ along $f$ in $V[G_{\gamma}]$, there must be some $b\in\Col(\omega,\omega_1)$ with $S^f_b\subseteq S\mod\NS$ in $V[G_{\gamma+1}]$, hence in $V[G]$.
\end{proof}

Neither of these results answers the original question, as Woodin asks specifically for a semiproper forcing, but $Q$-iterations are not stationary set preserving if $\NS$ is not $\omega_1$-dense to begin with. However, we have one more trick up our sleeves: For once we will pick $f$ more carefully.

\begin{lemm}\label{nicediacolwitnesslemm}
Suppose $\vec S=\langle S_\alpha\mid\alpha<\omega_1\rangle$ is a sequence of pairwise disjoint stationary sets in $\omega_1$ and $\diamondsuit(S_\alpha)$ holds for all $\alpha<\omega_1$. Then there is $f$ witnessing $\diacol$ so that for all $\alpha<\omega_1$, there is $p\in\Col(\omega, \omega_1)$ with $S^f_p\subseteq S_\alpha$. 
\end{lemm}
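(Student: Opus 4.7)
I fix pairwise incompatible markers $\langle q_\alpha\mid\alpha<\omega_1\rangle$ in $\Col(\omega,\omega_1)$, say $q_\alpha=\{(0,\alpha)\}$, together with diamond sequences $\langle A^\alpha_\beta\mid\beta\in S_\alpha\rangle$ for each $\alpha$ given by $\diamondsuit(S_\alpha)$. By shrinking each $S_\alpha$ by the non-stationary piece $\{\beta\in S_\alpha\mid\beta\leq\alpha\}$ (which does not affect $\diamondsuit(S_\alpha)$), I may assume $q_\alpha\in\Col(\omega,\omega_1)\cap\beta$ whenever $\beta\in S_\alpha$. Once and for all I fix a coding scheme interpreting any subset $A$ of a countable limit ordinal $\beta$ as a sequence $\langle E^A_\gamma\mid\gamma<\beta\rangle$ of subsets of $\Col(\omega,\omega_1)\cap\beta$ via a standard pairing function. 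I then define $f\colon\omega_1\to H_{\omega_1}$ by $f(\beta)=\emptyset$ if $\beta\notin\bigcup_\alpha S_\alpha$, and for the unique $\alpha$ with $\beta\in S_\alpha$, $f(\beta)$ is the canonically chosen (in a fixed wellorder of $H_{\omega_1}$) $\Col(\omega,\omega_1)\cap\beta$-filter containing $q_\alpha$ that meets every $E^{A^\alpha_\beta}_\gamma$ which happens to be dense in $\Col(\omega,\omega_1)\cap\beta$; such a filter exists by countability.

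The inclusion $S^f_{q_\alpha}\subseteq S_\alpha$ then follows at once from pairwise incompatibility: if $q_\alpha\in f(\beta)$, then $f(\beta)\neq\emptyset$, so $\beta\in S_{\alpha'}$ for some $\alpha'$, and $f(\beta)\ni q_{\alpha'}$ by construction; since distinct $q_\gamma$'s are incompatible as filter elements, $\alpha=\alpha'$ and so $\beta\in S_\alpha$.

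To verify that $f$ witnesses $\diacol$ I use the equivalent condition \ref{diacolequivcond3} of Proposition \ref{diacolequivprop}: given $b\in\Col(\omega,\omega_1)$ and a sequence $\langle D_\gamma\mid\gamma<\omega_1\rangle$ of dense subsets of $\Col(\omega,\omega_1)$, I must show that $T:=\{\beta\in S^f_b\mid\forall\gamma<\beta\ f(\beta)\cap D_\gamma\neq\emptyset\}$ is stationary. Pick $\alpha$ with $q_\alpha$ compatible with $b$ (set $\alpha=b(0)$ if $0\in\dom(b)$, arbitrary otherwise), and let $b^*$ be a common extension of $b$ and $q_\alpha$. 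Form $\vec D^*$ from $\vec D$ by inserting the dense set $\{r\mid r\leq b^*\}$, and encode $\vec D^*$ as a subset $A^*\subseteq\omega_1$ compatibly with the coding scheme so that on a club of $\beta$, $A^*\cap\beta$ decodes to $\vec D^*\upharpoonright\beta$ (restricted to $\Col(\omega,\omega_1)\cap\beta$). Applying $\diamondsuit(S_\alpha)$ to $A^*$ yields a stationary set of $\beta\in S_\alpha$ on which $A^\alpha_\beta=A^*\cap\beta$; intersecting with the club, at such $\beta$ the filter $f(\beta)$ meets every $D^*_\gamma$ for $\gamma<\beta$. In particular $f(\beta)$ meets $\{r\mid r\leq b^*\}$, so by upward closure $b^*\in f(\beta)$ and hence $b\in f(\beta)$, placing $\beta$ in $T$. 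The one piece needing care is the coding: since $\diamondsuit(S_\alpha)$ only guesses initial segments, one must arrange that the single subset $A^\alpha_\beta\subseteq\beta$ uniformly decodes to the correct sequence of dense sets on a club of $\beta$, but this is a routine pairing-function bookkeeping once set up consistently across $\alpha$.
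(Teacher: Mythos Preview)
Your proof is correct and follows essentially the same strategy as the paper: fix an $\omega_1$-sized antichain of markers (your $q_\alpha=\{(0,\alpha)\}$, the paper's $b_\alpha$), and arrange that on $S_\alpha$ the guessed filter always contains the $\alpha$-th marker, so that $S^f_{q_\alpha}\subseteq S_\alpha$; then verify $\diacol$ via Proposition~\ref{diacolequivprop}\ref{diacolequivcond3} by choosing $\alpha$ with $q_\alpha$ compatible with the given $b$. The only difference is packaging: the paper first invokes the standard fact that $\diamondsuit(S_\alpha)$ yields a $\diacol$-witness $f_\alpha$ supported on $S_\alpha$ and then defines $f(\beta)$ as the shift of $f_\alpha(\beta)$ by $b_\alpha$ (i.e., the upward closure of $\{b_\alpha{}^\frown q\mid q\in f_\alpha(\beta)\}$), whereas you unfold this step and build the filter $f(\beta)$ directly from the raw diamond guess $A^\alpha_\beta$ via a coding of dense sets---a slightly more self-contained but otherwise equivalent argument.
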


\begin{proof}
From $\diamondsuit(S_\alpha)$, we get a witness $f_\alpha$ of $\diacol$ so that $f_\alpha(\beta)$ is the trivial filter if $\beta\notin S$. Let $\langle b_\alpha\mid\alpha<\omega_1\rangle$ be an enumeration of some maximal antichain in $\Col(\omega,\omega_1)$ of size $\aleph_1$. Now define $f\colon \omega_1\rightarrow H_{\omega_1}$ as follows: For $\beta\in S_\alpha$ we let
$$f(\beta)=\{p\in\Col(\omega,\beta)\mid\exists p'\leq p\exists q\in f_\alpha(\beta)\ p'\leq {b_\alpha}^\frown q\}.$$
Note that there is at most one $\alpha$ with $\beta\in S_\alpha$. If $\beta$ is not in any $S_\alpha$, let $f(\beta)$ be the trivial filter. It is now clear that $S^f_{b_\alpha}\subseteq S_\alpha$, but we still need to verify that $f$ indeed witnesses $\diacol$. So let $p\in\Col(\omega,\omega_1)$ and
$$\vec D=\langle D_\alpha\mid\alpha<\omega_1\rangle$$
be a sequence of dense subsets of $\Col(\omega,\omega_1)$. We have that show that 
$$\{\beta<\omega_1\mid p\in f(\beta)\wedge \forall \gamma<\beta\ f(\beta)\cap D_\gamma\neq\emptyset\}$$
is stationary. So let $C$ be a club in $\omega_1$. Find $\alpha$ so that $b_\alpha$ is compatible with $p$ and note that we may assume further that $p\leq b_\alpha$. Hence we can write $p$ as $p={b_\alpha}^\frown q$. For $\gamma<\omega_1$, let 
$$D'_\gamma=\{r\in\Col(\omega,\omega_1)\mid {b_\alpha}^\frown r\in D_\gamma\}$$
and note that $D'_\gamma$ is dense.  As $f_{\alpha}$ witnesses $\diacol$, we may find $\beta\in C$ large enough so that
\begin{enumerate}[label=$(\beta.\roman*)$]
\item $p\in\Col(\omega,\beta)$,
\item $q\in f_{\alpha}(\beta)$ and
\item $\forall \gamma<\beta\ f_{\alpha}(\beta)\cap D^\prime_\gamma\neq\emptyset$.
\end{enumerate}
It follows that $p\in f(\beta)$ and that 
$$\forall \gamma<\beta\ f(\beta)\cap D_\gamma\neq\emptyset.$$
\end{proof}

\begin{cor}
Assume there is a supercompact limit of supercompact cardinals. Then there is a semiproper forcing $\PP$ with $V^\PP\models\QMM$.
\end{cor}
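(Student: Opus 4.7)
The strategy is to refine the proof of Theorem \ref{getQMMthm} so that every stage of the iteration is $f$-semiproper rather than merely $f$-preserving, using Lemma \ref{nicediacolwitnesslemm} to choose the witness $f$ of $\diacol$ with enough extra structure for this to work. Concretely, I would begin with a preparatory forcing that preserves a tail of the supercompact limit of supercompacts while securing $\diamondsuit(S_\alpha)$ for every $\alpha<\omega_1$, where $\langle S_\alpha\mid\alpha<\omega_1\rangle$ is a fixed partition of $\omega_1$ into stationary pieces. Lemma \ref{nicediacolwitnesslemm} then yields a witness $f$ of $\diacol$ with $S^f_{b_\alpha}\subseteq S_\alpha\mod\NS$ for a prescribed maximal antichain $\langle b_\alpha\mid\alpha<\omega_1\rangle\subseteq\Col(\omega,\omega_1)$. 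A further $f$-semiproper preliminary forcing can be used to upgrade $f$ to a witness of $\diacolp$; by Proposition \ref{diacolpequivprop} \ref{diacolpequivcond4}, in the resulting universe the notions of $f$-semiproper and semiproper forcing coincide.

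Working in this prepared model, I would run a nice iteration $\PP=\langle\PP_\alpha,\Q_\beta\mid\alpha\leq\kappa,\beta<\kappa\rangle$ of exactly the shape used in Theorem \ref{getQMMthm}: at each successor stage, a two-step iterand $\Q_\alpha^0\ast\ddot{\mathbb Q}^1_\alpha$, where $\Q_\alpha^0$ freezes $\NS$ along $f$ (Lemma \ref{freezinglemm}) and $\ddot{\mathbb Q}^1_\alpha$ forces an instance of $\SRP$ by $f$-semiproper forcing; at limit stages, a Laver function on $\kappa$ guesses $f$-semiproper, $f$-preserving forcings. The crucial modification is that each $\Q_\alpha^0$ must be chosen to be $f$-semiproper, not just $f$-preserving. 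Granted this, Theorem \ref{fsemiproperiterationtheoremotherpaper} ensures that $\PP$ is $f$-semiproper (hence semiproper, since in the intermediate extensions $f$ witnesses $\diacolp$), and the standard bookkeeping argument with the Laver function gives $V^\PP\models\QMM$.

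The main obstacle is therefore verifying that the freezing forcing from Lemma \ref{freezinglemm}, realised as $\PPdia(\Qmaxm,f,\Qmaxm)$ over a suitable $f$-semiproper preparation, is in fact $f$-semiproper when $f$ is chosen as above. This will require going back into the $\PPdia$-construction and adjoining a semigenericity clause to the certificate machinery: for each $\fgood$ countable $X\prec H_\theta$, we need to show that any $p\in\PPdia\cap X$ admits an extension $q\leq p$ forcing that $X[\dot G]$ is $\fgood$. The strategy will be to incorporate into the semantic certificates an additional component recording a $\Col(\omega,\omega_1)\cap\delta^X$-generic filter over $M_X$ whose values cohere with $f(\delta^X)$; the extra flexibility in the placement of the $S^f_{b_\alpha}$ afforded by Lemma \ref{nicediacolwitnesslemm} is precisely what allows such filters to be threaded through the diamond-genericity condition \ref{newgenericitycondition} without collision. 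Once this refinement is in place, the proof otherwise proceeds in parallel to Theorem \ref{getQMMthm}, and the resulting $\PP$ is a semiproper forcing producing $\QMM$.
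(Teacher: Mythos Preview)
Your approach has a genuine gap. To force $\QMM$ you must, at the Laver-guessed stages, iterate \emph{arbitrary} $f$-preserving forcings, since $\QMM$ is $\FA(\{\PP\mid\PP\text{ preserves }f\})$. Such forcings need not be $f$-semiproper, so Theorem~\ref{fsemiproperiterationtheoremotherpaper} cannot carry the iteration; you are forced back to the $Q$-iteration theorem, which only yields ``$f$-preserving'', not ``semiproper''. Your proposed way out---to make the freezing forcing $\PPdia$ itself $f$-semiproper by modifying the certificate machinery---would not help with this, and in any case that modification is only sketched, not carried out. (Also, your claim that under $\diacolp$ ``$f$-semiproper'' and ``semiproper'' coincide is not justified: semigenericity of $q$ only gives $\delta^{X[\dot G]}=\delta^X$, not that $f(\delta^X)$ is generic over the larger $M_{X[\dot G]}$.)

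The paper sidesteps all of this. The point of Lemma~\ref{nicediacolwitnesslemm} is used differently: after first arranging (via the least supercompact) that stationary-set-preserving equals semiproper in $V$, one forces with $\Col(\omega_1,2^{\omega_1})$ so that the old stationary sets can be enumerated in type $\omega_1$, and then chooses $f$ so that \emph{every} $V$-stationary set contains some $S^f_p$. Now any $f$-preserving forcing automatically preserves all $V$-stationary sets. So the plain $Q$-iteration $\PP_1$ of Theorem~\ref{getQMMthm}, which is merely $f$-preserving, already makes the composite $\Col(\omega_1,2^{\omega_1})\ast\dot\PP_1$ stationary-set-preserving over $V$, hence semiproper. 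No refinement of $\PPdia$ is needed, and no attempt is made to keep individual iterands semiproper.
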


\begin{proof}
By otherwise taking advantage of the least supercompact, we may assume all stationary-set preserving forcings are semiproper. Next, we force with 

$$\PP_0=\Col(\omega_1, 2^{\omega_1}).$$
Let $G$ be $\PP_0$-generic over $V$. There is then a partition $\langle T_\alpha\mid\alpha<\omega_1\rangle$ of $\omega_1$ into stationary sets so that whenever $S\in V$ is stationary in $\omega_1$, then $T_\alpha\cap S$ is stationary for all $\alpha<\omega_1$. Also, there is an enumeration 
$$\langle S_\alpha\mid\alpha<\omega_1\rangle$$
of all stationary sets in $V$. Now in $V[G]$,
$$\langle S_\alpha\cap T_\alpha\mid\alpha<\omega_1\rangle$$
is a sequence of pairwise disjoint stationary sets. Moreover, $\diamondsuit_T$ holds for any stationary $T\subseteq\omega_1$. By Lemma \ref{nicediacolwitnesslemm}, there is a witness $f$ of $\diacol$ so that for any $\alpha<\omega_1$ there is $p\in\Col(\omega,\omega_1)$ with $S^f_p\subseteq (S_\alpha\cap T_\alpha)$. Thus for any stationary $S\in V$, $S$ contains some $S^f_p$. Note that any further $f$-preserving forcing preserves the stationarity of any $S^f_p$ and hence does not kill any stationary $S\in V$. By Theorem \ref{getQMMthm}, there is an $f$-preserving $\PP_1$ that forces $\QMM$. It follows that back in $V$, the two-step forcing $\PP_0\ast\dot\PP_1$ preserves stationary sets, hence is semiproper, and forces $\QMM$.
\end{proof}

Similarly, can prove the following from Theorem \ref{firstnsdenseforcingthm}.

\begin{cor}
Assume there is an inaccessible $\kappa$ that is a limit of ${<}\kappa$-supercompact cardinals. Then there is a stationary set preserving forcing $\PP$ with 
$$V^\PP\models``\NS\text{ is }\omega_1\text{-dense}".$$
\end{cor}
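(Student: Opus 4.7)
The plan is to mimic the proof of the preceding corollary (for $\QMM$ from a supercompact limit of supercompacts) but using Theorem \ref{firstnsdenseforcingthm} in place of Theorem \ref{getQMMthm}, and dropping the preliminary reduction to ``stationary set preserving equals semiproper'' (which is the only reason an extra supercompact was needed there). Since we only demand stationary set preservation in the conclusion, this weakening of the large cardinal hypothesis is available.

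First I would force with $\PP_0 = \Col(\omega_1, 2^{\omega_1})$, which is $\omega_1$-closed and hence trivially preserves stationary subsets of $\omega_1^V$. Let $G$ be $\PP_0$-generic. In $V[G]$ there is an enumeration $\langle S_\alpha \mid \alpha < \omega_1\rangle$ of all stationary subsets of $\omega_1$ from $V$, together with a partition $\langle T_\alpha\mid\alpha<\omega_1\rangle$ of $\omega_1$ into pairwise disjoint stationary sets such that $T_\alpha \cap S$ is stationary for every stationary $S \in V$ and every $\alpha<\omega_1$; moreover $\diamondsuit_T$ holds for every stationary $T\subseteq\omega_1$. The sequence $\langle S_\alpha\cap T_\alpha\mid\alpha<\omega_1\rangle$ is then a partition into pairwise disjoint stationary sets, and Lemma \ref{nicediacolwitnesslemm} produces a witness $f$ of $\diacol$ in $V[G]$ such that for every $\alpha<\omega_1$ there is $p_\alpha\in\Col(\omega,\omega_1)$ with $S^f_{p_\alpha}\subseteq S_\alpha\cap T_\alpha$.

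Next, working in $V[G]$, the inaccessibility of $\kappa$ and the fact that $\kappa$ is still a limit of ${<}\kappa$-supercompact cardinals (by $\omega_1$-closure of $\PP_0$ and the usual Laver-style absoluteness) let us apply Theorem \ref{firstnsdenseforcingthm} to $f$: there is an $f$-preserving forcing $\dot\PP_1$ so that $V[G]^{\dot\PP_1}\models``\NS\text{ is }\omega_1\text{-dense}"$. Set $\PP = \PP_0 \ast \dot\PP_1$.

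It remains to verify that $\PP$ preserves stationary sets of $V$. Let $S\in V$ be stationary in $\omega_1$; since $\PP_0$ preserves stationarity, $S$ is stationary in $V[G]$, hence $S=S_\alpha$ for some $\alpha$, and $S^f_{p_\alpha}\subseteq S_\alpha\cap T_\alpha\subseteq S$. Because $\dot\PP_1$ preserves $f$, the set $S^f_{p_\alpha}$ is still ($f$-)stationary in the final extension by Proposition \ref{diacolequivprop}, and therefore $S$ contains a stationary set in $V^\PP$. Thus $\PP$ is stationary set preserving and forces ``$\NS$ is $\omega_1$-dense''. The only mildly delicate point is confirming that the witness $f$ chosen in $V[G]$ is preserved all the way through $\dot\PP_1$ and that the $S^f_p$'s are the same stationary sets on both sides of the two-step iteration, but both are immediate from the definitions and from the fact that $\PP_0$ is $\omega_1$-closed.
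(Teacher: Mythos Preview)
Your proposal is correct and follows exactly the approach the paper indicates: the paper's proof of this corollary is the single sentence ``Similarly, can prove the following from Theorem \ref{firstnsdenseforcingthm}'', meaning one reruns the argument of the preceding corollary with Theorem \ref{firstnsdenseforcingthm} in place of Theorem \ref{getQMMthm} and omits the initial reduction to ``stationary-set-preserving $=$ semiproper''. Your write-up fills in precisely these details; the only minor remark is that the preservation of the large cardinal hypotheses in $V[G]$ follows simply from $\PP_0$ being small relative to $\kappa$ (Levy--Solovay), not from any Laver-style indestructibility.
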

Assuming one more (sufficiently past $\kappa$-) supercompact cardinal below $\kappa$, one can replace stationary set preserving forcing by semiproper forcing.\medskip

So the answer to Woodin's question is yes assuming sufficiently large cardinals.

\subsection{$\QMM$ implies $\stargen{\Qmax}$}
We apply the Blueprint Theorems to show that the relation between $\QM$ and $\stargen{\Qmax}$ is analogous to the one of $\MMpp$ and $\Wstar$.\\
Typicality of $\Qmax$ is witnessed by $\Psi^{\Qmax}$ consisting of the formulae
\begin{itemize}
    \item $\psi^{\Qmax}_0(x)=``x\in\dot I"$,
    \item $\psi^{\Qmax}_1(x)=``x=\dot f"$ and
    \item $\psi^{\Qmax}_2(x)=``x=\dot f\wedge x\text{ witnesses }\diacol"$.
\end{itemize}
Note that $\psi^{\Qmax}_2(x)$ is (in context equivalent to) a $\Pi_1$-formula.
\begin{thm}\label{QMMimpliesQmaxstarthm}
$\QM$ implies $\stargen{\Qmax}$.
\end{thm}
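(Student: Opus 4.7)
The plan is to apply the First Blueprint Theorem~\ref{blueprintthm} with $\Vmax = \Qmax$, $\vec A = f$ (the witness of $\QM$), and $\Psi = \Psi^{\Qmax}$. Under $\QM$, Lemma~\ref{NSdensefromQMlemm} gives that $\NS$ is $\omega_1$-dense and that $\eta_f \colon \Col(\omega,\omega_1) \to (\Pomo/\NS)^+$ is a dense embedding; so by Lemma~\ref{diacomplemm}, $f$ witnesses $\diacolp$, hence $\NS = \NSf$, and $\mathcal H_f = (\Htwo, \NS, f)$ is almost a $\Qmax$-condition.

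Next I would verify the remaining hypotheses. Typicality of $\Qmax$ with witness $\Psi^{\Qmax}$ was asserted just before the theorem, and uniqueness of iterations is standard for $\Qmax$. For acceptance of $\diamondsuit$-iterations, take $p = (N, I, f_0) \in \Qmax$ and a $\diamondsuit$-iteration $\sigma \colon p \to p^* = (M, I^*, f^*)$; since $N \models$ ``$f_0$ witnesses $\diacolp$'', by elementarity $p^* \models$ ``$f^*$ witnesses $\diabgena{I^*}$'', so Lemma~\ref{diamondsuititerationdiablemm} applies to yield $I^* = \NS_{f^*} \cap M$ and that $f^*$ witnesses $\diab$ in $V$. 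The delicate point is then to show $\NS \cap M = \NS_{f^*} \cap M$, which I would derive via a reflection/density argument analogous to Lemma~\ref{Qmaxmanalysislemm}, forcing $f^*$ to agree with the ambient $\diacolp$-witness $f$ modulo $\NS$; granted this, the $\Psi^{\Qmax}$-formulas are absolute between $p^*$ and $\mathcal H_{f^*}$, as required.

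For the forcing-axiom hypothesis $\FA^{\Psi^{\Qmax}}_f(\Gamma^{\Psi^{\Qmax}}_f)$: since $\NS = \NSf$, a forcing preserves stationarity iff it preserves each $S^f_b$ as stationary, so the class $\Gamma^{\Psi^{\Qmax}}_f$ of $(\Psi^{\Qmax}, f)$-preserving forcings coincides with the class $\{\PP : \PP \text{ preserves } f\}$ on which $\QM$'s forcing axiom directly speaks. The enhancement to $\FA^{\Psi^{\Qmax}}_f$ that also interprets names for codes of nonstationary sets and for $f$ reduces to the ordinary $\FA$ by translating such names into dense sets in the usual manner. For $\SRP$: each instance of $\SRP$ is forced by a standard semiproper forcing, and under $\diacolp$ these constructions admit $f$-semiproper versions (compare the brief lemma proved before Lemma~\ref{freezinglemm}); since $f$-semiproper forcings are $f$-preserving, $\QM$'s forcing axiom yields $\SRP$. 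The First Blueprint Theorem then produces $g_f$ witnessing $\stargen{\Qmax}$.

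The main obstacle is the verification that $\Qmax$ accepts $\diamondsuit$-iterations; this is where the internal $\diacolp$-structure carried by a $\Qmax$-condition must be synchronized with the external $\diacolp$-witness $f$ of $V$ along the iteration. The argument combines Lemma~\ref{diamondsuititerationdiablemm} with an analysis in the spirit of Lemma~\ref{Qmaxmanalysislemm}, and it is the step where the specific structure of $\Qmax$ (as opposed to a generic $\Pmaxvariant$) is essential. Everything else is largely a matter of unpacking the definitions and invoking the two blueprints.
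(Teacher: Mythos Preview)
Your overall approach is correct and matches the paper's: apply the First Blueprint Theorem~\ref{blueprintthm} with $\Vmax=\Qmax$, $\vec A=f$, $\Psi=\Psi^{\Qmax}$. Your verifications that $\SRP$ holds, that $\mathcal H_f$ is almost a $\Qmax$-condition, and that $\FA^{\Psi^{\Qmax}}_f(\Gamma^{\Psi^{\Qmax}}_f)$ reduces to $\QM$'s forcing axiom are all fine.

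Where you go astray is in the step you call the ``delicate point.'' The property ``$\Qmax$ accepts $\diamondsuit$-iterations'' is a self-contained statement about $\Qmax$, to be proved in $\ZFC^-+``\omega_1$ exists''; it makes no reference to any ambient $\diacol$-witness. There is no external $f$ to synchronize with: the structure on the left of the absoluteness is $\mathcal H_{f^\ast}=(\Htwo,\NS,f^\ast)$, where $f^\ast$ is the last coordinate of the iterate $p^\ast$, not the $\QM$-witness. Your proposed route via an analogue of Lemma~\ref{Qmaxmanalysislemm}---which \emph{constructs} a particular iteration rather than analyzing a given one---would not apply here.

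The gap you identify, passing from $I^\ast=\NS_{f^\ast}\cap M$ (given by Lemma~\ref{diamondsuititerationdiablemm}) to $I^\ast=\NS\cap M$, is immediate once you note two trivialities. First, $\NS\subseteq\NS_{f^\ast}$ since $f^\ast$-stationary sets are stationary; hence $\NS\cap M\subseteq\NS_{f^\ast}\cap M=I^\ast$. Second, $I^\ast\subseteq\NS$ holds for \emph{any} generic iteration of length $\omega_1+1$: if $S\in I^\ast$ then $S$ misses a tail of iteration points, which form a club. This is why the paper simply cites Lemma~\ref{diamondsuititerationdiablemm} for acceptance of $\diamondsuit$-iterations and moves on.
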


\begin{proof}
Suppose $f$ witnesses $\QM$. We already mentioned that forcing an instance of $\SRP$ is $f$-preserving and so $\SRP$ holds. $\mathcal H_f$ is almost a $\Qmax$-condition by Lemma \ref{NSdensefromQMlemm}. $\Qmax$ accepts $\diamondsuit$-iterations by Lemma \ref{diamondsuititerationdiablemm}. $\stargen{\Qmax}$ now follows from the First Blueprint Theorem \ref{blueprintthm}.
\end{proof}

\begin{defn}
For $\Delta\subseteq\mathcal P(\mathbb R)$,  $\Delta$-$\BQM$ states that there is $f$ witnessing $\diacol$ so that 
$$\Delta\text{-}\BFA(\{\PP\mid\PP\text{ preserves }f\})$$
holds.
\end{defn}
We mention that already $\BQM=\emptyset$-$\BQM$\index{BQM@$\BQM$} is enough to prove ``$\NS$ is $\omega_1$-dense".

\medskip

Finally, we remark that one can show that fragments of $\QMM$ hold in $\Qmax$-extensions of canonical models of determinacy. For example $\QMM(\mathfrak{c})$, i.e.~$\QMM$ for forcings of size at most continuum, holds in the $\Qmax$-extension of models of $\ADR+``\Theta\text{ is regular}"+V=L(\mathcal P(\mathbb R))$ and $\BQMM$ holds in the $\Qmax$-extension of suitable $\mathbb R$-mice.\\
Finally we want to mention that Woodin has formulated a forcing axiom $\mathrm{FA}(\diacol)[\mathfrak{c}]$ somewhat similar to $\QMM(\mathfrak{c})$ and has proven that it holds in the $\Qmax$-extension of a model of $\ADR+``\Theta\text{ is regular}"+V=L(\mathcal{P}(\mathbb R))$, see Theorem 9.54 in \cite{woodinbook}\footnote{We remark once again that Woodin has defined $\diacol$ slightly different than we have here.} The global version $\mathrm{FA}(\diacol)$ of Woodin's axiom does \textit{not} imply ``$\NS$ is $\omega_1$-dense". The reason is that if $f$ witnesses $\diacol$ and $\MMpp(f)$ holds then $\mathrm{FA}(\diacol)$ is true, however $\NS$ is not $\omega_1$-dense.

\bibliographystyle{alpha}
\bibliography{bib}

\end{document}